\pgfplotsset{compat=1.14}
\def\@secnumfont{\bfseries} 
\def\@seccntformat#1{%
  \protect\textup{\protect\@secnumfont
    \ifnum\pdfstrcmp{section}{#1}=0 \normalfont \fi 
    \csname the#1\endcsname
    \protect\@secnumpunct
  }%
}  
\numberwithin{equation}{subsection}
\numberwithin{figure}{subsection}
\theoremstyle{plain}
\newtheorem*{theorem*}{Theorem}
\newtheorem*{corollary*}{Corollary}
\newtheorem*{lemma*}{Lemma}
\newtheorem*{proposition*}{Proposition}
\newtheorem*{summary*}{Summary}
\theoremstyle{remark}
\newtheorem*{remark*}{Remark}
\theoremstyle{definition}
\newtheorem{definition}{Definition}[subsection]
\newtheorem*{case*}{Case}
\newtheorem*{subcase*}{Subcase}
\newtheorem*{definition*}{Definition}
\newtheorem*{example*}{Example}
\newtheorem*{nonexample*}{Non-Example}
\newcommand{\ds}{\displaystyle}
\newcommand{\mR}{\mathcal{R}}
\newcommand{\mS}{\mathcal{S}}
\newcommand{\mT}{\mathcal{T}}
\newcommand{\mF}{\mathcal{F}}
\newcommand{\sT}{\mathscr{T}}
\newcommand{\bN}{\mathbb{N}}
\newcommand{\bC}{\mathbb{C}}
\newcommand{\bS}{\mathbb{S}}
\newcommand{\fS}{\mathfrak{S}}
\newcommand{\gl}{\mathfrak{gl}}
\newcommand{\GL}{\mathop{GL}}
\newcommand{\Sym}{\operatorname{Sym}}
\title{Explicit Pieri Inclusions}
\author{Markus Hunziker}
\address{Department of Mathematics, Baylor University, Waco, TX 76798}
\email{markus\_hunziker@baylor.edu}
\author{John A. Miller}
\address{Department of Mathematics, Baylor University, Waco, TX 76798}
\email{john\_miller5@baylor.edu}
\author{Mark Sepanski}
\address{Department of Mathematics, Baylor University, Waco, TX 76798}
\email{mark\_sepanski@baylor.edu}
\date{\today}
\begin{document}

\maketitle

\tableofcontents

\section{Introduction} \label{section: Introduction}

\subsection{} \label{Introduction subsection: Pieri rule}
Let \(\lambda = (\lambda_1, \lambda_2, \ldots)\) be a partition, i.e., a sequence of non-negative integers in weakly decreasing order. 
The number of nonzero parts \(\lambda_i\) is called the length of \(\lambda\), denoted by \(l(\lambda)\).
If \(V\) is a complex vector space of dimension \(n\leq l(\lambda)\), we can apply the Schur--Weyl functor \(\bS_{\lambda}\) (as in \cite{fulton2013representation}) to \(V\) to obtain an irreducible representation \(\bS_{\lambda}(V)\) of the general linear group \(\GL(V)\). 
It follows from Pieri's formula for the product of an elementary symmetric polynomial and a Schur polynomial that the tensor product representation \(\wedge^m(V) \otimes \bS_{\lambda}(V)\) decomposes multiplicity-free into a direct sum of irreducible representations
\[
    \wedge^m(V) \otimes \bS_{\lambda}(V) \cong \bigoplus_{\mu} \bS_{\mu}(V),
\]
where the sum is over all partitions \(\mu\) with \(l(\mu)\leq n\) whose Young diagram is obtained from the Young diagram of \(\lambda\) by adding exactly \(m\) boxes, at most one to each row.
Since the decomposition is multiplicity-free, it is natural to ask for explicit descriptions of the embeddings 
\(\Phi_m:\bS_{\mu}(V) \hookrightarrow \wedge^m(V) \otimes \bS_{\lambda}(V)\).
Following \cite{sam2011pieri}, we call these embeddings (skew) Pieri inclusions.

\subsection{} \label{Introduction subsection: Pieri inclusions}

Given a basis \(\{e_1, e_2, \ldots, e_n\}\) of \(V\), the representation \(\bS_{\lambda}(V)\) is equipped with a canonical basis indexed by the set of semistandard tableaux of shape \(\lambda\) with fillings from the set \(\{1, \ldots, n\}\). 
In \cite{olver1982differential}, Olver gave an explicit description of the Pieri inclusions with respect to these canonical bases in the special case when \(m = 1\). 
When \(m > 1\), the Pieri inclusion \(\Phi_m\) can be obtained by iteration of the special case \cite[Corollary 1.8]{sam2011pieri}.
The main purpose of this paper is to give a new combinatorial description of \(\Phi_m\) that (a) leads to a more efficient algorithm and (b) can be given in a general closed form (avoiding iteration) for \(m \ge 1\). 
In regard to (a), we will show that our algorithm achieves an exponential speed-up over Olver's algorithm when it is restricted to partitions with a bounded number of distinct parts. 
More precisely, if we fix a positive integer \(N\) and consider partitions \(\lambda\) that can be written in exponential notation as \(\lambda=(1^{h_1},2^{h_2},3^{h_3},\dots )\) with at most \(N\) nonzero exponents \(h_i\), then our algorithm to compute the image of a highest weight vector under a Pieri inclusion \(\Phi_1: \bS_{\mu}(V) \hookrightarrow V \otimes \bS_{\lambda}(V)\) has a run-time complexity of \(O(l(\lambda)^N)\), whereas Olver's algorithm has a run-time complexity of \(\Omega(2^{\,l(\lambda)})\). 

\subsection{} \label{Introduction subsection: Our formula}

Our general formula for a Pieri inclusion \(\Phi_m : \bS_{\mu}(V) \hookrightarrow \wedge^m(V) \otimes \bS_{\lambda}(V)\), where \(\lambda \setminus \mu\) is a skew diagram with no two boxes in the same row, is as follows.
If \(T\) is a semistandard tableau of shape \(\mu\) with filling in \(\{1, \ldots, n\}\) and \(e_T \in \bS_{\mu}(V)\) is the corresponding basis element, then
\[
    \Phi_m(e_T) = \sum_{P} \frac{(-1)^P}{H(P)} P(T)
\]
where the sum is over a certain set of ``\(m\)-paths'' \(P\) which remove \(m\) boxes from the shape \(\lambda\), \((-1)^P\) is a sign, \(H(P)\) is a positive integer that is a product of certain ``hook lengths.''
We will write the path \(P\) acting on \(T\) as 
\[
    P(T) = e_{Y_P} \otimes e_{T_P}
\]
where \(e_{Y_P} = e_{i_1} \wedge \cdots \wedge e_{i_m} \in \wedge^m(V)\) is given by the entries of the boxes removed by \(P\), and \(T_P\) is a (not necessarily semistandard) tableau of shape \(\lambda\) with filings in \(\{1, \ldots, n\}\) such that
\[
    \{\text{numbers appearing in } T\} = \{\text{numbers appearing in } Y_P\} \cup \{\text{numbers appearing in } T_P\}
\]
as a multi-set.
All of this will be defined rigorously in Sections \ref{section: Constructing the Pieri Inclusion for Removing One Box} and \ref{section: Constructing the Pieri Inclusion for Removing Many Boxes}.

\subsection{} \label{Introduction subsection: Example HW}

To illustrate how our formula works, we look at an example in the case when \(n = 4\) and \(m = 1\). Let \(\lambda = (2,1,1,1)\), and \(\mu = (2,1,1)\). Then the Schur--Weyl module \(\bS_{\lambda}(V)\) appears as a summand in the decomposition of \(\bS_{(1)}(V) \otimes \bS_{(2,1,1)}(V) = V \otimes \bS_{(2,1,1)}\),
    \[
        \begin{ytableau}
            *(black!25)
        \end{ytableau}
        \otimes
        \begin{ytableau}
            ~ & ~\\
            ~\\
            ~
        \end{ytableau}
        =
        \begin{ytableau}
            ~ & ~ & *(black!25)\\
            ~\\
            ~
        \end{ytableau}
        \oplus
        \begin{ytableau}
            ~ & ~\\
            ~ & *(black!25)\\
            ~
        \end{ytableau}
        \oplus
        \begin{ytableau}
            ~ & ~\\
            ~ \\
            ~ \\
            *(black!25)
        \end{ytableau}.
    \]
Consider the Pieri inclusion
\[
    \Phi_1: \bS_{(2,1,1,1)}(V) \hookrightarrow V \otimes \bS_{(2,1,1)}(V).
\]
By abuse of notation, we will identify semistandard tableaux and their corresponding basis vectors.
We will compute
    \[
        \Phi_1\left( \begin{ytableau}
            1 & 1\\
            2 \\
            3 \\
            4
        \end{ytableau} \right)
        =
        \sum_{P} \frac{(-1)^P}{H(P)} P\left( \begin{ytableau}
            1 & 1\\
            2 \\
            3 \\
            4
        \end{ytableau} \right).
    \]
    
The sum is over all ``\(1\)-paths'' \(P\) on \(\lambda\), that is, certain maps on the boxes in \(\lambda\) that removing a single box. 
Below we illustrate all such \(1\)-paths with arrows, shading the boxes on which the path acts.
We will view the box removed by a \(1\)-path as being moved to the top of the diagram in a ``zeroth row'' of the diagram. We give the image up to a row permutation, so that it is semi-standard.

\begingroup
\setlength{\tabcolsep}{18pt}
\renewcommand{\arraystretch}{4}
\begin{center}
    \begin{tabular}{c c c}
        \begin{tikzpicture}[baseline=(O.base)]
            \node (O) at (0,.6) {~};
            \filldraw[fill=blue!25] (0,0) rectangle (.3,.3);
            \node at (.15,.15) {\scriptsize \(4\)};
            \draw (0,.3) rectangle (.3,.6);
            \node at (.15,.45) {\scriptsize \(3\)};
            \draw (0,.6) rectangle (.3,.9);
            \node at (.15,.75) {\scriptsize \(2\)};
            \draw (0,.9) rectangle (.3,1.2);
            \node at (.15,1.05) {\scriptsize \(1\)};
            \draw (.3,.9) rectangle (.6,1.2);
            \node at (.45,1.05) {\scriptsize \(1\)};
            \draw (0,1.2) rectangle (.3,1.5);
            \draw[blue, thick, ->] (.1,.15) to [out = 180, in = 180] (.15,1.35);
        \end{tikzpicture}
        \(\leadsto\)
        \(-\)
        \begin{ytableau}
            4
        \end{ytableau}
        \(\otimes\)
        \begin{ytableau}
            1 & 1\\
            2 \\
            3 \\
        \end{ytableau},
        &
        \begin{tikzpicture}[baseline=(O.base)]
            \node (O) at (0,.6) {~};
            \filldraw[fill=blue!25] (0,0) rectangle (.3,.3);
            \node at (.15,.15) {\scriptsize \(4\)};
            \draw[fill=blue!25] (0,.3) rectangle (.3,.6);
            \node at (.15,.45) {\scriptsize \(3\)};
            \draw (0,.6) rectangle (.3,.9);
            \node at (.15,.75) {\scriptsize \(2\)};
            \draw (0,.9) rectangle (.3,1.2);
            \node at (.15,1.05) {\scriptsize \(1\)};
            \draw (.3,.9) rectangle (.6,1.2);
            \node at (.45,1.05) {\scriptsize \(1\)};
            \draw (0,1.2) rectangle (.3,1.5);
            \draw[blue, thick] (.1,.15) to [out = 180, in = 180, looseness=2] (.1,.45);
            \draw[blue, thick, ->] (.1,.45) to [out = 180, in = 180] (.15,1.35);
        \end{tikzpicture}
        \(\leadsto\)
        \begin{ytableau}
            3
        \end{ytableau}
        \(\otimes\)
        \begin{ytableau}
            1 & 1\\
            2 \\
            4 \\
        \end{ytableau},
        &
        \begin{tikzpicture}[baseline=(O.base)]
            \node (O) at (0,.6) {~};
            \filldraw[fill=blue!25] (0,0) rectangle (.3,.3);
            \node at (.15,.15) {\scriptsize \(4\)};
            \draw[fill=blue!25] (0,.3) rectangle (.3,.6);
            \node at (.15,.45) {\scriptsize \(3\)};
            \draw[fill=blue!25] (0,.6) rectangle (.3,.9);
            \node at (.15,.75) {\scriptsize \(2\)};
            \draw (0,.9) rectangle (.3,1.2);
            \node at (.15,1.05) {\scriptsize \(1\)};
            \draw (.3,.9) rectangle (.6,1.2);
            \node at (.45,1.05) {\scriptsize \(1\)};
            \draw (0,1.2) rectangle (.3,1.5);
            \draw[blue, thick] (.1,.15) to [out = 180, in = 180, looseness=2] (.1,.45);
            \draw[blue, thick] (.1,.45) to [out = 180, in = 180, looseness=2] (.1,.75);
            \draw[blue, thick, ->] (.1,.75) to [out = 180, in = 180, looseness=1.5] (.15,1.35);
        \end{tikzpicture}
        \(\leadsto\)
        \(-\)
        \begin{ytableau}
            2
        \end{ytableau}
        \(\otimes\)
        \begin{ytableau}
            1 & 1\\
            3 \\
            4 \\
        \end{ytableau},
        \\
        \begin{tikzpicture}[baseline=(O.base)]
            \node (O) at (0,.6) {~};
            \filldraw[fill=blue!25] (0,0) rectangle (.3,.3);
            \node at (.15,.15) {\scriptsize \(4\)};
            \draw (0,.3) rectangle (.3,.6);
            \node at (.15,.45) {\scriptsize \(3\)};
            \draw (0,.6) rectangle (.3,.9);
            \node at (.15,.75) {\scriptsize \(2\)};
            \draw[fill=blue!25] (0,.9) rectangle (.3,1.2);
            \node at (.15,1.05) {\scriptsize \(1\)};
            \draw (.3,.9) rectangle (.6,1.2);
            \node at (.45,1.05) {\scriptsize \(1\)};
            \draw (0,1.2) rectangle (.3,1.5);
            \draw[blue, thick] (.1,.15) to [out = 180, in = 180] (.1,1.05);
            \draw[blue, thick, ->] (.1,1.05) to [out = 180, in = 180, looseness=2] (.15,1.35);
        \end{tikzpicture}
        \(\leadsto\)
        \(\frac{1}{4}\)
        \begin{ytableau}
            1
        \end{ytableau}
        \(\otimes\)
        \begin{ytableau}
            1 & 4\\
            2 \\
            3 \\
        \end{ytableau},
        &
        \begin{tikzpicture}[baseline=(O.base)]
            \node (O) at (0,.6) {~};
            \filldraw[fill=blue!25] (0,0) rectangle (.3,.3);
            \node at (.15,.15) {\scriptsize \(4\)};
            \draw[fill=blue!25] (0,.3) rectangle (.3,.6);
            \node at (.15,.45) {\scriptsize \(3\)};
            \draw (0,.6) rectangle (.3,.9);
            \node at (.15,.75) {\scriptsize \(2\)};
            \draw[fill=blue!25] (0,.9) rectangle (.3,1.2);
            \node at (.15,1.05) {\scriptsize \(1\)};
            \draw (.3,.9) rectangle (.6,1.2);
            \node at (.45,1.05) {\scriptsize \(1\)};
            \draw (0,1.2) rectangle (.3,1.5);
            \draw[blue, thick] (.1,.15) to [out = 180, in = 180, looseness=2] (.1,.45);
            \draw[blue, thick] (.1,.45) to [out = 180, in = 180, looseness=1.5] (.1,1.05);
            \draw[blue, thick, ->] (.1,1.05) to [out = 180, in = 180, looseness=2] (.15,1.35);
        \end{tikzpicture}
        \(\leadsto\)
        \(- \frac{1}{4}\)
        \begin{ytableau}
            1
        \end{ytableau}
        \(\otimes\)
        \begin{ytableau}
            1 & 3\\
            2 \\
            4 \\
        \end{ytableau},
        &
        \begin{tikzpicture}[baseline=(O.base)]
            \node (O) at (0,.6) {~};
            \filldraw[fill=blue!25] (0,0) rectangle (.3,.3);
            \node at (.15,.15) {\scriptsize \(4\)};
            \draw[fill=blue!25] (0,.3) rectangle (.3,.6);
            \node at (.15,.45) {\scriptsize \(3\)};
            \draw[fill=blue!25] (0,.6) rectangle (.3,.9);
            \node at (.15,.75) {\scriptsize \(2\)};
            \draw[fill=blue!25] (0,.9) rectangle (.3,1.2);
            \node at (.15,1.05) {\scriptsize \(1\)};
            \draw (.3,.9) rectangle (.6,1.2);
            \node at (.45,1.05) {\scriptsize \(1\)};
            \draw (0,1.2) rectangle (.3,1.5);
            \draw[blue, thick] (.1,.15) to [out = 180, in = 180, looseness=2] (.1,.45);
            \draw[blue, thick] (.1,.45) to [out = 180, in = 180, looseness=2] (.1,.75);
            \draw[blue, thick] (.1,.75) to [out = 180, in = 180, looseness=2] (.1,1.05);
            \draw[blue, thick, ->] (.1,1.05) to [out = 180, in = 180, looseness=2] (.15,1.35);
        \end{tikzpicture}
        \(\leadsto\)
        \(\frac{1}{4}\)
        \begin{ytableau}
            1
        \end{ytableau}
        \(\otimes\)
        \begin{ytableau}
            1 & 2\\
            3 \\
            4 \\
        \end{ytableau},
        \\
        \begin{tikzpicture}[baseline=(O.base)]
            \node (O) at (0,.6) {~};
            \filldraw[fill=blue!25] (0,0) rectangle (.3,.3);
            \node at (.15,.15) {\scriptsize \(4\)};
            \draw (0,.3) rectangle (.3,.6);
            \node at (.15,.45) {\scriptsize \(3\)};
            \draw (0,.6) rectangle (.3,.9);
            \node at (.15,.75) {\scriptsize \(2\)};
            \draw (0,.9) rectangle (.3,1.2);
            \node at (.15,1.05) {\scriptsize \(1\)};
            \draw[fill=blue!25] (.3,.9) rectangle (.6,1.2);
            \node at (.45,1.05) {\scriptsize \(1\)};
            \draw (0,1.2) rectangle (.3,1.5);
            \draw[blue, thick] (.1,.15) to [out = 180, in = 180, looseness=1.5] (.4,1.05);
            \draw[blue, thick, ->] (.4,1.05) to [out = 90, in = 0] (.15,1.35);
        \end{tikzpicture}
        \(\leadsto\)
        \(\frac{1}{4}\)
        \begin{ytableau}
            1
        \end{ytableau}
        \(\otimes\)
        \begin{ytableau}
            1 & 4\\
            2 \\
            3 \\
        \end{ytableau},
        &
        \begin{tikzpicture}[baseline=(O.base)]
            \node (O) at (0,.6) {~};
            \filldraw[fill=blue!25] (0,0) rectangle (.3,.3);
            \node at (.15,.15) {\scriptsize \(4\)};
            \draw[fill=blue!25] (0,.3) rectangle (.3,.6);
            \node at (.15,.45) {\scriptsize \(3\)};
            \draw (0,.6) rectangle (.3,.9);
            \node at (.15,.75) {\scriptsize \(2\)};
            \draw (0,.9) rectangle (.3,1.2);
            \node at (.15,1.05) {\scriptsize \(1\)};
            \draw[fill=blue!25] (.3,.9) rectangle (.6,1.2);
            \node at (.45,1.05) {\scriptsize \(1\)};
            \draw (0,1.2) rectangle (.3,1.5);
            \draw[blue, thick] (.1,.15) to [out = 180, in = 180, looseness=2] (.1,.45);
            \draw[blue, thick] (.1,.45) to [out = 180, in = 180, looseness=1.5] (.4,1.05);
            \draw[blue, thick, ->] (.4,1.05) to [out = 90, in = 0] (.15,1.35);
        \end{tikzpicture}
        \(\leadsto\)
        \(- \frac{1}{4}\)
        \begin{ytableau}
            1
        \end{ytableau}
        \(\otimes\)
        \begin{ytableau}
            1 & 3\\
            2 \\
            4 \\
        \end{ytableau},
        &
        \begin{tikzpicture}[baseline=(O.base)]
            \node (O) at (0,.6) {~};
            \filldraw[fill=blue!25] (0,0) rectangle (.3,.3);
            \node at (.15,.15) {\scriptsize \(4\)};
            \draw[fill=blue!25] (0,.3) rectangle (.3,.6);
            \node at (.15,.45) {\scriptsize \(3\)};
            \draw[fill=blue!25] (0,.6) rectangle (.3,.9);
            \node at (.15,.75) {\scriptsize \(2\)};
            \draw (0,.9) rectangle (.3,1.2);
            \node at (.15,1.05) {\scriptsize \(1\)};
            \draw[fill=blue!25] (.3,.9) rectangle (.6,1.2);
            \node at (.45,1.05) {\scriptsize \(1\)};
            \draw (0,1.2) rectangle (.3,1.5);
            \draw[blue, thick] (.1,.15) to [out = 180, in = 180, looseness=2] (.1,.45);
            \draw[blue, thick] (.1,.45) to [out = 180, in = 180, looseness=2] (.1,.75);
            \draw[blue, thick] (.1,.75) to [out = 180, in = 180, looseness=2] (.4,1.05);
            \draw[blue, thick, ->] (.4,1.05) to [out = 90, in = 0] (.15,1.35);
        \end{tikzpicture}
        \(\leadsto\)
        \(\frac{1}{4}\)
        \begin{ytableau}
            1
        \end{ytableau}
        \(\otimes\)
        \begin{ytableau}
            1 & 2\\
            3 \\
            4 \\
        \end{ytableau}
    \end{tabular}
\end{center}
\endgroup
    
Thus, up to row permutations we have
\begin{align*}
    \Phi_1\left( \begin{ytableau}
        1 & 1\\
        2 \\
        3 \\
        4
    \end{ytableau} \right)
    =
    &- 
    \begin{ytableau}
        4
    \end{ytableau}
    \otimes
    \begin{ytableau}
        1 & 1\\
        2 \\
        3 \\
    \end{ytableau}
    +  
    \begin{ytableau}
        3
    \end{ytableau}
    \otimes
    \begin{ytableau}
        1 & 1\\
        2 \\
        4 \\
    \end{ytableau}
    -  
    \begin{ytableau}
        2
    \end{ytableau}
    \otimes
    \begin{ytableau}
        1 & 1\\
        3 \\
        4 \\
    \end{ytableau}\\
    &+ \frac{1}{2} \  
    \begin{ytableau}
        1
    \end{ytableau}
    \otimes
    \begin{ytableau}
        1 & 4\\
        2 \\
        3 \\
    \end{ytableau}
    - \frac{1}{2} \ 
    \begin{ytableau}
        1
    \end{ytableau}
        \otimes
    \begin{ytableau}
        1 & 3\\
        2 \\
        4 \\
    \end{ytableau}
    + \frac{1}{2} \ 
    \begin{ytableau}
        1
    \end{ytableau}
    \otimes
    \begin{ytableau}
        1 & 2\\
        3 \\
        4 \\
    \end{ytableau}.
\end{align*}

\subsection{} \label{Introduction subsection: Example straightening}

In \ref{Introduction subsection: Example HW}, all terms \(T_P\) that appeared were (after row permutations) semi-standard. 
We now compute an example where some of the terms that appear in the image are not semi-standard, and so must be straightened. 
Let \(\Phi_1\) be as in \ref{Introduction subsection: Example HW}, we will compute
\[
    \Phi_1\left( \begin{ytableau}
        1 & 2\\
        2 \\
        3 \\
        4
    \end{ytableau} \right)
    =
    \sum_{P} \frac{(-1)^P}{H(P)} P\left( \begin{ytableau}
        1 & 2\\
        2 \\
        3 \\
        4
    \end{ytableau} \right),
\]
where the terms of the sum in the image are again indexed by the \(1\)-paths on \(\lambda\) removing a single box, which we illustrate below.
As before, we give the image up to row permutations and we now star the terms that need to be starightened.
    
\begingroup
\setlength{\tabcolsep}{18pt}
\renewcommand{\arraystretch}{4}
\begin{center}
    \begin{tabular}{c c c}
        \begin{tikzpicture}[baseline=(O.base)]
            \node (O) at (0,.6) {~};
            \filldraw[fill=blue!25] (0,0) rectangle (.3,.3);
            \node at (.15,.15) {\scriptsize \(4\)};
            \draw (0,.3) rectangle (.3,.6);
            \node at (.15,.45) {\scriptsize \(3\)};
            \draw (0,.6) rectangle (.3,.9);
            \node at (.15,.75) {\scriptsize \(2\)};
            \draw (0,.9) rectangle (.3,1.2);
            \node at (.15,1.05) {\scriptsize \(1\)};
            \draw (.3,.9) rectangle (.6,1.2);
            \node at (.45,1.05) {\scriptsize \(2\)};
            \draw (0,1.2) rectangle (.3,1.5);
            \draw[blue, thick, ->] (.1,.15) to [out = 180, in = 180] (.15,1.35);
        \end{tikzpicture}
        \(\leadsto\)
        \(-\)
        \begin{ytableau}
            4
        \end{ytableau}
        \(\otimes\)
        \begin{ytableau}
            1 & 2\\
            2 \\
            3 \\
        \end{ytableau},
        &
        \begin{tikzpicture}[baseline=(O.base)]
            \node (O) at (0,.6) {~};
            \filldraw[fill=blue!25] (0,0) rectangle (.3,.3);
            \node at (.15,.15) {\scriptsize \(4\)};
            \draw[fill=blue!25] (0,.3) rectangle (.3,.6);
            \node at (.15,.45) {\scriptsize \(3\)};
            \draw (0,.6) rectangle (.3,.9);
            \node at (.15,.75) {\scriptsize \(2\)};
            \draw (0,.9) rectangle (.3,1.2);
            \node at (.15,1.05) {\scriptsize \(1\)};
            \draw (.3,.9) rectangle (.6,1.2);
            \node at (.45,1.05) {\scriptsize \(2\)};
            \draw (0,1.2) rectangle (.3,1.5);
            \draw[blue, thick] (.1,.15) to [out = 180, in = 180, looseness=2] (.1,.45);
            \draw[blue, thick, ->] (.1,.45) to [out = 180, in = 180] (.15,1.35);
        \end{tikzpicture}
        \(\leadsto\)
        \begin{ytableau}
            3
        \end{ytableau}
        \(\otimes\)
        \begin{ytableau}
            1 & 2\\
            2 \\
            4 \\
        \end{ytableau},
        &
        \begin{tikzpicture}[baseline=(O.base)]
            \node (O) at (0,.6) {~};
            \filldraw[fill=blue!25] (0,0) rectangle (.3,.3);
            \node at (.15,.15) {\scriptsize \(4\)};
            \draw[fill=blue!25] (0,.3) rectangle (.3,.6);
            \node at (.15,.45) {\scriptsize \(3\)};
            \draw[fill=blue!25] (0,.6) rectangle (.3,.9);
            \node at (.15,.75) {\scriptsize \(2\)};
            \draw (0,.9) rectangle (.3,1.2);
            \node at (.15,1.05) {\scriptsize \(1\)};
            \draw (.3,.9) rectangle (.6,1.2);
            \node at (.45,1.05) {\scriptsize \(2\)};
            \draw (0,1.2) rectangle (.3,1.5);
            \draw[blue, thick] (.1,.15) to [out = 180, in = 180, looseness=2] (.1,.45);
            \draw[blue, thick] (.1,.45) to [out = 180, in = 180, looseness=2] (.1,.75);
            \draw[blue, thick, ->] (.1,.75) to [out = 180, in = 180, looseness=1.5] (.15,1.35);
        \end{tikzpicture}
        \(\leadsto\)
        \(-\)
        \begin{ytableau}
            2
        \end{ytableau}
        \(\otimes\)
        \begin{ytableau}
            1 & 2\\
            3 \\
            4 \\
        \end{ytableau},
        \\
        \begin{tikzpicture}[baseline=(O.base)]
            \node (O) at (0,.6) {~};
            \filldraw[fill=blue!25] (0,0) rectangle (.3,.3);
            \node at (.15,.15) {\scriptsize \(4\)};
            \draw (0,.3) rectangle (.3,.6);
            \node at (.15,.45) {\scriptsize \(3\)};
            \draw (0,.6) rectangle (.3,.9);
            \node at (.15,.75) {\scriptsize \(2\)};
            \draw[fill=blue!25] (0,.9) rectangle (.3,1.2);
            \node at (.15,1.05) {\scriptsize \(1\)};
            \draw (.3,.9) rectangle (.6,1.2);
            \node at (.45,1.05) {\scriptsize \(2\)};
            \draw (0,1.2) rectangle (.3,1.5);
            \draw[blue, thick] (.1,.15) to [out = 180, in = 180] (.1,1.05);
            \draw[blue, thick, ->] (.1,1.05) to [out = 180, in = 180, looseness=2] (.15,1.35);
        \end{tikzpicture}
        \(\overset{*}{\leadsto}\)
        \(\frac{1}{4}\)
        \begin{ytableau}
            1
        \end{ytableau}
        \(\otimes\)
        \begin{ytableau}
            2 & 4\\
            2 \\
            3 \\
        \end{ytableau},
        &
        \begin{tikzpicture}[baseline=(O.base)]
            \node (O) at (0,.6) {~};
            \filldraw[fill=blue!25] (0,0) rectangle (.3,.3);
            \node at (.15,.15) {\scriptsize \(4\)};
            \draw[fill=blue!25] (0,.3) rectangle (.3,.6);
            \node at (.15,.45) {\scriptsize \(3\)};
            \draw (0,.6) rectangle (.3,.9);
            \node at (.15,.75) {\scriptsize \(2\)};
            \draw[fill=blue!25] (0,.9) rectangle (.3,1.2);
            \node at (.15,1.05) {\scriptsize \(1\)};
            \draw (.3,.9) rectangle (.6,1.2);
            \node at (.45,1.05) {\scriptsize \(2\)};
            \draw (0,1.2) rectangle (.3,1.5);
            \draw[blue, thick] (.1,.15) to [out = 180, in = 180, looseness=2] (.1,.45);
            \draw[blue, thick] (.1,.45) to [out = 180, in = 180, looseness=1.5] (.1,1.05);
            \draw[blue, thick, ->] (.1,1.05) to [out = 180, in = 180, looseness=2] (.15,1.35);
        \end{tikzpicture}
        \(\overset{*}{\leadsto}\)
        \(- \frac{1}{4}\)
        \begin{ytableau}
            1
        \end{ytableau}
        \(\otimes\)
        \begin{ytableau}
            2 & 3\\
            2 \\
            4 \\
        \end{ytableau},
        &
        \begin{tikzpicture}[baseline=(O.base)]
            \node (O) at (0,.6) {~};
            \filldraw[fill=blue!25] (0,0) rectangle (.3,.3);
            \node at (.15,.15) {\scriptsize \(4\)};
            \draw[fill=blue!25] (0,.3) rectangle (.3,.6);
            \node at (.15,.45) {\scriptsize \(3\)};
            \draw[fill=blue!25] (0,.6) rectangle (.3,.9);
            \node at (.15,.75) {\scriptsize \(2\)};
            \draw[fill=blue!25] (0,.9) rectangle (.3,1.2);
            \node at (.15,1.05) {\scriptsize \(1\)};
            \draw (.3,.9) rectangle (.6,1.2);
            \node at (.45,1.05) {\scriptsize \(2\)};
            \draw (0,1.2) rectangle (.3,1.5);
            \draw[blue, thick] (.1,.15) to [out = 180, in = 180, looseness=2] (.1,.45);
            \draw[blue, thick] (.1,.45) to [out = 180, in = 180, looseness=2] (.1,.75);
            \draw[blue, thick] (.1,.75) to [out = 180, in = 180, looseness=2] (.1,1.05);
            \draw[blue, thick, ->] (.1,1.05) to [out = 180, in = 180, looseness=2] (.15,1.35);
        \end{tikzpicture}
        \(\leadsto\)
        \(\frac{1}{4}\)
        \begin{ytableau}
            1
        \end{ytableau}
        \(\otimes\)
        \begin{ytableau}
            2 & 2\\
            3 \\
            4 \\
        \end{ytableau},
        \\
        \begin{tikzpicture}[baseline=(O.base)]
            \node (O) at (0,.6) {~};
            \filldraw[fill=blue!25] (0,0) rectangle (.3,.3);
            \node at (.15,.15) {\scriptsize \(4\)};
            \draw (0,.3) rectangle (.3,.6);
            \node at (.15,.45) {\scriptsize \(3\)};
            \draw (0,.6) rectangle (.3,.9);
            \node at (.15,.75) {\scriptsize \(2\)};
            \draw (0,.9) rectangle (.3,1.2);
            \node at (.15,1.05) {\scriptsize \(1\)};
            \draw[fill=blue!25] (.3,.9) rectangle (.6,1.2);
            \node at (.45,1.05) {\scriptsize \(2\)};
            \draw (0,1.2) rectangle (.3,1.5);
            \draw[blue, thick] (.1,.15) to [out = 180, in = 180, looseness=1.5] (.4,1.05);
            \draw[blue, thick, ->] (.4,1.05) to [out = 90, in = 0] (.15,1.35);
        \end{tikzpicture}
        \(\leadsto\)
        \(\frac{1}{4}\)
        \begin{ytableau}
            2
        \end{ytableau}
        \(\otimes\)
        \begin{ytableau}
            1 & 4\\
            2 \\
            3 \\
        \end{ytableau},
        &
        \begin{tikzpicture}[baseline=(O.base)]
            \node (O) at (0,.6) {~};
            \filldraw[fill=blue!25] (0,0) rectangle (.3,.3);
            \node at (.15,.15) {\scriptsize \(4\)};
            \draw[fill=blue!25] (0,.3) rectangle (.3,.6);
            \node at (.15,.45) {\scriptsize \(3\)};
            \draw (0,.6) rectangle (.3,.9);
            \node at (.15,.75) {\scriptsize \(2\)};
            \draw (0,.9) rectangle (.3,1.2);
            \node at (.15,1.05) {\scriptsize \(1\)};
            \draw[fill=blue!25] (.3,.9) rectangle (.6,1.2);
            \node at (.45,1.05) {\scriptsize \(2\)};
            \draw (0,1.2) rectangle (.3,1.5);
            \draw[blue, thick] (.1,.15) to [out = 180, in = 180, looseness=2] (.1,.45);
            \draw[blue, thick] (.1,.45) to [out = 180, in = 180, looseness=1.5] (.4,1.05);
            \draw[blue, thick, ->] (.4,1.05) to [out = 90, in = 0] (.15,1.35);
        \end{tikzpicture}
        \(\leadsto\)
        \(- \frac{1}{4}\)
        \begin{ytableau}
            2
        \end{ytableau}
        \(\otimes\)
        \begin{ytableau}
            1 & 3\\
            2 \\
            4 \\
        \end{ytableau},
        &
        \begin{tikzpicture}[baseline=(O.base)]
            \node (O) at (0,.6) {~};
            \filldraw[fill=blue!25] (0,0) rectangle (.3,.3);
            \node at (.15,.15) {\scriptsize \(4\)};
            \draw[fill=blue!25] (0,.3) rectangle (.3,.6);
            \node at (.15,.45) {\scriptsize \(3\)};
            \draw[fill=blue!25] (0,.6) rectangle (.3,.9);
            \node at (.15,.75) {\scriptsize \(2\)};
            \draw (0,.9) rectangle (.3,1.2);
            \node at (.15,1.05) {\scriptsize \(1\)};
            \draw[fill=blue!25] (.3,.9) rectangle (.6,1.2);
            \node at (.45,1.05) {\scriptsize \(2\)};
            \draw (0,1.2) rectangle (.3,1.5);
            \draw[blue, thick] (.1,.15) to [out = 180, in = 180, looseness=2] (.1,.45);
            \draw[blue, thick] (.1,.45) to [out = 180, in = 180, looseness=2] (.1,.75);
            \draw[blue, thick] (.1,.75) to [out = 180, in = 180, looseness=2] (.4,1.05);
            \draw[blue, thick, ->] (.4,1.05) to [out = 90, in = 0] (.15,1.35);
        \end{tikzpicture}
        \(\leadsto\)
        \(\frac{1}{4}\)
        \begin{ytableau}
            2
        \end{ytableau}
        \(\otimes\)
        \begin{ytableau}
            1 & 2\\
            3 \\
            4 \\
        \end{ytableau}
    \end{tabular}
\end{center}
\endgroup
    
In this case, we must straighten the image of two of the \(1\)-paths (starred), which we show in Section \ref{Constructing the Pieri Inclusion for Removing One Box subsection: Straightening Example}.
After straightening we have, up to row permutations,
\begin{align*}
    \Phi_1\left( \begin{ytableau}
        1 & 2\\
        2 \\
        3 \\
        4
    \end{ytableau} \right)
    =
    &- 
    \begin{ytableau}
        4
    \end{ytableau}
    \otimes
    \begin{ytableau}
        1 & 2\\
        2 \\
        3 \\
    \end{ytableau}
    + 
    \begin{ytableau}
        3
    \end{ytableau}
    \otimes
    \begin{ytableau}
        1 & 2\\
        2 \\
        4 \\
    \end{ytableau}
    - \frac{3}{4} \ 
    \begin{ytableau}
        2
    \end{ytableau}
    \otimes
    \begin{ytableau}
        1 & 2\\
        3 \\
        4 \\
    \end{ytableau}\\
    &+ \frac{1}{4} \ 
    \begin{ytableau}
        1
    \end{ytableau}
    \otimes
    \begin{ytableau}
        2 & 2\\
        3 \\
        4 \\
    \end{ytableau}
    + \frac{1}{4} \ 
    \begin{ytableau}
        2
    \end{ytableau}
    \otimes
    \begin{ytableau}
        1 & 4\\
        2 \\
        3 \\
    \end{ytableau}
    - \frac{1}{4} \ 
    \begin{ytableau}
        2
    \end{ytableau}
    \otimes
    \begin{ytableau}
        1 & 3\\
        2 \\
        4 \\
    \end{ytableau}.
\end{align*}
    
For an example of two-box removal, see Section \ref{Constructing the Pieri Inclusion for Removing Many Boxes subsection: Two Path Example}.

\subsection{} \label{Introduction subsection: Related results}

In results of Eisenbud, Fl\o stad, and Weyman \cite{eisenbud2011existence} and of Sam and Weyman \cite{sam2011pieri}, Pieri inclusions are used to compute pure free resolutions for classical groups. Sam has also built a package for Macaulay2 (PieriMaps) \cite{sam2009computing} that computes Pieri inclusions explicitly using the algorithm from \cite{sam2011pieri}. 

In \cite{pragacz1985complexes}, Weyman and Pragacz use Pieri inclusion maps to describe Lascoux resolutions.
We use the explicit description of Pieri inclusions to give minimal free resolutions of modules of covariants (in the context of Weyl's fundamental theorems).


\subsection{} \label{Introduction subsection: Organization}

In Section \ref{section: Constructing Schur--Weyl Modules} we construct the Schur--Weyl modules \(\bS_{\lambda}\).
In Sections \ref{section: Constructing the Pieri Inclusion for Removing One Box} and \ref{section: Constructing the Pieri Inclusion for Removing Many Boxes} we describe the construction of the Pieri inclusion in the one-box removal case (\(V\)) and \(m\)-box removal case (\(\wedge^m(V)\)), respectively.
In Sections \ref{section: Showing the Pieri Inclusion Removing One Box is a GL(V)-map} and \ref{section: Showing the Pieri Inclusion Removing Many Boxes is a GL(V)-map} we show that the Pieri inclusions are \(\GL(V)\)-maps in the one-box removal and \(m\)-box removal cases, respectively, with the tools for these proofs given in Section \ref{section: Generating Garnir Relations and Tools for Collapsing Sums}.
In Section \ref{section: The Image of a Highest Weight Vector and Computational Complexity} we show the one-box removal map is the negative of Olver's description via the uniqueness of an equivariant map and, in a similar way, show that the \(m\)-box removal case is equal to iterating one-box removal. 
We also show in this section that the same description of Pieri inclusions gives a map in the case \(\bS_{\mu}(V) \hookrightarrow \Sym^m(V) \otimes \bS_{\lambda}(V)\) and we compare the computational complexity of the one-box removal description to that of Olver.\\

\section{Constructing Schur--Weyl Modules} \label{section: Constructing Schur--Weyl Modules}

\subsection{} \label{Constructing Schur--Weyl Modules subsection: Young symmetrizer}

From now on, let \(\lambda = (\lambda_1,\ldots,\lambda_r)\) be a fixed partition of \(d\).
Let \(\mT_{\lambda, n}\) be the set of all tableau \(T\) with shape \(\lambda\) and arbitrary filling from the alphabet \(\{1, \ldots, n\}\).
A tableau \(T \in \mT_{\lambda, n}\) is called semi-standard if the filling is non-decreasing across the rows and strictly increasing down the columns.
Fix the canonical tableau \(T_{0}\) of shape \(\lambda\) labeled with \(\left\{ 1, \ldots, d \right\}\), starting with the top left most box and filling across each row, so the first box of the first row is labeled \(1\), the first box of the second row is labeled \(\lambda_1 + 1\), etc.
Via this labeling, the symmetric group \(\fS_d\) acts on the set of tableau with shape \(\lambda\) with respect to any given alphabet. 

Let
\[
    P = P_{\lambda} = \left\{ \pi \in \fS_{d} : \pi \text{ preserves the rows of } T_{0}\right\}
\]
and
\[
    Q = Q_{\lambda} = \left\{ \sigma \in \fS_{d} : \sigma \text{ preserves the columns of } T_{0}\right\}.
\]
As elements of the group algebra of \(\fS_{d}\), \(\bC \fS_{d}\), define
\[
    A_{\lambda} = \sum_{\pi \in P} \pi
    \qquad \text{ and } \qquad 
    B_{\lambda} = \sum_{\sigma \in Q} \left( -1\right)^{\sigma} \sigma.
\]
The Young Symmetrizer is then defined as \( C_{\lambda} = A_{\lambda }B_{\lambda }\).
Note this convention symmetrizes along rows first and antisymmetrizes along columns second.

\subsection{} \label{Constructing Schur--Weyl Modules subsection: Schur--Weyl modules}

From now on, fix a complex vector space \(V\) of dimension \(n\). Let \(\fS_{d}\) also act on elements of \(V^{\otimes d}\) by permuting the coordinates. 
In particular, \(C_{\lambda}\) acts on \(V^{\otimes d}\). 
The corresponding Schur--Weyl module is \(\bS_{\lambda} = \bS_{\lambda}(V^{\otimes d}) = C_{\lambda } \cdot V^{\otimes d}\). 
Clearly \(\bS_{\lambda }\) is a \(\GL(V)\)-module. 
When the number of rows of \(\lambda\) is at most \(n\), it is known that \(\bS_{\lambda }\) is an irreducible representation of \(\GL(V)\) and that all (in-equivalent) polynomial irreducible representations are constructed this way.

Write \(\{e_i\}_{1 \le i \le n}\) for the standard basis of \(V\).
For \(T \in \mT_{\lambda ,n}\), define \(e_T \in \bS_{\lambda}\) by
\[
    e_T = C_{\lambda }\cdot ((e_{T_{11}}\otimes \cdots \otimes e_{T_{1\lambda_{1}}})\otimes
    \cdots \otimes (e_{T_{N1}}\otimes \cdots \otimes e_{T_{N \lambda_{N}}}))
\]
where \(T_{i,j}\) is the entry in the \(i\)th row and \(j\) column of \(T\) starting from the top left. 
Clearly \(\bS_{\lambda }\) is spanned by such elements, and it is known that a basis is given by the semistandard ones.

\subsection{} \label{Constructing Schur--Weyl Modules subsection: Highest Weights}

Let \(V_{\bullet}\) be the standard flag in \(V\),
\[
    V_{\bullet} : \quad V_i = \text{span}\{e_1, \ldots, e_i\} \quad (1 \le i \le n).
\]
Let \(B \subset \GL(V)\) be the Borel subgroup given by
\[
    B = \{g \in \GL(V) : g V_i \subset V_i \text{ for } 1 \le i \le n\}.
\]
Throughout this paper, all highest weights are with respect to \(B\).
The highest weight vector of \(\bS_{\lambda}(V)\) is
\[
    e_{T_\lambda} = C_{\lambda}\cdot (\underbrace{(e_{1}\otimes \cdots \otimes e_{1}}_{\lambda_1}) \otimes
    \cdots \otimes (\underbrace{(e_{r}\otimes \cdots \otimes e_{r}}_{\lambda_r})).
\]
That is, \(T_\lambda\) is the tableau of shape \(\lambda\) with all ones in the first row, all twos in the second row, etc.
For example, if \(\lambda = (5,3,3,1,1)\), 
\[
    T_{\lambda} = \ytableaushort{1 1 1 1 1, 2 2 2, 3 3 3, 4, 5}
\]
and
\[
    e_{T_\lambda} = C_{\lambda} \cdot ( e_1 \otimes e_1 \otimes e_1 \otimes e_1 \otimes e_1 \otimes e_2 \otimes e_2 \otimes e_2 \otimes e_3 \otimes e_3 \otimes e_3 \otimes e_4 \otimes e_5).
\]

\subsection{} \label{Constructing Schur--Weyl Modules subsection: Garnirs}

For any subset \(A\) of boxes of \(T_{0}\), let \(w_A\) be the maximum width of a row containing an element of \(A\). 
Let 
\[
    \fS_A = \left\{ \sigma \in \fS_{d}  :  \sigma \text{ preserves } A \text{ and fixes } T_0 \setminus A\right\}.
\]
When \(\left\vert A \right\vert > w_A\), define a Garnir operator as an element of \(\bC \fS_{d}\) by
\[
    G_A = \sum_{\sigma \in \fS_A} \sigma .
\]

\subsection{} \label{Constructing Schur--Weyl Modules subsection: The Construction}

Let \(\mF_{\lambda, n}\) be the formal \(\bC\)-span of symbols \(T \in \mT_{\lambda ,n}\) and let \(\mR_{\lambda, n}\) be the subspace of \(\mF_{\lambda, n}\) generated by all 
\begin{equation} \label{Constructing Schur--Weyl Modules subsection: The Construction equation: R1}
    T_{1} - T_{2}, \text{ where } T_{1} \text{ and } T_{2} \text{ agree up to a row permutation}, %
\end{equation}
and
\begin{equation} \label{Constructing Schur--Weyl Modules subsection: The Construction equation: R2}
    G_A (T), \text{ where } A \subset T_0 \text{ with } \left\vert A\right\vert > w_A.
\end{equation}

\begin{theorem*} 
    As \(\GL(V)\)-modules, we have
    \[
        \mF_{\lambda, n} / \mR_{\lambda, n} \cong \bS_{\lambda}.
    \]
\end{theorem*}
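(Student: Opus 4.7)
The plan is to construct the tautological map $\Phi\colon \mF_{\lambda,n}\to \bS_\lambda$ sending a tableau to its Schur--Weyl element, and then show it is $\GL(V)$-equivariant, surjective, and that its kernel is exactly $\mR_{\lambda,n}$. Identifying $\mF_{\lambda,n}$ with $V^{\otimes d}$ via $T\mapsto v_T := e_{T_{11}}\otimes\cdots\otimes e_{T_{N\lambda_N}}$, the map $\Phi$ becomes application of $C_\lambda$: surjectivity onto $\bS_\lambda = C_\lambda V^{\otimes d}$ is immediate, and $\GL(V)$-equivariance is automatic since $C_\lambda \in \bC\fS_d$ commutes with the diagonal action.

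For $\mR_{\lambda,n}\subseteq\ker\Phi$, the row relation \eqref{Constructing Schur--Weyl Modules subsection: The Construction equation: R1} holds because any row permutation $\pi\in P_\lambda$ satisfies $C_\lambda v_{T_1} = C_\lambda v_{T_2}$ whenever $T_2=\pi T_1$; this reflects the fact that the outer row-symmetrization built into $C_\lambda$ absorbs any such $\pi$ (after untangling the $\fS_d$-action convention on $V^{\otimes d}$). The Garnir relation \eqref{Constructing Schur--Weyl Modules subsection: The Construction equation: R2} is the real content: the hypothesis $|A|>w_A$ forces, by pigeonhole, two boxes of $A$ to share a column of $T_0$, since $A$ sits in at most $w_A$ columns. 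Consequently $G_A v_T$ is symmetric in two positions lying in a common column, and the column antisymmetrizer $B_\lambda$, and hence $C_\lambda = A_\lambda B_\lambda$, annihilates it, giving $\Phi(G_A T)=0$.

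What remains is injectivity of the induced surjection $\bar\Phi\colon \mF_{\lambda,n}/\mR_{\lambda,n}\twoheadrightarrow \bS_\lambda$. The plan is to show that $\mF_{\lambda,n}/\mR_{\lambda,n}$ is spanned by the classes of semistandard tableaux via Garnir's classical straightening algorithm: first apply \eqref{Constructing Schur--Weyl Modules subsection: The Construction equation: R1} to sort each row into weakly increasing order; then, at any column violation, apply \eqref{Constructing Schur--Weyl Modules subsection: The Construction equation: R2} to the L-shaped set straddling the bottom of one column and the top of the next, rewriting the offending tableau as a combination of tableaux strictly smaller in a suitable well-order on $\mT_{\lambda,n}$. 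Iterating yields a combination of semistandard tableaux. Since the semistandard $\{e_T\}$ form a basis of $\bS_\lambda$ (stated in Section \ref{Constructing Schur--Weyl Modules subsection: Schur--Weyl modules}), $\bar\Phi$ is injective on this spanning set, hence an isomorphism.

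The main obstacle is the straightening algorithm: choosing the Garnir set at each column violation so that each rewrite strictly decreases under a fixed term order, and verifying termination. A cleaner workaround, if one is willing to cite the classical presentation of the Specht module, is to observe that the classical column-adjacent Garnir relations form a subset of \eqref{Constructing Schur--Weyl Modules subsection: The Construction equation: R2}; hence $\mF_{\lambda,n}/\mR_{\lambda,n}$ is a further quotient of the classical Specht module, which already equals $\bS_\lambda$, and since $\bar\Phi$ is a nonzero $\GL(V)$-map onto the irreducible $\bS_\lambda$ (when $l(\lambda)\le n$), the two sides must coincide.
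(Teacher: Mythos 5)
The overall strategy---send $T$ to $e_T = C_\lambda v_T$, verify $\mR_{\lambda,n}\subseteq\ker\Phi$, then conclude via the classical presentation and irreducibility---is the one the paper intends (it simply defers to \cite[\S 8]{fulton1997young}), and your workaround for injectivity via Schur's lemma is sound. The gap is in the relation-checking step, and it is not merely a convention issue that ``untangles.''

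Your argument for the row relation and your argument for the Garnir relation require opposite readings of $C_\lambda = A_\lambda B_\lambda$. To kill $G_A(T)$ you observe (correctly) that a pigeonholed column pair $\{a,b\}\subseteq A$ gives $(ab)\in Q\cap\fS_A$, hence $B_\lambda G_A v_T = 0$, ``and hence $C_\lambda = A_\lambda B_\lambda$ annihilates it''; this takes $B_\lambda$ to act innermost. To kill $T_1 - T_2$ you appeal to an ``outer'' row-symmetrization absorbing the permutation $\pi$ with $T_2=\pi T_1$; but if $A_\lambda$ is outermost it cannot reach $\pi$, which sits next to $B_\lambda$, and indeed with $C_\lambda v := A_\lambda(B_\lambda(v))$ the row relation fails: for $\lambda=(2,1)$ with $T_1=\ytableaushort{12,3}$ and $T_2=\ytableaushort{21,3}$, one checks directly that $A_\lambda B_\lambda v_{T_1}\ne A_\lambda B_\lambda v_{T_2}$. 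The paper's remark ``symmetrizes along rows first'' and the truth of the theorem force the other reading, $e_T = B_\lambda(A_\lambda(v_T))$, which makes the row relation immediate via $A_\lambda\pi = A_\lambda$. But then the Garnir claim becomes $B_\lambda A_\lambda G_A = 0$, and the intervening $A_\lambda$ destroys the $(ab)$-symmetry of $G_A v_T$: already for $\lambda=(2,2)$, $T_0=\ytableaushort{12,34}$, $A=\{1,2,4\}$, and $(ab)=(2\,4)$, one has $(24)G_A = G_A$ but $(24)A_\lambda G_A\ne A_\lambda G_A$, even though $B_\lambda A_\lambda G_A = 0$ does hold. So the identity is true, but proving it is exactly the nontrivial content of this step; it requires a cancellation argument on the cosets appearing in $P\fS_A$ (or a reduction to the adjacent exchange relations of \cite[\S 8]{fulton1997young}), and the pigeonhole-plus-antisymmetry observation alone does not supply it.
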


\begin{proof}
    The map is induced by \(T \mapsto e_T\). See, for example, \cite[\S 8]{fulton1997young}, where the convention is transpose to ours.
\end{proof}

\section{Constructing the Pieri Inclusion for Removing One Box} \label{section: Constructing the Pieri Inclusion for Removing One Box}

\subsection{} \label{Constructing the Pieri Inclusion for Removing One Box subsection: Block Form}

We will write \(\lambda = (\lambda_1, \ldots, \lambda_r)\) in block form as \(\lambda = (w_1^{h_1}, \ldots, w_N^{h_N})\), where \(w_i < w_{i + 1}\) and exactly \(h_i\) parts of \(\lambda\) are equal to \(w_i\). 
That is, \(N\) is the number of blocks in \(\lambda\), where block \(1\) is the lowest geometrically, \(w_b\) is the width of block \(b\), and \(h_b\) is the height of block \(b\).
See Figure \ref{Constructing the Pieri Inclusion for Removing One Box subsection: Block Form figure: lambda with N blocks}.
For example, we will write \((5,2,2,2,1,1)\) as \((1^2, 2^3, 5)\),
\[
    (5, 2, 2, 2, 1, 1) = (1^2, 2^3, 5) = %
    \begin{tikzpicture}[scale=.75, baseline={([yshift=-.5ex]current bounding box.center)}]
        \draw (0,0) rectangle (.5,1);
        \draw (0,.5) -- (.5,.5);
        \draw [decorate,decoration={brace,mirror}] (.6,.05) -- (.6,.95) node [midway, xshift=0.75cm] {\small{block \(1\)}};
        
        \draw (0,1) rectangle (1,2.5);
        \draw (.5,1) -- (.5,2.5);
        \draw (0,1.5) -- (1,1.5);
        \draw (0,2) -- (1,2);
        \draw [decorate,decoration={brace,mirror}] (1.1,1.05) -- (1.1,2.45) node [midway, xshift=0.75cm] {\small{block \(2\)}};
        
        \draw (0,2.5) rectangle (2.5,3);
        \draw (.5,2.5) -- (.5,3);
        \draw (1,2.5) -- (1,3);
        \draw (1.5,2.5) -- (1.5,3);
        \draw (2,2.5) -- (2,3);
        \draw [decorate,decoration={brace,mirror}] (2.6,2.55) -- (2.6,2.95) node [midway, xshift=0.75cm] {\small{block \(3\)}};
        
    \end{tikzpicture}.
\]

\begin{figure}[t]
\begin{tikzpicture}[scale=.75]
    \draw (0,0) rectangle (1.25,1);
    \draw[<->] (0,.25) -- (1.25,.25);
    \node at (.25,.4) {\tiny \(w_1\)};
    \draw[<->] (1,0) -- (1,1);
    \node at (.8,.7) {\tiny \(h_1\)};
    
    \draw (0,1) rectangle (2,2);
    \draw[<->] (0,1.25) -- (2,1.25);
    \node at (.5,1.4) {\tiny\(w_2\)};
    \draw[<->] (1.5,1) -- (1.5,2);
    \node at (1.25,1.75) {\tiny \(h_2\)};
    
    \draw[dashed] (0,2) rectangle (3,3);
    \node at (1.5,2.6) {\(\vdots\)};
    
    \draw (0,3) rectangle (4,6);
    \draw[<->] (0,4) -- (4,4);
    \node at (1,4.2) {\tiny \(w_{N-1}\)};
    \draw[<->] (3.5,3) -- (3.5,6);
    \node at (2.9,5) {\tiny \(h_{N-1}\)};
    
    \draw (0,6) rectangle (6,8);
    \draw[<->] (0,6.5) -- (6,6.5);
    \node at (2,6.7) {\tiny \(w_N\)};
    \draw[<->] (5.5,6) -- (5.5,8);
    \node at (5.15,7.5) {\tiny \(h_N\)};
    
    \node at (-.8,.5) {\tiny{block \(1\)}};
    \node at (-.8,1.5) {\tiny{block \(2\)}};
    \node at (-1.3,4.5) {\tiny{block \(N-1\)}};
    \node at (-.85,7) {\tiny{block \(N\)}};
    
\end{tikzpicture}
\caption{The shape \(\lambda\) with \(N\) blocks.}
\label{Constructing the Pieri Inclusion for Removing One Box subsection: Block Form figure: lambda with N blocks}
\end{figure}
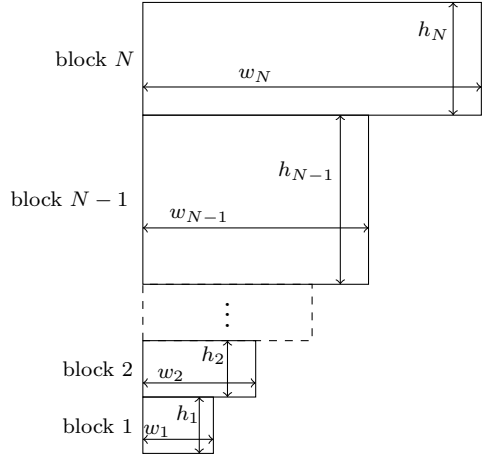

\subsection{} \label{Constructing the Pieri Inclusion for Removing One Box subsection: Phi Diagram}

For any box \(x \in T_0\) at the bottom right of some block \(k\), i.e. so that \(\lambda \setminus \{x\}\) is still a diagram, we will define the map
\[
    \Phi_1 : \mF_{\lambda,n} \to V \otimes \mF_{\lambda \setminus \{x\}, n} 
\]
on a basis and then show that \(\Phi_1\) descends to a \(\GL(V)\)-module map.

\begin{center}
    \begin{tikzpicture}
            \node at (0,2) (F) {\(\mF_{\lambda,n}\)};
                \node at (6,2) (Fminus) {\(V \otimes \mF_{\lambda \setminus \{x\}, n}\)};
                    \node (S) [below=of F]{\(\ds \mF_{\lambda ,n} \big/\mR_{\lambda ,n}\cong \bS_{\lambda}(V)\)};
                        \node (Sminus) [below=of Fminus]{\(V \otimes \mF_{\lambda \setminus \{x\}, n} \big/\mR_{\lambda, n} \cong V  \otimes \bS_{\lambda \setminus \{x\}}(V)\)};
    \node at (3,2.2) {\(\Phi_1\)};
    \node at (2.25,.6) {\(\Phi_1\)};
    
    \draw[->] (F.east) -- (Fminus.west);
    \draw[->] (F.south) -- (S.north);
    \draw[->] (Fminus.south) -- (Sminus.north);
    \draw[->, dashed] (S.east) -- (Sminus.west);
\end{tikzpicture}
\end{center}

\subsection{} \label{Constructing the Pieri Inclusion for Removing One Box subsection: Shape Notation}

We first introduce further notation. 
For a given shape \(\lambda\), let \([b]\) denote the \(b\)th block, \([b](i)\) denote the \(i\)th row of the \(b\)th block, and \([b](i,j)\) denote the box in block \(b\), row \(i\), and column \(j\), with block \(1\) and row \(1\) the lowest geometrically and column \(1\) the furthest left. 
We write
\[
    [b](i,j) \le [c](k,l)
\]
if \([b](i,j)\) is geometrically (weakly) lower than \([c](k,l)\), i.e. \(b < c\) or \(b = c\) and \(i \le k\). 
The strict inequality is defined in the natural way. 
We will extend this notation to compare boxes, rows, and blocks in the natural way. 
For a given \(T \in \mF_{\lambda, n}\), we denote the entry in box \([b](i,j)\) by \(T_{[b](i,j)}\). 
For example, if 
\[
    T = \begin{ytableau} 1 & 1 & 3 & 3 & 4 \\ 2 & 2 \\ 3 & 4 \\ 4 & 5\\ 6\\ 7\\ \end{ytableau}
\]
then \(T_{[1](2,1)} = 6\) and \(T_{[3](1,5)} = 4\).

\subsection{} \label{Constructing the Pieri Inclusion for Removing One Box subsection: Evacuation Route}

An evacuation route \(R\) is a selection of a string of boxes starting from the bottom of some block.
For example an evacuation route on \((1^2,3,5^3,7^2)\) is given by the shaded boxes in the diagram below.
\[
    \begin{ytableau}
        ~ & ~ & ~ & ~ & ~ & ~ & ~ \\
        ~ & ~ & *(blue!25) & ~ & ~ & ~ & ~ \\
        ~ & ~ & ~ & ~ & ~\\
        ~ & ~ & ~ & ~ & *(blue!25)\\
        ~ & *(blue!25) & ~ & ~ & ~\\
        ~ & ~ & ~\\
        *(blue!25)\\
        *(blue!25)
    \end{ytableau}
\]
This example shows that an evacuation route does not need to contain a box from every row, however, it cannot skip rows within a block. This is best illustrated with a non-example. 
The shaded selection of boxes below is not an evacuation route on \((2, 3^2, 5^4, 7^2)\) since a box in row \([3](3)\) (that is, the third row in the third block) is selected, while there is no box selected from row \([3](2)\).
\[
    \begin{ytableau}
        ~ & ~ & ~ & ~ & ~ & ~ & ~\\
        ~ & ~ & ~ & *(blue!25) & ~ & ~ & ~\\
        ~ & ~ & ~ & ~ & *(blue!25)\\
        ~ & ~ & *(blue!25) & ~ & ~\\
        ~ & ~ & ~ & ~ & ~\\
        *(blue!25) & ~ & ~ & ~ & ~\\
        ~ & ~ & *(blue!25)\\
        ~ & *(blue!25) & ~\\
        ~ & ~
    \end{ytableau}
\]

Formally, we have the following definition.

\begin{definition*}[Evacuation Route]
    An evacuation route \(R\) starting at \([b_0]\) is a subset of boxes in \(T_0\) such that \(R\) contains a box in row \([b_0](1)\), \(R\) contains at most one box per row, and if \([b](i,j) \in R\), then \([b](k,j_k) \in R\) for all \(1 \le k < i\) and some \(1 \le j_k \le w_b\).
\end{definition*}

\subsection{} \label{Constructing the Pieri Inclusion for Removing One Box subsection: 1-Path}

A \(1\)-path \(P\) on \(\lambda\) moves boxes up the diagram via some associated evacuation route \(R^P\). 
We will treat a \(1\)-path as acting on general shapes, where the highest box in \(R^P\) is ``removed'' by the \(1\)-path and viewed as being moved to the box \([N + 1](1,1)\) attached to the top of \(T_0\).
Below we illustrate a \(1\)-path moving boxes up via an evacuation route. 
We highlight only the boxes in the evacuation route.
\[
    \begin{tikzpicture}[scale = .75, baseline={([yshift=-.5ex]current bounding box.center)}]
        \draw (0,0) rectangle (.5,1);
        \draw (0,1) rectangle (1.5,3);
        \draw (0,3) rectangle (3,3.5);
        \draw (0,3.5) rectangle (3.5,5);
        \draw (0,5) rectangle (.5,5.5);
        
        \filldraw[fill=blue!20] (0,0) rectangle (.5,.5);
        \filldraw[fill=blue!20] (0,.5) rectangle (.5,1);
        \filldraw[fill=blue!20] (1,1) rectangle (1.5,1.5);
        \filldraw[fill=blue!20] (.5,1.5) rectangle (1,2);
        \filldraw[fill=blue!20] (2.5,3) rectangle (3,3.5);
        \filldraw[fill=blue!20] (2,3.5) rectangle (2.5,4);
        
        \draw[blue, thick] (.25,.25) to [out = 180, in = 180] (.25,.75);
        \draw[blue, thick] (.25,.75) to [out = 90, in = 180] (1.25,1.25);
        \draw[blue, thick] (1.25,1.25) to [out = 90, in = 0] (.75,1.75);
        \draw[blue, thick] (.75,1.75) to [out = 90, in = 180] (2.75,3.25);
        \draw[blue, thick] (2.75,3.25) to [out = 90, in = 0] (2.25,3.75);
        \draw[blue, thick, ->] (2.25,3.75) to [out = 90, in = 0] (.25,5.25);
        
    \end{tikzpicture}
\]

Formally, we have the following definition.

\begin{definition*}[\(1\)-path]
    Let
    \[
        X = \{x_1 := [b_1](1,w_{b_1})\},
    \]
    and
    \[
        Y = \{y_1 := [N+1](1,1)\}, 
    \]
    where \(Y\) is viewed as block \([N + 1]\) attached to the top of \(T_0\). A
    \[
        1\text{-path } P \text{ removing } X
    \]
    is a map of boxes
    \[
        P:\lambda \cup Y \to \lambda \cup Y
    \]
    along with an evacuation route \(R = R^P\) such that the following hold.
    \begin{itemize}
        \item \(R\) starts at \([b_1]\). Note that \(R\) can contain \(x_1\), though this is not a requirement. 
        \item \(P\) is geometrically increasing on rows, with \(P\) strictly increasing on \(R\). 
        That is, for all boxes \(x \in \lambda \cup Y\), \(x \le P(x)\).
        \item If \(R_1\) is the orbit of \(x_1\) under \(P^\bN\), then \(y_1 \in R_1\) and \(\ds R \cup X \cup Y = R_1\).
        \item \(P\) preserves row order in \(R\) within blocks. That is, if \([b](i,j), \ [b](k,l) \in R\) with \(i < k\) and \(P([b](i,j)), P([b](k,l)) \in [b]\), then \(P([b](i,j)) < P([b](k,l))\).
        \item \(P\) fixes those boxes not in \(R\) or \(X\), i.e. \(P = \text{id}_{\lambda \cup Y}\) except on \(R \cup X\), and \(P(R) = R \setminus X \cup Y\).
    \end{itemize}
\end{definition*}

\subsection{} \label{Constructing the Pieri Inclusion for Removing One Box subsection: Defining h(b) and H(B)}

We now define the components of the formulation of the Pieri inclusion \(\Phi_1\) removing one box.
For a \(1\)-path \(P\) removing \(X\) with evacuation route \(R^P\), let \(h^P\) be the number of rows in \(R^P\) and \((-1)^P := (-1)^{h^P}\). 
For \(b = b_1, \ldots, N\), let \(h_b^P\) to be the number of rows in \(R^P \cap [b]\). 
For \(b \ge b_1 + 1\) define \(h(b) = w_b - w_{b-1} + h_{b-1}\) to be the hook length of block \(b\), and for \(b = b_1 + 1, \ldots, N\) define
\[
    H(b) = \sum_{j=b_1 + 1}^{b} h(j).
\]

\ytableausetup{nosmalltableaux}
For \(b = b_1 + 1, \ldots, N\), let 
\[
    H_b(P) =    \begin{cases} 	
                    1 & \text{ if } R^P \cap [b] = \emptyset\\%
					H(b) & \text{ otherwise }
				\end{cases},
\]
and let \(H_{b_1}(P) = 1\). Then define
\[
    H(P) = \prod_{b=b_1}^n H_b(P).
\]

\begin{example*} 
For the partition \((1,3^2,4^3,6^2)\) and \(X = \{[1](1,1)\}\) (shaded),
\[
    \begin{tikzpicture}[scale=.8, baseline={([yshift=-.5ex]current bounding box.center)}]
        \filldraw[fill=gray!50] (0,0) rectangle (.5,.5);
        
        \draw (0,.5) rectangle (1.5,1.5);
        \draw (.5,.5) -- (.5,1.5);
        \draw (1,.5) -- (1,1.5);
        \draw (0,1) -- (1.5,1);
        
        \draw (0,1.5) rectangle (2,3);
        \draw (0,2) -- (2,2);
        \draw (0,2.5) -- (2,2.5);
        \draw (.5,1.5) -- (.5,3);
        \draw (1,1.5) -- (1,3);
        \draw (1.5,1.5) -- (1.5,3);
        
        \draw (0,3) rectangle (3,4);
        \draw (0,3.5) -- (3,3.5);
        \draw (.5,3) -- (.5,4);
        \draw (1,3) -- (1,4);
        \draw (1.5,3) -- (1.5,4);
        \draw (2,3) -- (2,4);
        \draw (2.5,3) -- (2.5,4);
        
        \draw[<->] (.6,0) -- (.6,.4) -- (1.5,.4);
        \node at (1.1,.1) {\tiny \(h(2)\)};
        
        \draw[<->] (1.6,.5) -- (1.6,1.4) -- (2,1.4);
        \node at (2,1) {\tiny \(h(3)\)};
    
        \draw[<->] (2.1,1.5) -- (2.1,2.9) -- (3,2.9);
        \node at (2.5,2.5) {\tiny \(h(4)\)};
    \end{tikzpicture},
\]
we have
\[
    h(2) = 3 - 1 + 1 = 3, \quad h(3) = 4 - 3 + 2 = 3, \quad h(4) = 6 - 4 + 3 = 5,
\]
\[
    H(1) = 1, \quad H(2) = 3, \quad H(3) = 6, \quad \text{ and } \quad H(4) = 11.
\]
\end{example*}

\subsection{} \label{Constructing the Pieri Inclusion for Removing One Box subsection: Defining Phi One Box Removal}
For \(T \in \mF_{\lambda,n}\), denote by \(\alpha_1^P\) the entry in the box \(P^{-1}(y_1) \in T\) and extend \(P\) to \(T\) by acting on the entries, with the image %
\[
    P(T) = Y_P \otimes T_P \in V \otimes \mF_{\lambda \setminus X,n},
\]
where
\[
    Y_P = E_X \ytableaushort{{\alpha_1^P}},
\]
which is standard form notation is \(e_{\alpha_1^P} \in V\), and \(T_P \in \mF_{\lambda \setminus X,n}\) is defined by \(\left( T_P \right)_{[b](i,j)} = T_{P^{-1}([b](i,j))}\). %
We omit \(E_X\) and just write 
\[
    \ytableaushort{{\alpha_1^P}} \quad \text{ in place of } \quad E_X \ytableaushort{{\alpha_1^P}}
\]
in the image of \(P(T)\).

\begin{definition*}
The map \(\ds \Phi_1 : \mF_{\lambda, n} \to V \otimes \mF_{\lambda \setminus X, n}\) is given by
\[
    \Phi_1(T) = \sum_{P} \frac{(-1)^P}{H(P)} P(T)
\]
where the sum is over all \(1\)-paths \(P\) removing \(X\).
\end{definition*}

\subsection{} \label{Constructing the Pieri Inclusion for Removing One Box subsection: Straightening Example}

\ytableausetup{smalltableaux}

We now compute the straightening from Section \ref{Introduction subsection: Example straightening}.
If 
    \[
        A_1 = \{[2](1,1), [2],(1,2), [1](2,1)\} = 
        \begin{ytableau}            
            *(black!25) & *(black!25)\\
            *(black!25) \\
            ~ \\
        \end{ytableau}
    \]
    then we have, mod \(\mR_{(2,1,1), 4}\),
    \[
        \frac{1}{2} G_{A_1}
        \left( \begin{ytableau}
            2 & 4\\
            2 \\
            3 \\
        \end{ytableau} \right)
        =
        2 ~
        \begin{ytableau}
            2 & 4\\
            2 \\
            3 \\
        \end{ytableau}
        +
        \begin{ytableau}
            2 & 2\\
            4 \\
            3 \\
        \end{ytableau}.
    \]
    Then if
    \[
        A_2 = \{[1](1,1), [1](2,1)\} = 
        \begin{ytableau}            
            ~ & ~\\
            *(black!25) \\
            *(black!25)
        \end{ytableau},
    \]
    we have, mod \(\mR_{(2,1,1), 4}\),
    \[
        G_{A_2}
        \left( \begin{ytableau}
            2 & 2\\
            4 \\
            3 \\
        \end{ytableau} \right)
        =
        \begin{ytableau}
            2 & 2\\
            4 \\
            3 \\
        \end{ytableau}
        +
        \begin{ytableau}
            2 & 2\\
            3 \\
            4 \\
        \end{ytableau}.
    \]
    Thus, as elements of \(\mF_{(2,1,1), 4} / \mR_{(2,1,1), 4}\) we have
    \[
        \frac{1}{4} \ 
        \begin{ytableau}
            1
        \end{ytableau}
        \otimes
        \begin{ytableau}
            2 & 4\\
            2 \\
            3 \\
        \end{ytableau}
        =
        \frac{1}{8} ~
        \begin{ytableau}
            1
        \end{ytableau}
        \otimes
        \begin{ytableau}
            2 & 2\\
            3 \\
            4 \\
        \end{ytableau}.
    \]
    Similarly, as elements of \(\mF_{(2,1,1), 4} / \mR_{(2,1,1), 4}\) we have
    \[
        \frac{1}{4} ~
        \begin{ytableau}
            1
        \end{ytableau}
        \otimes
        \begin{ytableau}
            2 & 3\\
            2 \\
            4 \\
        \end{ytableau}
        =
        - \frac{1}{8} \ 
        \begin{ytableau}
            1
        \end{ytableau}
        \otimes
        \begin{ytableau}
            2 & 2\\
            3 \\
            4 \\
        \end{ytableau}.
    \]
Note that in this example the terms that were straightened cancelled with each other and so did not appear in the image, this is not the case in general.

\section{Constructing the Pieri Inclusion for Removing Many Boxes} \label{section: Constructing the Pieri Inclusion for Removing Many Boxes}

\subsection{} \label{Constructing the Pieri Inclusion for Removing Many Boxes subsection: Phi Diagram}

Let \(X = \{x_1 = [b_1](1,w_{b_1}), \ldots, x_m = [b_m](i_m,w_{b_m})\}\) be a set of \(m\) boxes in \(\lambda\) with \(x_i < x_{i + 1}\) so that removing the boxes in \(X\) from \(T_0\) gives a Young diagram and let \(\lambda \setminus X\) be the associated partition.
We call such a set \(X\) a removal set for \(T_0\) (or for \(\lambda\)).
As before, we will define the map \(\Phi_m : \mF_{\lambda,n} \to F_m \otimes \mF_{\lambda \setminus X, n}\) on a basis, where \(F_m = \bigwedge^m V\), and then show that \(\Phi_m\) is a \(\GL(V)\)-map.

\begin{center}
\begin{tikzpicture}
    \node at (0,2) (F) {\(\mF_{\lambda,n}\)};
    \node at (6,2) (Fminus) {\(F_m \otimes \mF_{\lambda \setminus X,n}\)};
    \node (S) [below=of F]{\(\ds \mF_{\lambda ,n} \big/\mR_{\lambda ,n}\cong \bS_{\lambda}(V)\)};
    \node (Sminus) [below=of Fminus]{\(F_m \otimes \mF_{\lambda\setminus X ,n} \big/\mR_{\lambda ,n}\cong F_m 
        \otimes \bS_{\lambda\setminus X}(V)\)};
    \node at (3,2.2) {\(\Phi_m\)};
    \node at (2.25,.6) {\(\Phi_m\)};
    
    \draw[->] (F.east) -- (Fminus.west);
    \draw[->] (F.south) -- (S.north);
    \draw[->] (Fminus.south) -- (Sminus.north);
    \draw[->, dashed] (S.east) -- (Sminus.west);
\end{tikzpicture}
\end{center}

\subsection{} \label{Constructing the Pieri Inclusion for Removing Many Boxes subsection: m-path}

Extending the notion of a \(1\)-path, an \(m\)-path on \(\lambda\) is a map of boxes that moves boxes up the diagram via some associated evacuation route with \(m\) interlaced orbits. 
As with \(1\)-paths, we treat \(m\)-paths as acting on general shapes, where the highest \(m\) boxes in \(R^P\) are ``removed'' by the \(m\)-path and viewed as being moved to the boxes \([N + 1](1,1), \ldots, [N + 1](m, 1)\) attached to the top of \(T_0\).
An example of a \(2\)-path is pictured below. We highlight only the boxes in the evacuation route.
\[
    \begin{tikzpicture}[scale=.75, baseline={([yshift=-.5ex]current bounding box.center)}]
        \draw (0,0) rectangle (.5,1);
        \draw (0,1) rectangle (2.5,3);
        \draw (0,3) rectangle (3,4.5);
        \draw (0,4.5) rectangle (4,6);
        \draw (0,6) rectangle (6,6.5);
        \draw (0,6.5) rectangle (.5,7);
        \draw (0,7) rectangle (.5, 7.5);
        
        \filldraw[fill=blue!20] (2,1) rectangle (2.5,1.5);
        \filldraw[fill=blue!20] (1,2) rectangle (1.5,2.5);
        \filldraw[fill=blue!20] (0,3.5) rectangle (.5,4);
        \filldraw[fill=blue!20] (3.5,4.5) rectangle (4,5);
        \filldraw[fill=blue!20] (.5,6) rectangle (1,6.5);
        
        \draw[color=red,pattern=dots, pattern color=red] (2,1.5) rectangle (2.5,2);
        \draw[color=red,pattern=dots, pattern color=red] (.5,3) rectangle (1,3.5);
        \draw[color=red,pattern=dots, pattern color=red] (2,4) rectangle (2.5,4.5);
        \draw[color=red,pattern=dots, pattern color=red] (2,5) rectangle (2.5,5.5);

        \draw[blue, thick] (2.25,1.25) to [out = 90, in = 0] (1.25,2.25);
        \draw[blue, thick] (1.25,2.25) to [out = 90, in = 0] (.25,3.75);
        \draw[blue, thick] (.25,3.75) to [out = 90, in = 180] (3.75,4.75);
        \draw[blue, thick] (3.75,4.75) to [out = 90, in = 0] (.75,6.25);
        \draw[blue, thick, ->] (.75,6.25) to [out = 90, in = 0] (.25,6.75);
        
        \draw[red, very thick, dotted] (2.25,1.75) to [out = 90, in = 0] (.75,3.25);
        \draw[red, very thick, dotted] (.75,3.25) to [out = 90, in = 180] (2.25,4.25);
        \draw[red, very thick, dotted] (2.25,4.25) to [out = 200, in = 180] (2.25,5.25);
        \draw[red, ->, very thick, dotted] (2.25,5.25) to [out = 90, in = 0] (.25,7.25);
    \end{tikzpicture}
\]

The interlacing property for \(m\)-paths is not so strict as the above example suggests. We require that an \(m\)-path interlaces orbits only within blocks while multiple orbits are present. This is illustrated further in the example below.
\[
    \begin{tikzpicture}[scale=.75, baseline={([yshift=-.5ex]current bounding box.center)}]
        \draw (0,0) rectangle (.5,1.5);
        \draw (0,1.5) rectangle (2.5,3);
        \draw (0,3) rectangle (4,6);
        \draw (0,6) rectangle (6,7.5);
        \draw (0,7.5) rectangle (8,8.5);
        \draw (0,8.5) rectangle (.5, 9);
        \draw (0,9) rectangle (.5,9.5);
        
        \filldraw[fill=blue!20] (0,0) rectangle (.5,.5);
        \filldraw[fill=blue!20] (1.5,1.5) rectangle (2,2);
        \filldraw[fill=blue!20] (1,3.5) rectangle (1.5,4);
        \filldraw[fill=blue!20] (2,4.5) rectangle (2.5,5);
        \filldraw[fill=blue!20] (.5,5) rectangle (1,5.5);
        \filldraw[fill=blue!20] (.5,6) rectangle (1,6.5);
        \filldraw[fill=blue!20] (2,6.5) rectangle (2.5,7);
        \filldraw[fill=blue!20] (4.5,7) rectangle (5,7.5);
        
        \draw[color=red,pattern=dots, pattern color=red] (0,.5) rectangle (.5,1);
        \draw[color=red,pattern=dots, pattern color=red] (.5,2) rectangle (1,2.5);
        \draw[color=red,pattern=dots, pattern color=red] (0,2.5) rectangle (.5,3);
        \draw[color=red,pattern=dots, pattern color=red] (2.5,3) rectangle (3,3.5);
        \draw[color=red,pattern=dots, pattern color=red] (1.5,4) rectangle (2,4.5);
        \draw[color=red,pattern=dots, pattern color=red] (2.5,7.5) rectangle (3,8);
        
        \draw[blue, thick] (.25,.25) to [out = 90, in = 180] (1.75,1.75);
        \draw[blue, thick] (1.75,1.75) to [out = 90, in = 0] (1.25,3.75);
        \draw[blue, thick] (1.25,3.75) to [out = 90, in = 180] (2.25,4.75);
        \draw[blue, thick] (2.25,4.75) to [out = 90, in = 0] (.75,5.25);
        \draw[blue, thick] (.75,5.25) to [out = 200, in = 180] (.75,6.25);
        \draw[blue, thick] (.75,6.25) to [out = 90, in = 180] (2.25,6.75);
        \draw[blue, thick] (2.25,6.75) to [out = 90, in = 180] (4.75,7.25);
        \draw[blue, thick, ->] (4.75,7.25) to [out = 90, in = 0] (.25,8.75);
        
        \draw[red, very thick, dotted] (.25,.75) to [out = 90, in = 180] (.75,2.25);
        \draw[red, very thick, dotted] (.75,2.25) to [out = 90, in = 0] (.25,2.75);
        \draw[red, very thick, dotted] (.25,2.75) to [out = 90, in = 180] (2.75,3.25);
        \draw[red, very thick, dotted] (2.75,3.25) to [out = 90, in = 0] (1.75,4.25);
        \draw[red, very thick, dotted] (1.75,4.25) to [out = 90, in = 180] (2.75,7.75);
        \draw[red, very thick, dotted, ->] (2.75,7.75) to [out = 90, in = 0] (.25,9.25);
    \end{tikzpicture}
\]

Formally, we have the following definition.

\begin{definition*}[m-path]
Let
\[
    X = \{x_1 = [b_1](1,w_{b_1}), \ldots, x_m = [b_m](i_m,w_{b_m})\}
\]
be a removal set for \(T_0\) and
\[
    Y = \{ y_1:= [N+1](1,1), \ldots, y_m:= [N+1](m,1) \},
\]
where \(Y\) is viewed as block \(N + 1\) attached to the top of \(T_0\). An
\[
    m\text{-path } P \text{ removing } X
\]
is a map of boxes%
\[
    P:\lambda \cup Y \to \lambda \cup Y
\]
along with an evacuation route \(R = R^P\) such that the following hold.
	\begin{itemize}
    \item \(R\) starts at \([b_1]\). Note that \(R\) can intersect \(X\), though this is not a requirement.
    \item \(P\) is geometrically increasing on rows, with \(P\) strictly increasing on \(R\). %
    That is, for all boxes \(x \in \lambda \cup Y\), \(x \le P(x)\).
    \item If \(R_i\) is the orbit of \(x_i\) under \(P^\bN\), then \(y_i \in R_i\) and \(R \cup X \cup Y = \bigsqcup_{i=1}^m R_i\).
    \item If there are \(k\) distinct orbits in a block, then the first \(k\) rows of that block must be in different orbits. %
    i.e., if \(R^P_{i_1}, \ldots, R^P_{i_k}\) intersect some block \([b]\), then for \(j = 1, \ldots k\), up to relabeling, \(R^P_{i_j} \cap [b](j) \ne \emptyset\).
    \item \(P\) preserves row order in \(R\) within blocks, and so interlaces orbits. %
    That is, if \([b](i,j), \ [b](k,l) \in R\) with \(i < k\) and \(P([b](i,j)), P([b](k,l)) \in [b]\), then \(P([b](i,j)) < P([b](k,l))\).
    \item \(P\) fixes those boxes not in \(R\) or \(X\), i.e. \(P = \text{id}_{\lambda \cup Y}\) except on \(R \cup X\), and \(P(R) = R \setminus X \cup Y\).
    \end{itemize}
\end{definition*}

\subsection{} \label{Constructing the Pieri Inclusion for Removing Many Boxes subsection: Defining H(P)}

For an \(m\)-path \(P\) with evacuation route \(R^P\), let \(h^P\), \((-1)^P\), \(h_b^P\), and \(H(b)\) be defined as in Section \ref{Constructing the Pieri Inclusion for Removing One Box subsection: Defining h(b) and H(B)}. 
For \(b = b_1 + 1, \ldots, N\), let \(H_b(P) = 1\) if \(R^P \cap [b] = \emptyset\) and let \(H_{b_1}(P) = 1\). If \(b \ge b_1 + 1\) and \(\left\vert R^P \cap [b] \right\vert = k_b \ne 0\), then let
\[
    H_b(P) = \prod_{i = 1}^{k_b} \left(H(b) - (m - i)\right).
\]
Define
\[
    H(P) = \prod_{b=b_1}^n H_b(P).
\]

\subsection{} \label{Constructing the Pieri Inclusion for Removing Many Boxes subsection: Defining Phi}

For \(T \in \mF_{\lambda,n}\), denote by \(\alpha_i^P\) the entry in the box \(P^{-1}(y_i) \in T\) and extend \(P\) to \(T\) by acting on the entries, with the image %
\[
    P(T) = Y_P \otimes T_P \in F_m \otimes \mF_{\lambda %
    \setminus X,n} = \bigwedge^m V \otimes \mF_{\lambda \setminus X,n},
\]
where
\ytableausetup{nosmalltableaux}
\[
    Y_P = E_X \ytableaushort{{\alpha_m^P}, \vdots, {\alpha_1^P}}
\]
which is standard form notation is \(e_{\alpha_1^P} \wedge \cdots \wedge e_{\alpha_m^P} \in \bigwedge^m V\), and \(T_P \in \mF_{\lambda \setminus X,n}\) is defined by \(\left( T_P \right)_{[b](i,j)} = T_{P^{-1}([b](i,j))}\). %
As before, we omit \(E_X\) and just write 
\[
    \ytableaushort{{\alpha_m^P}, \vdots, {\alpha_1^P}} \quad \text{ in place of } \quad  E_X \ytableaushort{{\alpha_m^P}, \vdots, {\alpha_1^P}}
\]
in the image of \(P(T)\).\\

\ytableausetup{smalltableaux}

\begin{definition*}
    The map \(\ds \Phi_m : \mF_{\lambda, n} \to F_m \otimes \mF_{\lambda \setminus X, n}\) is given by
    \[
        \Phi_m(T) = \sum_{P} \frac{(-1)^P}{H(P)} P(T),
    \]
    where the sum is over all \(m\)-paths \(P\) removing \(X\).
\end{definition*}

\subsection{} \label{Constructing the Pieri Inclusion for Removing Many Boxes subsection: Two Path Example}

We now compute an example of the Pieri inclusion when \(m = 2\).
Let \(n = 6\), \(\lambda = (2,2,1,1,1,1)\), and \(\mu = (2,2,1,1)\). 
Then the Schur--Weyl module \(\bS_{\lambda}\) appears as a summand in the decomposition of \(\bS_{(1,1)} \otimes \bS_{\mu}\),
\[
    \begin{ytableau}
        *(black!25)\\
        *(black!25)
    \end{ytableau}
    \otimes
    \begin{ytableau}
        ~ & ~\\
        ~ & ~\\
        ~ \\
        ~
    \end{ytableau}
    =
    \begin{ytableau}
        ~ & ~ & *(black!25)\\
        ~ & ~ & *(black!25)\\
        ~ \\
        ~
    \end{ytableau}
    \oplus
    \begin{ytableau}
        ~ & ~ & *(black!25)\\
        ~ & ~\\
        ~ & *(black!25)\\
        ~
    \end{ytableau}
    \oplus
    \begin{ytableau}
        ~ & ~ & *(black!25)\\
        ~ & ~\\
        ~ \\
        ~ \\
        *(black!25)
    \end{ytableau}
    \oplus
    \begin{ytableau}
        ~ & ~ \\
        ~ & ~\\
        ~ & *(black!25)\\
        ~ \\
        *(black!25)
    \end{ytableau}
    \oplus
    \begin{ytableau}
        ~ & ~ \\
        ~ & ~\\
        ~ \\
        ~ \\
        *(black!25)\\
        *(black!25)
    \end{ytableau}.
\]
    
Consider the Pieri inclusion
\[
    \bS_{(2,2,1,1,1,1)}  \overset{\Phi_2}{\longrightarrow} \bS_{(1,1)} \otimes \bS_{(2,2,1,1)}.
\]
We will show the image of the highest weight vector under this map,
\[
    \Phi_2\left( \begin{ytableau}
        1 & 1\\
        2 & 2\\
        3 \\
        4 \\
        5 \\
        6
    \end{ytableau} \right)
    =
    \sum_{P} \frac{(-1)^P}{H(P)} P\left( \begin{ytableau}
        1 & 1\\
        2 & 2\\
        3 \\
        4 \\
        5 \\
        6
    \end{ytableau} \right),
\]
where the sum is over all \(m\)-paths \(P\) on \(\lambda\) removing \(X = \{x_1 = [1](1,1), x_2 = [1](2,1)\).
Below we illustrate all such paths with arrows, where we shade the boxes in the evacuation route, distinguishing the orbits of \(x_1\) and \(x_2\). 
For paths hitting rows \([2](1)\) and \([2](2)\), we only show the path that hits the first column of both rows, as the paths that hit the second column in either row will give the same result. 
As in the \(1\)-box removal example, we give the images up to row permutations and we star the paths whose images require straightening.
    
\begingroup
\setlength{\tabcolsep}{12pt}
\renewcommand{\arraystretch}{5}

\endgroup
    
If
\[
    A_1 = \{[2](1,1), [2],(1,2), [2](2,2)\} = 
    \begin{ytableau}            
        ~ & *(black!25)\\
        *(black!25) & *(black!25)\\
        ~ \\
        ~
    \end{ytableau}
\]
then we have, mod \(\mR_{(2,2,1,1), 6}\),
\[
    \begin{ytableau}
        1 & 5\\
        2 & 4\\
        3 \\
        6
    \end{ytableau}
    =
    \frac{1}{2} G_{A_1}
    \left( \begin{ytableau}
        1 & 5\\
        2 & 4\\
        3 \\
        6
    \end{ytableau} \right)
    -
    \begin{ytableau}
        1 & 2\\
        4 & 5 \\
        3 \\
        6
    \end{ytableau}
    -
    \begin{ytableau}
        1 & 4\\
        2 & 5 \\
        3 \\
        6
    \end{ytableau}.
\]
Then if
\[
    A_2 = \{[2](1,1), [2],(1,2), [1](2,1)\} = 
    \begin{ytableau}            
        ~ & ~\\
        *(black!25) & *(black!25)\\
        *(black!25) \\
        ~
    \end{ytableau}
\]
we have, mod \(\mR_{(2,2,1,1), 6}\),
\[
    \begin{ytableau}
        1 & 2\\
        4 & 5\\
        3 \\
        6
    \end{ytableau}
    =
    \frac{1}{2} G_{A_2}
    \left( \begin{ytableau}
        1 & 2\\
        4 & 5\\
        3 \\
        6
    \end{ytableau} \right)
    -
    \begin{ytableau}
        1 & 2\\
        3 & 5 \\
        4 \\
        6
    \end{ytableau}
    -
    \begin{ytableau}
        1 & 2\\
        3 & 4 \\
        5 \\
        6
    \end{ytableau}.
\]
Thus, mod \(\mR_{(2,2,1,1), 6}\),
\[
    \begin{ytableau}
        1 & 5\\
        2 & 4\\
        3 \\
        6
    \end{ytableau}
    =
    \begin{ytableau}
        1 & 2\\
        3 & 5 \\
        4 \\
        6
    \end{ytableau}
    +
    \begin{ytableau}
        1 & 2\\
        3 & 4 \\
        5 \\
        6
    \end{ytableau}
    -
    \begin{ytableau}
        1 & 4\\
        2 & 5 \\
        3 \\
        6
    \end{ytableau}.
\]
    
Similarly, via straightening we have, mod \(\mR_{(2,2,1,1), 6}\),
\[
    \begin{ytableau}
        1 & 5\\
        2 & 3\\
        4 \\
        6
    \end{ytableau}
    =
    -
    \begin{ytableau}
        1 & 2\\
        3 & 5 \\
        4 \\
        6
    \end{ytableau}
    -
    \begin{ytableau}
        1 & 3\\
        2 & 5 \\
        4 \\
        6
    \end{ytableau},
\]
\[
    \begin{ytableau}
        1 & 6\\
        2 & 5\\
        3 \\
        4
    \end{ytableau}
    =
    -
    \begin{ytableau}
        1 & 2\\
        3 & 6 \\
        4 \\
        5
    \end{ytableau}
    -
    \begin{ytableau}
        1 & 2\\
        3 & 5 \\
        4 \\
        6
    \end{ytableau}
    -
    \begin{ytableau}
        1 & 5\\
        2 & 6 \\
        3 \\
        4
    \end{ytableau},
\]
\[
    \begin{ytableau}
        1 & 6\\
        2 & 4\\
        3 \\
        5
    \end{ytableau}
    =
    \begin{ytableau}
        1 & 2\\
        3 & 6 \\
        4 \\
        5
    \end{ytableau}
    -
    \begin{ytableau}
        1 & 2\\
        3 & 4\\
        5 \\
        6
    \end{ytableau}
    -
    \begin{ytableau}
        1 & 4\\
        2 & 6 \\
        3 \\
        5
    \end{ytableau},
\]
\[
    \begin{ytableau}
        1 & 6\\
    2 & 3\\
        4 \\
        5
    \end{ytableau}
    =
    -
    \begin{ytableau}
        1 & 2\\
        3 & 6\\
        4 \\
        6
    \end{ytableau}
    -
    \begin{ytableau}
        1 & 3\\
        2 & 6 \\
        4 \\
        5
    \end{ytableau},
\]
and
\[
    \begin{ytableau}
        1 & 4\\
        2 & 3\\
        5 \\
        6
    \end{ytableau}
    =
    -
    \begin{ytableau}
        1 & 2\\
        3 & 4\\
        5 \\
        6
    \end{ytableau}
    -
    \begin{ytableau}
        1 & 3\\
        2 & 4\\
        5 \\
        6
    \end{ytableau}.
\]
Recall that for all \(2\)-paths \(P\), \(Y_P \in \bigwedge^2 V\), and so
\[
    \begin{ytableau}
        \alpha\\
        \beta
    \end{ytableau}
    =
    -
    \begin{ytableau}
        \beta\\
        \alpha
    \end{ytableau}.
\]
Thus,
\begingroup
    \addtolength{\jot}{.5em}
    \begin{align*}
        \Phi_2\left( \begin{ytableau}
            1 & 1\\
            2 & 2\\
            3 \\
            4 \\
            5 \\
            6
        \end{ytableau} \right)
        = ~ &
        \begin{ytableau}
            5\\
            6
        \end{ytableau}
        \otimes
        \begin{ytableau}
            1 & 1\\
            2 & 2\\
            3 \\
            4
        \end{ytableau}
        -
        \begin{ytableau}
            4\\
            6
        \end{ytableau}
        \otimes
        \begin{ytableau}
            1 & 1\\
            2 & 2\\
            3 \\
            5
        \end{ytableau}
        +
        \begin{ytableau}
            3\\
            6
        \end{ytableau}
        \otimes
        \begin{ytableau}
            1 & 1\\
            2 & 2\\
            4 \\
            5
        \end{ytableau}
        - \frac{1}{2} ~
        \begin{ytableau}
            2\\
            6
        \end{ytableau}
        \otimes
        \begin{ytableau}
            1 & 2\\
            2 & 5\\
            3 \\
            4
        \end{ytableau}
        + \frac{1}{2} ~
        \begin{ytableau}
            2\\
            6
        \end{ytableau}
        \otimes
        \begin{ytableau}
            1 & 1\\
            2 & 4\\
            3 \\
            5
        \end{ytableau}
        - \frac{1}{2} ~
        \begin{ytableau}
            2\\
            6
        \end{ytableau}
        \otimes
        \begin{ytableau}
            1 & 1\\
            2 & 3\\
            4 \\
            5
        \end{ytableau}\\[-.5em]
        + &
        \begin{ytableau}
            1\\
            6
        \end{ytableau}
        \otimes
        \begin{ytableau}
            1 & 2\\
            2 & 5\\
            3 \\
            4
        \end{ytableau}
        -
        \begin{ytableau}
            1\\
            6
        \end{ytableau}
        \otimes
        \begin{ytableau}
            1 & 2\\
            2 & 4\\
            3 \\
            5
        \end{ytableau}
        +
        \begin{ytableau}
            1\\
            6
        \end{ytableau}
        \otimes
        \begin{ytableau}
            1 & 2\\
            2 & 3\\
            4 \\
            5
        \end{ytableau}
        +
        \begin{ytableau}
            4\\
            5
        \end{ytableau}
        \otimes
        \begin{ytableau}
            1 & 1\\
            2 & 2\\
            3 \\
            6
        \end{ytableau}
        -
        \begin{ytableau}
            3\\
            5
        \end{ytableau}
        \otimes
        \begin{ytableau}
            1 & 1\\
            2 & 2\\
            4 \\
            6
        \end{ytableau}
        + \frac{1}{2} ~
        \begin{ytableau}
            2\\
            5
        \end{ytableau}
        \otimes
        \begin{ytableau}
            1 & 1\\
            2 & 6\\
            3 \\
            4
        \end{ytableau}\\
        -& \frac{1}{2} ~
        \begin{ytableau}
            2\\
            5
        \end{ytableau}
        \otimes
        \begin{ytableau}
            1 & 1\\
            2 & 4\\
            3 \\
            6
        \end{ytableau}
        + \frac{1}{2} ~
        \begin{ytableau}
            2\\
            5
        \end{ytableau}
        \otimes
        \begin{ytableau}
            1 & 1\\
            2 & 3\\
            4 \\
            6
        \end{ytableau}
        -
        \begin{ytableau}
            1\\
            5
        \end{ytableau}
        \otimes
        \begin{ytableau}
            1 & 2\\
            2 & 6\\
            3 \\
            4
        \end{ytableau}
        +
        \begin{ytableau}
            1\\
            5
        \end{ytableau}
        \otimes
        \begin{ytableau}
            1 & 2\\
            2 & 4\\
            3 \\
            6
        \end{ytableau}
        -
        \begin{ytableau}
            1\\
            5
        \end{ytableau}
        \otimes
        \begin{ytableau}
            1 & 2\\
            2 & 3\\
            4 \\
            6
        \end{ytableau}
        +
        \begin{ytableau}
            3\\
            4
        \end{ytableau}
        \otimes
        \begin{ytableau}
            1 & 1\\
            2 & 2\\
            5 \\
            6
        \end{ytableau}\\
        +& \frac{1}{2} ~
        \begin{ytableau}
            2\\
            4
        \end{ytableau}
        \otimes
        \begin{ytableau}
            1 & 1\\
            2 & 5\\
            3 \\
            6
        \end{ytableau}
        - \frac{1}{2} ~
        \begin{ytableau}
            2\\
            4
        \end{ytableau}
        \otimes
        \begin{ytableau}
            1 & 1\\
            2 & 3\\
            5 \\
            6
        \end{ytableau}
        - \frac{1}{2} ~
        \begin{ytableau}
            2\\
            4
        \end{ytableau}
        \otimes
        \begin{ytableau}
            1 & 1\\
            2 & 6\\
            3 \\
            5
        \end{ytableau}
        -
        \begin{ytableau}
            1\\
            4
        \end{ytableau}
        \otimes
        \begin{ytableau}
            1 & 2\\
            2 & 5\\
            3 \\
            6
        \end{ytableau}
        +
        \begin{ytableau}
            1\\
            4
        \end{ytableau}
        \otimes
        \begin{ytableau}
            1 & 2\\
            2 & 3\\
            5 \\
            6
        \end{ytableau}
        -
        \begin{ytableau}
            1\\
            4
        \end{ytableau}
        \otimes
        \begin{ytableau}
            1 & 2\\
            2 & 6\\
            3 \\
            5
        \end{ytableau}\\
        +& \frac{1}{2} ~
        \begin{ytableau}
            2\\
            3
        \end{ytableau}
        \otimes
        \begin{ytableau}
            1 & 1\\
            2 & 4\\
            5 \\
            6
        \end{ytableau}
        + \frac{1}{2} ~
        \begin{ytableau}
            2\\
            3
        \end{ytableau}
        \otimes
        \begin{ytableau}
            1 & 1\\
            2 & 6\\
            4 \\
            5
        \end{ytableau}
        - \frac{1}{2} ~
        \begin{ytableau}
            2\\
            3
        \end{ytableau}
        \otimes
        \begin{ytableau}
            1 & 1\\
            2 & 5\\
            4 \\
            6
        \end{ytableau}
        -
        \begin{ytableau}
            1\\
            3
        \end{ytableau}
        \otimes
        \begin{ytableau}
            1 & 2\\
            2 & 4\\
            5 \\
            6
        \end{ytableau}
        -
        \begin{ytableau}
            1\\
            3
        \end{ytableau}
        \otimes
        \begin{ytableau}
            1 & 2\\
            2 & 6\\
            4 \\
            5
        \end{ytableau}
        + \frac{1}{2} ~
        \begin{ytableau}
            1\\
            3
        \end{ytableau}
        \otimes
        \begin{ytableau}
            1 & 2\\
            2 & 5\\
            4 \\
            6
        \end{ytableau}\\
        +& \frac{2}{5} ~
        \begin{ytableau}
            1\\
            2
        \end{ytableau}
        \otimes
        \begin{ytableau}
            1 & 5\\
            2 & 6\\
            3 \\
            4
        \end{ytableau}
        - \frac{2}{5} ~
        \begin{ytableau}
            1\\
            2
        \end{ytableau}
        \otimes
        \begin{ytableau}
            1 & 4\\
            2 & 6\\
            3 \\
            5
        \end{ytableau}
        + \frac{2}{5} ~
        \begin{ytableau}
            1\\
            2
        \end{ytableau}
        \otimes
        \begin{ytableau}
            1 & 3\\
            2 & 6\\
            4 \\
            5
        \end{ytableau}
        - \frac{1}{5} ~
        \begin{ytableau}
            1\\
            2
        \end{ytableau}
        \otimes
        \begin{ytableau}
            1 & 2\\
            3 & 5\\
            4 \\
            6
        \end{ytableau}
        - \frac{1}{5} ~
        \begin{ytableau}
            1\\
            2
        \end{ytableau}
        \otimes
        \begin{ytableau}
            1 & 2\\
            3 & 4\\
            5 \\
            6
        \end{ytableau}\\
        +& \frac{2}{5} ~
        \begin{ytableau}
            1\\
            2
        \end{ytableau}
        \otimes
        \begin{ytableau}
            1 & 4\\
            2 & 5\\
            3 \\
            6
        \end{ytableau}
        + \frac{2}{5} ~
        \begin{ytableau}
            1\\
            2
        \end{ytableau}
        \otimes
        \begin{ytableau}
            1 & 3\\
            2 & 4\\
            5 \\
            6
        \end{ytableau}
        - \frac{2}{5} ~
        \begin{ytableau}
            1\\
            2
        \end{ytableau}
        \otimes
        \begin{ytableau}
            1 & 3\\
            2 & 5\\
            4 \\
            6
        \end{ytableau}
        + \frac{2}{5} ~
        \begin{ytableau}
            1\\
            2
        \end{ytableau}
        \otimes
        \begin{ytableau}
            1 & 2\\
            3 & 6\\
            4 \\
            5
        \end{ytableau}.
    \end{align*}
\endgroup

\section{Generating Garnir Relations and Tools for Collapsing Sums} \label{section: Generating Garnir Relations and Tools for Collapsing Sums}

\subsection{} \label{Generating Garnir Relations and Tools for Collapsing Sums subsection: One Garnir Theorem}

\ytableausetup{nosmalltableaux}

We will show that all Garnirs are generated by Garnirs of minimal size, i.e. those \(G_{A}\) with \(\vert A \vert = w_A + 1\) (where \(w_A\) is as in Section \ref{Constructing Schur--Weyl Modules subsection: Garnirs}). 
We then show that all Garnirs of minimal size are themselves generate by Garnirs over hooks, which are those \(G_A\) where \(A\) is of minimal size and consists of exactly a complete row and one other box. 
We start by formalizing the idea of a hook.

\begin{definition*}[Hook]
We say that \(A \subset T_0\) is a hook if for some row \([b](r)\),
\[
    A = [b](r) \cup \{a_0\}
\]
where 
\[
    a_0 =   \begin{cases} [b](r-1,1) & \text{ if } r \ne 1\\
                        [b-1](h_{b-1},1) & \text{ if } r = 1
            \end{cases}.
\]
That is,
\[
    A = \overbrace{\begin{ytableau} ~ & \cdots & ~ \\ ~ \end{ytableau}}^{w_b}
\]
\end{definition*}

\begin{theorem*} 
Let \(T \in \mF_{\lambda,n}\) and \(A \subset T_0\) such that \(\vert A \vert > w_A\). Then 
\[
    G_A (T)  \in \langle G_{A^\prime} (T^\prime)  :  T^\prime \in \mF_{\lambda, n}, ~ A^\prime \text{ is a hook}\rangle.
\]
\end{theorem*}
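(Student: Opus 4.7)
The plan is to proceed by a two-stage induction on $|A|$, mirroring the structure suggested by the paragraph preceding the theorem: first, every Garnir $G_A$ with $|A| > w_A + 1$ is reduced to \emph{minimal} Garnirs (those with $|A'| = w_{A'} + 1$), and second, every minimal Garnir is reduced to hook Garnirs.

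For the first stage, suppose $|A| > w_A + 1$. Choose any $A_0 \subseteq A$ with $|A_0| = w_A + 1$ and $w_{A_0} = w_A$ --- for example, include any one box of $A$ from a row of width $w_A$ together with $w_A$ other boxes of $A$, which is possible since $|A| \geq w_A + 2$. Because $A_0 \subseteq A$, we have $\fS_{A_0} \leq \fS_A$, and decomposing the larger group into left cosets $\fS_A = \bigsqcup_i \tau_i \fS_{A_0}$ yields the identity
\[
G_A = \sum_i \tau_i\, G_{A_0}
\]
in $\bC \fS_d$. Using the conjugation relation $\tau\, G_{A_0}\, \tau^{-1} = G_{\tau A_0}$, this rearranges to
\[
G_A(T) = \sum_i G_{\tau_i A_0}(\tau_i T).
\]
Each $\tau_i A_0 \subseteq A$ satisfies $|\tau_i A_0| = w_A + 1 > w_{\tau_i A_0}$, so it indexes a legitimate Garnir, and $|\tau_i A_0| < |A|$ allows the outer induction hypothesis to apply.

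For the second stage (the minimal case $|A| = w_A + 1$), one argues by a secondary induction on how far $A$ is from being a hook, measured for instance by the number of boxes of $A$ lying outside the widest row meeting $A$. If $A$ is already a hook, there is nothing to show. Otherwise, let $[b]$ be the block of width $w_b = w_A$ containing a box of $A$, let $[b](r)$ be the lowest row of $[b]$ meeting $A$, and let $H$ be the hook $[b](r) \cup \{a_0\}$. Applying the first-stage coset decomposition to the \emph{union} $A \cup H$ (which has $|A \cup H| \geq w_A + 2$) expresses $G_{A \cup H}(T)$ both as a sum involving $G_A(T)$ and as a sum involving $G_H(T')$, plus contributions from smaller or strictly-closer-to-hook subsets; rearranging gives $G_A(T)$ modulo hook Garnirs and outputs of the outer induction.

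The main obstacle is Stage~2. The first stage is essentially an identity of group algebras, but reducing an arbitrary minimal Garnir to a hook is genuinely combinatorial: the class of hook shapes is rigid, and one must verify both that the intermediate coset decomposition does not reintroduce Garnirs larger than the original, and that the secondary induction statistic strictly decreases at each step so that the process terminates. Careful bookkeeping --- isolating which cosets produce hook relations, which produce strictly smaller Garnirs, and which produce minimal Garnirs nearer to hook shape --- is the crux of the argument.
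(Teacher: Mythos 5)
Your Stage~1 is sound and is essentially a repackaging of the paper's Lemma~\ref{Generating Garnir Relations and Tools for Collapsing Sums subsection: Minimal Size Lemma}: whereas the paper drops one box at a time (grouping $\fS_{A\cup\{x\}}$ by $\sigma^{-1}(x)$), you jump directly to a chosen minimal $A_0\subseteq A$ by a single left-coset decomposition $\fS_A = \bigsqcup_i \tau_i\fS_{A_0}$. Both arguments are clean identities in $\bC\fS_d$, and the observation that $\tau_iA_0\subseteq A$ remains a legitimate Garnir is correct. This part is fine.

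Stage~2 is where the proof is missing, not merely incomplete. You acknowledge the difficulty yourself, but the sketch you give does not yet constitute an argument, for two reasons. First, the decomposition you propose applying to $G_{A\cup H}$ does not obviously ``express $G_{A\cup H}(T)$ both as a sum involving $G_A(T)$ and as a sum involving $G_H(T')$'': the coset decomposition of Stage~1 with a chosen minimal $A_0 \subseteq A\cup H$ yields a sum over translates $G_{\tau_iA_0}(\tau_iT)$, and there is no reason that $A$ itself, or $H$ itself, appears among the $\tau_iA_0$. To get a relation that isolates $G_A(T)$ on one side you need a decomposition adapted to $A$, which is exactly what the paper's Lemma~\ref{Generating Garnir Relations and Tools for Collapsing Sums subsection: Hooks with a Gap Lemma} constructs: it groups $\fS_{A\cup\{x_0\}\cup B}$ by where $x_0$ lands, producing the explicit identity (Equation~\ref{Generating Garnir Relations and Tools for Collapsing Sums subsection: Hooks with a Gap Lemma equation: Garnir}) in which $G_{A\cup B}(T)$ appears with a nonzero integer coefficient $|A\cup\{x_0\}|$ and can be solved for. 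Without an identity of this shape, ``rearranging'' is not justified. Second, your induction statistic (number of boxes outside the widest row meeting $A$) handles the reduction to the form ``full row plus one box below,'' but not the further reduction to an actual hook: a minimal set of the form $[b](r)\cup\{b_0\}$ with $b_0$ several rows below $[b](r)$ is still not a hook, has zero boxes outside the widest row except $b_0$, and your statistic does not decrease when you try to move $b_0$ upward. The paper needs a second, separate descent (Lemma~\ref{Generating Garnir Relations and Tools for Collapsing Sums subsection: Hooks Lemma}) measured by the vertical distance between the row and $b_0$, and the identity used there again requires explicit manipulation with transpositions $\tau_{a,b_0}$ and $\tau_{x,b_0}$. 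Neither of these two descents is supplied by your sketch, so Stage~2 as written would not close the argument.
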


\subsection{} \label{Generating Garnir Relations and Tools for Collapsing Sums subsection: Minimal Size Lemma}

To prove Theorem \ref{Generating Garnir Relations and Tools for Collapsing Sums subsection: One Garnir Theorem}, we first show that \(G_A (T)\) is generated by Garnirs of minimal size for any \(T \in \mF_{\lambda,n}\) and any \(A \subset T_0\) with \(\vert A \vert > w_A\). We will also show that if \(\vert A \vert > w_A + 1\), then \(G_A (T)\) is generated by Garnirs over \(A \setminus \{y\}\) for any \(y \in A\).

\begin{lemma*} 
Let \(T \in \mF_{\lambda, n}\). If \(A \subset T_0\) with \(\vert A \vert > w_A\), then for any \(x \in T_0\) such that \(\vert A \cup \{x\} \vert > w_{A \cup \{x_0\}}\),
\[
    G_{A\cup \{x\}} (T) \in \langle G_A (T^\prime)  :  T^\prime \in \mF_{\lambda,n} \rangle.
\]
\end{lemma*}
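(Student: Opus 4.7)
The plan is to prove this via a right-coset decomposition of \(\fS_{A \cup \{x\}}\) with respect to the subgroup \(\fS_A\). We may assume \(x \notin A\), for otherwise \(A \cup \{x\} = A\) and the claim is trivial.

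First, every \(\pi \in \fS_A\) fixes \(T_0 \setminus A\), and in particular fixes \(x\), so \(\fS_A\) sits inside \(\fS_{A \cup \{x\}}\) as the stabilizer of \(x\). Since \(\fS_{A \cup \{x\}}\) acts transitively on \(A \cup \{x\}\), this subgroup has index \(|A|+1\). As right-coset representatives I take \(\tau_x = \text{id}\) together with the transpositions \(\tau_y = (x\ y)\) for each \(y \in A\). Each \(\tau_y\) sends \(x\) to \(y\), and any element of \(\fS_A\) fixes \(x\); hence the \(\tau_y\) lie in distinct right cosets, giving
\[
    \fS_{A \cup \{x\}} \;=\; \bigsqcup_{y \in A \cup \{x\}} \fS_A\, \tau_y.
\]

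Summing in the group algebra \(\bC\fS_d\), this coset decomposition yields the operator identity
\[
    G_{A \cup \{x\}} \;=\; G_A \cdot \sum_{y \in A \cup \{x\}} \tau_y.
\]
Applying both sides to \(T\) and using linearity of \(G_A\) then gives
\[
    G_{A \cup \{x\}}(T) \;=\; \sum_{y \in A \cup \{x\}} G_A(\tau_y T).
\]
Because each \(\tau_y T\) is again an element of \(\mF_{\lambda, n}\), the right-hand side lies in \(\langle G_A(T') : T' \in \mF_{\lambda, n} \rangle\), as required.

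There is no serious obstacle here; the argument is essentially a routine coset computation. The only bookkeeping point requiring care is to use \emph{right} cosets of \(\fS_A\) (rather than left), so that \(G_A\) appears leftmost in the product and can be factored out when the operator acts on \(T\). The hypotheses \(|A| > w_A\) and \(|A \cup \{x\}| > w_{A \cup \{x\}}\) are used only to guarantee that both \(G_A\) and \(G_{A \cup \{x\}}\) are legitimate Garnir operators in the sense of Section \ref{Constructing Schur--Weyl Modules subsection: Garnirs}, so that the displayed identities live in the intended space of Garnir relations.
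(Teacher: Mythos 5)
Your proof is correct and follows essentially the same approach as the paper: both decompose \(\fS_{A\cup\{x\}}\) into right cosets of \(\fS_A\) using the transpositions \((x\,y)\) as coset representatives. The paper carries this out as an explicit re-indexing of the sum defining \(G_{A\cup\{x\}}\), while you phrase it as a group algebra factorization \(G_{A\cup\{x\}} = G_A \cdot \sum_y \tau_y\), but the underlying computation is identical.
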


\begin{proof}
Let \(A \subset T_0\) with \(\vert A \vert > w_A\) and \(x \in T_0 \setminus A\). For all \(y \in A\cup \{x\}\), let \(\tau_{x,y}\) be the permutation that switches \(x\) and \(y\) and fixes the rest of \(A \cup \{x\}\). Then for any \(\sigma \in \fS_{A \cup \{x\}}\),
\[
    \sigma (y) = x %
	    \iff \left(\sigma \tau_{x,y}\right) (x) = x
        \iff \sigma \tau_{x,y} \in \fS_{A}.
\]
Then,
\begin{align*}
    G_{A \cup \{x\}} (T) %
	    & = \sum_{\sigma \in \fS_{A \cup \{x\}}} \sigma T\\
	    & = \sum_{y \in A\cup \{x\}} \sum_{\substack{\sigma \in \fS_{A\cup \{x\}}\\ \textnormal{s.t. } %
	    \sigma (y) = x}} \sigma T\\
	    & = \sum_{y \in A\cup\{x\}} \sum_{\substack{\sigma \in \fS_{A\cup \{x\}}\\ \textnormal{s.t. } %
	    \sigma (y) = x}} \left(\sigma \tau_{x,y}\right) \left(\tau_{x,y} T \right)\\
	    & = \sum_{y \in A\cup\{x\}} \sum_{\tilde{\sigma}\in \fS_A} \tilde{\sigma}\left(\tau_{x,y} T \right)\\
	    & = \sum_{y \in A\cup\{x\}} G_{A} (\tau_{x,y} T) \in \langle G_A (T^\prime) :  T^\prime %
	    \in \mF_{\lambda,n} \rangle
\end{align*}
\end{proof}

\subsection{} \label{Generating Garnir Relations and Tools for Collapsing Sums subsection: Hooks with a Gap Lemma}

We now show that all Garnirs of minimal size are generated by Garnirs over a set consisting of a full row and a box below that row.

\begin{lemma*} 
Let \(T \in \mF_{\lambda,n}\). If \(A \subset T_0\) of minimal size, then
\[
    G_{A} (T) \in \langle G_{A' \cup \{b_0\}} (T^\prime)  :  T^\prime \in \mF_{\lambda,n}, A^\prime = [b](r) \text{ for some } [b](r) \subset T_0, b_0 < [b](r) \rangle.
\]
\end{lemma*}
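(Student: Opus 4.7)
I would proceed by induction on $m := w_A - |A \cap r^*|$, where $r^*$ is the geometrically topmost row meeting $A$. Since rows higher in the diagram are strictly narrower in the paper's block convention, $r^*$ is automatically the widest row of $A$, so $w_{r^*} = w_A$. Write $Y := A \setminus r^*$; since $|A| = w_A + 1$, we have $|Y| = m + 1$. The base case $m = 0$ is immediate: then $A \cap r^* = r^*$ and $|Y| = 1$, so $A = r^* \cup \{b_0\}$ where $b_0 \in Y$ lies strictly below $r^*$, and $G_A(T)$ is already of the prescribed form.

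For the inductive step $m \ge 1$, pick any $x \in r^* \setminus A$ and any $y \in Y$, and set $A_- := (A \setminus \{y\}) \cup \{x\}$ and $\tilde A := A \cup \{x\} = A_- \cup \{y\}$. The set $A_-$ is still minimal of size $w_A + 1$ but has $|A_- \cap r^*| = |A \cap r^*| + 1$, so its $m$-value is $m - 1$. By the inductive hypothesis $G_{A_-}(T') \in \mathcal{G}$ for every $T' \in \mF_{\lambda,n}$, writing $\mathcal{G}$ for the target span. Applying the Minimal Size Lemma (\ref{Generating Garnir Relations and Tools for Collapsing Sums subsection: Minimal Size Lemma}) to $\tilde A$ in two different ways---removing $x$ versus removing $y$---and equating gives
\[
    G_A(T) + \sum_{z \in A} G_A(\tau_{x,z} T)
    \;=\; \sum_{z \in \tilde A} G_{A_-}(\tau_{y,z}T)
    \;\in\; \mathcal{G}.
\]

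The heart of the argument is extracting $G_A(T)$ alone from the correction sum on the left. I would reapply the above identity with $T$ replaced successively by each $\tau_{x,z}T$ for $z \in A$. Since $\tau_{x,z}^2 = \mathrm{id}$, the collection $\{T\} \cup \{\tau_{x,z}T : z \in A\}$ is closed under these substitutions, so one obtains a system of $|A| + 1$ linear relations modulo $\mathcal{G}$ in the $|A| + 1$ unknowns $G_A(T)$ and $G_A(\tau_{x,z}T)$, $z \in A$. When $Y$ lies in a single row below $r^*$, a direct count identifies the coefficient matrix as $sI + J$, where $s = |A \cap r^*|$ and $J$ is the all-ones matrix; its eigenvalues $s$ (with multiplicity $|A|$) and $s + |A| + 1$ are both nonzero over $\bC$, so the system is invertible and yields $G_A(T) \in \mathcal{G}$.

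The main obstacle I anticipate is the case where $Y$ meets several rows below $r^*$: iterating $\tau_{x,z}$ for $z$'s in different rows generates tabloid classes not captured by a single-step substitution, and the linear system no longer closes cleanly. I would handle this with a secondary induction on the number of distinct rows that $Y$ meets, choosing $y \in Y$ at each stage to lie in a row where $Y$ has only one representative (so that removing $y$ strictly decreases the row count of the corresponding $A_-$). The base of this secondary induction is the single-row case solved above, and the inductive step uses the primary inductive hypothesis on $m$ to absorb the cross-row correction terms into $\mathcal{G}$.
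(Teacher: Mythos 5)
Your inductive framework (on $m := w_A - |A \cap r^*|$) is sound in spirit, and the identity obtained by applying the Minimal Size Lemma (\ref{Generating Garnir Relations and Tools for Collapsing Sums subsection: Minimal Size Lemma}) to $\tilde A = A \cup \{x\} = A_- \cup \{y\}$ in two ways is correct. The gap is in the claim that iterating $T \mapsto \tau_{x,z}T$ produces an invertible linear system. The cross-terms $\tau_{x,z'}\tau_{x,z}T$ for $z' \ne z$ are genuinely new tableaux: $\tau_{x,z'}\tau_{x,z}$ is a $3$-cycle, so $\tau_{x,z}^2 = \mathrm{id}$ is not the relevant identity, and the collection $\{T\} \cup \{\tau_{x,z}T : z \in A\}$ is \emph{not} closed under the substitutions. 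What is true is that their $G_A$-values collapse: one checks $\tau_{x,z'}\tau_{x,z} = \tau_{z,z'}\tau_{x,z'}$ with $\tau_{z,z'} \in \fS_A$, and right-invariance of $G_A$ under $\fS_A$ then gives $G_A(\tau_{x,z'}\tau_{x,z}T) = G_A(\tau_{x,z'}T)$. But once you substitute this collapse into your iterated identities, each one reduces to the original relation $G_A(T) + \sum_{z \in A} G_A(\tau_{x,z}T) \in \mathcal{G}$ (writing $\mathcal{G}$ for your target span). The coefficient matrix is the all-ones matrix $J$, of rank one, not $sI + J$, and $G_A(T)$ cannot be isolated.

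The paper avoids this degeneracy by decomposing the sum defining $G_{\tilde A}(T)$ asymmetrically rather than via a single application of the Minimal Size Lemma. Partitioning $\fS_{\tilde A}$ by the value $\sigma(x)$: for $a \in (A \cap r^*) \cup \{x\}$ one writes $\sigma = \tau_{x,a}\tilde\sigma$ with $\tilde\sigma \in \fS_A$ (a \emph{left} coset), so the contribution is $\tau_{x,a} G_A(T)$, and since $\tau_{x,a}$ is a row permutation acting on the \emph{output} this equals $G_A(T)$ modulo $\mR_{\lambda,n}$; for $b \in Y$ one instead writes $\sigma = \tilde\sigma\tau_{x,b}$ with $\tilde\sigma \in \fS_{\tilde A \setminus \{b\}}$ (a \emph{right} coset), giving $G_{\tilde A \setminus \{b\}}(\tau_{x,b}T)$, which is covered by the inductive hypothesis. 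The left-coset part contributes a nonzero coefficient $|A \cap r^*| + 1$ in front of $G_A(T)$, which is precisely what lets one solve for it. Your uniform right-coset decomposition instead places all the transpositions inside the argument of $G_A$, where the row permutations can no longer be absorbed into $\mR_{\lambda,n}$, and that is the source of the degeneracy.
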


\begin{proof} Let \(T \in \mF_{\lambda,n}\) and \(A \subset T_0\) such that \(A \subset [b](r)\). Assume, without loss of generality, that \(r \ne 1\) (else, replace \([b](r-1)\) in the following argument with \([b-1](h_{b - 1})\)). Let \(B \subset T_0\) such that \( B < [b](r)\), \(B \not \subset [b](r-1)\), and \(\vert A \cup B \vert = w_b + 1\). Assume \(A \ne [b](r)\), i.e. \(\vert B \vert \ne 1\). We will show that
\[
    G_{A \cup B} (T) \in \langle G_{A^\prime \cup B^\prime} (T^\prime) : T^\prime \in \mF_{\lambda, n}, ~ A^\prime = [b](r), ~ \vert B^\prime \vert = 1, B < A\rangle.
\]
Pick \(x_0 \in [b](r) \setminus A\) and \(b_0 \in B\).

\begin{center}
\begin{tikzpicture}
    \draw[dashed] (0,0) -- (0,.8);
    \draw[dashed] (1,0) -- (1,.8);
    \draw (0,.8) rectangle (2,2);
    \draw (0,2) rectangle (4,4);
    \draw[dashed] (0,4) -- (0,5);
    \draw[dashed] (4,4) -- (4,5);
    
    \draw (0,3.2) -- (4,3.2);
    \draw (0,3.6) -- (4,3.6);
    \node at (5,3.4) {\tiny{\(\leftarrow\) row \([b](r)\)}};
    
    \draw (2.4,3.2) rectangle (2.8,3.6);
    \node at (2.6,3.4) {\tiny{\(x_0\)}};
    
    \draw[color=red,pattern=horizontal lines, pattern color=red] (0,1.6) rectangle (.4,2);
    \draw[color=red,pattern=horizontal lines, pattern color=red] (1.2,1.2) rectangle (1.6,1.6);
    \draw[color=red,pattern=horizontal lines, pattern color=red] (.8,2) rectangle (1.2,2.4);
    \draw[color=red,pattern=horizontal lines, pattern color=red] (2,2) rectangle (2.4,2.4);
    \draw[color=red,pattern=horizontal lines, pattern color=red] (3.2,2.4) rectangle (3.6,2.8);
    \draw[color=red,pattern=horizontal lines, pattern color=red] (0,2.8) rectangle (0.4,3.2);
    
    \filldraw[color=blue,fill=blue!30] (0,3.2) rectangle (.4,3.6);
    \filldraw[color=blue,fill=blue!30] (.4,3.2) rectangle (.8,3.6);
    \filldraw[color=blue,fill=blue!30] (1.2,3.2) rectangle (1.6,3.6);
    \filldraw[color=blue,fill=blue!30] (1.6,3.2) rectangle (2,3.6);
    \filldraw[color=blue,fill=blue!30] (3.2,3.2) rectangle (3.6,3.6);
    
    \filldraw[color=blue,fill=blue!30] (-3.4,4) rectangle (-3,4.4);
    \node at (-2.5,4.2) {\(\in A\)};
    \draw[color=red,pattern=horizontal lines, pattern color=red] (-3.4,3) rectangle (-3,3.4);
    \node at (-2.5,3.2) {\(\in B\)};
    \node at (-2.7,2) {\(\vert A \cup B \vert = w_b +1\)};
\end{tikzpicture}  
\end{center}

For all \(x \in A \cup \{x_0\} \cup B\), define \(\tau_{x_0,x}\) as before. 
Then for all \(\sigma \in \fS_{A \cup \{x_0\} \cup B}\),
\[\sigma (x_0) = x %
	    \iff \left(\tau_{x_0, x} \sigma\right) x_0 = x_0
        \iff \tau_{x_0,x} \sigma \in \fS_{A \cup B}
\]
and
\[
    \sigma (x_0) = x %
	    \iff \left(\sigma \tau_{x_0, x}\right) x = x
        \iff \sigma \tau_{x_0,x} \in \fS_{A \cup B \cup \{x_0\} \setminus \{x\}}.
\]
Then,
\begin{align*}
    G_{A \cup \{x_0\} \cup B} (T) %
	    & = \sum_{\sigma \in \fS_{A \cup \{x_0\} \cup B}} \sigma T \\
        & = \sum_{a \in A \cup \{x_0\}} \sum_{\substack{\sigma \in \fS_{A \cup \{x_0\} \cup B}, \\%
        \sigma \left(x_0\right) = a}} \sigma T 
        + \sum_{b \in B} \sum_{\substack{\sigma \in \fS_{A \cup \{x_0\} \cup B}, \\%
        \sigma \left(x_0\right) = b}} \sigma T \\
        & = \sum_{a \in A \cup \{x_0\}} \sum_{\substack{\sigma \in \fS_{A \cup \{x_0\} \cup B}, \\%
        \sigma \left(x_0\right) = a}} \tau_{x_o,a} \left( \tau_{x_0,a} \sigma \right) T 
        + \sum_{b \in B} \sum_{\substack{\sigma \in \fS_{A \cup \{x_0\} \cup B}, \\%
        \sigma \left(x_0\right) = b}} \left(\sigma \tau_{x_0, b}\right) \tau_{x_0,b} T \\
	    & = \sum_{a\in A \cup \{x_0\}} \sum_{\tilde{\sigma} \in \fS_{A \cup B}} %
	    \tau_{x_o,a} \tilde{\sigma} T
        + \sum_{b\in B} \sum_{\tilde{\sigma} \in \fS_{A \cup \{x_0\} \cup B \setminus \{b\}}} %
        \tilde{\sigma} \tau_{x_0,b} T \\
	    & =  \sum_{a \in A \cup \{x_0\}} \tau_{x_0,a} G_{A \cup B} (T) 
        + \sum_{b \in B} G_{A \cup \{x_0\} \cup B \setminus \{b\}} (\tau_{x_0,b} T)
\end{align*}
and as \(\tau_{x_0,a}\) is a row permutation for all \(a \in A \cup \{x_0\}\), up to row permutations we have
\begin{equation} \label{Generating Garnir Relations and Tools for Collapsing Sums subsection: Hooks with a Gap Lemma equation: Garnir}
    G_{A \cup \{x_0\} \cup B} (T) %
        = \vert A \cup \{x_0\} \vert G_{A \cup B} (T) %
         + \sum_{b \in B} G_{A \cup \{x_0\} \cup B \setminus \{b\}} (\tau_{x_0,b} T). 
\end{equation}

Solving for \(G_{A \cup B} (T)\) in equation \ref{Generating Garnir Relations and Tools for Collapsing Sums subsection: Hooks with a Gap Lemma equation: Garnir} we get
\[
    G_{A \cup B} (T) = %
        \frac{1}{\vert A \cup \{x_0\} \vert} \left( G_{A \cup \{x_0\} \cup B} (T) %
        - \sum_{b \in B} G_{A \cup \{x_0\} \cup B \setminus \{b\}} (\tau_{x_0,b} T) \right). 
\]
By Lemma \ref{Generating Garnir Relations and Tools for Collapsing Sums subsection: Minimal Size Lemma}, \(G_{A \cup B \cup \{x_0\}} \left( T \right)\) is generated by Garnirs over %
\(A \cup \{x_0\} \cup B \setminus \{b_0\}\). Thus \(G_{A \cup B} (T)\) is generated by Garnirs over \(A^\prime \cup B^\prime\), 
where \(A^\prime = A \cup \{x_0\}\), so that \(\vert A^\prime \cap [b](r) \vert = \vert A \cap [b](r) \vert + 1\), 
and \(B' = B \setminus \{b\}\) for some \(b \in B\), so that \(\vert B^\prime \vert = \vert B \vert - 1\). By induction, we get that 
\[
    G_{A \cup B} (T) \in \langle G_{A^\prime \cup B^\prime} (T^\prime) : T^\prime \in \mF_{\lambda, n}, ~ A^\prime = [b](r), ~ \vert B^\prime \vert = 1\rangle.
\]
\end{proof}

\subsection{} \label{Generating Garnir Relations and Tools for Collapsing Sums subsection: Simplifying a Garnir}

We now give a way to to write \(G_{A \cup B} (T)\) as above as a sum of \(2\)-cycles, which will make our calculations easier throughout.

\begin{lemma*} 
Let \(A = [b](r)\) and \(b_0 \in T_0 \setminus A\). Then for all \(T \in \mF_{\lambda,n}\),
\[
G_A (T) = w_b! \sum_{a \in A} \tau_{a,b_0} T \mod \mR_{\lambda,n}.
\]
\end{lemma*}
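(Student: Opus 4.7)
The plan is to leverage the Garnir relation over the hook $H := A \cup \{b_0\}$, which lies in $\mR_{\lambda,n}$ because $|H| = w_b + 1 > w_b = w_H$, and decompose $G_H(T)$ along cosets of $\fS_A$ in $\fS_H$. Reducing each coset contribution modulo the row-permutation relation (\ref{Constructing Schur--Weyl Modules subsection: The Construction equation: R1}) will express $G_H(T)$ as a combination of $T$ and $\sum_{a \in A}\tau_{a,b_0}T$, and the vanishing of $G_H(T)$ in the quotient then produces the identity.

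Concretely, every $\sigma \in \fS_H$ either fixes $b_0$ (placing it in $\fS_A$) or sends $b_0$ to some $a \in A$ (placing it in $\tau_{a,b_0}\fS_A$), yielding the disjoint decomposition $\fS_H = \fS_A \sqcup \bigsqcup_{a \in A} \tau_{a,b_0}\fS_A$ and hence
\[
    G_H(T) \;=\; G_A(T) \;+\; \sum_{a \in A} \sum_{\sigma' \in \fS_A} \tau_{a,b_0}\,\sigma'\, T.
\]
Since $A = [b](r)$ is a full row, every $\sigma' \in \fS_A$ is a row permutation, so $\sigma' U \equiv U \pmod{\mR_{\lambda,n}}$ for any $U \in \mF_{\lambda,n}$; applied to $U = T$ this gives $G_A(T) \equiv w_b!\,T \pmod{\mR_{\lambda,n}}$.

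For the double sum, the commutation identity $\tau_{a,b_0}\sigma' = \sigma'\tau_{(\sigma')^{-1}(a),b_0}$ (verified by a direct check on positions) combined with the same row-permutation reduction gives $\tau_{a,b_0}\sigma' T \equiv \tau_{(\sigma')^{-1}(a),b_0}\,T \pmod{\mR_{\lambda,n}}$. Since $(\sigma')^{-1}(a)$ visits each $c \in A$ exactly $(w_b-1)!$ times as $\sigma'$ ranges over $\fS_A$, the inner sum reduces to $(w_b-1)!\sum_{c \in A}\tau_{c,b_0}T$, and summing over $a \in A$ supplies the missing factor $w_b$ to produce $w_b!\sum_{c \in A}\tau_{c,b_0}T$ modulo $\mR_{\lambda,n}$.

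Combining the two computations expresses $G_H(T)$ modulo $\mR_{\lambda,n}$ as $w_b!\,T + w_b!\sum_{c \in A}\tau_{c,b_0}T$; since $G_H(T)$ itself lies in $\mR_{\lambda,n}$, rearrangement yields the claimed identity. The main technical obstacle is the commutation identity and the verification that $\sigma'$ acts by a row permutation even on tableaux of the form $\tau_{c,b_0}T$ — both points reduce to the observation that $\sigma' \in \fS_A$ only moves positions inside the single row $[b](r)$ and so leaves the multiset of entries in that row invariant, even after the swap at $b_0$.
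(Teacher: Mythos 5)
Your core computation is sound and lands on exactly the right formula. Before the final sentence you establish, unconditionally,
\[
G_{A\cup\{b_0\}}(T)\;\equiv\; w_b!\,T \;+\; w_b!\sum_{a\in A}\tau_{a,b_0}\,T \;=\; w_b!\sum_{a\in A\cup\{b_0\}}\tau_{a,b_0}\,T \mod \mR_{\lambda,n},
\]
and that \emph{is} the lemma. (The printed statement appears to have a typo: the sum should run over $A\cup\{b_0\}$, as the paper's own proof, the downstream uses of the lemma, and the definition of $\sigma^A_k$ with $k=0,\dots,w_b$ all confirm.) Your route is essentially the same as the paper's — a coset decomposition of $\fS_{A\cup\{b_0\}}$ followed by reduction via relation \ref{Constructing Schur--Weyl Modules subsection: The Construction equation: R1} — except you use left cosets $\tau_{a,b_0}\fS_A$ and therefore genuinely need the commutation identity $\tau_{a,b_0}\sigma'=\sigma'\tau_{(\sigma')^{-1}(a),b_0}$ to push $\sigma'$ to the outside before quotienting by row permutations (one cannot reduce $\sigma'T$ to $T$ \emph{inside} $\tau_{a,b_0}$, since $\mR_{\lambda,n}$ is not $\fS_d$-stable). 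The paper uses right cosets $\fS_A\tau_{a,b_0}$, so each summand is already in the form $\tilde\sigma\,(\tau_{a,b_0}T)$ and the reduction is immediate, with no commutation needed.

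The final step of your argument, however, is where things go wrong. Invoking $G_{A\cup\{b_0\}}(T)\in\mR_{\lambda,n}$ and ``rearranging'' is not needed — the identity is already proved — and if you actually carry it out against the printed statement with $G_A(T)$ read literally as $\sum_{\sigma\in\fS_A}\sigma T\equiv w_b!\,T$, you obtain $w_b!\,T\equiv -\,w_b!\sum_{a\in A}\tau_{a,b_0}T$, the wrong sign. The sign discrepancy is a symptom of the misprint in the lemma, not an error in your coset computation, but as written your proof claims to derive something it doesn't. Drop the last sentence, fold the $a=b_0$ term back into the sum, and note that $G_A$ in the lemma is shorthand for $G_{A\cup\{b_0\}}$ (as the paper's proof makes clear in its first line), and your argument is complete.
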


\begin{proof}
\begin{align*}
    G_A (T) %
	    & = \sum_{\sigma \in \fS_{A \cup \{b_0\}}} \sigma T\\
	    & = \sum_{\tilde{\sigma} \in \fS_A} \sum_{a \in A \cup \{b_0\}} \tilde{\sigma} \tau_{a,b_0} T\\%
        & = w_b! \sum_{a \in A \cup \{b_0\}} \tau_{a,b_0} T \mod \mR_{\lambda,n},
\end{align*}
as all \(\tilde{\sigma} \in \fS_{A}\) are row permutations and \(\vert A \vert = w_b\).
\end{proof}

\subsection{} \label{Generating Garnir Relations and Tools for Collapsing Sums subsection: Hooks Lemma}

To prove Theorem \ref{Generating Garnir Relations and Tools for Collapsing Sums subsection: One Garnir Theorem}, it remains to show that all Garnirs of the form \(G_{A \cup B} (T)\) where \(A = [b](r)\) and \(\vert B \vert = 1\), with \(B < A\), are generated by Garnirs over hooks. We show that for any such \(A\) and \(B\), \(G_{A \cup B} (T)\) is generated by Garnirs over \(A^\prime \cup B^\prime\) where \(A^\prime\) is a full row and \(\vert B \vert = 1\) with \(B^\prime < A^\prime\), and where the distance between \(A^\prime\) and \(B^\prime\) is less than the distance between \(A\) and \(B\). Theorem \ref{Generating Garnir Relations and Tools for Collapsing Sums subsection: One Garnir Theorem} is then proved by iterating this until we get that \(G_{A \cup B} (T)\) is generated (up to row permutation) by Garnirs over hooks.

\begin{lemma*}
Let \(T \in \mF_{\lambda,n}\), \(A = [b](r)\), \(B \subset T_0\) with \(\vert B \vert = 1\) and \(B < A\). Then
\[
    G_{A \cup B} (T) \in \langle G_{A'} (T^\prime)  :  T^\prime \in \mF_{\lambda,n}, A^\prime \text{ is a hook}\rangle.
\]
\end{lemma*}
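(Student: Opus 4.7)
The plan is to induct on the distance $d$ between $A$ and $b_0$, defined as the number of rows strictly between the row of $b_0$ and the row $A$. For the base case $d = 0$, the box $b_0$ lies in the row immediately below $A$; applying a row-permutation relation within that row to move $b_0$ to column $1$ identifies $A \cup B$ with the standard hook $A \cup \{a_0\}$, so $G_{A \cup B}(T)$ equals a hook Garnir modulo $\mR_{\lambda, n}$.

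For the inductive step $d \ge 1$, let $R^+$ denote the row immediately above the row containing $b_0$; by hypothesis $R^+$ is strictly below $A$, and any box in $R^+$ sits at distance $d-1$ from $A$, so by induction every Garnir of the form $G_{A \cup \{x_0\}}(T')$ for $x_0 \in R^+$ already lies in the hook span. Fix such a box $x_0$ and consider the enlarged Garnir $G_{A \cup \{x_0, b_0\}}(T)$, which is valid since $|A \cup \{x_0, b_0\}| = w_b + 2 > w_b$. Applying Lemma \ref{Generating Garnir Relations and Tools for Collapsing Sums subsection: Minimal Size Lemma} in two different ways---once factoring out $x_0$, once factoring out $b_0$---and equating the two expansions, I isolate
\[
G_{A \cup \{b_0\}}(T) \;=\; \sum_{y \in A \cup \{x_0, b_0\}} G_{A \cup \{x_0\}}(\tau_{b_0, y} T) \;-\; \sum_{y \in A \cup \{b_0\}} G_{A \cup \{b_0\}}(\tau_{x_0, y} T).
\]
The first sum lies entirely in the hook span by the inductive hypothesis.

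The main obstacle is to show that the second sum, call it $X$, also lies in the hook span: each of its terms has the shape $G_{A \cup \{b_0\}}$ with the \emph{same} distance $d$ as the target, so the induction on distance does not apply term-by-term. To close the argument I plan to expand each summand of $X$ using the two-cycle formula of Lemma \ref{Generating Garnir Relations and Tools for Collapsing Sums subsection: Simplifying a Garnir}, and then exploit the hook Garnir $G_{R^+ \cup \{b_0\}}(T')$, which---again by Lemma \ref{Generating Garnir Relations and Tools for Collapsing Sums subsection: Simplifying a Garnir} combined with the three-cycle identity $\tau_{c, b_0} \tau_{a, b_0} = \tau_{a, c} \tau_{c, b_0}$---yields the modular identity
\[
\tau_{a, b_0} T' \;\equiv\; -\sum_{c \in R^+} \tau_{a, c} \tau_{c, b_0} T' \pmod{\text{hook span}}.
\]
This replaces each long-range transposition $\tau_{a, b_0}$ by a telescoping sum of shorter-range transpositions that pass through the intermediate rows; iterating the substitution up the ladder of rows $R^+, R^{++}, \dots, R_1$ ultimately reduces every swap appearing in $X$ to transpositions between $A$ and its directly-below row $R_1$, which are themselves hook Garnir ingredients. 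Carrying out this telescope---tracking the $3$-cycle vs.\ $2$-cycle coefficients that arise when the intermediate transpositions overlap, and verifying that the aggregate coefficients close up correctly---is the main combinatorial challenge of the proof.
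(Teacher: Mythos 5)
Your inductive framework---induction on the number $d$ of rows strictly between $A$ and $b_0$---is the right one and matches the paper's iteration, and your base case is fine. The trouble is in the inductive step. Passing through the enlarged Garnir $G_{A\cup\{x_0,b_0\}}$ and applying Lemma~\ref{Generating Garnir Relations and Tools for Collapsing Sums subsection: Minimal Size Lemma} in both directions determines only the \emph{sum} $\sum_{y\in A\cup\{x_0,b_0\}}G_{A\cup\{b_0\}}(\tau_{x_0,y}T)$, never the individual term $G_{A\cup\{b_0\}}(T)$. The leftover $X=\sum_{y\in A\cup\{b_0\}}G_{A\cup\{b_0\}}(\tau_{x_0,y}T)$ that you isolate consists entirely of Garnirs of the same shape $G_{A\cup\{b_0\}}(\cdot)$, with $b_0$ still at distance $d$; the inductive hypothesis therefore gives you nothing about $X$, and the ``telescope'' you sketch---which you yourself flag as the unverified ``main combinatorial challenge''---is exactly where the proof is incomplete. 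The detour through the enlarged Garnir thus trades the target for a circular constraint rather than reducing $d$.

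You already hold the key ingredient: the congruence $\tau_{a,b_0}T'\equiv -\sum_{c\in R^+}\tau_{a,c}\,\tau_{c,b_0}T'$ obtained from the hook Garnir $G_{R^+\cup\{b_0\}}$. The fix is to apply it \emph{directly} to the target rather than to $X$. By Lemma~\ref{Generating Garnir Relations and Tools for Collapsing Sums subsection: Simplifying a Garnir}, up to row permutations $G_{A\cup\{b_0\}}(T)=w_b!\sum_{a\in A\cup\{b_0\}}\tau_{a,b_0}T$; substituting your congruence for each summand (with $T\equiv -\sum_{c\in R^+}\tau_{c,b_0}T$ when $a=b_0$), swapping the order of summation, and recognizing $\sum_{a\in A\cup\{c\}}\tau_{a,c}(\tau_{c,b_0}T)$ as $\frac{1}{w_b!}G_{A\cup\{c\}}(\tau_{c,b_0}T)$ again by Lemma~\ref{Generating Garnir Relations and Tools for Collapsing Sums subsection: Simplifying a Garnir} gives, modulo the span of hook Garnirs,
\[
G_{A\cup\{b_0\}}(T)\;\equiv\;-\sum_{c\in R^+}G_{A\cup\{c\}}\bigl(\tau_{c,b_0}T\bigr),
\]
where every $c\in R^+$ is at distance $d-1$, so the induction closes. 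This is precisely the paper's one-step recursion: walk $b_0$ one row up through the intermediate hook, with no enlarged-Garnir detour.
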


\begin{proof}
Let \(A = [b](r)\) and \(B = \{b_0\}\) with \(b_0 \in [c](s)\) and \([c](s) < [b](r)\). Let \(j\) be the number of rows between \([b](r)\) and \([c](s)\). Without loss of generality we will assume that \(r > j + 1\) and \(b_0 = [b](r - j - 1, 1)\). Then
\begin{align*}
    G_{A \cup B} (T) %
        & = \sum_{\sigma \in A \cup B} \sigma T \\ %
        & = \sum_{\tilde{\sigma} \in S_A} \sum_{a \in A \cup B} \tilde{\sigma} \tau_{a,b_0} T \\%
        & = w_b! \sum_{a \in A \cup B} \tau_{a,b_0} T.
\end{align*}
We also have that for all \(a \in A \cup B\),
\[
    G_{[b](r-j) \cup B} ( \tau_{a,b_0} T ) = w_b! \left( \tau_{a,b_0} T + \sum_{x \in [b](r-j)} \tau_{x,b_0} \tau_{a,b_0} T\right)
\]
and hence
\[
    \tau_{a,b_0} T = \frac{1}{w_b!} G_{[b](r-j) \cup B} ( \tau_{a,b_0} T ) - \sum_{x \in [b](r-j)} \tau_{x,b_0} \tau_{a,b_0} T.
\]
Now observe that for all \(a \in A\cup B\) and all \(x \in [b](r-j)\), \(\tau_{x,b_0} \tau_{a,b_0} T = \tau_{a,x} \tau_{x, b_0} T\). 
Then,
\begin{align*}
    G_{A \cup B}(T) %
        & = w_b! \sum_{a \in A \cup B} \tau_{a,b_0} T\\
        & = w_b! \left( \sum_{a \in A \cup B} \frac{1}{w_b!} G_{[b](r-j) \cup B} ( \tau_{a,b_0} T ) -\sum_{x \in [b](r-j)} \tau_{x,b_0} \tau_{a,b_0} T\right)\\
        & = \sum_{a \in A \cup B} G_{[b](r-j) \cup B} ( \tau_{a,b_0} T ) - w_b! \sum_{a \in A \cup B} \sum_{x \in [b](r-j)} \tau_{x,b_0} \tau_{a,b_0} T\\
        & = \sum_{a \in A \cup B} G_{[b](r - j) \cup B} ( \tau_{a,b_0} T ) - w_b! \sum_{x\in[b](r - j)} \left( \tau_{x,b_0} T + \sum_{a \in A} \tau_{x,b_0} \tau_{a,b_0} T \right)\\
        & = \sum_{a \in A \cup B} G_{[b](r - j) \cup B} ( \tau_{a,b_0} T ) - w_b! \sum_{x\in[b](r - j)} \left( \tau_{x,b_0} T + \sum_{a \in A} \tau_{a,x} \tau_{x, b_0} T \right)\\
        & = \sum_{a \in A \cup B} G_{[b](r - j) \cup B} ( \tau_{a,b_0} T ) - w_b! \sum_{x\in[b](r - j)} G_{A \cup \{x\}} (\tau_{x,b_0} T).
\end{align*}
\end{proof}

So we have that for any \(T \in \mF_{\lambda,n}\) and any \(A \subset T_0\) with \(\vert A \vert > w_A\), \(G_A (T)\) is generated by Garnirs over hooks.

\subsection{} \label{Generating Garnir Relations and Tools for Collapsing Sums subsection: sigma A k}

The rest of this section is devoted to collapsing the sum in the image \(\Phi_m(T)\). 
We first consider the \(1\)-path case, where the idea is that the sum over all possible paths between two boxes can be collapsed to a single tableau, mod \(\mR_{\lambda \setminus X,n}\), with parity depending only on the number of rows between the two boxes. 
See Figure \ref{Generating Garnir Relations and Tools for Collapsing Sums subsection: sigma A k figure: Collapsing Sums}. 
We then generalize the result to \(2\)-paths, before considering the \(m\)-path case.

\begin{definition*}[\(\sigma^A_k\)]
Let \(A \subset T_0\) be a hook with \([b](r)\) the top row of \(A\). 
Label the boxes in \([b](r)\) as \(a_1, \ldots, a_{w_b}\) and let \(a_0\) be the box in \(A\) below \([b](r)\). 
For \(k = 0, \ldots, w_b\), define \(\sigma^A_k\) to be the permutation of \(A\) that switches \(a_0\) and \(a_k\) and is the identity otherwise. 
For \(T \in \mF_{\lambda, n}\) and \(0 \le k \le w_b\), let \(A_k = T_{a_k}\) and extend \(\sigma^A_k\) to act on the entries of \(T\), so that \(\sigma^A_k A_k = A_0\) and \(\sigma^A_k\) is the identity on \(T\) otherwise. 
Then, by Lemma \ref{Generating Garnir Relations and Tools for Collapsing Sums subsection: Simplifying a Garnir},
\[
    G_A (T) = w_b! \sum_{k = 0}^{w_b} \sigma^A_k T \mod \mR_{\lambda,n}.
\]
\end{definition*}

\begin{figure}[ht]
    \begin{tikzpicture}
        \draw (0,0) rectangle (4,6);
        
        \draw (.5,4) rectangle (1,4.5);
        \node at (.75,4.25) {\small \(z\)};
        \node at (2.5,4.25) {\small \(\leftarrow\) row \(r\)};
        
        \draw (2,-1) rectangle (2.5,-.5);
        \node at (2.25,-.75) {\small \(u\)};
        
        \node at (2,2) {\small{\(\sum\) over all}};
        \node at (2,1.5) {\small{possible paths}};
        
        \draw[blue] (2.25,-.55) to [out = 90, in = 0] (2,.25);
        \draw[blue] (2,.25) to [out = 200, in = 180] (2,.75);
        \draw[blue] (2,.75) to [out = 200, in = 180] (2,1.25);
        \draw[blue] (2,2.5) to [out = 200, in = 180] (2,3);
        \draw[blue] (2,3) to [out = 200, in = 180] (2,3.5);
        \draw[blue] (2,3.5) to [out = 90, in = 0] (.95,4.25);
        \draw[blue, ->] (.75,4.45) to [out = 90, in = 0] (.25,6.5);
        
        \draw (0,6.25) rectangle (.5,6.75);
    \end{tikzpicture}
        \hspace{.25in}
    \begin{tikzpicture}
        \draw (0,0) rectangle (4,6);
        
        \draw (.5,4) rectangle (1,4.5);
        \node at (.75,4.25) {\small \(u\)};
        
        \node at (2,2) {\small{(original tableau)}};
        
        \draw (0,6.25) rectangle (.5,6.75);
        \node at (.25,6.5) {\small \(z\)};
        
        \draw (2,-1) rectangle (2.5,-.5);
        \node at (3.25,-.75) {\small \(\leftarrow P^{-1}(u)\)};
        
        \node at (-2.25,3) {\Large \(\sim\)};
        
        \node at (-1,3) {\((-1)^{f(r)}\)};
    \end{tikzpicture}
\caption{Collapsing a sum of paths.}
\label{Generating Garnir Relations and Tools for Collapsing Sums subsection: sigma A k figure: Collapsing Sums}
\end{figure}

\subsection{} \label{Generating Garnir Relations and Tools for Collapsing Sums subsection: Path Extensions}

It will be useful to be able to identify those paths that are similar to a given \(m\)-path. Given an \(m\)-path \(P\) and two rows \([b](r)\) and \([c](s)\), a \(([b](r),[c](s))\)-path extension of \(P\) is an \(m\)-path \(Q\) that is identical to \(P\) except on the interval of rows \(([b](r),[c](s))\) and on any boxes whose image under \(P\) is in the interval of rows \(([b](r),[c](s))\). In the row interval \(([b](r),[c](s))\), \(Q\) can differ from \(P\), and in fact can even act on different boxes. 

\begin{example*}
Let the \(1\)-path \(P\) be given below. 
\[
    \begin{tikzpicture}[scale=.75, baseline={([yshift=-.5ex]current bounding box.center)}]
        \draw (0,0) rectangle (.5,1);
        \draw (0,1) rectangle (1.5,3);
        \draw (0,3) rectangle (3,3.5);
        \draw (0,3.5) rectangle (3.5,5);
        \draw (0,5) rectangle (.5,5.5);
        
        \filldraw[fill=blue!20] (0,0) rectangle (.5,.5);
        \filldraw[fill=blue!20] (0,.5) rectangle (.5,1);
        \filldraw[fill=blue!20] (1,1) rectangle (1.5,1.5);
        \filldraw[fill=blue!20] (.5,1.5) rectangle (1,2);
        \filldraw[fill=blue!20] (2.5,3) rectangle (3,3.5);
        \filldraw[fill=blue!20] (2,3.5) rectangle (2.5,4);
        
        \draw[blue] (.25,.25) to [out = 200, in = 180] (.25,.75);
        \draw[blue] (.25,.75) to [out = 90, in = 180] (1.25,1.25);
        \draw[blue] (1.25,1.25) to [out = 90, in = 0] (.75,1.75);
        \draw[blue] (.75,1.75) to [out = 90, in = 180] (2.75,3.25);
        \draw[blue] (2.75,3.25) to [out = 90, in = 0] (2.25,3.75);
        \draw[blue,->] (2.25,3.75) to [out = 90, in = 0] (.25,5.25);
        
    \end{tikzpicture}
\]
For any \(([2](2),[4](1))\)-path extension \(Q\) of \(P\), it must be that \(\{[1](1,1), \{[1](2,1), [2](1,3)\} \subset R^Q\) as these are the boxes in \(R^P\) outside of the interval of rows \(([2](2),[4](1))\). 
As \([2](1,3) \in P^{-1}(([2](2),[4](1)))\), \(Q\) must be identical to \(P\) on \(\{[1](1,1), \{[1](2,1)\}\), but it can be the case that \(Q([2](1,3)) \ne P([2](1,3))\). Two such examples of \(([2](2),[4](1))\)-path extensions of \(P\) are given below.
\[
    \begin{tikzpicture}[scale=.75, baseline={([yshift=-.5ex]current bounding box.center)}]
        \draw (0,0) rectangle (.5,1);
        \draw (0,1) rectangle (1.5,3);
        \draw (0,3) rectangle (3,3.5);
        \draw (0,3.5) rectangle (3.5,5);
        \draw (0,5) rectangle (.5,5.5);
        
        \filldraw[fill=blue!20] (0,0) rectangle (.5,.5);
        \filldraw[fill=blue!20] (0,.5) rectangle (.5,1);
        \filldraw[fill=blue!20] (1,1) rectangle (1.5,1.5);
        \filldraw[fill=blue!20] (.5,1.5) rectangle (1,2);
        \filldraw[fill=blue!20] (.5,2) rectangle (1,2.5);
        \filldraw[fill=blue!20] (1,2.5) rectangle (1.5,3);
        \filldraw[fill=blue!20] (1.5,3.5) rectangle (2,4);
        
        \draw[blue] (.25,.25) to [out = 200, in = 180] (.25,.75);
        \draw[blue] (.25,.75) to [out = 90, in = 180] (1.25,1.25);
        \draw[blue] (1.25,1.25) to [out = 90, in = 0] (.75,1.75);
        \draw[blue] (.75,1.75) to [out = 200, in = 180] (.75,2.25);
        \draw[blue] (.75,2.25) to [out = 90, in = 180] (1.25,2.75);
        \draw[blue] (1.25,2.75) to [out = 90, in = 180] (1.75,3.75);
        \draw[blue,->] (1.75,3.75) to [out = 90, in = 0] (.25,5.25);
        
    \end{tikzpicture}
    \hspace{.5in}
    \begin{tikzpicture}[scale=.75, baseline={([yshift=-.5ex]current bounding box.center)}]
        \draw (0,0) rectangle (.5,1);
        \draw (0,1) rectangle (1.5,3);
        \draw (0,3) rectangle (3,3.5);
        \draw (0,3.5) rectangle (3.5,5);
        \draw (0,5) rectangle (.5,5.5);
        
        \filldraw[fill=blue!20] (0,0) rectangle (.5,.5);
        \filldraw[fill=blue!20] (0,.5) rectangle (.5,1);
        \filldraw[fill=blue!20] (1,1) rectangle (1.5,1.5);
        \filldraw[fill=blue!20] (1,1.5) rectangle (1.5,2);
        \filldraw[fill=blue!20] (0,2) rectangle (.5,2.5);
        \filldraw[fill=blue!20] (1.5,3) rectangle (2,3.5);
        
        \draw[blue] (.25,.25) to [out = 200, in = 180] (.25,.75);
        \draw[blue] (.25,.75) to [out = 90, in = 180] (1.25,1.25);
        \draw[blue] (1.25,1.25) to [out = 200, in = 180] (1.25,1.75);
        \draw[blue] (1.25,1.75) to [out = 90, in = 0] (.25,2.25);
        \draw[blue] (.25,2.25) to [out = 90, in = 180] (1.75,3.25);
        \draw[blue,->] (1.75,3.25) to [out = 90, in = 0] (.25,5.25);
        
    \end{tikzpicture}
\]

Now let the \(2\)-path \(P\) be given below.
\[
    \begin{tikzpicture}[scale=.75, baseline={([yshift=-.5ex]current bounding box.center)}]
        \draw (0,0) rectangle (.5,1.5);
        \draw (0,1.5) rectangle (3,3);
        \draw (0,3) rectangle (4,6);
        \draw (0,6) rectangle (6,7.5);
        \draw (0,7.5) rectangle (8,8.5);
        \draw (0,8.5) rectangle (.5,9);
        \draw (0,9) rectangle (.5,9.5);
        
        \filldraw[fill=blue!20] (0,0) rectangle (.5,.5);
        \filldraw[fill=blue!20] (1.5,1.5) rectangle (2,2);
        \filldraw[fill=blue!20] (1,3.5) rectangle (1.5,4);
        \filldraw[fill=blue!20] (2,4.5) rectangle (2.5,5);
        \filldraw[fill=blue!20] (.5,5) rectangle (1,5.5);
        \filldraw[fill=blue!20] (.5,6) rectangle (1,6.5);
        \filldraw[fill=blue!20] (2,6.5) rectangle (2.5,7);
        \filldraw[fill=blue!20] (4.5,7) rectangle (5,7.5);
        
        \draw[color=red,pattern=dots, pattern color=red] (0,.5) rectangle (.5,1);
        \draw[color=red,pattern=dots, pattern color=red] (.5,2) rectangle (1,2.5);
        \draw[color=red,pattern=dots, pattern color=red] (0,2.5) rectangle (.5,3);
        \draw[color=red,pattern=dots, pattern color=red] (2.5,3) rectangle (3,3.5);
        \draw[color=red,pattern=dots, pattern color=red] (1.5,4) rectangle (2,4.5);
        \draw[color=red,pattern=dots, pattern color=red] (2.5,7.5) rectangle (3,8);
        
        \draw[blue, thick] (.25,.25) to [out = 90, in = 180] (1.75,1.75);
        \draw[blue, thick] (1.75,1.75) to [out = 90, in = 0] (1.25,3.75);
        \draw[blue, thick] (1.25,3.75) to [out = 90, in = 180] (2.25,4.75);
        \draw[blue, thick] (2.25,4.75) to [out = 90, in = 0] (.75,5.25);
        \draw[blue, thick] (.75,5.25) to [out = 200, in = 180] (.75,6.25);
        \draw[blue, thick] (.75,6.25) to [out = 90, in = 180] (2.25,6.75);
        \draw[blue, thick] (2.25,6.75) to [out = 90, in = 180] (4.75,7.25);
        \draw[blue, thick, ->] (4.75,7.25) to [out = 90, in = 0] (.25,8.75);
        
        \draw[red, very thick, dotted] (.25,.75) to [out = 90, in = 180] (.75,2.25);
        \draw[red, very thick, dotted] (.75,2.25) to [out = 90, in = 0] (.25,2.75);
        \draw[red, very thick, dotted] (.25,2.75) to [out = 90, in = 180] (2.75,3.25);
        \draw[red, very thick, dotted] (2.75,3.25) to [out = 90, in = 0] (1.75,4.25);
        \draw[red, very thick, dotted] (1.75,4.25) to [out = 90, in = 180] (2.75,7.75);
        \draw[red, very thick, dotted, ->] (2.75,7.75) to [out = 90, in = 0] (.25,9.25);
        
    \end{tikzpicture}
\]
For any \(([3](1), [4](1))\)-path extension \(Q\) of \(P\), it must be that \(R^Q \setminus ([3](1), [4](1)) = R^P \setminus ([3](1), [4](1))\). 
As \(\{[2](2,2),[2](3,1)\} \subset P^{-1}(([3](1), [4](1)))\), \(Q\) must be identical to \(P\) on \(R^Q \setminus \left(([3](1), [4](1)) \cup \{[2](2,2),[2](3,1)\} \right)\), but \(Q\) can differ from \(P\) otherwise. An example of a \(([3](1), [4](1))\)-path extension of \(P\) is given below.
\[
    \begin{tikzpicture}[scale=.75, baseline={([yshift=-.5ex]current bounding box.center)}]
        \draw (0,0) rectangle (.5,1.5);
        \draw (0,1.5) rectangle (3,3);
        \draw (0,3) rectangle (4,6);
        \draw (0,6) rectangle (6,7.5);
        \draw (0,7.5) rectangle (8,8.5);
        \draw (0,8.5) rectangle (.5,9);
        \draw (0,9) rectangle (.5,9.5);
        
        \filldraw[fill=blue!20] (0,0) rectangle (.5,.5);
        \filldraw[fill=blue!20] (1.5,1.5) rectangle (2,2);
        \filldraw[fill=blue!20] (.5,3) rectangle (1,3.5);
        \filldraw[fill=blue!20] (0,4) rectangle (.5,4.5);
        \filldraw[fill=blue!20] (1.5,6) rectangle (2,6.5);
        \filldraw[fill=blue!20] (2.5,7.5) rectangle (3,8);
        
        \draw[color=red,pattern=dots, pattern color=red] (0,.5) rectangle (.5,1);
        \draw[color=red,pattern=dots, pattern color=red] (.5,2) rectangle (1,2.5);
        \draw[color=red,pattern=dots, pattern color=red] (0,2.5) rectangle (.5,3);
        \draw[color=red,pattern=dots, pattern color=red] (2,3.5) rectangle (2.5,4);
        \draw[color=red,pattern=dots, pattern color=red] (1.5,4.5) rectangle (2,5);
        \draw[color=red,pattern=dots, pattern color=red] (3,5) rectangle (3.5,5.5);
        \draw[color=red,pattern=dots, pattern color=red] (2,6.5) rectangle (2.5,7);
        \draw[color=red,pattern=dots, pattern color=red] (4.5,7) rectangle (5,7.5);
        
        \draw[blue, thick] (.25,.25) to [out = 90, in = 180] (1.75,1.75);
        \draw[blue, thick] (1.75,1.75) to [out = 90, in = 0] (.75,3.25);
        \draw[blue, thick] (.75,3.25) to [out = 90, in = 0] (.25,4.25);
        \draw[blue, thick] (.25,4.25) to [out = 90, in = 180] (1.75,6.25);
        \draw[blue, thick] (1.75,6.25) to [out = 90, in = 180] (2.75,7.75);
        \draw[blue, thick, ->] (2.75,7.75) to [out = 90, in = 0] (.25,8.75);
        
        \draw[red, very thick, dotted] (.25,.75) to [out = 90, in = 180] (.75,2.25);
        \draw[red, very thick, dotted] (.75,2.25) to [out = 90, in = 0] (.25,2.75);
        \draw[red, very thick, dotted] (.25,2.75) to [out = 90, in = 180] (2.25,3.75);
        \draw[red, very thick, dotted] (2.25,3.75) to [out = 90, in = 0] (1.75,4.75);
        \draw[red, very thick, dotted] (1.75,4.75) to [out = 90, in = 180] (3.25,5.25);
        \draw[red, dotted, very thick] (3.25,5.25) to [out = 90, in = 0] (2.25,6.75);
        \draw[red, dotted, very thick] (2.25,6.75) to [out = 90, in = 180] (4.75,7.25);
        \draw[red, very thick, dotted, ->] (4.75,7.25) to [out = 90, in = 0] (.25,9.25);

    \end{tikzpicture}
\]
\end{example*}

Given an evacuation route \(R\) and a row \([b](r)\), define
\[
    R_{<[b](r)} := \{x \in R : x < [b](r)\} \qquad \text{ and } \qquad R_{>[b](r)} := \{ x \in R : [b](r) < x\}.
\]
We formalize the notion of path extensions with the following definitions.

\begin{definition}[Route Extension] \label{Generating Garnir Relations and Tools for Collapsing Sums subsection: Path Extensions definition: Route Extension}
Given an evacuation route \(R\) and two rows \([b](r)\) and \([c](s)\) with \([b](r) \le [c](s)\), an evacuation route \(B\) is a \(([b](r),[c](s))\)-route extension of \(R\) if \(R_{<[b](r)} = B_{<[b](r)}\) and \(R_{>[c](s)} = B_{>[c](s)}\).
\end{definition}

\begin{definition}[Path Extension] \label{Generating Garnir Relations and Tools for Collapsing Sums subsection: Path Extensions definition: Path Extension}
Given an \(m\)-path \(P\) and two rows \([b](r)\) and \([c](s)\) with \([b](r) \le [c](s)\), an \(m\)-path \(Q\) is a \(([b](r),[c](s))\)-path extension of \(P\) if:%
	\begin{itemize}
    \item \(R^Q\) is a \(([b](r),[c](s))\)-route extension of \(R^P\),
    \item \(\left. P \right\vert_{R^P_{>[c](s)}} = \left. Q \right\vert_{R^P_{>[c](s)}}\)
    \item \(\left. P \right\vert_{R^P_{<[b](r)} \setminus I} = \left. Q \right\vert_{R^P_{<[b](r)} \setminus I}\), where \(I = \{x \in R^P_{<[b](r)}  :  P(x) \in ([b](r),[c](s))\}\). \end{itemize}
\end{definition}

\subsection{} \label{Generating Garnir Relations and Tools for Collapsing Sums subsection: Calculation Lemma for 1-paths}

For any \(T \in \mF_{\lambda,n}\), let 
\[
    X = \{x_1 := [b_1](1,w_{b_1})\} \text{ and } Y = \{y_1 := [N+1](1,1)\}
\]
and let
\[
    z_1 := T_{[b_z](i_1,j_1)}
\]
for some \(1 \le b_1 \le b_z \le N\), \(1 \le i_1 \le h_{b_z}\), and \(1 \le j_1 \le w_{b_z}\). 
Let \(u := T_{[b_u](i_u,j_u)})\) for some \(b_1 \le b_u \le b_z\), \(1 \le i_u \le h_{b_u}\), and \(1 \le j_u \le w_{b_u}\), and let \(P\) be any \(1\)-path on \(\lambda\) removing \(X\) such that \(P([b_u](i_u,j_u)) \in [b_z](1)\) and \(P([b_z](i_1,j_1)) > [b_z](i_1)\), including the case \(\ds P([b_z](i_1,j_1)) = y_1\). 
Let
\begin{align*}
    \left[ P \right] %
        = & \{1 \text{-paths } Q \text{ on } \lambda  :  Q \text{ is a } ([b_z](1),[b_z](i_1)) \text{-path extension of } P\\
        & \text{ with } Q([b_z](i_1,j_1)) = P([b_z](i_1,j_1))\}
\end{align*}
and \(T^\prime \in \mF_{\lambda \setminus X, n}\) be the unique tableau such that \(T^\prime = T_P \text{ on } (\lambda \setminus X)\), except on the interval of rows \(([b_z](1), [b_z](i_1))\), where \(T^\prime = T\), except \(T^\prime_{[b_z](i_1,j_1)} = u\). 
We then have the following.

\begin{lemma*} 
\[
    \sum_{Q \in \left[ P \right]} Q(T) = (-1)^{i_1-1} ~ \ytableaushort{{\alpha^P_1}} \otimes T^\prime %
    \mod F_1 \otimes \mR_{\lambda \setminus X, n}.
\]
\end{lemma*}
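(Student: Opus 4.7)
The first step is to factor out the wedge-component. Because every $Q \in [P]$ restricts to $P$ on $R^P_{>[b_z](i_1)}$ by the Path Extension definition, the preimage $Q^{-1}(y_1) = P^{-1}(y_1)$ is the same for every $Q$, so $\alpha_1^Q = \alpha_1^P$ is constant across the sum. Pulling this out of the tensor product reduces the claim to
\[
\sum_{Q \in [P]} T_Q \equiv (-1)^{i_1-1}\, T' \mod \mR_{\lambda \setminus X, n}.
\]

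Next, I would parametrize $[P]$ concretely. Writing $I = \{x \in R^P_{<[b_z](1)} : P(x) \in ([b_z](1), [b_z](i_1))\}$, the constraint $Q|_{R^P_{<[b_z](1)} \setminus I} = P|_{R^P_{<[b_z](1)} \setminus I}$ together with $P([b_u](i_u, j_u)) \in [b_z](1)$ forces the entry column $c_1$ of $R^Q$ into row $[b_z](1)$ to agree with that of $R^P$, and $Q([b_z](i_1,j_1)) = P([b_z](i_1,j_1))$ forces the exit column $c_{i_1} = j_1$. The free parameters are the interior columns $c_2, \ldots, c_{i_1-1} \in \{1, \ldots, w_{b_z}\}$, so $[P]$ is in bijection with $\{1, \ldots, w_{b_z}\}^{i_1-2}$. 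Under this parametrization, $T_Q$ differs from $T$ in a ``conveyor belt'' manner: $u$ is inserted at $[b_z](1, c_1)$, and for each $k = 2, \ldots, i_1$ the entry originally at $[b_z](k-1, c_{k-1})$ is pushed up to $[b_z](k, c_k)$.

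The heart of the argument is an induction on $i_1$ that iterates a Garnir-hook collapse, one row at a time. The base case $i_1 = 1$ is immediate since $[P] = \{P\}$ and $T_P = T'$. For $i_1 \ge 2$, fix $(c_2, \ldots, c_{i_1-2})$ and consider the inner sum over $c_{i_1-1}$. All $w_{b_z}$ terms coincide with a common base tableau $\widehat{T}$ (in which row $[b_z](i_1-1)$ carries its original $T$-values and $[b_z](i_1, j_1)$ holds $T_{[b_z](i_1-2,\, c_{i_1-2})}$) outside the two positions $[b_z](i_1-1, c_{i_1-1})$ and $[b_z](i_1, j_1)$; there $T_Q = \tau_{[b_z](i_1-1, c_{i_1-1}),\, [b_z](i_1, j_1)} \widehat{T}$. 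The simplified Garnir identity of Lemma~\ref{Generating Garnir Relations and Tools for Collapsing Sums subsection: Simplifying a Garnir}, applied to $A = [b_z](i_1-1)$ and $b_0 = [b_z](i_1, j_1)$ — a valid configuration since $|A \cup \{b_0\}| = w_{b_z}+1 > w_A$ — gives
\[
\widehat{T} + \sum_{c=1}^{w_{b_z}} \tau_{[b_z](i_1-1, c),\, [b_z](i_1, j_1)} \widehat{T} \equiv 0 \mod \mR_{\lambda \setminus X, n},
\]
so the inner sum over $c_{i_1-1}$ collapses to $-\widehat{T}$, contributing one factor of $-1$. Up to row permutations in row $[b_z](i_1-1)$ (which are trivial modulo $\mR$), $\widehat{T}$ matches the inductive-hypothesis tableau for the shortened path that exits $[b_z]$ at row $i_1 - 1$, and induction closes the loop.

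The principal obstacle is the careful identification of $\widehat{T}$, after absorbing row-permutation equivalences in the ``vacated'' row $[b_z](i_1-1)$, with the object on which the inductive hypothesis operates, and the verification that no residual non-canceling terms survive. This may require auxiliary Garnir-hook identities on configurations $A = [b_z](k) \cup \{b_0\}$ for various $k$ and $b_0$ to cancel extraneous swap terms (for instance, at the base transition involving row $[b_z](1)$, where $c_1$ is fixed so there is no free sum but a Garnir must still be applied to normalize $T_P$ into the form of $T'$). Iterating $i_1-1$ collapses — one per row-transition inside $[b_z]$ — produces the total sign $(-1)^{i_1-1}$ and the tableau $T'$ exactly.
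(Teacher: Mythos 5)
Your overall strategy—peel off the wedge factor, parametrize $[P]$ by the columns the evacuation route visits in block $b_z$, and iterate the Garnir-hook collapse of Lemma \ref{Generating Garnir Relations and Tools for Collapsing Sums subsection: Simplifying a Garnir} one row at a time with a sign $-1$ per collapse—is structurally the same as the paper's argument. However, there is a genuine gap in the parametrization of $[P]$ that breaks the induction at its first non-trivial step.

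You claim that the entry column $c_1$ (the column of row $[b_z](1)$ through which $R^Q$ passes) is \emph{forced} to agree with $R^P$, because of the constraint $Q|_{R^P_{<[b_z](1)} \setminus I} = P|_{R^P_{<[b_z](1)} \setminus I}$. But this is exactly backwards: since $P([b_u](i_u,j_u)) \in [b_z](1)$, which belongs to the row interval $([b_z](1),[b_z](i_1))$, the box $[b_u](i_u,j_u)$ lies in the exceptional set $I$, so the path-extension definition precisely \emph{releases} $Q$ from agreeing with $P$ there. (For the statement to be true at all, the endpoints must be included in the interval—otherwise for $i_1=2$ the set $I$ would be empty, $[P]=\{P\}$, and the single term $T_P$ would differ from $T'$ by one cross-row transposition, which is not congruent to $-T'$ modulo $\mR_{\lambda\setminus X,n}$ when $w_{b_z}>1$.) Thus $c_1$ is a free parameter, and $[P]$ is in bijection with $\{1,\dots,w_{b_z}\}^{i_1-1}$, not $\{1,\dots,w_{b_z}\}^{i_1-2}$ as you wrote. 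The paper's proof of the $i_1=2$ case makes this explicit: the sum there runs over $k=1,\dots,w_{b_z}$, one term per choice of $c_1$, and this $w_{b_z}$-term sum is what the Garnir relation collapses.

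Because of this undercounting, your induction has no honest base at the $i_1=2$ transition. Your proposed repair—"a Garnir must still be applied to normalize $T_P$ into the form of $T'$"—cannot work: the Garnir relation identifies the \emph{sum} $\sum_{k=0}^{w_{b_z}} \sigma^A_k T'$ with zero modulo $\mR_{\lambda\setminus X,n}$; it cannot identify a single term $\sigma^A_{c_1}T'$ with $-T'$, since the missing $w_{b_z}-1$ terms are not row-equivalent to anything already present. Once you correct the parametrization so that $c_1,\dots,c_{i_1-1}$ are all free, the $i_1-1$ collapses appear naturally (without your extra ad hoc Garnir) and your proof coincides with the paper's, which fixes the outer column $c_{i_1-1}$, applies the inductive hypothesis to the inner sum over $c_1,\dots,c_{i_1-2}$, and then collapses the remaining $w_{b_z}$-term outer sum by the hook Garnir over $[b_z](i_1-1)\cup\{[b_z](i_1,1)\}$.
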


\begin{proof}
Assume, without loss of generality, that \(j_1 = 1\). 
We will show the case \(b_u < b_z\), the case \(b_u = b_z\) is similar. 
If \(i_1 = 1\), then \(\left[ P \right] = \{ P \}\), and so 
\begin{align*}
    \sum_{Q \in \left[ P \right]} Q(T) %
        & = P(T) \mod F_1 \otimes \mR_{\lambda \setminus X, n} \\
        & = \ytableaushort{{\alpha^P_1}} \otimes T^\prime \mod F_1 \otimes \mR_{\lambda \setminus X, n},
\end{align*}
as desired. Let \(i_1 = 2\) and 
\[
    A = \{a_1 := [b_z](1,k), \ldots, a_{w_{b_z}} := [b_z](1,w_{b_z})\} \cup \{a_0 := [b_z](2,1)\}.
\]
Then by Lemma \ref{Generating Garnir Relations and Tools for Collapsing Sums subsection: Simplifying a Garnir} we have the following (see Figure \ref{Generating Garnir Relations and Tools for Collapsing Sums subsection: Calculation Lemma for 1-paths figure: Calculation for 1-paths}). 
\begin{align*}
    \sum_{Q \in \left[ P \right]} Q(T) %
        & = \sum_{k = 1}^{w_{b_z}} ~ \ytableaushort{{\alpha^P_1}} ~ \otimes \sigma^A_k T^\prime\\
        & = \ytableaushort{{\alpha^P_1}} ~ \otimes \left( \dfrac{1}{w_{b_z}!} G_A \left( T^\prime \right) %
        - T^\prime \right) \mod F_1 \otimes \mR_{\lambda \setminus X, n}\\
        & = - ~ \ytableaushort{{\alpha^P_1}} ~ \otimes T^\prime \mod F_1 \otimes \mR_{\lambda \setminus X, n}.
\end{align*}

\begin{figure}[ht]
\begin{tikzpicture}
    \draw (0,0) rectangle (2,2);
    \draw (0,-.25) -- (0,0);
    \draw (0,2) -- (0,2.25);
    \draw (1.5,-.25) -- (1.5,0);
    \draw (2,2) -- (2.5,2) -- (2.5,2.25);
    
    \draw (0,.5) rectangle (.5,1);
    \node at (.25,.75) {\small \(a_k\)};
    
    \draw (0,.5) -- (2,.5);
    \draw (.75,0) -- (.75,.5);
    \draw (1.25,0) -- (1.25,.5);
    \node at (1,.25) {\small \(u\)};
    \node at (1,.65) {\tiny \(k\)};
    
    \node at (.5,-.5) {\small \ };
    
    \node at (-1,1) {\(\ds \sum_{k=1}^{w_b}\)};
\end{tikzpicture}
\begin{tikzpicture}
    \node at (-2,1) {\large \(\sim\)};

    \node at (-1,1) {\large \(-\)};

    \draw (0,0) rectangle (2,2);
    \draw (0,-.25) -- (0,0);
    \draw (0,2) -- (0,2.25);
    \draw (1.5,-.25) -- (1.5,0);
    \draw (2,2) -- (2.5,2) -- (2.5,2.25);
    
    \draw (0,.5) rectangle (.5,1);
    \node at (.25,.75) {\small \(u\)};
    
    \draw (0,.5) -- (2,.5);
    
    \node at (.5,-.5) {\small \ };
\end{tikzpicture}
\caption{}
\label{Generating Garnir Relations and Tools for Collapsing Sums subsection: Calculation Lemma for 1-paths figure: Calculation for 1-paths}
\end{figure}

Now let \(i_1 > 2\) and 
\[
    B = \{b_1 := [b_z](i_1 - 1,k), \ldots, b_{w_{b_z}} := [b_z](i_1 - 1,w_{b_z})\} \cup \{b_0 := [b_z](i_1,1)\}.
\]
Then by Lemma \ref{Generating Garnir Relations and Tools for Collapsing Sums subsection: Simplifying a Garnir} and induction applied to each entry in \((i_1-1)^{b_z}\), we have %
\begin{align*}
    \sum_{Q \in \left[ P \right]} Q(T) %
        & = \sum_{k = 1}^{w_{b_z}} \left( -1 \right)^{i_1-2} ~  \ytableaushort{{\alpha^P_1}} ~ \otimes %
        \sigma^B_k T^\prime \mod F_1 \otimes \mR_{\lambda \setminus X, n}\\
        & = \left( -1 \right)^{i_1-2} ~ \ytableaushort{{\alpha^P_1}} ~ \otimes \left( \dfrac{1}{w_{b_z}!} G_B %
        \left( T^\prime \right) - T^\prime \right) \mod F_1 \otimes \mR_{\lambda \setminus X, n}\\
        & = \left( -1 \right)^{i_1-1} ~ \ytableaushort{{\alpha^P_1}} ~ \otimes T^\prime %
        \mod F_1 \otimes \mR_{\lambda \setminus X, n}.
\end{align*}
Thus the claim holds for \(1 \le i_1 \le h_{b_z}\). 
\end{proof}

\subsection{} \label{Generating Garnir Relations and Tools for Collapsing Sums subsection: Calculation Lemma for 2-paths}

Lemma \ref{Generating Garnir Relations and Tools for Collapsing Sums subsection: Calculation Lemma for 1-paths} also allows for calculations of sums of \(2\)-paths, by applying the technique of the proof twice and ``skipping'' certain rows each time. That is, for any \(T \in \mF_{\lambda,n}\), let 
\[
    X := \{x_1 := [b_1](1,w_{b_1}), x_2 := [b_2](i_2,w_{b_2})\},
\]
\[
    Y := \{y_1 := [N+1](1,1), y_2 := [N+1](2,1)\}
\]
and let
\[
    z_1 := T_{[b_z](i_1,j_1)}, ~ z_2 := T_{[b_z](i_2,j_2)}
\]
for some \(1 \le b_1 \le b_z \le N\), \(1 \le i_2 < i_1 \le h_{b_z}\), and \(1 \le j_1,j_2 \le w_{b_z}\). 
Let
\[
    u_1 := T_{[b_{u_1}](i_{u_1},j_{u_1})}, ~ u_2 := T_{[b_{u_2}](i_{u_2},j_{u_2})}
\]
for some \(b_1 \le b_{u_1},b_{u_2} \le b_z\), \(1 \le i_{u_1} \le h_{b_{u_1}}\), \(1 \le j_{u_1} \le w_{b_{u_1}}\), \(1 \le i_{u_2} \le h_{b_{u_2}}\), and \(1 \le j_{u_2} \le w_{b_{u_2}}\). 
If \(b_{u_1} = b_{u_2}\), then we also assume that \(i_{u_1} \ne i_{u_2}\). 
Let \(P\) be any \(2\)-path on \(\lambda\) such that
\[
    P( [b_{u_1}](i_{u_1},j_{u_1}) ) \in [b_z](1), P( [b_{u_2}](i_{u_2},j_{u_2}) ) \in [b_z](2),
\]
\[
     P( [b_z](i_1,j_1) ),  P( [b_z](i_2,j_2) ) > [b_z](i_1).  
\]
Assume, without loss of generality, that \(P([b_z](i_1,j_1)), P([b_z](i_2,j_2)) \not \in Y\). Let
\begin{align*}
    \left[ P \right] %
        = &\{2 \text{-paths } Q \text{ on } \lambda  :  Q \text{ is a } ([b_z](1),[b_z](i_1))%
        \text{-path extension of } P\\
        & \text{ with } Q([b_z](i_1,j_1)) = P([b_z](i_1,j_1)), Q([b_z](i_2,j_2)) = P([b_z](i_2,j_2))\}
\end{align*}
and \(T^\prime \in \mF_{\lambda \setminus X, n}\) be the unique tableau such that \(T^\prime = T_P\) on \((\lambda \setminus X)\), except on the interval of rows \(([b_z](1), [b_z](i_1))\), where \(T^\prime = T\), except \(T^\prime_{[b_z](i_1,j_1)} = u, ~ T^\prime_{[b_z](i_2,j_2)} = v\).
We then have the following.

\begin{corollary*}
\[
    \sum_{Q \in \left[ P \right]} Q(T) = (-1)^{i_1 - 2 + i_2 - 2} ~ %
    \ytableaushort{{\alpha^P_2}, {\alpha^P_1}} %
    \otimes T^\prime \mod F_2 \otimes \mR_{\lambda \setminus X, n}.
\]
\end{corollary*}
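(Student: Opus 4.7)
The plan is to reduce the two-orbit sum to two applications of the preceding one-orbit Lemma \ref{Generating Garnir Relations and Tools for Collapsing Sums subsection: Calculation Lemma for 1-paths}. The central observation is that, by the definition of an \(m\)-path, the evacuation route \(R^Q\) assigns at most one box to any given row of block \([b_z]\), so the rows of that block occupied by orbit 1 are disjoint from those occupied by orbit 2. Thus, within the interval \(([b_z](1),[b_z](i_1))\), the row-and-column data for the two orbits decouple, and each orbit individually looks like the setup of the 1-path lemma.

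Step 1 (decompose the sum): I would group the terms of \(\sum_{Q \in [P]} Q(T)\) by the complete trajectory (which rows and which columns) of orbit 2 inside the interval. For each fixed orbit 2 trajectory, the inner sum ranges over all valid orbit 1 trajectories compatible with that choice. Because the rows available to orbit 1 are simply those not used by orbit 2, this inner sum is exactly the setup of Lemma \ref{Generating Garnir Relations and Tools for Collapsing Sums subsection: Calculation Lemma for 1-paths} applied to orbit 1 alone, acting on the ``reduced'' shape obtained by freezing orbit 2's boxes.

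Step 2 (apply the one-orbit lemma twice): Lemma \ref{Generating Garnir Relations and Tools for Collapsing Sums subsection: Calculation Lemma for 1-paths} collapses the inner sum, modulo \(F_2 \otimes \mR_{\lambda \setminus X,n}\), to a single tableau in which \([b_z](i_1,j_1)\) now holds \(u_1\). Orbit 1 enters at row 1, but row 2 is blocked by orbit 2, so its effective span is \(i_1 - 1\) rows, producing a sign \((-1)^{i_1-2}\) from \(i_1 - 2\) iterated Garnir collapses. The remaining outer sum then ranges over orbit 2 trajectories from row 2 up to row \(i_2\); another application of Lemma \ref{Generating Garnir Relations and Tools for Collapsing Sums subsection: Calculation Lemma for 1-paths} to orbit 2 collapses it to a single tableau with sign \((-1)^{i_2-2}\) (orbit 2 spans \(i_2 - 1\) rows). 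Multiplying yields the claimed factor \((-1)^{i_1-2+i_2-2}\), and the resulting tableau is exactly \(T'\).

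The main obstacle is justifying the first application of Lemma \ref{Generating Garnir Relations and Tools for Collapsing Sums subsection: Calculation Lemma for 1-paths} in Step 2: the hooks used in its proof (full rows together with an anchor box in an adjacent row) straddle two adjacent rows, and the anchor row may already be occupied by orbit 2 in a fixed column. I expect to resolve this by choosing the anchor box in a column not taken by orbit 2, invoking Lemma \ref{Generating Garnir Relations and Tools for Collapsing Sums subsection: Simplifying a Garnir} to rewrite the Garnir as a sum of transpositions that manifestly avoid orbit 2's box. A secondary bookkeeping issue is that the column positions of orbit 1's boxes are canonicalized row-by-row in Step 2, and when we pass to orbit 2 in Step 3, orbit 1's now-canonical boxes must be treated as inert; this mirrors the role that orbit 2 played in Step 2, so the same argument applies symmetrically.
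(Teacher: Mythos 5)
Your high-level plan — collapse the two-orbit sum by iterating the one-orbit Lemma \ref{Generating Garnir Relations and Tools for Collapsing Sums subsection: Calculation Lemma for 1-paths} once per orbit, with a decomposition that groups by one orbit's trajectory and freezes it — is exactly the paper's strategy. The gap is in the row accounting. By the definition of an \(m\)-path, while two orbits are both active inside block \([b_z]\) they must \emph{alternate} rows: \(P\) preserves row order within blocks and the first \(k\) rows of a block lie in \(k\) distinct orbits, which forces (in this corollary's setup, where \(u_1\) enters at row \(1\) and \(u_2\) at row \(2\)) orbit \(2\) to occupy rows \(2,4,\dots,i_2\) and orbit \(1\) to occupy the complement. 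So orbit \(2\) does not occupy ``only row 2,'' nor a consecutive interval \(2,\dots,i_2\) — the two pictures you use in different sentences are mutually inconsistent and neither matches the definitions. Correspondingly orbit \(1\) does not span \(i_1 - 1\) rows and orbit \(2\) does not span \(i_2 - 1\) rows; their true spans \(r_1, r_2\) satisfy \(r_1 + r_2 = i_1\), and the collapse genuinely produces \((-1)^{r_1-1}(-1)^{r_2-1} = (-1)^{i_1-2}\). Your quoted factors \((-1)^{i_1-2}\) and \((-1)^{i_2-2}\) only multiply out to the right answer because the interlacing constraint forces \(i_2\) to be even in this setup, so \((-1)^{i_1+i_2-4} = (-1)^{i_1-2}\) and your two mis-counts cancel invisibly. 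The paper's own bookkeeping encodes the interleaving differently: orbit \(1\) ``skips row \(i_2\)'' and orbit \(2\) ``skips row \(i_2-1\),'' not row \(2\). A correct argument would need to track the actual interleaved trajectories rather than assuming one orbit sits in a single row or a consecutive block of rows.

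Your secondary worry about Garnir anchors colliding with orbit \(2\)'s boxes is not a real obstacle. An evacuation route contains at most one box per row, so the two orbits occupy disjoint rows of \([b_z]\), and when you collapse orbit \(1\) the required hooks are simply non-adjacent hooks of the form (full orbit-\(1\) row) \(\cup\) (single box in the next orbit-\(1\) row); Lemma \ref{Generating Garnir Relations and Tools for Collapsing Sums subsection: Simplifying a Garnir} already treats such non-adjacent hooks, since it only requires \(A=[b](r)\cup\{b_0\}\) with \(b_0\) any box outside that row, not an adjacent one.
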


\begin{proof}
Apply the techniques from the proof of Lemma \ref{Generating Garnir Relations and Tools for Collapsing Sums subsection: Calculation Lemma for 1-paths} to get a sum of tableaux with \(u_1\) in the box \([b_z](i_1,j_1)\), skipping row \([b_z](i_2)\), then apply the techniques again to get \(u_2\) in the box \([b_z](i_2,j_2)\), skipping row \([b_z](i_2 - 1)\).
\end{proof}

\subsection{} \label{Generating Garnir Relations and Tools for Collapsing Sums subsection: Calculation Lemma for m-paths}

The same technique used above immediately generalizes to sums of \(m\)-path extensions.
Fix \(m > 2\). 
For any \(T \in \mF_{\lambda,n}\), let 
\[
    X = \{x_1 = [b_1](1,w_{b_1}), \ldots, x_m = [b_m](i_m,w_{b_m})\}
\]
be a removal set and
\[
    z_k = T_{[b_z](i_k,j_k)} \quad \text{ for } 1 \le k \le m
\]
for some \(1 \le b_1 \le b_z \le N\), \(1 \le i_m < \cdots < i_1 \le h_{b_z}\), and \(1 \le j_k \le w_{b_z}\) for \(1 \le k \le m\). 
Let
\[
    u_k = T_{[b_{u_k}](i_{u_k},j_{u_k})} \quad \text{ for } 1 \le k \le m
\]
for some \(b_1 \le b_{u_k} \le b_z\), \(1 \le i_{u_k} \le h_{b_{u_k}}\), \(1 \le j_{u_k} \le w_{b_{u_k}}\). 
If \(b_{u_k} = b_{u_l}\) for \(k \ne l\), then we also assume that \(i_{u_k} \ne i_{u_l}\). 
Let \(P\) be any \(m\)-path on \(\lambda\) such that, for \(1 \le k \le m\),
\[
    P([b_{u_k}](i_{u_k},j_{u_k})) \in [b_z](k)
\]
and
\[
    P([b_z](i_k,j_k)) > [b_z](i_1).  
\]
Assume, without loss of generality, that \(P([b_z](i_k,j_k)) \not \in Y\) for \(1 \le k \le m\). Let
\begin{align*}
    \left[ P \right] %
        = & \{m \text{-paths } Q \text{ on } \lambda  :  Q \text{ is a } ([b_z](1),[b_z](i_1)) \text{-path extension of } P \\ %
        & \text{ such that } Q([b_z](i_k,j_k)) = P([b_z](i_k,j_k)) \text{ for } 1 \le k \le m\}
\end{align*}
and \(T^\prime \in \mF_{\lambda \setminus X, n}\) be the unique tableau such that \(T^\prime = T_P \text{ on } (\lambda \setminus X)\) except on \(([b_z](1), [b_z](i_2))\), where \(T^\prime = T\) except \(T^\prime_{[b_z](i_k,j_k)} = u_k\) for \(1 \le k \le m\).
We then have the following.

\begin{corollary*}
\[
    \sum_{Q \in \left[ P \right]} Q(T) = (-1)^{i_1 - m + \cdots + i_m - m} ~ %
    \begin{ytableau} \alpha^P_m \\ \vdots \\ \alpha^P_1 \end{ytableau} %
    \otimes T^\prime \mod F_m \otimes \mR_{\lambda \setminus X, n}
\]
\end{corollary*}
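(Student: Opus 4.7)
The plan is to iterate the argument of Lemma \ref{Generating Garnir Relations and Tools for Collapsing Sums subsection: Calculation Lemma for 1-paths} a total of $m$ times, once per orbit, in direct generalization of the proof of Corollary \ref{Generating Garnir Relations and Tools for Collapsing Sums subsection: Calculation Lemma for 2-paths}. At the $k$-th pass (for $k = 1, \ldots, m$) we focus on the orbit of $u_k$, collapsing the sum over all placements of this orbit within the row interval $([b_z](k), [b_z](i_k))$ and producing a sign of $(-1)^{i_k - m}$.

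For the first pass, partition $[P]$ according to the rows that orbits $2, \ldots, m$ occupy inside the interval $([b_z](1), [b_z](i_1))$. On each sub-family, $u_1$ must travel from row $[b_z](1)$ to row $[b_z](i_1)$ by a sequence of Garnir swaps exactly as in the 1-path proof, except that the $m-1$ rows $[b_z](i_2), \ldots, [b_z](i_m)$ — which are destined to host the other orbits and cannot be entered by the orbit of $u_1$ under any extension in $[P]$ — must be skipped. Each skip is realized by a Garnir relation on a hook that spans two non-adjacent rows, and each such relation lies in $\mR$ by Lemma \ref{Generating Garnir Relations and Tools for Collapsing Sums subsection: Hooks Lemma}; each Garnir-swap then contributes a factor of $-1$ via Lemma \ref{Generating Garnir Relations and Tools for Collapsing Sums subsection: Simplifying a Garnir}. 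The first pass thus requires $(i_1 - 1) - (m - 1) = i_1 - m$ swaps and produces the sign $(-1)^{i_1 - m}$.

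Iterating for $k = 2, \ldots, m$: at the $k$-th pass the positions of $u_1, \ldots, u_{k-1}$ have already been fixed at their targets, and we collapse the orbit of $u_k$ from row $[b_z](k)$ up to row $[b_z](i_k)$, skipping the $m - k$ rows $[b_z](i_{k+1}), \ldots, [b_z](i_m)$. By the same reasoning, this pass contributes $(i_k - k) - (m - k) = i_k - m$ swaps, yielding the sign $(-1)^{i_k - m}$. Multiplying the signs across all $m$ passes gives the total sign $(-1)^{(i_1 - m) + \cdots + (i_m - m)}$, matching the claim.

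The principal obstacle is legitimizing the non-standard Garnir relations needed to skip across the rows reserved for the other orbits. These span several rows and are not among the hooks of Section \ref{Constructing Schur--Weyl Modules subsection: Garnirs}, but Lemma \ref{Generating Garnir Relations and Tools for Collapsing Sums subsection: Hooks Lemma} reduces them to sums of standard-hook Garnirs modulo $\mR$, so their application is justified. The second subtlety is the combinatorial bookkeeping of how $[P]$ decomposes at each pass; fortunately, the interlacing rule in the definition of an $m$-path — that distinct orbits occupy distinct rows within block $[b_z]$ — makes the skipped rows unambiguous at every stage, so the iteration reduces cleanly to repeating the $m = 1$ argument $m$ times.
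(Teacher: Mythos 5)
Your proof takes essentially the same approach as the paper's: iterate the collapsing argument of Lemma \ref{Generating Garnir Relations and Tools for Collapsing Sums subsection: Calculation Lemma for 1-paths} once per orbit, skipping the rows reserved for other orbits at each pass, and accumulate a sign of $-1$ per Garnir swap. Your per-pass sign tally $(i_k - k) - (m - k) = i_k - m$ agrees with the paper's $(i_k - 1) - (m - 1) = i_k - m$ (the paper's proof enumerates the skipped rows slightly differently, but the net count is the same), and multiplying over $k$ gives the claimed total sign. One small overcomplication: you appeal to Lemma \ref{Generating Garnir Relations and Tools for Collapsing Sums subsection: Hooks Lemma} to legitimize Garnir operators whose sets span non-adjacent rows. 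This is unnecessary — by the defining relation \eqref{Constructing Schur--Weyl Modules subsection: The Construction equation: R2}, any $G_A(T)$ with $|A| > w_A$ already lies in $\mR_{\lambda \setminus X, n}$, with no reduction to standard hooks required; Lemma \ref{Generating Garnir Relations and Tools for Collapsing Sums subsection: Hooks Lemma} is used elsewhere (to show it suffices to verify equivariance on hook Garnirs), not here. Aside from that bookkeeping detail, your argument and the paper's coincide.
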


\begin{proof}
Assume, without loss of generality, that 
\begin{align*}
    i_1 &> i_2 \\
    i_2 &> i_3 + 1 \\
        \vdots & \\
    i_{m-1} &> i_m + (m-1) - 1.\\   
\end{align*}
Otherwise, the following goes through by skipping the appropriate rows. 
Apply the techniques from the proof of Lemma \ref{Generating Garnir Relations and Tools for Collapsing Sums subsection: Calculation Lemma for 1-paths} to get a sum of tableaux with \(u_1\) in the box \([b_z](i_1,j_1)\), skipping rows \(i_m^z, \ldots, i_2^z\). 
Then iterate the techniques again to get \(u_k\) in the box \([b_z](i_k,j_k)\), skipping rows \([b_z](i_m), \ldots, [b_z](i_{k+1})\) and rows \([b_z](i_k - 1), \ldots [b_z](i_k - (k - 1))\).
\end{proof}

\section{Showing the Pieri Inclusion Removing One Box is a \(GL(V)\)-map} \label{section: Showing the Pieri Inclusion Removing One Box is a GL(V)-map}

\subsection{} \label{Showing the Pieri Inclusion Removing One Box is a GL(V)-map subsection: The Theorem}

For all of Section \ref{section: Showing the Pieri Inclusion Removing One Box is a GL(V)-map}, fix \(X = \{x_1 := [b_1](1,w_{b_1})\} \subset T_0\) to be removed. 
Let
\[
    \Phi_1 : \mF_{\lambda,n} \to V \otimes \mF_{\lambda \setminus X, n} 
\]
be as in Section \ref{Constructing the Pieri Inclusion for Removing One Box subsection: Defining Phi One Box Removal}.

\begin{theorem*}
    \(\Phi_1\) is a \(\GL(V)\)-map, i.e. \(\Phi_1\) descends to  
    \[
        \Phi_1 : \bS_{\lambda} (V) \to F_1 \otimes \bS_{\lambda \setminus X} (V)
    \]
    and \(\Phi_1\) is \(\GL(V)\)-equivaraint.
\end{theorem*}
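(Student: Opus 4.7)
The plan is to split the statement into (i) well-definedness of $\Phi_1$ as a map on the quotient $\bS_\lambda(V) \to V \otimes \bS_{\lambda\setminus X}(V)$, and (ii) $\GL(V)$-equivariance. Part (ii) is essentially automatic once (i) is known: the formula $\Phi_1(T) = \sum_P \frac{(-1)^P}{H(P)} P(T)$ depends only on the combinatorial path data and transports entries formally, so $\Phi_1$ is the restriction to $C_\lambda \cdot V^{\otimes d}$ of a $\GL(V)$-equivariant linear combination of tensor-factor permutations on $V^{\otimes d}$. The substance therefore lies in (i), namely $\Phi_1(\mR_{\lambda,n}) \subset V \otimes \mR_{\lambda\setminus X,n}$. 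Since $\mR_{\lambda,n}$ is generated by row-permutation differences and Garnir operators $G_A(T)$ with $|A| > w_A$, and since the main theorem of Section 4.1 reduces all Garnirs to Garnirs over hooks, it suffices to verify (a) row-permutation invariance and (b) annihilation of hook Garnirs.

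For (a), I would swap two entries in a single row $[b](r)$ of $T$, producing $T'$, and pair $1$-paths as follows. Paths whose evacuation route $R^P$ avoids $[b](r)$ contribute terms $P(T) - P(T')$ that differ only by a row swap in the target, hence lie in $V \otimes \mR_{\lambda\setminus X,n}$. A path $P$ whose route hits $[b](r)$ in column $j_1$ is paired with the path $P'$ that agrees with $P$ on every other row but hits $[b](r)$ in column $j_2$; an entry-chasing computation shows that $P(T')$ equals $P'(T)$ up to a row permutation in the target, while $(-1)^P = (-1)^{P'}$ and $H(P) = H(P')$, since both depend only on the rows (not the columns) in $R^P$.

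For (b), fix a hook $A = [b](r) \cup \{a_0\}$. By the Simplifying a Garnir Lemma of Section 4.4, $G_A(T) \equiv w_b! \sum_{a \in A} \tau_{a,a_0}(T)$ modulo $\mR_{\lambda,n}$, and interchanging summations yields
\[
    \Phi_1(G_A(T)) \equiv w_b! \sum_P \frac{(-1)^P}{H(P)} \sum_{a \in A} P(\tau_{a,a_0} T) \pmod{V \otimes \mR_{\lambda\setminus X,n}}.
\]
I would analyze the inner sum path by path. When $R^P$ is disjoint from $[b](r)$, the transpositions are transported rigidly and the inner sum assembles into a Garnir operator applied to $T_P$, which lies in $V \otimes \mR_{\lambda\setminus X,n}$. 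When $R^P$ meets $[b](r)$, one must group $P$ together with its path-extensions on the interval of rows around $[b](r)$ and invoke the Calculation Lemma for $1$-paths of Section 4.8 to collapse the extension sum into a single tableau times a rational coefficient; the denominators $H(P) = \prod_b H_b(P)$ are designed precisely so that, after re-summing over $a \in A$, the hook-length coefficients telescope through the identity $H(b) = H(b-1) + h(b)$, producing either cancellation or a further Garnir expression in the target. This last block-by-block telescoping, showing that the hook-length coefficients conspire to rewrite $\Phi_1(G_A(T))$ as a sum of Garnir relations in the target, is the principal technical obstacle of the proof.
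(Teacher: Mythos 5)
Your proposal follows the same overall plan as the paper---reduce to hook Garnirs via Theorem 4.1, expand $G_A(T)$ via the Simplifying a Garnir Lemma of 4.4, group paths by extension equivalence, and invoke the Calculation Lemma of 4.8 to collapse sums and telescope hook lengths. The equivariance argument is likewise handled informally in the paper (via simple root vectors rather than your ``linear combination of tensor-factor permutations,'' but the spirit is the same), and the row-permutation step (a) is asserted to be clear in the paper; your pairing sketch is a plausible filling.

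The gap is in the case analysis for (b). You split by whether $R^P$ is disjoint from the top row $[b](r)$ of the hook, but the correct split is by whether $R^P$ meets the whole hook $A = [b](r) \cup \{a_0\}$, and then by whether $P$ pushes an entry of $A$ out of the block(s). A route can hit $a_0$ while missing $[b](r)$ entirely (routes only need to fill an initial segment of a block's rows), and for such a path $P$ the entry at $a_0$ is moved out of the block, so $P$ does \emph{not} commute with the transpositions $\tau_{a,a_0}$: your claimed Garnir assembly fails for exactly those paths. These paths belong to the third class $\sT_3$ in the paper's partition, which is precisely where path extensions and the Calculation Lemma are needed. Conversely, your ``meets $[b](r)$'' class also absorbs paths that merely shuffle $A$ inside the block ($\sT_2$ in the paper); for those, no path extensions or telescoping are needed at all---conjugating the transpositions through $P$, i.e., $\tau_k^A = P \sigma_k^A P^{-1}$, assembles the sum directly into $G_{P(A)}(T_P)$, a Garnir in the target. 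And at the end of the genuine telescoping step (paper's $\sT_3$, your ``principal technical obstacle''), the hook-length coefficients conspire to give a \emph{zero} total coefficient, not ``a further Garnir expression''; the Garnir outcomes arise only in $\sT_1$ and $\sT_2$. With the trichotomy $R^P\cap A=\emptyset$ / $P(A)\le[b]$ (resp.\ $\le[b+1]$) / otherwise in place, the remaining details do match the paper's computation.
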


For each simple root vector \(\alpha_i\) with respect the standard Cartan subalgebra, the action of \(e_{\alpha_i}\) on a tableau \(T\) generates a sum of tableau \(\widetilde T\) where each entry \(i\) in \(T\) is replaced by an \(i + 1\).
Similarly, for each \(e_{-\alpha_i}\), where each entry \(i\) in \(T\) is replaced by an \(i - 1\).
As \(\Phi_1\) is a sum over \(1\)-paths that move entries up the diagram, acting with \(e_{\alpha_i}\) and applying \(\Phi_1\) to the sum is the same as the opposite order.
As the simple root vectors generate \(\gl(V)\), \(\Phi_1\) is \(\gl(V)\)-equivariant.

To prove Theorem \ref{Showing the Pieri Inclusion Removing One Box is a GL(V)-map subsection: The Theorem}, it remains to show that
\[
    \Phi_1(\mR_{\lambda, n}) \subset F_1 \otimes \mR_{\lambda \setminus X, n}.
\]
It is clear that \(\Phi_1\) preserves Property \ref{Constructing Schur--Weyl Modules subsection: The Construction equation: R1} as it is a sum over all \(1\)-paths, and hence we must show that Property \ref{Constructing Schur--Weyl Modules subsection: The Construction equation: R2} holds, i.e. for all \(T \in \mF_{\lambda,n}\) and all \(A \subset T_0\) with \(\vert A \vert > w_A\),
\begin{equation} \label{Showing the Pieri Inclusion Removing One Box is a GL(V)-map subsection: The Theorem equation: Phi Preserves Garnirs One Box}
    \Phi_1(G_A (T)) \in F_1 \otimes \mR_{\lambda \setminus X, n}.
\end{equation}

By Theorem \ref{Generating Garnir Relations and Tools for Collapsing Sums subsection: One Garnir Theorem}, it is enough to show that Equation \ref{Showing the Pieri Inclusion Removing One Box is a GL(V)-map subsection: The Theorem equation: Phi Preserves Garnirs One Box} holds for all hooks \(A\). 
If \(A\) is a hook, either \(A\) is completely contained in a block \(b\), with \(1 \leq b \leq N\), or \(A\) is contained in two blocks, \(b\) and \(b + 1\), with \(1 \leq b \leq N-1\). 
We consider these two options separately. 

\begin{center}
    \begin{tikzpicture}
        \draw[thick] (0,.5) -- (0,3.5);
        \draw[thick] (4,.5) -- (4,3.5);
        \draw[thick, dashed] (0,0) -- (0,4);
        \draw[thick, dashed] (4,0) -- (4,4);
        \draw[] (0,3) -- (4,3);
        \draw[] (0,2) -- (4,2);
        \draw[] (0,1) -- (1,1);
        \draw[] (1,1) -- (1,3);
        \draw[] (3,2) -- (3,3);
        \node at (.5,1.5) {\small \(A_0\)};
        \node at (.5,2.5) {\small \(A_1\)};
        \node at (2,2.5) {\(\cdots\)};
        \node at (3.5,2.5) {\small \(A_{w_b}\)};
        \node at (2,-.5) {\(A \subset [b]\).};
    \end{tikzpicture}
    \qquad \qquad
    \begin{tikzpicture}
        \draw[thick] (0,.5) -- (0,3.5);
        \draw[thick] (2,.5) -- (2,2);
        \draw[thick] (4,2) -- (4,3.5);
        \draw[thick, dashed] (0,0) -- (0,4);
        \draw[thick, dashed] (2,0) -- (2,.5);
        \draw[thick,dashed] (4,3.5) -- (4,4);
        \draw[] (0,3) -- (4,3);
        \draw[] (0,2) -- (4,2);
        \draw[] (0,1) -- (1,1);
        \draw[] (1,1) -- (1,3);
        \draw[] (3,2) -- (3,3);
        \node at (.5,1.5) {\small \(A_0\)};
        \node at (.5,2.5) {\small \(A_1\)};
        \node at (2,2.5) {\(\cdots\)};
        \node at (3.5,2.5) {\small \(A_{w_{b + 1}}\)};
        \node at (2,-.5) {\(A \subset [b] \cup [b + 1]\).};
    \end{tikzpicture}
\end{center}

\subsection{One Box Removal Preserves Garnirs for Hooks Contained in a Single Block} \label{Showing the Pieri Inclusion Removing One Box is a GL(V)-map subsection: One Box Removal Preserves Garnirs for Hooks Contained in a Single Block}

We first show that Equation \ref{Showing the Pieri Inclusion Removing One Box is a GL(V)-map subsection: The Theorem equation: Phi Preserves Garnirs One Box} holds for all hooks \(A \subset [b]\), for some \(1 \le b \le N\).
For the rest of Section \ref{Showing the Pieri Inclusion Removing One Box is a GL(V)-map subsection: One Box Removal Preserves Garnirs for Hooks Contained in a Single Block}, fix \(T \in \mF_{\lambda, n}\) and
\[
    A = \{a_0 := [b](i_0,1), a_1 := [b](i_0 + 1,1), \ldots, a_{w_b} := [b](i_0 + 1,w_b)\} \subset T_0    
\]
with \(1 \le i_0 < h_b\), so that \(A \subset [b]\). 
Denote the entries of \(A\) in \(T\) by \(A_k = T_{a_k}\) for \(k = 0, 1, \ldots, w_b\). 
Then by Lemma \ref{Generating Garnir Relations and Tools for Collapsing Sums subsection: Simplifying a Garnir}, mod \(F_1 \otimes \mR_{\lambda \setminus X, n}\) we have 
\[
    \Phi_1\left( G_A (T) \right) 
        = \sum_{P} \frac{(-1)^{P}}{H(P)} P\left(\sum_{\sigma \in S_A} \sigma T \right)
        = C \sum_{P} \sum_{k = 0}^{w_b} \frac{(-1)^{P}}{H(P)} P\left(\sigma^A_k T \right),
\]
where the sum is over all \(1\)-paths \(P\) on \(\lambda\) removing \(X\).
The set of all \(P_k := P(\sigma^A_k T)\) appearing in the image \(\Phi_1\left(G_A (T) \right)\) above is the union of the following disjoint sets.\\

The \(P_k\)s that miss \(A\),
\begin{equation} \label{Showing the Pieri Inclusion Removing One Box is a GL(V)-map subsection: One Box Removal Preserves Garnirs for Hooks Contained in a Single Block equation: T1}
    \sT_1 = \{P_k  :  R^P \cap A = \emptyset\}.\\
\end{equation}

The \(P_k\)s that hit \(A\) and keep \(A\) in block \(b\),
\begin{equation} \label{Showing the Pieri Inclusion Removing One Box is a GL(V)-map subsection: One Box Removal Preserves Garnirs for Hooks Contained in a Single Block equation: T2}
    \sT_2 = \{P_k  :  R^P \cap A \ne \emptyset, ~ P(A) \le [b]\}.\\
\end{equation}

The \(P_k\)s that hit \(A\) and move the entry \(A_i\) above block \(b\), including \(P(\sigma_k^A A_i) \in Y\),
\begin{equation} \label{Showing the Pieri Inclusion Removing One Box is a GL(V)-map subsection: One Box Removal Preserves Garnirs for Hooks Contained in a Single Block equation: T3}
    \sT_3 = \bigsqcup_{i = 0}^{w_b} \sT_3^i, \quad \text{ where } \quad \sT_3^i = \{P_k :  R^P \cap A \ne \emptyset, ~ P(\sigma^A_k A_i) > [b]\}.\\
\end{equation}

We then have, mod \(F_1 \otimes \mR_{\lambda \setminus X, n}\),
\[
    \Phi_1(G_A(T)) 
	    = C \sum_{P} \sum_{k = 0}^{w_b} \frac{(-1)^{P}}{H(P)} P\left(\sigma^A_k T \right)
	    = C \sum_{j = 1}^{3} \sum_{P_k \in \sT_j} \frac{(-1)^{P}}{H(P)} P_k.
\]
We show that for each of the cases (\ref{Showing the Pieri Inclusion Removing One Box is a GL(V)-map subsection: One Box Removal Preserves Garnirs for Hooks Contained in a Single Block equation: T1}) - (\ref{Showing the Pieri Inclusion Removing One Box is a GL(V)-map subsection: One Box Removal Preserves Garnirs for Hooks Contained in a Single Block equation: T3}), 
\[
    \sum_{P_k \in T_j} \frac{(-1)^{P}}{H(P)} P_k \in F_1 \otimes \mR_{\lambda \setminus X,n},
\]
and hence Equation \ref{Showing the Pieri Inclusion Removing One Box is a GL(V)-map subsection: The Theorem equation: Phi Preserves Garnirs One Box} holds for all hooks \(A\) with \(A \subset [b]\).\\

\begin{case*}[\ref{Showing the Pieri Inclusion Removing One Box is a GL(V)-map subsection: One Box Removal Preserves Garnirs for Hooks Contained in a Single Block equation: T1}]
In this case we show that the sum over all paths that miss \(A\) is in \(F_1 \otimes \mR_{\lambda \setminus X,n}\), i.e.
\[
    \sum_{P_k \in \sT_1} \frac{(-1)^{P}}{H(P)} P_k \in F_1 \otimes \mR_{\lambda \setminus X,n}.
\]
\end{case*}

\begin{proof}
As \(P\) misses \(A\) for all \(P_k \in \sT_1\), \(P\vert_A = \text{id}_A\), and thus 
\[
    P\left( \sigma^A_k T \right) = Y_P \otimes \sigma^A_k T_P \text{ for all } 0 \le k \le w_b.    
\]
Then we have, mod \(F_1 \otimes \mR_{\lambda \setminus X, n}\), 
\begin{align*}
    \sum_{P_k \in \sT_1} \frac{(-1)^{P}}{H(P)} P_k %
    	& = \sum_{P_0 \in \sT_1} \sum_{k = 0}^{w_b} \frac{(-1)^{P}}{H(P)} P\left(\sigma^A_k T\right)\\
        & = \sum_{P_0 \in \sT_1} \sum_{k = 0}^{w_b} \frac{(-1)^{P}}{H(P)} \ytableaushort{{\alpha^P_1}}%
        \otimes \sigma^A_k T_P\\
        & = \sum_{P_0 \in \sT_1} \dfrac{1}{C_A} \frac{(-1)^{P}}{H(P)} \ytableaushort{{\alpha^P_1}} %
        \otimes G_A(T_P)\\
        & = 0.
\end{align*}
\end{proof}

\begin{case*}[\ref{Showing the Pieri Inclusion Removing One Box is a GL(V)-map subsection: One Box Removal Preserves Garnirs for Hooks Contained in a Single Block equation: T2}]
In this case we show that the sum over all paths that hit \(A\) and keep \(A\) in block \(b\) is in \(F_1 \otimes \mR_{\lambda \setminus X,n}\), i.e.
\[
    \sum_{P_k \in \sT_2} \frac{(-1)^{P}}{H(P)} P_k \in F_1 \otimes \mR_{\lambda \setminus X,n}.
\]
\end{case*}

\begin{proof}
For a 1-path \(P\), let \(P^{-1}\) be the unique map of boxes 
\[
    P^{-1}: \lambda \cup Y \to \lambda \cup Y
\]
such that for all \(x \in \lambda \cup Y\), \(P^{-1} (P(x)) = x\). 
For all \(k = 0, 1, \ldots, w_b\), let 
\[
    \tau^A_k := P \sigma^A_k P^{-1} \in S_{P(A)},
\]
so that \(\tau^A_k\) permutes \(P(a_0)\) and \(P(a_k)\) and is the identity otherwise. 
Extend \(\tau^A_k\) to act on the entries of \(T_P\). 
Then, mod \(F_1 \otimes \mR_{\lambda \setminus X, n}\), 
\begin{align*}
	\sum_{P_k \in \sT_2} \frac{(-1)^{P}}{H(P)} P_k %
    	& = \sum_{P_0 \in \sT_2} \sum_{k=0}^{w_b} \frac{(-1)^{P}}{H(P)} P\left( \sigma^A_k T \right)\\
    	& = \sum_{P_0 \in \sT_2} \sum_{k=0}^{w_b} \frac{(-1)^{P}}{H(P)} P\left( \sigma^A_k P^{-1} \left( P (T)\right) \right)\\
        & = \sum_{P_0 \in \sT_2} \sum_{k=0}^{w_b} \frac{(-1)^{P}}{H(P)} \ytableaushort{{\alpha^P_1}} \otimes \tau^A_k T_P.
\end{align*}
Then as \(P(A) \subset [b]\) and \(\vert P(A) \vert = w_b + 1\), by the proof of Lemma \ref{Generating Garnir Relations and Tools for Collapsing Sums subsection: Simplifying a Garnir} we have, mod \(F_1 \otimes \mR_{\lambda \setminus X, n}\), 
\[
    \sum_{P_0 \in \sT_2} \sum_{k=0}^{w_b} \frac{(-1)^{P}}{H(P)} \ytableaushort{{\alpha^P_1}} \otimes \tau^A_k T_P 
    = \sum_{P_0 \in \sT_2} \dfrac{1}{C_A} \frac{(-1)^{P}}{H(P)} \ytableaushort{{\alpha^P_1}} \otimes G_{P(A)} T_P
    = 0.
\]
\end{proof}

\begin{remark*}
    Notice that the proofs of Case (\ref{Showing the Pieri Inclusion Removing One Box is a GL(V)-map subsection: One Box Removal Preserves Garnirs for Hooks Contained in a Single Block equation: T1}) and Case (\ref{Showing the Pieri Inclusion Removing One Box is a GL(V)-map subsection: One Box Removal Preserves Garnirs for Hooks Contained in a Single Block equation: T2}) did not depend on removing a single box nor on \(A\) being contained in a single block, and so this will generalize to \(m \ge 1\) for both options of a hook \(A\).
\end{remark*}

\begin{case*}[\ref{Showing the Pieri Inclusion Removing One Box is a GL(V)-map subsection: One Box Removal Preserves Garnirs for Hooks Contained in a Single Block equation: T3}]
In this case we show that the sum over all paths that hit \(A\) and move the entry \(A_i\) above block \(b\) is in \(F_1 \otimes \mR_{\lambda \setminus X,n}\).
We will assume that \(b > b_1\), as the case \(b \le b_1\) can be treated similarly. 
It is enough to show that for each \(i = 0,\ldots,w_b\),
\[
    \sum_{P_k \in \sT_3^i} \frac{(-1)^{P}}{H(P)} P_k \in F_1 \otimes \mR_{\lambda \setminus X,n}.
\]
We will show the case \(i = 0\), with the cases \(i = 1, \ldots, w_b\) being similar.
\end{case*}

\begin{proof}
For the rest of Case (\ref{Showing the Pieri Inclusion Removing One Box is a GL(V)-map subsection: One Box Removal Preserves Garnirs for Hooks Contained in a Single Block equation: T3}) let \(\sT := \sT_3^0\) and, for any \(1\)-path \(P\), let \(\tilde{h^P} := h^P - h_b^P\). Define the relation \(\sim\) on \(\sT\) by 
\[
    P_k \sim Q_j \iff Q \text{ is a } ([b](1),[b](i_0 + 1)) \text{-path extension of } P.
\]
It is clear that this defines an equivalence relation on \(\sT\), so that %
\[
    \sum_{P_k \in \sT} \frac{(-1)^{P}}{H(P)} P_k = \sum_{\left[P_k\right] \in \sT / \sim} ~ \sum_{Q_k \in %
    \left[P_k\right]} \frac{(-1)^{Q}}{H(Q)} Q_k.
\]
Pick \(P_0 \in \sT\) with \([b](i,1) \in R^P\) for all \(i=1, \ldots, i_0\), and let \([b_u](i_u,j_u) = P^{-1}\left([b](1,1)\right)\), with \(u := T_{[b_u](i_u,j_u)}\).

\begin{center}
\begin{tikzpicture}
    \node at (-1,1.5) {\(P_0 = \)};
    
    \draw (0,0) rectangle (2,3);
    \draw (0,-.25) -- (0,0);
    \draw (0,3) -- (0,3.25);
    \draw (1.5,-.25) -- (1.5,0);
    \draw (2,3) -- (2.5,3) -- (2.5,3.25);
    
    \draw (0,1.5) -- (.5,1.5);
    \draw (.5,0) -- (.5,2);
    \node at (.25,1.75) {\small \(A_0\)};
    
    \draw (0,2) rectangle (.5,2.5);
    \node at (.25,2.25) {\small \(A_1\)};
    \draw (.5,2) rectangle (1.5,2.5);
    \node at (1,2.25) {\small \(\cdots\)};
    \draw (1.5,2) rectangle (2,2.5);
    \node at (1.75,2.25) {\small \(A_w\)};
    
    \node at (1,-.5) {\small \(u\)};
    
    \draw[blue] (.85,-.5) to [out = 90, in = 0] (.2,.2); 
    \draw[blue] (.2,.2) to [out = 200, in = 180] (.2,.4);
    \draw[blue] (.2,.4) to [out = 200, in = 180] (.2,.6);
    \draw[blue] (.2,.6) to [out = 200, in = 180] (.2,.8);
    \draw[blue] (.2,.8) to [out = 200, in = 180] (.2,1);
    \draw[blue] (.2,1) to [out = 200, in = 180] (.2,1.2);
    \draw[blue] (.2,1.2) to [out = 200, in = 180] (.2,1.4);
    \draw[blue] (.2,1.4) to [out = 200, in = 180] (.1,1.75); 
    \draw[blue, ->] (.1,1.75) to [out = 180, in = 270] (-.5,3); 
\end{tikzpicture}
\end{center}

It is then enough to show that
\[
    \sum_{Q_k \in \left[ P_0 \right]} \frac{(-1)^{Q}}{H(Q)} Q_k \in F_1 \otimes \mR_{\lambda \setminus X,n}.
\]
In fact, as \(\tilde{h^Q} = \tilde{h^P}\) and \(H(Q) = H(P)\) for all \(Q_k \in \left[P_0\right]\), it is enough to show that
\[
    \sum_{Q_k \in [P_0]} (-1)^{h^Q_b} Q_k \in F_1 \otimes \mR_{\lambda \setminus X, n}.
\]

Observe that \([P_0]\) can be written as the disjoint union%
\[
    [P_0] = \bigsqcup_{i = 1}^3 [P_0]_i,
\]
where the \([P_0]_i\) are defined as follows.\\

The paths acting on \(\sigma_0^A T\),
\begin{equation*}
    \left[P_0\right]_1 = \{Q_0 \in \left[P_0\right]\},
\end{equation*}
\begin{center}
    \begin{tikzpicture}
        \draw (0,0) rectangle (2,3);
        \draw (0,-.25) -- (0,0);
        \draw (0,3) -- (0,3.25);
        \draw (1.5,-.25) -- (1.5,0);
        \draw (2,3) -- (2.5,3) -- (2.5,3.25);
        
        \draw (0,1.5) rectangle (.5,2);
        \node at (.25,1.75) {\small \(A_0\)};
        
        \draw (0,2) rectangle (.5,2.5);
        \node at (.25,2.25) {\small \(A_1\)};
        \draw (.5,2) rectangle (1.5,2.5);
        \node at (1,2.25) {\small \(\cdots\)};
        \draw (1.5,2) rectangle (2,2.5);
        \node at (1.75,2.25) {\small \(A_w\)};
        
        \node at (1,-.5) {\small \(u\)};
        
        \draw[blue] (.85,-.5) to [out = 200, in = 180] (1,.2); 
        \draw[blue] (1,.2) to [out = 200, in = 180] (1,.4);
        \draw[blue] (1,.4) to [out = 200, in = 180] (1,.6);
        \draw[blue] (1,.6) to [out = 200, in = 180] (1,.8);
        \draw[blue] (1,.8) to [out = 200, in = 180] (1,1);
        \draw[blue] (1,1) to [out = 200, in = 180] (1,1.2);
        \draw[blue] (1,1.2) to [out = 200, in = 180] (1,1.4);
        \draw[blue] (1,1.4) to [out = 90, in = 0] (.4,1.75); 
        \draw[blue, ->] (.1,1.75) to [out = 180, in = 270] (-.5,3); 
    \end{tikzpicture}
\end{center}
the paths acting on \(\sigma_k^AT\), \(k \ne 0\), that hit \(a_0 = \sigma_k^a a_k\),
\begin{equation*}
    \left[P_0\right]_2 = \{Q_k \in \left[P_0\right]  :  k \ne 0, a_0 \in R^Q\},    
\end{equation*}
\begin{center}
    \begin{tikzpicture}
        \draw (0,0) rectangle (2,3);
        \draw (0,-.25) -- (0,0);
        \draw (0,3) -- (0,3.25);
        \draw (1.5,-.25) -- (1.5,0);
        \draw (2,3) -- (2.5,3) -- (2.5,3.25);
        
        \draw (0,1.5) rectangle (.5,2);
        \node at (.25,1.75) {\small \(A_k\)};
        
        \draw (0,2) rectangle (2,2.5);
        \draw (.75,2) -- (.75,2.5);
        \draw (1.25,2) -- (1.25,2.5);
        \node at (1,2.25) {\small \(A_0\)};
        \node at (1,2.7) {\small \(k\)};
        
        \node at (1,-.5) {\small \(u\)};
        
        \draw[blue] (.85,-.5) to [out = 200, in = 180] (1,.2); 
        \draw[blue] (1,.2) to [out = 200, in = 180] (1,.4);
        \draw[blue] (1,.4) to [out = 200, in = 180] (1,.6);
        \draw[blue] (1,.6) to [out = 200, in = 180] (1,.8);
        \draw[blue] (1,.8) to [out = 200, in = 180] (1,1);
        \draw[blue] (1,1) to [out = 200, in = 180] (1,1.2);
        \draw[blue] (1,1.2) to [out = 180, in = 270] (.25,1.75); 
        \draw[blue] (.25,1.9) to [out = 90, in = 180] (.6,2.25); 
        \draw[blue, ->] (.85,2.25) to [out = 180, in = 270] (-.5,3); 
    \end{tikzpicture}
\end{center}
and the paths acting on \(\sigma_k^AT\), \(k \ne 0\), that miss \(a_0 = \sigma_k^a a_k\),
\begin{equation*}
    \left[P_0\right]_3 = \{Q_k \in \left[P_0\right]  :  k \ne 0, a_0 \not \in R^Q\},
\end{equation*}
\begin{center}
    \begin{tikzpicture} 
        \draw (0,0) rectangle (2,3);
        \draw (0,-.25) -- (0,0);
        \draw (0,3) -- (0,3.25);
        \draw (1.5,-.25) -- (1.5,0);
        \draw (2,3) -- (2.5,3) -- (2.5,3.25);
        
        \draw (0,1.5) -- (2,1.5);
        \draw (.5,1.5) -- (.5,2);
        \node at (.25,1.75) {\small \(A_k\)};
        
        \draw (0,2) rectangle (2,2.5);
        \draw (.75,2) -- (.75,2.5);
        \draw (1.25,2) -- (1.25,2.5);
        \node at (1,2.25) {\small \(A_0\)};
        \node at (1,2.7) {\small \(k\)};
        
        \node at (1,-.5) {\small \(u\)};
        
        \draw[blue] (.85,-.5) to [out = 200, in = 180] (1,.2); 
        \draw[blue] (1,.2) to [out = 200, in = 180] (1,.4);
        \draw[blue] (1,.4) to [out = 200, in = 180] (1,.6);
        \draw[blue] (1,.6) to [out = 200, in = 180] (1,.8);
        \draw[blue] (1,.8) to [out = 200, in = 180] (1,1);
        \draw[blue] (1,1) to [out = 200, in = 180] (1,1.2);
        \draw[blue] (1,1.2) to [out = 200, in = 180] (1,1.4);
        \draw[blue] (1,1.4) to [out = 200, in = 180] (1,1.75); 
        \draw[blue] (1,1.75) to [out = 200, in = 180] (.85,2.25); 
        \draw[blue, ->] (.85,2.25) to [out = 180, in = 270] (-.5,3); 
    \end{tikzpicture}
\end{center}

Let \(T^\prime \in \mF_{\lambda \setminus X}\) be the unique tableau with \(T^\prime = T_P\) on \((\lambda \setminus X) \setminus [b]\) and \(T^\prime = T\) on \([b]\) except \(T^\prime_{a_0} = u\).

\begin{center}
\begin{tikzpicture}
    \node at (-1, 1.5) {\(T^\prime = \)};
    
    \draw (0,0) rectangle (2,3);
    \draw (0,-.25) -- (0,0);
    \draw (0,3) -- (0,3.25);
    \draw (1.5,-.25) -- (1.5,0);
    \draw (2,3) -- (2.5,3) -- (2.5,3.25);
    
    \draw (0,1.5) rectangle (.5,2);
    \node at (.25,1.75) {\small \(u\)};
    
    \draw (0,2) rectangle (.5,2.5);
    \node at (.25,2.25) {\small \(A_1\)};
    \draw (.5,2) rectangle (1.5,2.5);
    \node at (1,2.25) {\small \(\cdots\)};
    \draw (1.5,2) rectangle (2,2.5);
    \node at (1.75,2.25) {\small \(A_w\)};
    
    \node at (-.5,3) { \ };
    \node at (1,-.5) { \ };
\end{tikzpicture}
\end{center}

Then by Lemma \ref{Generating Garnir Relations and Tools for Collapsing Sums subsection: Calculation Lemma for 1-paths} and applications of \(G_A\), we have, mod \(F_1 \otimes \mR_{\lambda \setminus X,n}\), %

\begin{align*}
\sum_{Q_0 \in \left[P_0\right]_1} (-1)^{h_b^Q} Q_0 %
	& = (-1)^{i_0 + i_0 - 1} ~ \ytableaushort{{\alpha^P_1}} \otimes T^\prime
	  = - ~ \ytableaushort{{\alpha^P_1}} \otimes T^\prime,\\
\sum_{Q_k \in \left[P_0\right]_2} (-1)^{h_b^Q} Q_k %
	& = (-1)^{i_0 + 1 + i_0 - 1} w_b ~ \ytableaushort{{\alpha^P_1}} \otimes T^\prime 
      = w_b ~ \ytableaushort{{\alpha^P_1}} \otimes T^\prime,\\
\sum_{Q_k \in \left[P_0\right]_3} (-1)^{h_b^Q} Q_k %
	& = (-1)^{i_0 + 1 + 1 + i_0 - 1} (w_b - 1) ~ \ytableaushort{{\alpha^P_1}} \otimes T^\prime 
      = - (w_b - 1) ~ \ytableaushort{{\alpha^P_1}} \otimes T^\prime.
\end{align*}

Now as 
\[
    \sum_{Q_k \in \left[P_0\right]} (-1)^{h^Q_b} Q_k = \sum_{i=1}^3 \sum_{Q_k \in \left[P_0\right]_i} (-1)^{h^Q_b} Q_k,
\]
we have, mod \(F_1 \otimes \mR_{\lambda \setminus X,n}\),
\[
    \sum_{Q_k \in \left[P_0\right]} (-1)^{h^Q_b} Q_k %
        = (-1 + w_b - w_b + 1) ~ \ytableaushort{{\alpha^P_1}} \otimes T^\prime 
        = 0.
\]
\end{proof}

\subsection{One Box Removal Preserves Garnirs for Hooks Contained in a Two Blocks} \label{Showing the Pieri Inclusion Removing One Box is a GL(V)-map subsection: One Box Removal Preserves Garnirs for Hooks Contained in a Two Blocks}

We now show that Equation \ref{Showing the Pieri Inclusion Removing One Box is a GL(V)-map subsection: The Theorem equation: Phi Preserves Garnirs One Box} holds for all hooks \(A \subset [b] \cup [b + 1]\) for some \(1 \le b \le N - 1\).
For the rest of Section \ref{Showing the Pieri Inclusion Removing One Box is a GL(V)-map subsection: One Box Removal Preserves Garnirs for Hooks Contained in a Two Blocks}, fix \(T \in \sT_{\lambda, n}\) and
\[
    A = \{a_0 := [b](h_b,1), a_1 := [b + 1](1,1), \ldots, a_{w_{b + 1}} := [b + 1](1,w_{b + 1})\} \subset T_0,   
\]
so that \(A \subset [b] \cup [b + 1]\). 
Denote the entries of \(A\) in \(T\) by \(A_k = T_{a_k}\) for \(k = 0, 1, \ldots, w_{b + 1}\). 
Then, by Lemma \ref{Generating Garnir Relations and Tools for Collapsing Sums subsection: Simplifying a Garnir}, mod \(F_1 \otimes \mR_{\lambda \setminus X, n}\) we have
\[
    \Phi_1\left(G_A (T)\right) 
    = \sum_{P} \frac{(-1)^{P}}{H(P)} P\left(\sum_{\sigma \in S_A} \sigma T \right)
    = C \sum_{P} \sum_{k=0}^{w_{b + 1}} \frac{(-1)^{P}}{H(P)} P\left(\sigma^A_k T \right),
\]
where the sum is over all \(1\)-paths \(P\) on \(\lambda\) removing \(X\).
The set of all \(P_k := P(\sigma^A_k T)\) appearing in the image \(\Phi_1\left(G_A (T)\right)\) above is the union of the following disjoint sets.\\

The \(P_k\)s that miss \(A\),
\begin{equation} \label{Showing the Pieri Inclusion Removing One Box is a GL(V)-map subsection: One Box Removal Preserves Garnirs for Hooks Contained in a Two Blocks equation: T1}
    \sT_1 = \{P_k : R^P \cap A = \emptyset\}.\\
\end{equation}

The \(P_k\)s that hit \(A\) and keep \(A\) in blocks \(b\) and \(b + 1\),
\begin{equation} \label{Showing the Pieri Inclusion Removing One Box is a GL(V)-map subsection: One Box Removal Preserves Garnirs for Hooks Contained in a Two Blocks equation: T2}
    \sT_2 = \{P_k  :  R^P \cap A \ne \emptyset, ~ P(A) \le [b + 1]\}.\\
\end{equation}

The \(P_k\)s that hit \(A\) and move the entry \(A_i\) above block \(b + 1\), including \(P(\sigma_k^A A_i) \in Y\),
\begin{equation} \label{Showing the Pieri Inclusion Removing One Box is a GL(V)-map subsection: One Box Removal Preserves Garnirs for Hooks Contained in a Two Blocks equation: T3}
    \sT_3 = \bigsqcup_{i = 0}^{w_{b + 1}} \sT_3^i \quad \text{ where } \quad \sT_3^i = \{P_k \in \sT_3 : R^P \cap A \ne \emptyset, ~ P(\sigma^A_k A_i) > [b + 1]\}.\\
\end{equation}

Then we have, mod \(F_1 \otimes \mR_{\lambda \setminus X, n}\),
\[
\Phi_1(G_A (T)) %
	= C \sum_{P} \sum_{k=0}^{w_{b + 1}} \frac{(-1)^{P}}{H(P)} P\left(\sigma^A_k T \right) %
    = C \sum_{j=1}^3 \sum_{P_k \in \sT_j} \frac{(-1)^{P}}{H(P)} P_k.
\]
We show that for each of the cases (\ref{Showing the Pieri Inclusion Removing One Box is a GL(V)-map subsection: One Box Removal Preserves Garnirs for Hooks Contained in a Two Blocks equation: T1}) - (\ref{Showing the Pieri Inclusion Removing One Box is a GL(V)-map subsection: One Box Removal Preserves Garnirs for Hooks Contained in a Two Blocks equation: T3}),
\[
    \sum_{P_k \in T_j} \frac{(-1)^{P}}{H(P)} P_k \in F_1 \otimes \mR_{\lambda \setminus X,n},
\]
and hence Equation \ref{Showing the Pieri Inclusion Removing One Box is a GL(V)-map subsection: The Theorem equation: Phi Preserves Garnirs One Box} holds for all hooks \(A \subset [b] \cup [b + 1]\).

Case (\ref{Showing the Pieri Inclusion Removing One Box is a GL(V)-map subsection: One Box Removal Preserves Garnirs for Hooks Contained in a Two Blocks equation: T1}) follows from the proof of Case (\ref{Showing the Pieri Inclusion Removing One Box is a GL(V)-map subsection: One Box Removal Preserves Garnirs for Hooks Contained in a Single Block equation: T1}) and Case (\ref{Showing the Pieri Inclusion Removing One Box is a GL(V)-map subsection: One Box Removal Preserves Garnirs for Hooks Contained in a Two Blocks equation: T2}) follows from the proof of Case (\ref{Showing the Pieri Inclusion Removing One Box is a GL(V)-map subsection: One Box Removal Preserves Garnirs for Hooks Contained in a Single Block equation: T2}). It remains to show Case (\ref{Showing the Pieri Inclusion Removing One Box is a GL(V)-map subsection: One Box Removal Preserves Garnirs for Hooks Contained in a Two Blocks equation: T3}).

\begin{case*}[\ref{Showing the Pieri Inclusion Removing One Box is a GL(V)-map subsection: One Box Removal Preserves Garnirs for Hooks Contained in a Two Blocks equation: T3}]
In this case we show that the sum over all paths that  hit \(A\) and move the entry \(A_i\) above block \(b + 1\) is in \(F_1 \otimes \mR_{\lambda \setminus X,n}\).
It is enough to show that for \(i = 0, \ldots, w_{b + 1}\),
\[
    \sum_{P_k \in \sT_3^i} \frac{(-1)^{P}}{H(P)} P_k \in F_1 \otimes \mR_{\lambda \setminus X,n}.
\]
We will show the case \(i = 0\), with the cases \(i = 1, \ldots, w_{b + 1}\) being similar.
\end{case*}

\begin{proof} 
Note that as we are considering paths that hit \(A\) in this case, we must have that \(b \ge b_1 - 1\). We will consider the case \(b = b_1 - 1\) (and hence \(a_{w_{b + 1}} = x_1\)) and the case \(b > b_1 - 1\) separately.

\begin{subcase*}[\ref{Showing the Pieri Inclusion Removing One Box is a GL(V)-map subsection: One Box Removal Preserves Garnirs for Hooks Contained in a Two Blocks equation: T3}.1]
    We first show the case where \(b = b_1 - 1\).
    We want to show that
    \[
        \sum_{P_k \in \sT_3^0} \frac{(-1)^{P}}{H(P)} P_k \in F_1 \otimes \mR_{\lambda \setminus X,n}.
    \]
    As \(a_0 < x_1\), for all \(P_k \in \sT_3^0\) we must have \(k \ne 0\).
    We can then write \(\sT_3^0\) as
    \[
        \sT_3^0 = \bigsqcup_{P_1 \in \sT_3^0} \sT_{P_1}
    \]
    where
    \[
        \sT_{P_1} = \{Q_k = \in \sT_3^0 : Q \text{ is a } ([b+1](1), [b+1](1))\text{-path extension of } P\}.
    \]
    It is then enough to show that for each \(P_1 \in \sT_3^0\),
    \[
        \sum_{k = 1}^{w_{b + 1}} \frac{(-1)^{P}}{H(P)} P_1(\sigma_k^A T) \in F_1 \otimes \mR_{\lambda \setminus X,n}.
    \]
    For the rest of Subcase (\ref{Showing the Pieri Inclusion Removing One Box is a GL(V)-map subsection: One Box Removal Preserves Garnirs for Hooks Contained in a Two Blocks equation: T3}.1), fix a \(P_1 \in \sT_3^0\). We will show that
    \[
        \sum_{k = 1}^{w_{b + 1}} P_1(\sigma_k^A T) \in F_1 \otimes \mR_{\lambda \setminus X,n}.
    \]
    Let \(A^\prime = A \setminus \{a_{w_b + 1}\} \subset \lambda \setminus X\). As \(\vert A^\prime \vert = w_{b + 1} > w_{b + 1} - 1\), by the proof of Lemma \ref{Generating Garnir Relations and Tools for Collapsing Sums subsection: Simplifying a Garnir} we have
    \begin{align*}
        \sum_{k = 1}^{w_{b + 1}} P_1(\sigma_k^A T)
            &= \sum_{k = 1}^{w_{b + 1}} \ytableaushort{{\alpha^P_1}} \otimes \sigma_k^A \left( T_P \right)\\
            &= \frac{1}{C_A} \ytableaushort{{\alpha^P_1}} \otimes G_{A^\prime} \left( T_P \right) \in F_1 \otimes \mR_{\lambda \setminus X,n}.
    \end{align*}
\end{subcase*}
\begin{subcase*}[\ref{Showing the Pieri Inclusion Removing One Box is a GL(V)-map subsection: One Box Removal Preserves Garnirs for Hooks Contained in a Two Blocks equation: T3}.2]
We now show the case \(b > b_1\).
For the rest of Subcase (\ref{Showing the Pieri Inclusion Removing One Box is a GL(V)-map subsection: One Box Removal Preserves Garnirs for Hooks Contained in a Two Blocks equation: T3}.2), let \(\sT := \sT_3^0\) and, for  any \(1\)-path \(P\), define \(\tilde{h^P} := h^P - h_b^P\) and 
\[
    \tilde{H(P)} = \dfrac{H(P)}{H_b(P)H_{b + 1}(P)}.
\]
Define the relation \(\sim\) on \(\sT\) by 
\[
    P_k \sim Q_j \iff Q_j \text{ is a } ([b](1),[b + 1](1)) \text{-path extension of } P.
\]
It is clear that this defines an equivalence relation on \(\sT\), so that %
\[
    \sum_{P_k \in \sT} \frac{(-1)^{P}}{H(P)} P_k = \sum_{\left[P_k\right] \in \sT / \sim} ~ \sum_{Q_k \in %
    \left[P_k\right]} \frac{(-1)^{Q}}{H(Q)} Q_k.
\]

Pick \(P_0 \in \sT\) with \([b](i,1) \in R^P\) for all \(i=1, \ldots, h_b\), and let \([b_u](i_u,j_u) = P^{-1}([b](1,1))\) with \(u := T_{[b_u](i_u,j_u)}\). 

\begin{center}
\begin{tikzpicture}
    \node at (-1, 1.5) {\(P_0 = \)};
    
    \draw (0,0) rectangle (1.25,2);
    \draw (0,-.25) -- (0,0);
    \draw (.75,-.25) -- (.75,0);
    
    \draw (0,2) rectangle (2,3);
    \draw (0,3) -- (0,3.25);
    \draw (2,3) -- (2.25,3) -- (2.25,3.25);
    
    \draw (0,1.5) -- (.5,1.5);
    \draw (.5,0) -- (.5,2);
    \node at (.25,1.75) {\small \(A_0\)};
    
    \draw (0,2.5) -- (2,2.5);
    \draw (.5,2) -- (.5,2.5);
    \draw (1.5,2) -- (1.5,2.5);
    \node at (.25,2.25) {\small \(A_1\)};
    \node at (1,2.25) {\small \(\cdots\)};
    \node at (1.75,2.25) {\small \(A_w\)};
    
    \node at (.5,-.5) {\(u\)};
    
    \draw[blue] (.5,-.4) to [out = 90, in = 0] (.2,.2); 
    \draw[blue] (.2,.2) to [out = 200, in = 180] (.2,.4);
    \draw[blue] (.2,.4) to [out = 200, in = 180] (.2,.6);
    \draw[blue] (.2,.6) to [out = 200, in = 180] (.2,.8);
    \draw[blue] (.2,.8) to [out = 200, in = 180] (.2,1);
    \draw[blue] (.2,1) to [out = 200, in = 180] (.2,1.2);
    \draw[blue] (.2,1.2) to [out = 200, in = 180] (.2,1.4);
    \draw[blue] (.2,1.4) to [out = 200, in = 240] (.1,1.75); 
    \draw[blue, ->] (.1,1.75) to [out = 180, in = 270] (-.5,3); 
\end{tikzpicture}
\end{center}

It is then enough to show that
\[
    \sum_{Q_k \in \left[P_0\right]} \frac{(-1)^{Q}}{H(Q)} Q_k \in %
    F_1 \otimes \mR_{\lambda \setminus X, n}.
\]
In fact, as \(\tilde{h^Q} = \tilde{h^P}\) and \(\tilde{H(Q)} = \tilde{H(P)}\) for all \(Q_k \in \left[P_0\right]\), it is enough to show that
\[
    \sum_{Q_k \in \left[P_0\right]} \dfrac{(-1)^{h^Q_b}}{H_b(Q) H_{b + 1}(Q)} Q_k \in %
    F_1 \otimes \mR_{\lambda \setminus X, n}.
\]

Observe that \([P_0]\) can be written as the disjoint union
\[
    \sT = \bigsqcup_{i = 1}^6 [P_0]_i,
\]
where the \([P_0]_i\) are defined as follows.\\

The paths acting on \(\sigma_0^AT\),
\begin{equation*}
    \left[P_0\right]_1 = \{Q_0 \in \left[P_0\right]\},
\end{equation*}
\begin{center}
	\begin{tikzpicture}
        \draw (0,0) rectangle (1.25,2);
        \draw (0,-.25) -- (0,0);
        \draw (.75,-.25) -- (.75,0);
        
        \draw (0,2) rectangle (2,3);
        \draw (0,3) -- (0,3.25);
        \draw (2,3) -- (2.25,3) -- (2.25,3.25);
        
        \draw (0,1.5) -- (.5,1.5) -- (.5,2);
        \node at (.25,1.75) {\small \(A_0\)};
        
        \draw (0,2.5) -- (2,2.5);
        \draw (.5,2) -- (.5,2.5);
        \draw (1.5,2) -- (1.5,2.5);
        \node at (.25,2.25) {\small \(A_1\)};
        \node at (1,2.25) {\small \(\cdots\)};
        \node at (1.75,2.25) {\small \(A_w\)};
        
        \node at (.5,-.5) {\(u\)};
        
        \draw[blue] (.35,-.5) to [out = 160, in = 180] (.5,.2); 
        \draw[blue] (.5,.2) to [out = 200, in = 180] (.5,.4);
        \draw[blue] (.5,.4) to [out = 200, in = 180] (.5,.6);
        \draw[blue] (.5,.6) to [out = 200, in = 180] (.5,.8);
        \draw[blue] (.5,.8) to [out = 200, in = 180] (.5,1);
        \draw[blue] (.5,1) to [out = 200, in = 180] (.5,1.2);
        \draw[blue] (.5,1.2) to [out = 200, in = 180] (.5,1.4);
        \draw[blue] (.5,1.4) to [out = 200, in = 240] (.1,1.75); 
        \draw[blue, ->] (.1,1.75) to [out = 180, in = 270] (-.5,3); 
    \end{tikzpicture}
\end{center}
The paths acting on \(\sigma_k^AT\), \(k \ne 0\), that miss block \(b\),
\begin{equation*}
    \left[P_0\right]_2 = \{Q_k \in \left[P_0\right]  :  k \ne 0, R^Q \cap [b] = \emptyset\},
\end{equation*}
\begin{center}
        \begin{tikzpicture}
        \draw (0,0) rectangle (1.25,2);
        \draw (0,-.25) -- (0,0);
        \draw (.75,-.25) -- (.75,0);
        
        \draw (0,2) rectangle (2,3);
        \draw (0,3) -- (0,3.25);
        \draw (2,3) -- (2.25,3) -- (2.25,3.25);
        
        \draw (0,1.5) -- (.5,1.5) -- (.5,2);
        \node at (.25,1.75) {\small \(A_k\)};
        
        \draw (0,2.5) -- (2,2.5);
        \draw (.75,2) -- (.75,2.5);
        \draw (1.25,2) -- (1.25,2.5);
        \node at (1,2.25) {\small \(A_0\)};
        \node at (1,2.7) {\small \(k\)};
        
        \node at (.5,-.5) {\(u\)};
        
        \draw[blue] (.35,-.5) to [out = 160, in = 180] (.8,2.25); 
        \draw[blue, ->] (.8,2.25) to [out = 180, in = 270] (-.5,3); 
    \end{tikzpicture}
\end{center}
the paths acting on \(\sigma_k^AT\), \(k \ne 0\), that hit \(a_0 = \sigma_k^a a_k\),
\begin{equation*}
    \left[P_0\right]_3 = \{Q_k \in \left[P_0\right]  :  k \ne 0, a_0 \in R^Q\},
\end{equation*}
\begin{center}
    \begin{tikzpicture}
        \draw (0,0) rectangle (1.25,2);
        \draw (0,-.25) -- (0,0);
        \draw (.75,-.25) -- (.75,0);
        
        \draw (0,2) rectangle (2,3);
        \draw (0,3) -- (0,3.25);
        \draw (2,3) -- (2.25,3) -- (2.25,3.25);
        
        \draw (0,1.5) -- (.5,1.5) -- (.5,2);
        \node at (.25,1.75) {\small \(A_k\)};
        
        \draw (0,2.5) -- (2,2.5);
        \draw (.75,2) -- (.75,2.5);
        \draw (1.25,2) -- (1.25,2.5);
        \node at (1,2.25) {\small \(A_0\)};
        \node at (1,2.7) {\small \(k\)};
        
        \node at (.5,-.5) {\(u\)};
        
        \draw[blue] (.35,-.5) to [out = 160, in = 180] (.5,.2); 
        \draw[blue] (.5,.2) to [out = 200, in = 180] (.5,.4);
        \draw[blue] (.5,.4) to [out = 200, in = 180] (.5,.6);
        \draw[blue] (.5,.6) to [out = 200, in = 180] (.5,.8);
        \draw[blue] (.5,.8) to [out = 200, in = 180] (.5,1);
        \draw[blue] (.5,1) to [out = 200, in = 180] (.5,1.2);
        \draw[blue] (.5,1.2) to [out = 200, in = 270] (.25,1.6); 
        \draw[blue] (.25,1.9) to [out = 90, in = 180] (.8,2.25); 
        \draw[blue, ->] (.8,2.25) to [out = 180, in = 270] (-.5,3); 
    \end{tikzpicture}
\end{center}
the paths acting on \(\sigma_k^AT\), \(k \ne 0\), that miss \(a_0 = \sigma_k^a a_k\) but hit row \([b](h_b)\),
\begin{equation*}
    \left[P_0\right]_4 = \{Q_k \in \left[P_0\right]  :  k \ne 0, [b](h_b,j) \in R^Q \text{ for some } 2 \le j \le w_b\},
\end{equation*}
\begin{center}
    \begin{tikzpicture}
        \draw (0,0) rectangle (1.25,2);
        \draw (0,-.25) -- (0,0);
        \draw (.75,-.25) -- (.75,0);
        
        \draw (0,2) rectangle (2,3);
        \draw (0,3) -- (0,3.25);
        \draw (2,3) -- (2.25,3) -- (2.25,3.25);
        
        \draw (0,1.5) -- (1.25,1.5);
        \draw (.5,1.5) -- (.5,2);
        \node at (.25,1.75) {\small \(A_k\)};
        
        \draw (0,2.5) -- (2,2.5);
        \draw (.75,2) -- (.75,2.5);
        \draw (1.25,2) -- (1.25,2.5);
        \node at (1,2.25) {\small \(A_0\)};
        \node at (1,2.7) {\small \(k\)};
        
        \node at (.5,-.5) {\(u\)};
        
        \draw[blue] (.35,-.5) to [out = 200, in = 180] (.5,.2); 
        \draw[blue] (.5,.2) to [out = 200, in = 180] (.5,.4);
        \draw[blue] (.5,.4) to [out = 200, in = 180] (.5,.6);
        \draw[blue] (.5,.6) to [out = 200, in = 180] (.5,.8);
        \draw[blue] (.5,.8) to [out = 200, in = 180] (.5,1);
        \draw[blue] (.5,1) to [out = 200, in = 180] (.5,1.2);
        \draw[blue] (.5,1.2) to [out = 90, in = 180] (.8,1.75); 
        \draw[blue] (.8,1.75) to [out = 200, in = 180] (.8,2.25); 
        \draw[blue, ->] (.8,2.25) to [out = 180, in = 270] (-.5,3); 
    \end{tikzpicture}
\end{center}
the paths acting on \(\sigma_k^AT\), \(k \ne 0\), that miss row \([b](h_b)\) and leave block \(b\) from an odd row,
\begin{equation*}
    \left[P_0\right]_5 = \{Q_k \in E^P : k \ne 0, Q([b](i,j)) = a_k \text{ for some } 1 \le j \le w_b, 1 \le i < h_b, i \text{ odd}\}
\end{equation*}
and the paths acting on \(\sigma_k^AT\), \(k \ne 0\), that miss row \([b](h_b)\) and leave block \(b\) from an even row,
\begin{equation*}
    \left[P_0\right]_6 = \{Q_k \in E^P  :  k \ne 0, Q([b](i,j)) = a_k \text{ for some } 1 \le j \le w_b, 1 \le i < h_b, i \text{ even}\}.
\end{equation*}
\begin{center}
    \begin{tikzpicture}
        \draw (0,0) rectangle (1.25,2);
        \draw (0,-.25) -- (0,0);
        \draw (.75,-.25) -- (.75,0);
        
        \draw (0,2) rectangle (2,3);
        \draw (0,3) -- (0,3.25);
        \draw (2,3) -- (2.25,3) -- (2.25,3.25);
    
        \draw (0,1.5) -- (.5,1.5) -- (.5,2);
        \node at (.25,1.75) {\small \(A_k\)};
        
        \draw (0,2.5) -- (2,2.5);
        \draw (.75,2) -- (.75,2.5);
        \draw (1.25,2) -- (1.25,2.5);
        \node at (1,2.25) {\small \(A_0\)};
        \node at (1,2.7) {\small \(k\)};
        
        \draw (.5,.7) rectangle (.75,.95);
        \node at (1.5,.85) {\(\leftarrow\) row \(i\)};
        
        \node at (.5,-.5) {\(u\)};
    
        \draw[blue] (.35,-.5) to [out = 200, in = 180] (.5,.2); 
        \draw[blue] (.5,.2) to [out = 200, in = 180] (.5,.4);
        \draw[blue] (.5,.4) to [out = 200, in = 180] (.5,.6);
        \draw[blue] (.5,.6) to [out = 200, in = 180] (.625,.8125); 
        \draw[blue] (.625,.8125) to [out = 90, in = 180] (.8,2.25); 
        \draw[blue, ->] (.8,2.25) to [out = 180, in = 270] (-.5,3); 
    \end{tikzpicture}
\end{center}

Let \(T^\prime \in \mF_{\lambda \setminus X, n}\) be the unique tableau with \(T^\prime = T_P\) on \((\lambda \setminus X) \setminus \left([b] \cup [b + 1]\right)\) and \(T^\prime = T\) on \([b] \cup [b + 1]\) except \(T^\prime_{a_0} = u\), 

\[
T^\prime = 
\begin{tikzpicture}[baseline=(O.base)]
    \node (O) at (0,1.625) {~};
    
    \draw (0,0) rectangle (1.25,2);
    \draw (0,-.25) -- (0,0);
    \draw (.75,-.25) -- (.75,0);
    
    \draw (0,2) rectangle (2,3);
    \draw (0,3) -- (0,3.25);
    \draw (2,3) -- (2.25,3) -- (2.25,3.25);
    
    \draw (0,1.5) -- (.5,1.5) -- (.5,2);
    \node at (.25,1.75) {\small \(u\)};
    
    \draw (0,2.5) -- (2,2.5);
    \draw (.5,2) -- (.5,2.5);
    \draw (1.5,2) -- (1.5,2.5);
    \node at (.25,2.25) {\small \(A_1\)};
    \node at (1,2.25) {\small \(\cdots\)};
    \node at (1.75,2.25) {\small \(A_w\)};
    
    \node at (-.5,3) { \ };
    \node at (1,-.5) { \ };
\end{tikzpicture}.
\]

By Lemma \ref{Generating Garnir Relations and Tools for Collapsing Sums subsection: Calculation Lemma for 1-paths} and applications of \(G_A\) we have, mod \(F_1 \otimes \mR_{\lambda \setminus X,n}\),
\begin{align*}
\sum_{Q_0 \in \left[P_0\right]_1} (-1)^{h_b^Q} Q_0 
	& = \dfrac{(-1)^{h_b + h_b - 1}}{H(b)} ~ \ytableaushort{{\alpha^P_1}} \otimes T^\prime\\
    & = \dfrac{-H(b + 1)}{H(b)H(b + 1)} ~ \ytableaushort{{\alpha^P_1}} \otimes T^\prime,\\
\end{align*}
\begin{align*}
\sum_{Q_k \in \left[P_0\right]_2} (-1)^{h_b^Q} Q_k 
	& = \dfrac{(-1)^{1 + 1}}{H(b + 1)} ~ \ytableaushort{{\alpha^P_1}} \otimes T^\prime\\
    & = \dfrac{H(b)}{H(b)H(b + 1)} ~ \ytableaushort{{\alpha^P_1}} \otimes T^\prime,\\
\end{align*}
\begin{align*}
\sum_{Q_k \in \left[P_0\right]_3} (-1)^{h_b^Q} Q_k 
	& = \dfrac{(-1)^{h_b + 1 + h_b - 1}w_{b + 1}}{H(b)H(b + 1)} ~ \ytableaushort{{\alpha^P_1}}
	\otimes T^\prime \\
    & = \dfrac{w_{b + 1}}{H(b)H(b + 1)} ~ \ytableaushort{{\alpha^P_1}} \otimes T^\prime,\\
\end{align*}
\begin{align*}
\sum_{Q_k \in \left[P_0\right]_4} (-1)^{h_b^Q} Q_k 
	& = \dfrac{(-1)^{h_b + 1 + 1 + h_b - 1}(w_b - 1)}{H(b)H(b + 1)} ~ \ytableaushort{{\alpha^P_1}} \otimes T^\prime \\
    & = \dfrac{1 - w_b}{H(b)H(b + 1)} ~ \ytableaushort{{\alpha^P_1}} \otimes T^\prime,\\
\end{align*}
\begin{align*}
\sum_{Q_k \in \left[P_0\right]_5} (-1)^{h_b^Q} Q_k 
	& = \sum_{\substack{1 \le i < h_b\\ i \text{ odd}}} \dfrac{(-1)^{i + 1 + 1 + i - 1 + 1}}{H(b)H(b + 1)} ~ 
	\ytableaushort{{\alpha^P_1}} \otimes T^\prime \\
    & = \sum_{\substack{1 \le i < h_b\\ i \text{ odd}}} \dfrac{1}{H(b)H(b + 1)} ~ 
    \ytableaushort{{\alpha^P_1}} \otimes T^\prime,\\
\end{align*}
and
\begin{align*}
\sum_{Q_k \in \left[P_0\right]_5} (-1)^{h_b^Q} Q_k 
	& = \sum_{\substack{1 \le i < h_b\\ i \text{ even}}} \dfrac{(-1)^{i + 1 + 1 + i - 1 + 1}}{H(b)H(b + 1)} ~ 
	\ytableaushort{{\alpha^P_1}} \otimes T^\prime \\
    & = \sum_{\substack{1 \le i < h_b\\ i \text{ even}}} \dfrac{1}{H(b)H(b + 1)} ~ 
    \ytableaushort{{\alpha^P_1}} \otimes T^\prime.\\
\end{align*}

Then as \(H(b + 1) = H(b) + w_{b + 1} - w_b + h_b\) and
\begin{align*}
    \sum_{\substack{1 \le i < h_b\\  i \text{ odd}}} & \dfrac{1}{H(b)H(b + 1)} ~ 
    \ytableaushort{{\alpha^P_1}} \otimes T^\prime 
    + \sum_{\substack{1 \le i < h_b\\ i \text{ even}}}  \dfrac{1}{H(b)H(b + 1)} ~ 
    \ytableaushort{{\alpha^P_1}} \otimes T^\prime \\
    & = \dfrac{h_b - 1}{H(b)H(b + 1)} ~ 
    \ytableaushort{{\alpha^P_1}} \otimes T^\prime 
\end{align*}
we get, mod \(F_1 \otimes \mR_{\lambda \setminus X,n}\),
\begin{align*}
    \sum_{Q_k \in \left[P_0\right]} (-1)^{h^Q_b} Q_k %
        & = \sum_{i=1,\ldots,6} \sum_{Q_k \in \left[P_0\right]_i} (-1)^{h^Q_b} Q_k\\
        & = \dfrac{- H(b + 1) + H(b) + w_{b + 1} + 1 - w_b + h_b - 1}{H(b)H(b + 1)} %
        ~ \ytableaushort{{\alpha^P_1}} \otimes T^\prime \\
        & = 0.
\end{align*}
\end{subcase*}
\end{proof}

Thus Equation \ref{Showing the Pieri Inclusion Removing One Box is a GL(V)-map subsection: The Theorem equation: Phi Preserves Garnirs One Box} holds for all hooks \(A \subset [b] \cup [b + 1]\), and so Theorem \ref{Showing the Pieri Inclusion Removing One Box is a GL(V)-map subsection: The Theorem} holds.

\section{Showing the Pieri Inclusion Removing Many Boxes is a \(GL(V)\)-map} \label{section: Showing the Pieri Inclusion Removing Many Boxes is a GL(V)-map}

\subsection{} \label{Showing the Pieri Inclusion Removing Many Boxes is a GL(V)-map subsection: The Theorem}

For all of section \ref{section: Showing the Pieri Inclusion Removing Many Boxes is a GL(V)-map}, fix a removal set \(X = \{x_1 = [b_1](1,w_{b_1}), \ldots, x_m = [b_m](i_m,w_{b_m})\} \subset \lambda\). Let
\[
    \Phi_m : \mF_{\lambda, n} \to F_m \otimes \mF_{\lambda \setminus X, n}
\]
be as in \ref{Constructing the Pieri Inclusion for Removing Many Boxes subsection: Defining Phi}.

\begin{theorem*}
    \(\Phi_m\) is a \(\GL(V)\)-map, i.e. \(\Phi_m\) descends to 
    \[
        \Phi_m : \bS_{\lambda}(V) \to F_m \otimes \bS_{\lambda \setminus X}(V)
    \]
    and \(\Phi_m\) is \(\GL(V)\)-equivariant.
\end{theorem*}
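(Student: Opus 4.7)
The plan is to follow the same strategy used for $\Phi_1$ in Section \ref{section: Showing the Pieri Inclusion Removing One Box is a GL(V)-map}, organized into three parts: first establish $\gl(V)$-equivariance at the level of $\mF_{\lambda,n}$, then verify that $\Phi_m$ preserves the row-permutation relations \eqref{Constructing Schur--Weyl Modules subsection: The Construction equation: R1}, and finally verify that $\Phi_m$ preserves the Garnir relations \eqref{Constructing Schur--Weyl Modules subsection: The Construction equation: R2}. The first two parts are essentially formal. Each simple root vector $e_{\alpha_i}$ acts by replacing entries $i$ with entries $i+1$ (and $e_{-\alpha_i}$ dually); since each $m$-path $P$ acts only by relocating boxes and not by altering the entries themselves, applying $e_{\pm\alpha_i}$ and summing over $m$-paths commutes. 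Because the simple root vectors generate $\gl(V)$, this gives $\gl(V)$-equivariance on $\mF_{\lambda,n}$. Preservation of relations in \eqref{Constructing Schur--Weyl Modules subsection: The Construction equation: R1} is immediate from the fact that $\Phi_m$ sums over all $m$-paths, so any row permutation of $T$ simply permutes the indexing set of paths.

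The heart of the proof, therefore, is showing
\[
    \Phi_m(G_A(T)) \in F_m \otimes \mR_{\lambda \setminus X, n}
\]
for all $T \in \mF_{\lambda,n}$ and all $A \subset T_0$ with $|A| > w_A$. By Theorem \ref{Generating Garnir Relations and Tools for Collapsing Sums subsection: One Garnir Theorem}, it suffices to handle hooks $A$, and as in Section \ref{section: Showing the Pieri Inclusion Removing One Box is a GL(V)-map} these split into two subcases: hooks contained in a single block $[b]$, and hooks straddling two consecutive blocks $[b] \cup [b+1]$. For each fixed hook $A = \{a_0, a_1, \ldots, a_{w}\}$, Lemma \ref{Generating Garnir Relations and Tools for Collapsing Sums subsection: Simplifying a Garnir} rewrites
\[
    \Phi_m(G_A(T)) = C \sum_P \sum_{k=0}^{w} \frac{(-1)^P}{H(P)} P(\sigma_k^A T) \pmod{F_m \otimes \mR_{\lambda\setminus X,n}},
\]
and the terms $P_k := P(\sigma_k^A T)$ are partitioned according to where the image $P(A)$ sits relative to block $b$ (or $b+1$): paths missing $A$, paths with $P(A)$ contained within the same block(s) as $A$, and paths that carry some entry $A_i$ above block $b$ (resp.\ $b+1$). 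The first two cases carry over verbatim from the proofs of Case (\ref{Showing the Pieri Inclusion Removing One Box is a GL(V)-map subsection: One Box Removal Preserves Garnirs for Hooks Contained in a Single Block equation: T1}) and Case (\ref{Showing the Pieri Inclusion Removing One Box is a GL(V)-map subsection: One Box Removal Preserves Garnirs for Hooks Contained in a Single Block equation: T2}), since those arguments did not use $m=1$ (as noted in the remark of Section \ref{Showing the Pieri Inclusion Removing One Box is a GL(V)-map subsection: One Box Removal Preserves Garnirs for Hooks Contained in a Single Block}): they only require that $P|_A$ be trivial or that $P(A)$ sits inside a single row-block where a Garnir on $P(A)$ can be assembled.

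The main obstacle is the third case, where one of the $m$ orbits of $P$ carries an entry of $A$ out of its block. Here I will imitate Case (\ref{Showing the Pieri Inclusion Removing One Box is a GL(V)-map subsection: One Box Removal Preserves Garnirs for Hooks Contained in a Single Block equation: T3}) and Case (\ref{Showing the Pieri Inclusion Removing One Box is a GL(V)-map subsection: One Box Removal Preserves Garnirs for Hooks Contained in a Two Blocks equation: T3}): fix a representative path $P_0$, stratify the relevant $m$-paths into equivalence classes under $([b](1),[b+1](1))$-path extension (Definition \ref{Generating Garnir Relations and Tools for Collapsing Sums subsection: Path Extensions definition: Path Extension}), and then within each class use the generalized collapsing result Corollary \ref{Generating Garnir Relations and Tools for Collapsing Sums subsection: Calculation Lemma for m-paths} to evaluate the sum to an explicit tableau $T'$ times a rational scalar. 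Because all non-fixed orbits share the same behavior outside of the interval of rows containing $A$, the factor $\widetilde{h^Q}$ and $\widetilde{H(Q)} = H(Q)/(H_b(Q) H_{b+1}(Q))$ are constant on each class, so only the contributions from $H_b(P)$, $H_{b+1}(P)$, and the parity $(-1)^{h_b^Q + h_{b+1}^Q}$ vary. The new subtlety relative to $m=1$ is that the products $H_b(P) = \prod_{i=1}^{k_b}(H(b) - (m-i))$ depend on how many orbits $k_b$ of $P$ enter block $b$, and likewise for $b+1$; thus the paths in each class must be further subdivided according to the number of orbits passing through rows $[b](h_b)$ and $[b+1](1)$, and through the specific box $a_0$. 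Enumerating these subclasses and summing their signed, weighted contributions will reduce to an algebraic identity in $H(b)$, $H(b+1)$, $h_b$, $w_b$, $w_{b+1}$, and $m$ generalizing the identities at the end of Subcase (\ref{Showing the Pieri Inclusion Removing One Box is a GL(V)-map subsection: One Box Removal Preserves Garnirs for Hooks Contained in a Two Blocks equation: T3}.2); verifying that this identity collapses to zero is where the real work lies, and it should fall out by a telescoping argument on the number of orbits passing through the boundary between blocks $b$ and $b+1$, using that $H(b+1) - H(b) = h(b+1) = w_{b+1} - w_b + h_b$.
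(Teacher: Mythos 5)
Your outer structure is right: $\gl(V)$-equivariance is built in to $\Phi_m$, preservation of the row relations \eqref{Constructing Schur--Weyl Modules subsection: The Construction equation: R1} is formal, Theorem \ref{Generating Garnir Relations and Tools for Collapsing Sums subsection: One Garnir Theorem} reduces the Garnir check to hooks, and the cases of Section \ref{Showing the Pieri Inclusion Removing One Box is a GL(V)-map subsection: One Box Removal Preserves Garnirs for Hooks Contained in a Single Block} corresponding to $\sT_1$ and $\sT_2$ carry over to all $m$. However, your handling of the remaining paths is underspecified in two ways that create a genuine gap.

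First, you describe the third case as the paths in which ``one of the $m$ orbits of $P$ carries an entry of $A$ out of its block,'' and you propose to treat it by stratifying into path-extension classes and invoking Corollary \ref{Generating Garnir Relations and Tools for Collapsing Sums subsection: Calculation Lemma for m-paths}. But a hook $A$ has exactly two rows, so in general \emph{two} orbits of $P$ can intersect $A$ simultaneously, and the corresponding paths are not a single case. In the paper they split into five distinct families: exactly one orbit interacting with the hook block (the analogue of the $m=1$ Case \ref{Showing the Pieri Inclusion Removing One Box is a GL(V)-map subsection: One Box Removal Preserves Garnirs for Hooks Contained in a Single Block equation: T3}), paths moving two hook entries $A_i, A_j$ above the block, paths moving $A_i$ together with a box below $A$, paths moving $A_i$ together with a box in $A$'s lower row but not in $A$, and paths moving $A_i$ together with a box above $A$. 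Your proposal does acknowledge ``further subdividing'' by the number of orbits passing through certain rows, but that criterion is not sufficient --- the paper's subcases \eqref{Showing the Pieri Inclusion Removing Many Boxes is a GL(V)-map subsection: Two Box Removal Preserves Garnirs for Hooks Contained in a Single Block equation: T4}--\eqref{Showing the Pieri Inclusion Removing Many Boxes is a GL(V)-map subsection: Two Box Removal Preserves Garnirs for Hooks Contained in a Single Block equation: T7} are distinguished by \emph{which} boxes, not merely how many orbits, and they need distinct constructions of the paired path $P'_1$ or the equivalence relation $\sim^i$.

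Second, and more importantly, you characterize the eventual cancellation as ``an algebraic identity in $H(b), H(b+1), h_b, w_b, w_{b+1}, m$'' to be verified by telescoping. That is the correct mechanism only for the cases with one active orbit (the $\sT_3$-, $\sT_5$-type families), where the weighted sum over a path-extension class collapses to a scalar multiple of a fixed tableau and the scalars vanish by the hook-length identity $H(b+1) - H(b) = w_{b+1} - w_b + h_b$. The cases with two active orbits cancel by a completely different mechanism: one pairs each $P_0$ with a unique $P'_1$ that routes the same two entries through the other orbit, and the contributions cancel because
\[
e_{\alpha_1^P} \wedge e_{\alpha_2^P} + e_{\alpha_2^P} \wedge e_{\alpha_1^P} = 0
\]
in $\bigwedge^2 V$, with no arithmetic in hook lengths involved at all. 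Your proposal never mentions this wedge antisymmetry, and the telescoping identity cannot account for these paths. Since this is precisely the new phenomenon when passing from $m=1$ to $m\ge 2$, omitting it is a gap in the heart of the argument.

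Finally, invoking Corollary \ref{Generating Garnir Relations and Tools for Collapsing Sums subsection: Calculation Lemma for m-paths} directly for general $m$ is somewhat off target: that result concerns rearranging $m$ entries within a single block, and since a hook spans only two rows, at most two orbits can be active near $A$. The paper exploits this to reduce the general case to $m=2$ and then uses Corollary \ref{Generating Garnir Relations and Tools for Collapsing Sums subsection: Calculation Lemma for 2-paths}; this reduction should appear explicitly in your argument.
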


As before, it is clear that \(\Phi_m\) is \(\gl(V)\)-equivariant by construction.
To prove Theorem \ref{Showing the Pieri Inclusion Removing Many Boxes is a GL(V)-map subsection: The Theorem}, it remains to show that
\[
    \Phi_m(\mR_{\lambda, n}) \subset F_m \otimes \mR_{\lambda \setminus X, n}.
\]

As before, it is clear that \(\Phi_m\) preserves Property \ref{Constructing Schur--Weyl Modules subsection: The Construction equation: R1} as it is a sum over all \(m\)-paths, and hence we must show that Property \ref{Constructing Schur--Weyl Modules subsection: The Construction equation: R2} holds, i.e. for all \(T \in \mF_{\lambda, n}\) and all \(A \subset T_0\) with \(\vert A \vert > w_A\),
\begin{equation} \label{Showing the Pieri Inclusion Removing Many Boxes is a GL(V)-map subsection: The Theorem equation: Phi of Garnir m Boxes}
    \Phi_m \left( G_A(T) \right) \in F_m \otimes \mR_{\lambda \setminus X, n}.
\end{equation}
We will show that Equation \ref{Showing the Pieri Inclusion Removing Many Boxes is a GL(V)-map subsection: The Theorem equation: Phi of Garnir m Boxes} holds for \(m = 2\). 
The general case can then be shown using similar techniques since, by Theorem \ref{Generating Garnir Relations and Tools for Collapsing Sums subsection: One Garnir Theorem}, it is enough to show that \ref{Showing the Pieri Inclusion Removing Many Boxes is a GL(V)-map subsection: The Theorem equation: Phi of Garnir m Boxes} holds only for hooks, which consist of two rows and so can only intersect at most two orbits of any \(m\)-path. 
As before, there are two options for hooks in \(T_0\), which we consider separately. %
Note that as we are considering the case \(m = 2\), we have the removal set
\[
    X = \{ x_1 = [b_1](1,w_{b_1}), x_2 = [b_2](i_2,w_{b_2})\}.
\]

\subsection{Two Box Removal Preserves Garnirs for Hooks Contained in a Single Block} \label{Showing the Pieri Inclusion Removing Many Boxes is a GL(V)-map subsection: Two Box Removal Preserves Garnirs for Hooks Contained in a Single Block}

We first show that Equation \ref{Showing the Pieri Inclusion Removing Many Boxes is a GL(V)-map subsection: The Theorem equation: Phi of Garnir m Boxes} holds when \(m = 2\) for all hooks \(A \subset [b]\), for some \(1 \le b \le N\). 
For the rest of Section \ref{Showing the Pieri Inclusion Removing Many Boxes is a GL(V)-map subsection: Two Box Removal Preserves Garnirs for Hooks Contained in a Single Block}, fix \(T \in \mF_{\lambda, n}\) and let 
\[
    A = \{ a_0 := [b](i_0,1), a_1 = [b](i_0 + 1,1), \ldots, a_{w_b} = [b](i_0 + 1, w_b)\} \subset T_0
\]
with \(1 \le i_0 < h_b\), so that \(A \subset [b]\).  
Denote the entries of \(A\) in \(T\) by \(A_k = T_{a_k}\) for \(k = 0, 1, \ldots, w_b\).
Then by Lemma \ref{Generating Garnir Relations and Tools for Collapsing Sums subsection: Simplifying a Garnir}, mod \(F_2 \otimes \mR_{\lambda \setminus X, n}\) we have
\begin{align*}
    \Phi_2\left(G_A (T)\right) %
	    & = \sum_{P} \frac{(-1)^{P}}{H(P)} P\left(\sum_{\sigma \in S_A} \sigma T \right) \\%
        & = C \sum_{P} \sum_{k=0}^{w_b} \frac{(-1)^{P}}{H(P)} P\left(\sigma^A_k T \right),
\end{align*}
where the sum is over all \(2\)-paths \(P\) on \(\lambda\) removing \(X\).
The set of all \(P_k := P(\sigma^A_k T)\) appearing in the image \(\Phi_2\left(G_A (T)\right)\) above is the union of the following disjoint sets.\\

The \(P_k\)s that miss \(A\),
\begin{equation} \label{Showing the Pieri Inclusion Removing Many Boxes is a GL(V)-map subsection: Two Box Removal Preserves Garnirs for Hooks Contained in a Single Block equation: T1}
    \sT_1 = \{P_k  :  R^P \cap A = \emptyset\}.\\
\end{equation}

The \(P_k\)s that hit \(A\) and keep \(A\) in block \(b\),
\begin{equation}\label{Showing the Pieri Inclusion Removing Many Boxes is a GL(V)-map subsection: Two Box Removal Preserves Garnirs for Hooks Contained in a Single Block equation: T2}
    \sT_2 = \{P_k  :  R^P \cap A \ne \emptyset, ~ P(A) \subset [b]\}.\\
\end{equation}

The \(P_k\)s that have exactly one orbit in \([b]\) and move \(A_i\) above \([b]\),
\begin{equation}\label{Showing the Pieri Inclusion Removing Many Boxes is a GL(V)-map subsection: Two Box Removal Preserves Garnirs for Hooks Contained in a Single Block equation: T3}
    \begin{split}
        \sT_3 & = \bigsqcup_{0 \le i \le w_b} \sT_3^i, \text{ where }\\
        \sT_3^i & = \{P_k : \text{ exactly one of } R_1, R_2 \text{ intersects } [b] \text{ and } P(\sigma_{k} A_i) > [b] \}.
    \end{split}
\end{equation}

The \(P_k\)s that move \(A_i\) and \(A_j\) above \([b]\),
\begin{equation}\label{Showing the Pieri Inclusion Removing Many Boxes is a GL(V)-map subsection: Two Box Removal Preserves Garnirs for Hooks Contained in a Single Block equation: T4}
    \begin{split}
        \sT_4 & = \bigsqcup_{0 \le i < j \le w_b} \sT_4^{i,j}, \text{ where }\\
	    \sT_4^{i,j} & =\{P_k \in \sT_4  :  P(\sigma^A_k a_i) > [b] \text{ and } P(\sigma^A_k a_j) > [b]\}.
    \end{split}
\end{equation}

The \(P_k\)s that move \(A_i\) and a box \(z \in [b]\), with \(z < A\), above \([b]\),
\begin{equation}\label{Showing the Pieri Inclusion Removing Many Boxes is a GL(V)-map subsection: Two Box Removal Preserves Garnirs for Hooks Contained in a Single Block equation: T5}
    \begin{split}
        \sT_5 &= \bigsqcup_{\substack{0 \le i \le w_b, ~ z = [b](i_z,j_z),\\  %
	            ~ 1 \le i_z \le i_0 - 1, 1 \le j_z \le w_b}} \sT_5^{i,z}, \text{ where }\\
	    \sT_5^{i,z} &= \{ P_k \in \sT_5  :  P(\sigma^A_k a_i) > [b], P(z) > [b]\}.
    \end{split}
\end{equation}

The \(P_k\)s that move \(A_i\) and a box \(z \not \in A\) in row \(i_0\) above \([b]\),
\begin{equation}\label{Showing the Pieri Inclusion Removing Many Boxes is a GL(V)-map subsection: Two Box Removal Preserves Garnirs for Hooks Contained in a Single Block equation: T6}
    \begin{split}
        \sT_6 &= \bigsqcup_{0 \le i \le w_b, ~ 2 \le j \le w_b} \sT_6^{i,j}, \text{ where }\\
	    \sT_6^{i,j} &= \{ P_k \in \sT_6  :  P(\sigma^A_k a_i) > [b], P([b](i_0,j)) > [b]\}.
    \end{split}
\end{equation}

The \(P_k\)s that move \(A_i\) and a box in \([b]\) above \(A\) above \([b]\),
\begin{equation}\label{Showing the Pieri Inclusion Removing Many Boxes is a GL(V)-map subsection: Two Box Removal Preserves Garnirs for Hooks Contained in a Single Block equation: T7}
    \begin{split}
        \sT_7 &= \bigsqcup_{\substack{0 \le i \le w_b\\ 1\le j \le w_b}} \sT_7^{i,j},, \text{ where }\\
	    \sT_7^{i,j} &= \{ P_k \in \sT_7  :  P(\sigma^A_k A_i) > [b], [b](i_0 + 2,j) \in R^P\}.
    \end{split}
\end{equation}

Then we have, mod \(F_2 \otimes \mR_{\lambda \setminus X,n}\),
\[
	\Phi_2\left(G_A (T) \right) %
	   	= C \sum_{P} \sum_{k=0}^{w_b} \frac{(-1)^{P}}{H(P)} P\left(\sigma^A_k T \right) 
        = C \sum_{j=1,\ldots,7} \sum_{P_k \in \sT_j} \frac{(-1)^{P}}{H(P)} P_k.
\]

We show that for \(j = 1, \ldots, 7\), 
\[
    \sum_{P_k \in T_j} \frac{(-1)^{P}}{H(P)} P_k \in F_2 \otimes \mR_{\lambda \setminus X,n},
\]
and hence Equation \ref{Showing the Pieri Inclusion Removing Many Boxes is a GL(V)-map subsection: The Theorem equation: Phi of Garnir m Boxes} holds when \(m = 2\) for all blocks \(A \subset [b]\).

The proofs of Cases (\ref{Showing the Pieri Inclusion Removing Many Boxes is a GL(V)-map subsection: Two Box Removal Preserves Garnirs for Hooks Contained in a Single Block equation: T1}), (\ref{Showing the Pieri Inclusion Removing Many Boxes is a GL(V)-map subsection: Two Box Removal Preserves Garnirs for Hooks Contained in a Single Block equation: T2}), and (\ref{Showing the Pieri Inclusion Removing Many Boxes is a GL(V)-map subsection: Two Box Removal Preserves Garnirs for Hooks Contained in a Single Block equation: T3}) are similar to the proofs of Cases (\ref{Showing the Pieri Inclusion Removing One Box is a GL(V)-map subsection: One Box Removal Preserves Garnirs for Hooks Contained in a Single Block equation: T1}), (\ref{Showing the Pieri Inclusion Removing One Box is a GL(V)-map subsection: One Box Removal Preserves Garnirs for Hooks Contained in a Single Block equation: T2}), and (\ref{Showing the Pieri Inclusion Removing One Box is a GL(V)-map subsection: One Box Removal Preserves Garnirs for Hooks Contained in a Single Block equation: T3}), respectively. 
It remains to show the proofs of Cases (\ref{Showing the Pieri Inclusion Removing Many Boxes is a GL(V)-map subsection: Two Box Removal Preserves Garnirs for Hooks Contained in a Single Block equation: T4}) - (\ref{Showing the Pieri Inclusion Removing Many Boxes is a GL(V)-map subsection: Two Box Removal Preserves Garnirs for Hooks Contained in a Single Block equation: T7}).
In each case we assume \(b > b_1\), with the case \(b = b_1\) being similar. We will also assume in each case that \(A \cap X = \emptyset\), as if \(A \cap X \ne \emptyset\) we may follow the proof of Subcase (\ref{Showing the Pieri Inclusion Removing One Box is a GL(V)-map subsection: One Box Removal Preserves Garnirs for Hooks Contained in a Two Blocks equation: T3}.1).

\begin{case*}[\ref{Showing the Pieri Inclusion Removing Many Boxes is a GL(V)-map subsection: Two Box Removal Preserves Garnirs for Hooks Contained in a Single Block equation: T4}]
In this case we show that the sum over all paths that move \(A_i\) and \(A_j\) above \([b]\) is in \(F_2 \otimes \mR_{\lambda \setminus X,n}\).
Recall that 
\[
    \sT_4 = \bigsqcup_{0 \le i < j \le w_b} \sT_4^{i,j},
\]
where
\[
    \sT_4^{i,j} = \{P_k \in \sT_4  :  P(\sigma^A_k a_i) > [b] \text{ and } P(\sigma^A_k a_j) > [b]\}.
\]
It is enough to show that for \(0 \le i < j \le w_b\),
\[
    \sum_{P_k \in \sT_4^{i,j}} \frac{(-1)^{P}}{H(P)} P_k \in F_2 \otimes \mR_{\lambda \setminus X,n}.
\]
We will show the case  \(i = 0\) and  \(j = 1\), with rest being similar. 
\end{case*}

\begin{proof}
For the rest of Case (\ref{Showing the Pieri Inclusion Removing Many Boxes is a GL(V)-map subsection: Two Box Removal Preserves Garnirs for Hooks Contained in a Single Block equation: T4}) let \(\sT := \sT_4^{0,1}\). 
Observe that for all \(\ds P_k \in \sT\), either \(k = 0\) or \(k = 1\), as otherwise \(\sigma^P_k A_0\) and \(\sigma^A_k A_1\) are in the same row.

Next we will define for each \(\ds P_0 \in \sT\) a unique \(P^\prime_1 \in \sT\) that agrees with \(P\) except on \(\{a_0, a_1\}\). 
The conditions on \(P^\prime_1\) will depend on whether or not \(P\) ``removes'' (i.e. maps to \(Y\)) either or both of \(a_0, a_1\). 
We want to construct \(P^\prime_1\)  so that it sends \(A_0\) and \(A_1\) to the same place \(P\) does, but with the freedom to do so with either the orbit of \(x_1\) or \(x_2\).
For each \(\ds P_0 \in \sT\), let \(P^\prime_1 \in \sT\) such that \(P^\prime \equiv P\) except on \(\{a_0, a_1\}\), and
\begin{itemize}
    \item if \(\{P(a_0),P(a_1)\} \cap Y = \emptyset\),
        \[
            P^\prime(\sigma^A_1 a_0) = P(a_0) \text{ and } P^\prime(\sigma^A_1 (a_1)) = P(a_1).
        \]
    \item if \(P(a_0) \in Y\) and \(P(a_1) \not \in Y\),
        \[
            P^\prime(\sigma^A_1 a_0) \in Y \text{ and } P^\prime(\sigma^A_1 (a_1)) = P(a_1).
        \]
    \item if \(P(a_0) \not \in Y\) and \(P(a_1) \in Y\),
        \[
            P^\prime(\sigma^A_1 a_0) = P(a_0) \text{ and } P^\prime(\sigma^A_1 (a_1)) \in Y.
        \]
    \item if \(\{P(a_0), P(a_1)\} = Y\),
        \[
            \{P^\prime(\sigma^A_1 a_0), P^\prime(\sigma^A_1 (a_1))\} = Y.
        \]
\end{itemize}

\begin{center}
\begin{tikzpicture}
    \node at (-1,2) {\(P_0 =\)};
    
    \draw (0,0) rectangle (3,4);
    \draw (0,-.25) -- (0,0);
    \draw (0,4) -- (0,4.25);
    \draw (2.5,-.25) -- (2.5,0);
    \draw (3,4) -- (3.5,4) -- (3.5,4.25);
    
    \draw (0,2.5) rectangle (.5,3);
    \node at (.25,2.75) {\small \(A_0\)};
    
    \draw (0,3) rectangle (.5,3.5);
    \node at (.25,3.25) {\small \(A_1\)};
    \draw (.5,3) rectangle (2.5,3.5);
    \node at (1.5,3.25) {\small \(\cdots\)};
    \draw (2.5,3) rectangle (3,3.5);
    \node at (2.75,3.25) {\small \(A_w\)};
    
    \node at (.5,2.2) {\(u\)};
    \node at (1.75,2.4) {\(v\)};
    
    \draw[blue] (.4,-.5) to [out = 120, in = 180] (1.25,.2); 
    \draw[blue] (1.25,.2) to [out = 200, in = 180] (1.25,.6);
    \draw[blue] (1.25,.6) to [out = 200, in = 180] (1.25,1);
    \draw[blue] (1.25,1) to [out = 200, in = 180] (1.25,1.4);
    \draw[blue] (1.25,1.4) to [out = 200, in = 180] (1.25,1.8);
    \draw[blue] (1.25,1.8) to [out = 220, in = 220] (.5,2.2); 
    \draw[blue] (.5,2.2) to [out = 200, in = 180] (.1,2.75); 
    \draw[blue,->] (.2,2.75) to [out = 180, in = 270] (-1,4); 
    
    \draw[red, very thick, dotted] (1.6,-.5) to [out = 20, in = 0] (1.25,.4); 
    \draw[red, very thick, dotted] (1.25,.4) to [out = -20, in = 0] (1.25,.8);
    \draw[red, very thick, dotted] (1.25,.8) to [out = -20, in = 0] (1.25,1.2);
    \draw[red, very thick, dotted] (1.25,1.2) to [out = -20, in = 0] (1.25,1.6);
    \draw[red, very thick, dotted] (1.25,1.6) to [out = -20, in = 0] (1.25,2);
    \draw[red, very thick, dotted] (1.25,2) to [out = -20, in = 0] (1.5,2.4); 
    \draw[red, very thick, dotted] (1.5,2.4) to [out = 180, in = 220] (.1,3.25); 
    \draw[red,->, very thick, dotted] (.1,3.25) to [out = 180, in = 270] (-.5,4); 
\end{tikzpicture}
\hspace{1in}
\begin{tikzpicture}
    \node at (-1,2) {\(P^\prime_1 =\)};
    
    \draw (0,0) rectangle (3,4);
    \draw (0,-.25) -- (0,0);
    \draw (0,4) -- (0,4.25);
    \draw (2.5,-.25) -- (2.5,0);
    \draw (3,4) -- (3.5,4) -- (3.5,4.25);
    
    \draw (0,2.5) rectangle (.5,3);
    \node at (.25,2.75) {\small \(A_1\)};
    
    \draw (0,3) rectangle (.5,3.5);
    \node at (.25,3.25) {\small \(A_0\)};
    \draw (.5,3) rectangle (2.5,3.5);
    \node at (1.5,3.25) {\small \(\cdots\)};
    \draw (2.5,3) rectangle (3,3.5);
    \node at (2.75,3.25) {\small \(A_w\)};
    
    \node at (.5,2.2) {\(u\)};
    \node at (1.75,2.4) {\(v\)};
    
    \draw[blue] (.4,-.5) to [out = 120, in = 180] (1.25,.2); 
    \draw[blue] (1.25,.2) to [out = 200, in = 180] (1.25,.6);
    \draw[blue] (1.25,.6) to [out = 200, in = 180] (1.25,1);
    \draw[blue] (1.25,1) to [out = 200, in = 180] (1.25,1.4);
    \draw[blue] (1.25,1.4) to [out = 200, in = 180] (1.25,1.8);
    \draw[blue] (1.25,1.8) to [out = 220, in = 220] (.5,2.2); 
    \draw[blue] (.5,2.2) to [out = 200, in = 180] (.1,2.75); 
    \draw[blue,->] (.2,2.75) to [out = 180, in = 270] (-.5,4); 
    
    \draw[red, very thick, dotted] (1.6,-.5) to [out = 20, in = 0] (1.25,.4); 
    \draw[red, very thick, dotted] (1.25,.4) to [out = -20, in = 0] (1.25,.8);
    \draw[red, very thick, dotted] (1.25,.8) to [out = -20, in = 0] (1.25,1.2);
    \draw[red, very thick, dotted] (1.25,1.2) to [out = -20, in = 0] (1.25,1.6);
    \draw[red, very thick, dotted] (1.25,1.6) to [out = -20, in = 0] (1.25,2);
    \draw[red, very thick, dotted] (1.25,2) to [out = -20, in = 0] (1.5,2.4); 
    \draw[red, very thick, dotted] (1.5,2.4) to [out = 180, in = 220] (.1,3.25); 
    \draw[red,->, very thick, dotted] (.1,3.25) to [out = 180, in = 270] (-1,4); 
\end{tikzpicture}
\end{center}

It is clear that for for each \(P_0 \in \sT\) the choice of \(P^\prime_1\) is unique, and that all \(Q_1 \in \sT\) arise in such a way. 
Thus
\[
    \sum_{P_k \in \sT} \frac{(-1)^{P}}{H(P)} P_k = \sum_{P_0 \in \sT} \left( \frac{(-1)^{P}}{H(P)} P_0 + \frac{(-1)^{P^\prime}}{H(P^\prime)} P^\prime_1 \right)
\]

As \((-1)^P = (-1)^{P^\prime}\) and \(H(P) = H(P^\prime)\), it is then enough to show that
\[
    \sum_{P_0 \in \sT} P_0 + P^\prime_1 \in F_2 \otimes \mR_{\lambda \setminus X,n}.
\]

We will in fact show that for each \(P_0 \in \sT\), \(P_0 + P^\prime_1 \in F_2 \otimes \mR_{\lambda \setminus X,n}\). 
Pick \(P_0\) and the corresponding \(P^\prime_1 \in \sT\) and let \(u = P^{-1}(a_0)\) and \(v = P^{-1}(A_1)\).
Let \(T^\prime \in \mF_{\lambda \setminus X, n}\) be the unique tableau with \(T^\prime = T_P\) on \((\lambda \setminus X) \setminus \{[b](i_0), [b](i_0 + 1)\}\) and \(T^\prime = T \text{ on } \{[b](i_0), [b](i_0 + 1)\}\) except \(T^\prime_{a_0} = u\) and \(T^\prime_{a_1} = v\).

\begin{center}
\begin{tikzpicture}
    \node at (-1,2) {\(T^\prime =\)};
    
    \draw (0,0) rectangle (3,4);
    \draw (0,-.25) -- (0,0);
    \draw (0,4) -- (0,4.25);
    \draw (2.5,-.25) -- (2.5,0);
    \draw (3,4) -- (3.5,4) -- (3.5,4.25);
    
    \draw (0,2.5) rectangle (.5,3);
    \node at (.25,2.75) {\small \(u\)};
    
    \draw (0,3) rectangle (.5,3.5);
    \node at (.25,3.25) {\small \(v\)};
    \draw (.5,3) rectangle (2.5,3.5);
    \node at (.75,3.25) {\small \(A_2\)};
    \draw (1,3) -- (1,3.5);
    \node at (1.75,3.25) {\small \(\cdots\)};
    \draw (2.5,3) rectangle (3,3.5);
    \node at (2.75,3.25) {\small \(A_w\)};
    
    \node at (-.5,4) { \ };
    \node at (-1,4) { \ };
    \node at (.5,-.5) { \ };
    \node at (1.5,-.5) { \ };
    
    \draw[blue] (.4,-.5) to [out = 120, in = 180] (1.25,.2); 
    \draw[blue] (1.25,.2) to [out = 200, in = 180] (1.25,.6);
    \draw[blue] (1.25,.6) to [out = 200, in = 180] (1.25,1);
    \draw[blue] (1.25,1) to [out = 200, in = 180] (1.25,1.4);
    \draw[blue] (1.25,1.4) to [out = 200, in = 180] (1.25,1.8);
    \draw[blue] (1.25,1.8) to [out = 220, in = 220] (.5,2.2); 
    
    \draw[red, very thick, dotted] (1.6,-.5) to [out = 20, in = 0] (1.25,.4); 
    \draw[red, very thick, dotted] (1.25,.4) to [out = -20, in = 0] (1.25,.8);
    \draw[red, very thick, dotted] (1.25,.8) to [out = -20, in = 0] (1.25,1.2);
    \draw[red, very thick, dotted] (1.25,1.2) to [out = -20, in = 0] (1.25,1.6);
    \draw[red, very thick, dotted] (1.25,1.6) to [out = -20, in = 0] (1.25,2);
    \draw[red, very thick, dotted] (1.25,2) to [out = -20, in = 0] (1.5,2.4); 
    
\end{tikzpicture}
\end{center}

Then, mod \(F_2 \otimes \mR_{\lambda \setminus X,n}\) we have
\[
    P_0 + P_1^\prime 
        = \left( ~ \ytableaushort{{\alpha^P_2}, {\alpha^P_1}} + \begin{ytableau} \alpha^P_1 \\ %
	            \alpha^P_2 \end{ytableau}\right) \otimes T^\prime
	    = 0.
\]
\end{proof}

\begin{case*}[\ref{Showing the Pieri Inclusion Removing Many Boxes is a GL(V)-map subsection: Two Box Removal Preserves Garnirs for Hooks Contained in a Single Block equation: T5}]
In this case we show that the sum over all paths that move \(A_i\) and a box \(z \in [b]\), with \(z < A\), above \([b]\) is in \(F_2 \otimes \mR_{\lambda \setminus X, n}\).
Recall that
\[
    \sT_5 = \bigsqcup_{\substack{0 \le i \le w_b, ~ z = [b](i_z,j_z),\\ ~ 1 \le i_z \le i_0 - 1, 1 \le j_z \le w_b}} \sT_5^{i,z},
\]
where
\[
    \sT_5^{i,z} = \{ P_k \in \sT_5  :  P(\sigma^A_k a_i) > [b], P(z) > [b]\}.
\]
It is enough to show that
\[
    \sum_{P_k \in \sT_5^{0,z}} \frac{(-1)^{P}}{H(P)} P_k \in F_2 \otimes \mR_{\lambda \setminus X,n},
\]
for some \(z = [b](i_z,j_z)\) a fixed box with \(Z = T_{z}\), \(1 \le i_z \le i_0 - 1\) odd, and \(1 \le j_z \le w_b\), with the other cases being similar.
\end{case*}

\begin{proof}
For the rest of Case (\ref{Showing the Pieri Inclusion Removing Many Boxes is a GL(V)-map subsection: Two Box Removal Preserves Garnirs for Hooks Contained in a Single Block equation: T5}) let \(\sT := \sT_5^{0,z}\) and, for any \(2\)-path \(P\), let \(\tilde{h^P} := h^P - h_b^P\).
Define the relation \(\sim\) on \(\sT\) by 
\begin{align*}
    P_k \sim Q_j
        \iff & Q \text{ is a } ([b](1),[b](i_0 + 1))\text{-path extension of } P\\
             & \text{ and if } P(u) \in [b](1) \text{ for some box } u < [b], \text{ then } Q(u) \in [b](1).
\end{align*}
It is clear that this defines an equivalence relation on \(\sT\), so that
\[
    \sum_{P_k \in \sT_5^{0,z}} \frac{(-1)^{P}}{H(P)} P_k = \sum_{\left[P_k\right] \in \sT / \sim} ~ %
    \sum_{Q_k \in \left[P_k\right]} \frac{(-1)^{Q}}{H(Q)} Q_k.
\]

Pick \(P_0 \in \sT\) with \([b](i,1) \in R^P\) for all \(1 \le i \ne i_z \le i_0\), and let \([b_u](i_u,j_u) = P^{-1}([b](1,1))\) and \([b_v](i_v,j_v) = P^{-1}([b](2,1))\) with \(u = T_{[b_u](i_u,j_u)}\) and \(v = T_{[b_v](i_v,j_v)}\).

\begin{center}
\begin{tikzpicture}
    \node at (-1,2) {\(P_0 = \)};
    
    \draw (0,0) rectangle (3,4);
    \draw (0,-.25) -- (0,0);
    \draw (0,4) -- (0,4.25);
    \draw (2.5,-.25) -- (2.5,0);
    \draw (3,4) -- (3.5,4) -- (3.5,4.25);
    
    \draw (0,2.5) rectangle (.5,3);
    \draw (.5,0) -- (.5,2.5);
    \node at (.25,2.75) {\small \(A_0\)};
    
    \draw (0,3) rectangle (.5,3.5);
    \node at (.25,3.25) {\small \(A_1\)};
    \draw (.5,3) rectangle (2.5,3.5);
    \node at (1.5,3.25) {\small \(\cdots\)};
    \draw (2.5,3) rectangle (3,3.5);
    \node at (2.75,3.25) {\small \(A_w\)};
    
    \draw (0,1.25) rectangle (3,1.75);
    \draw (1.25,1.25) -- (1.25,1.75);
    \draw (1.75,1.25) -- (1.75,1.75);
    \node at (1.5,1.5) {\small \(Z\)};
    
    \node at (.5,-.5) {\(u\)};
    \node at (1.5,-.5) {\(v\)};
    
    \draw[blue] (.4,-.5) to [out = 200, in = 180] (.2,.2); 
    \draw[blue] (.2,.2) to [out = 200, in = 180] (.2,.6);
    \draw[blue] (.2,.6) to [out = 200, in = 180] (.2,1);
    \draw[blue] (.2,1) to [out = 140, in = 180] (1.4,1.5); 
    \draw[blue,->] (1.4,1.5) to [out = 180, in = 270] (-.75,4); 
    
    \draw[red, very thick, dotted] (1.6,-.5) to [out = 20, in = 0] (.2,.4); 
    \draw[red, very thick, dotted] (.2,.4) to [out = -20, in = 0] (.2,.8);
    \draw[red, very thick, dotted] (.2,.8) to [out = -20, in = 0] (.2,1.2);
    \draw[red, very thick, dotted] (.2,1.2) to [out = 200, in = 180] (.2,1.8); 
    \draw[red, very thick, dotted] (.2,1.8) to [out = 200, in = 180] (.2,2);
    \draw[red, very thick, dotted] (.2,2) to [out = 200, in = 180] (.2,2.2);
    \draw[red, very thick, dotted] (.2,2.2) to [out = 200, in = 180] (.2,2.4);
    \draw[red, very thick, dotted] (.2,2.4) to [out = 200, in = 180] (.1,2.75); 
    \draw[red,->, very thick, dotted] (.1,2.75) to [out = 180, in = 270] (-.5,4); 
\end{tikzpicture}
\end{center}

It is then enough to show that
\[
    \sum_{Q_k \in \left[ P_0 \right]} \frac{(-1)^{Q}}{H(Q)} Q_k \in F_2 \otimes \mR_{\lambda \setminus X, n}.
\]
In fact, as \(\tilde{h^Q} = \tilde{h^P}\) and \(H(Q) = H(P)\) for all \(Q_k \in \left[ P_0 \right]\), it is enough to show that
\[
    \sum_{Q_k \in \left[ P_0 \right]} (-1)^{h_b^Q} Q_k \in F_2 \otimes \mR_{\lambda \setminus X,n}.
\]

Observe that \([P_0]\) can be written as the disjoint union
\[
    [P_0] = \bigsqcup_{i = 1}^3 [P_0]_i,
\]
where the \([P_0]_i\) are defined as follows.\\

The paths acting on \(\sigma_0^A T\), 
\begin{equation*}
    \left[P_0\right]_1 = \{Q_0 \in \left[P_0\right]\}, 
\end{equation*}
\begin{center}
    \begin{tikzpicture}
        \draw (0,0) rectangle (3,4);
        \draw (0,-.25) -- (0,0);
        \draw (0,4) -- (0,4.25);
        \draw (2.5,-.25) -- (2.5,0);
        \draw (3,4) -- (3.5,4) -- (3.5,4.25);
        
        \draw (0,2.5) rectangle (.5,3);
        \node at (.25,2.75) {\small \(A_0\)};
        
        \draw (0,3) rectangle (.5,3.5);
        \node at (.25,3.25) {\small \(A_1\)};
        \draw (.5,3) rectangle (2.5,3.5);
        \node at (1.5,3.25) {\small \(\cdots\)};
        \draw (2.5,3) rectangle (3,3.5);
        \node at (2.75,3.25) {\small \(A_w\)};
        
        \draw (0,1.25) rectangle (3,1.75);
        \draw (1.25,1.25) -- (1.25,1.75);
        \draw (1.75,1.25) -- (1.75,1.75);
        \node at (1.5,1.5) {\small \(Z\)};
        
        \node at (.5,-.5) {\(u\)};
        \node at (1.5,-.5) {\(v\)};
        
        \draw[blue] (.4,-.5) to [out = 120, in = 180] (.8,.2); 
        \draw[blue] (.8,.2) to [out = 200, in = 180] (.8,.6);
        \draw[blue] (.8,.6) to [out = 200, in = 180] (.8,1);
        \draw[blue] (.8,1) to [out = 140, in = 180] (1.4,1.5); 
        \draw[blue,->] (1.4,1.5) to [out = 180, in = 270] (-.75,4); 
        
        \draw[red, very thick, dotted] (1.6,-.5) to [out = 20, in = 0] (.8,.4); 
        \draw[red, very thick, dotted] (.8,.4) to [out = -20, in = 0] (.8,.8);
        \draw[red, very thick, dotted] (.8,.8) to [out = -20, in = 0] (.8,1.2);
        \draw[red, very thick, dotted] (.8,1.2) to [out = 200, in = 180] (.8,1.8); 
        \draw[red, very thick, dotted] (.8,1.8) to [out = 200, in = 180] (.8,2);
        \draw[red, very thick, dotted] (.8,2) to [out = 200, in = 180] (.8,2.2);
        \draw[red, very thick, dotted] (.8,2.2) to [out = 200, in = 180] (.8,2.4);
        \draw[red, very thick, dotted] (.8,2.4) to [out = 200, in = 220] (.1,2.75); 
        \draw[red,->, very thick, dotted] (.1,2.75) to [out = 180, in = 270] (-.5,4); 
    \end{tikzpicture}
\end{center}
the paths acting on \(\sigma_k^AT\), \(k \ne 0\), that hit \(a_0 = \sigma_k^A a_k\),
\begin{equation*}
    \left[P_0\right]_2 = \{Q_k \in \left[P_0\right]  :  k\ne 0, a_0 \in R^Q\},
\end{equation*}
\begin{center}
    \begin{tikzpicture}
        \draw (0,0) rectangle (3,4);
        \draw (0,-.25) -- (0,0);
        \draw (0,4) -- (0,4.25);
        \draw (2.5,-.25) -- (2.5,0);
        \draw (3,4) -- (3.5,4) -- (3.5,4.25);
        
        \draw (0,2.5) rectangle (.5,3);
        \node at (.25,2.75) {\small \(A_k\)};
        
        \draw (0,3) rectangle (3,3.5);
        \draw (1.25,3) -- (1.25,3.5);
        \draw (1.75,3) -- (1.75,3.5);
        \node at (1.5,3.25) {\small \(A_0\)};
        \node at (1.5,3.7) {\small \(k\)};
        
        \draw (0,1.25) rectangle (3,1.75);
        \draw (1.25,1.25) -- (1.25,1.75);
        \draw (1.75,1.25) -- (1.75,1.75);
        \node at (1.5,1.5) {\small \(Z\)};
        
        \node at (.5,-.5) {\(u\)};
        \node at (1.5,-.5) {\(v\)};
    
        \draw[blue] (.4,-.5) to [out = 120, in = 180] (.8,.2); 
        \draw[blue] (.8,.2) to [out = 200, in = 180] (.8,.6);
        \draw[blue] (.8,.6) to [out = 200, in = 180] (.8,1);
        \draw[blue] (.8,1) to [out = 140, in = 180] (1.4,1.5); 
        \draw[blue,->] (1.4,1.5) to [out = 180, in = 270] (-.75,4); 
        
        \draw[red, very thick, dotted] (1.6,-.5) to [out = 20, in = 0] (.8,.4); 
        \draw[red, very thick, dotted] (.8,.4) to [out = -20, in = 0] (.8,.8);
        \draw[red, very thick, dotted] (.8,.8) to [out = -20, in = 0] (.8,1.2);
        \draw[red, very thick, dotted] (.8,1.2) to [out = 200, in = 180] (.8,1.8); 
        \draw[red, very thick, dotted] (.8,1.8) to [out = 200, in = 180] (.8,2);
        \draw[red, very thick, dotted] (.8,2) to [out = 200, in = 180] (.8,2.2);
        \draw[red, very thick, dotted] (.8,2.2) to [out = 200, in = 180] (.8,2.4);
        \draw[red, very thick, dotted] (.8,2.4) to [out = 200, in = 220] (.1,2.75); 
        \draw[red, very thick, dotted] (.1,2.75) to [out = 120, in = 180] (1.3,3.25); 
        \draw[red,->, very thick, dotted] (1.3,3.25) to [out = 180, in = 270] (-.5,4); 
    \end{tikzpicture}
\end{center}
and the paths acting on \(\sigma_k^AT\), \(k \ne 0\), that miss \(a_0 = \sigma_k^A a_k\),
\begin{equation*}
    \left[P_0\right]_3 = \{Q_k \in \left[P_0\right] :  k \ne 0, a_0 \not \in R^Q\}.
\end{equation*}
\begin{center}
    \begin{tikzpicture}
        \draw (0,0) rectangle (3,4);
        \draw (0,-.25) -- (0,0);
        \draw (0,4) -- (0,4.25);
        \draw (2.5,-.25) -- (2.5,0);
        \draw (3,4) -- (3.5,4) -- (3.5,4.25);
        
        \draw (0,2.5) -- (3,2.5);
        \draw (.5,2.5) -- (.5,3);
        \node at (.25,2.75) {\small \(A_k\)};
        
        \draw (0,3) rectangle (3,3.5);
        \draw (1.25,3) -- (1.25,3.5);
        \draw (1.75,3) -- (1.75,3.5);
        \node at (1.5,3.25) {\small \(A_0\)};
        \node at (1.5,3.7) {\small \(k\)};
        
        \draw (0,1.25) rectangle (3,1.75);
        \draw (1.25,1.25) -- (1.25,1.75);
        \draw (1.75,1.25) -- (1.75,1.75);
        \node at (1.5,1.5) {\small \(Z\)};
        
        \node at (.5,-.5) {\(u\)};
        \node at (1.5,-.5) {\(v\)};
        
        \draw[blue] (.4,-.5) to [out = 120, in = 180] (.8,.2); 
        \draw[blue] (.8,.2) to [out = 200, in = 180] (.8,.6);
        \draw[blue] (.8,.6) to [out = 200, in = 180] (.8,1);
        \draw[blue] (.8,1) to [out = 140, in = 180] (1.4,1.5); 
        \draw[blue,->] (1.4,1.5) to [out = 180, in = 270] (-.75,4); 
        
        \draw[red, very thick, dotted] (1.6,-.5) to [out = 20, in = 0] (.8,.4); 
        \draw[red, very thick, dotted] (.8,.4) to [out = -20, in = 0] (.8,.8);
        \draw[red, very thick, dotted] (.8,.8) to [out = -20, in = 0] (.8,1.2);
        \draw[red, very thick, dotted] (.8,1.2) to [out = 200, in = 180] (.8,1.8); 
        \draw[red, very thick, dotted] (.8,1.8) to [out = 200, in = 180] (.8,2);
        \draw[red, very thick, dotted] (.8,2) to [out = 200, in = 180] (.8,2.2);
        \draw[red, very thick, dotted] (.8,2.2) to [out = 200, in = 180] (.8,2.4);
        \draw[red, very thick, dotted] (.8,2.4) to [out = 120, in = 180] (1.5,2.75); 
        \draw[red, very thick, dotted] (1.5,2.75) to [out = 180, in = 180] (1.3,3.25); 
        \draw[red,->, very thick, dotted] (1.3,3.25) to [out = 180, in = 270] (-.5,4); 
    \end{tikzpicture}
\end{center}

Let \(T^\prime \in \mF_{\lambda \setminus X}\) be the unique tableau with \(T^\prime = T_P\) on \((\lambda \setminus X) \setminus [b]\) and \(T^\prime = T\) on \([b]\) except \(T^\prime_{z} = v\) and \(T^\prime_{a_0} = u\).

\begin{center}
\begin{tikzpicture}
    \node at (-1,2) {\(T^\prime = \)};
    
    \draw (0,0) rectangle (3,4);
    \draw (0,-.25) -- (0,0);
    \draw (0,4) -- (0,4.25);
    \draw (2.5,-.25) -- (2.5,0);
    \draw (3,4) -- (3.5,4) -- (3.5,4.25);
    
    \draw (0,2.5) rectangle (.5,3);
    \node at (.25,2.75) {\small \(u\)};
    
    \draw (0,3) rectangle (.5,3.5);
    \node at (.25,3.25) {\small \(A_1\)};
    \draw (.5,3) rectangle (2.5,3.5);
    \node at (1.5,3.25) {\small \(\cdots\)};
    \draw (2.5,3) rectangle (3,3.5);
    \node at (2.75,3.25) {\small \(A_w\)};
    
    \draw (0,1.25) rectangle (3,1.75);
    \draw (1.25,1.25) -- (1.25,1.75);
    \draw (1.75,1.25) -- (1.75,1.75);
    \node at (1.5,1.5) {\small \(v\)};
    
    \node at (-.5,4) { \ };
    \node at (-.75,4) { \ };
    \node at (.5,-.5) { \ };
    \node at (1.5,-.5) { \ };
\end{tikzpicture}
\end{center}

Then by Corollary \ref{Generating Garnir Relations and Tools for Collapsing Sums subsection: Calculation Lemma for 2-paths} and applications of \(G_A\) we have, mod \(F_2 \otimes \mR_{\lambda \setminus X, n}\),

\begin{equation*}
\sum_{Q_k \in \left[P_0\right]_1} (-1)^{h_b^Q} Q_k %
	= (-1)^{i_0 + i_0 - 2 + i_z - 2} ~ \ytableaushort{{\alpha^P_2}, {\alpha^P_1}} \otimes T^\prime
    = - ~ \ytableaushort{{\alpha^P_2}, {\alpha^P_1}} \otimes  T^\prime,
\end{equation*}
\begin{equation*}
\sum_{Q_k \in \left[P_0\right]_2} (-1)^{h_b^Q} Q_k %
	= (-1)^{i_0 + 1 + i_0 - 2 + i_z - 2} (w_b) ~ \begin{ytableau} \alpha^P_2 \\ %
	        \alpha^P_1 \end{ytableau} \otimes  T^\prime 
    = w_b ~ \ytableaushort{{\alpha^P_2}, {\alpha^P_1}} \otimes T^\prime,
\end{equation*}
\begin{equation*}
\sum_{Q_k \in \left[P_0\right]_3} (-1)^{h_b^Q} Q_k %
    = (-1)^{i_0 + 1 + 1 + i_0 - 2 + i_z - 2} (w_b - 1) ~ \begin{ytableau} \alpha^P_2 \\ \alpha^P_1 \end{ytableau} \otimes  T^\prime
    = - w_b + 1 ~ \ytableaushort{{\alpha^P_2}, {\alpha^P_1}} \otimes T^\prime
\end{equation*}

So, mod \(F_2 \otimes \mR_{\lambda \setminus X, n}\) we have
\[
    \sum_{Q_k \in \left[P_0\right]} (-1)^{h_b^Q} Q_k
        = \left( - 1 + w_b - w_b + 1 \right) ~ \ytableaushort{{\alpha^P_2}, {\alpha^P_1}} \otimes T^\prime 
        = 0 \otimes T^\prime.
\]
\end{proof}

\begin{case*}[\ref{Showing the Pieri Inclusion Removing Many Boxes is a GL(V)-map subsection: Two Box Removal Preserves Garnirs for Hooks Contained in a Single Block equation: T6}]
In this case we show that the sum over all paths that that move \(A_i\) and a box \(z \not \in A\) in row \(i_0\) above \([b]\) is in \(F_2 \otimes \mR_{\lambda \setminus X,n}\).
Recall that
\[
    \sT_6 = \bigsqcup_{0 \le i \le w_b, ~ 2 \le j \le w_b} \sT_6^{i,j},
\]
where
\[
    \sT_6^{i,j} = \{ P_k \in \sT_6  :  P(\sigma^A_k a_i) > [b], P([b](i_0,j)) > [b]\}.
\]
It is enough to show that
\[
    \sum_{P_k \in \sT_6^{0,2}} \frac{(-1)^{P}}{H(P)} P_k \in F_2 \otimes \mR_{\lambda \setminus X,n},
\]
with the other cases being similar.
\end{case*}

\begin{proof}
Let \(z := [b](i_0,2)\) and \(Z := T_{z}\), and observe that \(\sT_6^{0,2}\) is the union of the following disjoint sets:

\begin{align*}
    \sT_6^{0,2,1} & = \{P_k \in \sT_6^{0,2}  :  P(\sigma^A_k a_0), P(z) \not \in Y\},\\[5pt]
    \sT_6^{0,2,2} & = \{P_k \in \sT_6^{0,2}  :  P(\sigma^A_k a_0) \in Y, P(z) \not \in Y\},\\[5pt]
    \sT_6^{0,2,3} & = \{P_k \in \sT_6^{0,2}  :  P(z) \in Y, P(\sigma^A_k a_0) \not \in Y\}, \text{ and }\\[5pt]
    \sT_6^{0,2,4} & = \{P_k \in \sT_6^{0,2}  :  P(\sigma^A_k a_0), P(z) \in Y\}.\\
\end{align*}

So it is enough to show that for \(1 \le i \le 4\),
\[
    \sum_{P_k \in \sT_6^{0,2,i}} \frac{(-1)^{P}}{H(P)} P_k \in F_2 \otimes \mR_{\lambda \setminus X,n}.
\]

Define the relation \(\sim^i\) on \(\sT_6^{0,2,i}\), for \(1 \le i \le 4\), as follows.
\begin{itemize}
\item Define \(\sim^1\) on \(\sT_6^{0,2,1}\) by 
	\begin{align*}
        P_k \sim^1 Q_j %
        	\iff & Q \text{ is a } ([b](1),[b](i_0 + 1)) \text{-path extension of } P,\\
            & Q(\sigma^A_j A_0) = P(\sigma^A_k A_0), \text{ and } Q(Z) = P(Z).
    \end{align*}
\item Define \(\sim^2\) on \(\sT_6^{0,2,2}\) by 
	\begin{align*}
        P_k \sim^2 Q_j %
    	    \iff & Q \text{ is a } ([b](1),[b](i_0 + 1)) \text{-path extension of } P,\\
            & \text{ and } Q(Z) = P(Z).
    \end{align*}
\item Define \(\sim^3\) on \(\sT_6^{0,2,3}\) by 
	\begin{align*}
        P_k \sim^3 Q_j %
    	    \iff & Q \text{ is a } ([b](1),[b](i_0 + 1)) \text{-path extension of } P,\\
            & \text{ and } Q(\sigma^A_j A_0) = P(\sigma^A_k A_0).
    \end{align*}
\item Define \(\sim^4\) on \(\sT_6^{0,2,4}\) by 
	\[
        P_k \sim^4 Q_j %
    	    \iff Q \text{ is a } ([b](1),[b](i_0 + 1)) \text{-path extension of } P.
    \]
\end{itemize}

It is clear that, for \(1 \le i \le 4\), \(\sim^i\) an equivalence relation on \(\sT_6^{0,2,i}\), so that
\[
    \sum_{P_k \in \sT_6^{0,2,i}} \frac{(-1)^{P}}{H(P)} P_k = \sum_{\left[ P_k \right] \in \sT_6^{0,2,i} / \sim^i} ~  
    \sum_{Q_k \in \left[P_k\right]} \frac{(-1)^{Q}}{H(Q)} Q_k.
\]
Thus it is enough to show that for \(1 \le i \le 4\),
\[
    \sum_{\left[ P_k \right] \in \sT_6^{0,2,i} / \sim^i} \sum_{Q_k \in \left[P_k\right]} %
    \frac{(-1)^{Q}}{H(Q)} Q_k \in F_2 \otimes \mR_{\lambda \setminus X,n}.
\]
We will show the case \(i = 1\), with the rest being similar. For the rest of Case (\ref{Showing the Pieri Inclusion Removing Many Boxes is a GL(V)-map subsection: Two Box Removal Preserves Garnirs for Hooks Contained in a Single Block equation: T6}), let \(\sT := \sT_6^{0,2,1}\).
For \(l = 1, 2\), let \(\sT_{x_l}\) be the set of all \(Q_k\) in \(\sT\) such that the orbit of \(x_l\) intersects the first row in \([b]\),
\[
    \sT_{x_l} = \{P_k \in \sT  :  R^P_l \cap [b](1) \ne \emptyset\}.
\]

As \(a_0 = [b](i_0,1)\) and \(z = [b](i_0,2)\) are in the same row, it must be that \(1 \le k \le w_b\) for all \(P_k \in \sT\).
Pick \(P_1 \in \sT\) with \([b](i,1) \in R^P\) for all \(i = 1, \ldots, i_0 - 1\), and let \([b_u](i_u,j_u) = P^{-1}([b](1,1))\) and \([b_v](i_v,j_v) = P^{-1}([b](2,1))\) with \(u = T_{[b_u](i_u,j_u)}\) and \(v = T_{[b_v](i_v,j_v)}\).

\begin{center}
\begin{tikzpicture}
    \node at (-1,2) {\(P_1 =\)};
    
    \draw (0,0) rectangle (3,4);
    \draw (0,-.25) -- (0,0);
    \draw (0,4) -- (0,4.25);
    \draw (2.5,-.25) -- (2.5,0);
    \draw (3,4) -- (3.5,4) -- (3.5,4.25);
    
    \draw (0,2.5) -- (3,2.5);
    \draw (.5,0) -- (.5,3);
    \draw (1,2.5) -- (1,3);
    \node at (.75,2.75) {\small \(Z\)};
    \node at (.25,2.75) {\small \(A_k\)};
    
    \draw (0,3) rectangle (3,3.5);
    \draw (1.25,3) -- (1.25,3.5);
    \draw (1.75,3) -- (1.75,3.5);
    \node at (1.5,3.25) {\small \(A_0\)};
    \node at (1.5,3.7) {\small \(k\)};
    
    \node at (.5,-.5) {\(u\)};
    \node at (1.5,-.5) {\(v\)};
    
    \draw[blue] (.4,-.5) to [out = 120, in = 180] (.25,.2); 
    \draw[blue] (.25,.2) to [out = 200, in = 180] (.25,.6);
    \draw[blue] (.25,.6) to [out = 200, in = 180] (.25,1);
    \draw[blue] (.25,1) to [out = 200, in = 180] (.25,1.4);
    \draw[blue] (.25,1.4) to [out = 200, in = 180] (.25,1.8);
    \draw[blue] (.25,1.8) to [out = 200, in = 180] (.25,2.2);
    \draw[blue] (.25,2.2) to [out = 120, in = 180] (.6,2.75); 
    \draw[blue,->] (.6,2.75) to [out = 180, in = 270] (-1,4); 
    
    \draw[red, very thick, dotted] (1.6,-.5) to [out = 20, in = 0] (.25,.4); 
    \draw[red, very thick, dotted] (.25,.4) to [out = -20, in = 0] (.25,.8);
    \draw[red, very thick, dotted] (.25,.8) to [out = -20, in = 0] (.25,1.2);
    \draw[red, very thick, dotted] (.25,1.2) to [out = -20, in = 0] (.25,1.6);
    \draw[red, very thick, dotted] (.25,1.6) to [out = -20, in = 0] (.25,2);
    \draw[red, very thick, dotted] (.25,2) to [out = -20, in = 0] (.25,2.4);
    \draw[red, very thick, dotted] (.25,2.4) to [out = 120, in = 180] (1.35,3.25); 
    \draw[red,->, very thick, dotted] (1.35,3.25) to [out = 180, in = 270] (-.5,4); 
\end{tikzpicture}
\end{center}

It is then enough to show that
\[
    \sum_{Q_k \in \left[ P_1 \right]} \frac{(-1)^{Q}}{H(Q)} Q_k \in F_2 \otimes \mR_{\lambda \setminus X,n}.
\]
In fact, as \((-1)^P = (-1)^Q\) and \(H(Q) = H(P)\) for all \(Q_k \in \left[ P_1 \right]\), it is enough to show that
\[
    \sum_{Q_k \in \left[ P_1 \right]} Q_k \in F_2 \otimes \mR_{\lambda \setminus X,n}.
\]

Assume, without loss of generality, that \(P_1 \in \sT_{x_1}\), and let \(\left[ P_1 \right]_1 = \left[ P_1 \right] \cap \sT_{x_1}\) and \(\left[ P_1 \right]_2 = \left[ P_1 \right] \cap \sT_{x_2}\), so that
\[
    \left[P_1\right] = \left[P_1\right]_{x_1} \bigsqcup \left[P_1\right]_{x_2}.
\]
See Figure \ref{Case Showing the Pieri Inclusion Removing Many Boxes is a GL(V)-map subsection: Two Box Removal Preserves Garnirs for Hooks Contained in a Single Block equation: T6 figure: [P]}. 

\begin{figure}[H]
\begin{subfigure}{0.35\textwidth}
\begin{tikzpicture} 
    \draw (0,0) rectangle (3,4);
    \draw (0,-.25) -- (0,0);
    \draw (0,4) -- (0,4.25);
    \draw (2.5,-.25) -- (2.5,0);
    \draw (3,4) -- (3.5,4) -- (3.5,4.25);
    
    \draw (0,2.5) -- (3,2.5);
    \draw (.5,2.5) -- (.5,3);
    \draw (1,2.5) -- (1,3);
    \node at (.75,2.75) {\small \(Z\)};
    \node at (.25,2.75) {\small \(A_k\)};
    
    \draw (0,3) rectangle (3,3.5);
    \draw (1.25,3) -- (1.25,3.5);
    \draw (1.75,3) -- (1.75,3.5);
    \node at (1.5,3.25) {\small \(A_0\)};
    \node at (1.5,3.7) {\small \(k\)};
    
    \node at (.5,-.5) {\(u\)};
    \node at (1.5,-.5) {\(v\)};
    
    \draw[blue] (.4,-.5) to [out = 120, in = 180] (1.25,.2); 
    \draw[blue] (1.25,.2) to [out = 200, in = 180] (1.25,.6);
    \draw[blue] (1.25,.6) to [out = 200, in = 180] (1.25,1);
    \draw[blue] (1.25,1) to [out = 200, in = 180] (1.25,1.4);
    \draw[blue] (1.25,1.4) to [out = 200, in = 180] (1.25,1.8);
    \draw[blue] (1.25,1.8) to [out = 200, in = 180] (1.25,2.2);
    \draw[blue] (1.25,2.2) to [out = 200, in = 220] (.6,2.75); 
    \draw[blue,->] (.6,2.75) to [out = 200, in = 270] (-1,4); 
    
    \draw[red, very thick, dotted] (1.6,-.5) to [out = 20, in = 0] (1.25,.4); 
    \draw[red, very thick, dotted] (1.25,.4) to [out = -20, in = 0] (1.25,.8);
    \draw[red, very thick, dotted] (1.25,.8) to [out = -20, in = 0] (1.25,1.2);
    \draw[red, very thick, dotted] (1.25,1.2) to [out = -20, in = 0] (1.25,1.6);
    \draw[red, very thick, dotted] (1.25,1.6) to [out = -20, in = 0] (1.25,2);
    \draw[red, very thick, dotted] (1.25,2) to [out = -20, in = 0] (1.25,2.4);
    \draw[red, very thick, dotted] (1.25,2.4) to [out = 120, in = 180] (1.35,3.25); 
    \draw[red,->, very thick, dotted] (1.35,3.25) to [out = 180, in = 270] (-.5,4); 
\end{tikzpicture}
\caption{\(Q_k \in \left[P_1\right]_{x_1}\)}
\end{subfigure}%
\hspace{.5in}
\begin{subfigure}{0.35\textwidth}
\begin{tikzpicture}
    \draw (0,0) rectangle (3,4);
    \draw (0,-.25) -- (0,0);
    \draw (0,4) -- (0,4.25);
    \draw (2.5,-.25) -- (2.5,0);
    \draw (3,4) -- (3.5,4) -- (3.5,4.25);
    
    \draw (0,2.5) -- (3,2.5);
    \draw (.5,2.5) -- (.5,3);
    \draw (1,2.5) -- (1,3);
    \node at (.75,2.75) {\small \(Z\)};
    \node at (.25,2.75) {\small \(A_k\)};
    
    \draw (0,3) rectangle (3,3.5);
    \draw (1.25,3) -- (1.25,3.5);
    \draw (1.75,3) -- (1.75,3.5);
    \node at (1.5,3.25) {\small \(A_0\)};
    \node at (1.5,3.7) {\small \(k\)};
    
    \node at (.5,-.5) {\(u\)};
    \node at (1.5,-.5) {\(v\)};
    
    \draw[blue] (.4,-.5) to [out = 120, in = 180] (1.25,.4); 
    \draw[blue] (1.25,.4) to [out = 200, in = 180] (1.25,.8);
    \draw[blue] (1.25,.8) to [out = 200, in = 180] (1.25,1.2);
    \draw[blue] (1.25,1.2) to [out = 200, in = 180] (1.25,1.6);
    \draw[blue] (1.25,1.6) to [out = 200, in = 180] (1.25,2);
    \draw[blue] (1.25,2) to [out = 200, in = 180] (1.25,2.4);
    \draw[blue] (1.25,2.4) to [out = 120, in = 180] (1.35,3.25); 
    \draw[blue,->] (1.35,3.25) to [out = 180, in = 270] (-.5,4); 
    
    \draw[red, very thick, dotted] (1.6,-.5) to [out = 20, in = 0] (1.25,.2); 
    \draw[red, very thick, dotted] (1.25,.2) to [out = -20, in = 0] (1.25,.6);
    \draw[red, very thick, dotted] (1.25,.6) to [out = -20, in = 0] (1.25,1);
    \draw[red, very thick, dotted] (1.25,1) to [out = -20, in = 0] (1.25,1.4);
    \draw[red, very thick, dotted] (1.25,1.4) to [out = -20, in = 0] (1.25,1.8);
    \draw[red, very thick, dotted] (1.25,1.8) to [out = -20, in = 0] (1.25,2.2);
    \draw[red, very thick, dotted] (1.25,2.2) to [out = 220, in = 200] (.6,2.75); 
    \draw[red,->, very thick, dotted] (.6,2.75) to [out = 180, in = 270] (-1,4); 
\end{tikzpicture}
\caption{\(Q_k \in \left[P_1\right]_{x_2}\)}
\end{subfigure}
\caption{Pictured the case when \(i_0\) is odd.}
\label{Case Showing the Pieri Inclusion Removing Many Boxes is a GL(V)-map subsection: Two Box Removal Preserves Garnirs for Hooks Contained in a Single Block equation: T6 figure: [P]}
\end{figure}

Let \(T^\prime \in \mF_{\lambda \setminus X}\) be the unique tableau with \(T^\prime = T_P\) on \((\lambda \setminus X) \setminus [b]\) and \(T^\prime = T\) on \([b]\) except \(T^\prime_{a_0} = u\) and \(T^\prime_{z} = v\).

\begin{center}
\begin{tikzpicture}
    \node at (-1,2) {\(T' = \)};
    
    \draw (0,0) rectangle (3,4);
    \draw (0,-.25) -- (0,0);
    \draw (0,4) -- (0,4.25);
    \draw (2.5,-.25) -- (2.5,0);
    \draw (3,4) -- (3.5,4) -- (3.5,4.25);
    
    \draw (0,2.5) -- (3,2.5);
    \draw (.5,2.5) -- (.5,3);
    \draw (1,2.5) -- (1,3);
    \node at (.25,2.75) {\small \(u\)};
    \node at (.75,2.75) {\small \(v\)};
    
    \draw (0,3) rectangle (.5,3.5);
    \node at (.25,3.25) {\small \(A_1\)};
    \draw (.5,3) rectangle (2.5,3.5);
    \node at (1.5,3.25) {\small \(\cdots\)};
    \draw (2.5,3) rectangle (3,3.5);
    \node at (2.75,3.25) {\small \(A_w\)};
    
    \node at (-.5,4) { \ };
    \node at (-1,4) { \ };
    \node at (.5,-.5) { \ };
    \node at (1.5,-.5) { \ };
\end{tikzpicture}
\end{center}

By the proof of Lemma \ref{Generating Garnir Relations and Tools for Collapsing Sums subsection: Calculation Lemma for 1-paths}, the result of Corollary \ref{Generating Garnir Relations and Tools for Collapsing Sums subsection: Calculation Lemma for 2-paths} still holds when moving \(u\) and \(v\) to boxes in the same row, which we have here after applying \(G_A\). This gives, mod \(F_2 \otimes \mR_{\lambda \setminus X, n}\),
\begin{align*}
\sum_{Q_k \in \left[P_1\right]} Q_k 
	& = \sum_{Q_k \in \left[P_1\right]_{x_1}} Q_k + \sum_{Q_k \in \left[P_1\right]_{x_2}} Q_k \\ 
    & = (-1)^{i_0 + 1 + 1 + 2(i_0 - 1)} \ytableaushort{{\alpha^P_2}, {\alpha^P_1}} \otimes T^\prime
        + (-1)^{i_0 + 1 + 1 + 2(i_0 - 1)} \begin{ytableau} \alpha^P_1 \\ \alpha^P_2 \end{ytableau} \otimes T^\prime\\ 
    & = 0.
\end{align*}
\end{proof}

\begin{case*}[\ref{Showing the Pieri Inclusion Removing Many Boxes is a GL(V)-map subsection: Two Box Removal Preserves Garnirs for Hooks Contained in a Single Block equation: T7}]
In this section we show that the sum over all paths that move \(A_i\) and a box in \([b]\) above \(A\) above \([b]\) is in \(F_2 \otimes \mR_{\lambda \setminus X,n}\).
Note that for any such path, there must be a box \([b](i_0+2,j) \in R^P\).
Recall that
\[
    \sT_7 = \bigsqcup_{\substack{0 \le i \le w_b\\ 1\le j \le w_b}} \sT_7^{i,j},
\]
where
\[
    \sT_7^{i,j} = \{ P_k \in \sT_7  :  P(\sigma^A_k A_i) > [b], [b](i_0+2,j) \in R^P\}.
\]
For \(l = 1, 2\), let \(\sT_{7,x_l}\) be the set of all \(P_k\) in \(\sT_7\) such that the orbit of \(x_l\) intersects the first row in \([b]\),
\[
    \sT_{7,x_l} = \{P_k \in \sT_7  :  R^P_l \cap [b](1) \ne \emptyset\}.
\]
Then 
\[
    \sT_7 = \sT_{7,x_1} \bigsqcup \sT_{7,x_2},
\]
and letting
\[
    \sT_{7,x_l}^{i,j} = \sT_{7,x_l} \bigcap \sT_7^{i,j},
\]
we have
\[
    \sT_7 = \bigsqcup_{\substack{l=1,2, ~ 0 \le i \le w_b\\ 1\le j \le w_b}} \sT_{7,x_l}^{i,j}.
\]
It is then enough to show that
\[
    \sum_{P_k \in \sT_{7,x_1}^{0,1}} \frac{(-1)^{P}}{H(P)} P_k \in F_2 \otimes \mR_{\lambda \setminus X,n},
\]
with \(z = [b](i_0 + 2,1)\) and \(Z = T_{z}\), with the other cases being similar.
\end{case*}

\begin{proof}
For the rest of Case (\ref{Showing the Pieri Inclusion Removing Many Boxes is a GL(V)-map subsection: Two Box Removal Preserves Garnirs for Hooks Contained in a Single Block equation: T7}) let \(\mS\) be the set of all \(P_k \in \sT_{7,x_1}^{0,1}\) that hit a box other than \(a_0\) in row \([b](i_0)\),
\[
    \mS := \{P_k \in \sT_{7,x_1}^{0,1} : [b](i_0,j) \in R^P \text{ for some } 2 \le j \le w_b\},
\]
and let \(\sT := T_{7,x_1}^{0,1} \setminus S\).
One can show 
\[
    \sum_{P_k \in \mS} \frac{(-1)^{P}}{H(P)} P_k \in F_2 \otimes \mR_{\lambda \setminus X,n}    
\]
by following the proof of Case (\ref{Showing the Pieri Inclusion Removing Many Boxes is a GL(V)-map subsection: Two Box Removal Preserves Garnirs for Hooks Contained in a Single Block equation: T6}).
It remains to show
\[
    \sum_{P_k \in \sT} \frac{(-1)^{P}}{H(P)} P_k \in F_2 \otimes \mR_{\lambda \setminus X,n}.
\]

Observe that \(\sT\) is the union of the following disjoint sets:

\begin{align*}
    \sT^1 & = \{P_k \in \sT  :  P(\sigma^A_k A_0), P(Z) \not \in Y\},\\[5pt]
    \sT^2 & = \{P_k \in \sT  :  P(\sigma^A_k A_0) \in Y, P(Z) \not \in Y\},\\[5pt]
    \sT^3 & = \{P_k \in \sT  :  P(Z) \in Y, P(\sigma^A_k A_0) \not \in Y\}, \text{ and }\\[5pt]
    \sT^4 & = \{P_k \in \sT  :  P(\sigma^A_k A_0), P(Z) \in Y\}.
\end{align*}
So, it is enough to show that for \(1 \le i \le 4\),
\[
    \sum_{P_k \in \sT^i} \frac{(-1)^{P}}{H(P)} P_k \in F_2 \otimes \mR_{\lambda \setminus X,n}.
\]

Define the relation \(\sim^i\) on \(\sT^i\), for \(1 \le i \le 4\), as follows.
\begin{itemize}
\item Define \(\sim^1\) on \(\sT^1\) by 
	\begin{align*}
    P_k \sim^1 Q_j %
    	\iff & Q \text{ is a } ([b](i_0),[b](i_0 + 1)) \text{-path extension of } P,\\
        & Q(\sigma^A_j A_0) = P(\sigma^A_k A_0), \text{ and } Q(Z) = P(Z).
    \end{align*}
\item Define \(\sim^2\) on \(\sT^2\) by 
	\begin{align*}
    P_k \sim^2 Q_j %
    	\iff & Q \text{ is a } ([b](i_0),[b](i_0 + 1)) \text{-path extension of } P,\\
        & \text{ and } Q(Z) = P(Z).
    \end{align*}
\item Define \(\sim^3\) on \(\sT^3\) by 
	\begin{align*}
    P_k \sim^3 Q_j %
    	\iff & Q \text{ is a } ([b](i_0),[b](i_0 + 1)) \text{-path extension of } P,\\
        & \text{ and } Q(\sigma^A_j A_0) = P(\sigma^A_k A_0).
    \end{align*}
\item Define \(\sim^4\) on \(\sT^4\) by 
	\[
    P_k \sim^4 Q_j %
    	\iff Q \text{ is a } ([b](i_0),[b](i_0 + 1)) \text{-path extension of } P.
    \]
\end{itemize}

It is clear that for \(1 \le i \le 4\), \(\sim^i\) an equivalence relation on \(\sT^i\), so that
\[
    \sum_{P_k \in \sT^i} \frac{(-1)^{P}}{H(P)} P_k = \sum_{\left[P_k\right] \in \sT^i / \sim^i} %
    \sum_{Q_k \in \left[P_k\right]} \frac{(-1)^{Q}}{H(Q)} Q_k.
\]

Thus it is enough to show that for \(1 \le i \le 4\),
\[
    \sum_{\left[P_k\right] \in \sT^i / \sim^i} \sum_{Q_k \in \left[P_k\right]} \frac{(-1)^{Q}}{H(Q)} Q_k %
    \in F_2 \otimes \mR_{\lambda \setminus X,n}.
\]

We will show the case \(i = 1\), with the other cases being similar. 
Pick \(P_0 \in \sT^1\) with \(A_1 \in R^P\) and let \(u := P^{-1}(A_0)\) and \(v := P^{-1}(A_1)\).
(Note that in the example below, the image of \(Z\) can be in \([b]\).
\begin{center}
\begin{tikzpicture}
    \node at (-1,2) {\(P_0 =\)};
    
    \draw (0,0) rectangle (3,4);
    \draw (0,-.25) -- (0,0);
    \draw (0,4) -- (0,4.25);
    \draw (2.5,-.25) -- (2.5,0);
    \draw (3,4) -- (3.5,4) -- (3.5,4.25);
    
    \draw (0,2) -- (.5,2);
    \draw (.5,2) -- (.5,2.5);
    \node at (.25,2.25) {\small \(A_0\)};
    
    \draw (0,2.5) rectangle (.5,3);
    \node at (.25,2.75) {\small \(A_1\)};
    \draw (.5,2.5) rectangle (2.5,3);
    \node at (1.5,2.75) {\small \(\cdots\)};
    \draw (2.5,2.5) rectangle (3,3);
    \node at (2.75,2.75) {\small \(A_w\)};
    
    \draw (0,3.5) -- (.5,3.5);
    \draw (.5,3) -- (.5,3.5);
    \node at (.25,3.25) {\small \(Z\)};
    
    \draw (0,1.5) rectangle (3,2);
    \draw (0,1) rectangle (3,1.5);
    \node at (.75,1.75) {\(v\)};
    \node at (2,1.25) {\(u\)};
    
    \draw[blue] (.5,-.5) to [out = 200, in = 180] (.5,.2);
    \draw[blue] (.5,.2) to [out = 200, in = 180] (.5,.6);
    \draw[blue] (.5,.6) to [out = 200, in = 180] (.5,1);
    \draw[blue] (.5,1) to [out = 200, in = 180] (.5,1.3);
    \draw[blue] (.5,1.3) to [out = 160, in = 180] (.6,1.75); 
    \draw[blue] (.6,1.75) to [out = 200, in = 180] (.1,2.75); 
    \draw[blue] (.1,2.75) to [out = 200, in = 180] (.1,3.25); 
    \draw[blue,->] (.1,3.25) to [out = 180, in = 270] (-.5,4); 
    
    \draw[red, very thick, dotted] (2.15,-.5) to [out = -20, in = 0] (2.15,.4);
    \draw[red, very thick, dotted] (2.15,.4) to [out = -20, in = 0] (2.15,.8);
    \draw[red, very thick, dotted] (2.15,.8) to [out = -20, in = 0] (2.15,1.25); 
    \draw[red, very thick, dotted] (2.15,1.25) to [out = 20, in = 0] (.1,2.25); 
    \draw[red, ->, very thick, dotted] (.1,2.25) to [out = 180, in = 270] (-1,4); 
\end{tikzpicture}
\end{center}

It is then enough to show that
\[
    \sum_{Q_k \in \left[ P_0 \right]} \frac{(-1)^{Q}}{H(Q)} Q_k \in F_2 \otimes \mR_{\lambda \setminus X,n},
\]
with the other cases being similar. In fact, as \((-1)^P = (-1)^Q\) and \(H(Q) = H(P)\) for all \(Q_k \in \left[P_0\right]\), it is enough to show that
\[
    \sum_{Q_k \in \left[P_0\right]} Q_k \in F_2 \otimes \mR_{\lambda \setminus X,n}.
\]

Let \(\left[ P_0 \right]_1 = \{Q_0 \in \left[P_0\right]\}\) and \(\left[P_0\right]_2 = \{Q_k \in \left[P_0\right]  :  1 \le k \le w_b\}\), so that
\[
    \left[P_0\right] = \left[P_0\right]_1 \bigsqcup \left[P_0\right]_2.
\]
(Note that in the examples below, the image of \(Z\) can be in \([b]\).)

\begin{center}
\begin{tikzpicture}
    \node at (-1, 2) {\(Q_0 =\)};
    
    \draw (0,0) rectangle (3,4);
    \draw (0,-.25) -- (0,0);
    \draw (0,4) -- (0,4.25);
    \draw (2.5,-.25) -- (2.5,0);
    \draw (3,4) -- (3.5,4) -- (3.5,4.25);
    
    \draw (0,2) -- (.5,2);
    \draw (.5,2) -- (.5,2.5);
    \node at (.25,2.25) {\small \(A_0\)};
    
    \draw (0,2.5) rectangle (3,3);
    \draw (1.25,2.5) -- (1.25,3);
    \draw (1.75,2.5) -- (1.75,3);
    \node at (1.5,2.75) {\small \(A_k\)};
    \node at (1.5,3.2) {\small \(k\)};
    
    \draw (0,3.5) -- (.5,3.5);
    \draw (.5,3) -- (.5,3.5);
    \node at (.25,3.25) {\small \(Z\)};
    
    \draw (0,1.5) rectangle (3,2);
    \node at (.75,1.75) {\(v\)};
    \draw (0,1) rectangle (3,1.5);
    \node at (2,1.25) {\(u\)};
    
    \draw[blue] (.5,-.5) to [out = 200, in = 180] (.5,.2);
    \draw[blue] (.5,.2) to [out = 200, in = 180] (.5,.6);
    \draw[blue] (.5,.6) to [out = 200, in = 180] (.5,1);
    \draw[blue] (.5,1) to [out = 200, in = 180] (.5,1.3);
    \draw[blue] (.5,1.3) to [out = 160, in = 180] (.6,1.75); 
    \draw[blue] (.6,1.75) to [out = 120, in = 180] (1.35,2.75); 
    \draw[blue] (1.35,2.75) to [out = 180, in = 270] (.1,3.25); 
    \draw[blue,->] (.1,3.25) to [out = 180, in = 270] (-.5,4); 
    
    \draw[red, very thick, dotted] (2.15,-.5) to [out = -20, in = 0] (2.15,.4);
    \draw[red, very thick, dotted] (2.15,.4) to [out = -20, in = 0] (2.15,.8);
    \draw[red, very thick, dotted] (2.15,.8) to [out = -20, in = 0] (2.15,1.25); 
    \draw[red, very thick, dotted] (2.15,1.25) to [out = 20, in = 0] (.1,2.25); 
    \draw[red, ->, very thick, dotted] (.1,2.25) to [out = 180, in = 270] (-1,4); 
\end{tikzpicture}
\hspace{1in}
\begin{tikzpicture}
    \node at (-1, 2) {\(Q_k =\)};
    
    \draw (0,0) rectangle (3,4);
    \draw (0,-.25) -- (0,0);
    \draw (0,4) -- (0,4.25);
    \draw (2.5,-.25) -- (2.5,0);
    \draw (3,4) -- (3.5,4) -- (3.5,4.25);
    
    \draw (0,2) -- (.5,2);
    \draw (.5,2) -- (.5,2.5);
    \node at (.25,2.25) {\small \(A_k\)};
    
    \draw (0,2.5) rectangle (3,3);
    \draw (1.25,2.5) -- (1.25,3);
    \draw (1.75,2.5) -- (1.75,3);
    \node at (1.5,2.75) {\small \(A_0\)};
    \node at (1.5,3.2) {\small \(k\)};
    
    \draw (0,3.5) -- (.5,3.5);
    \draw (.5,3) -- (.5,3.5);
    \node at (.25,3.25) {\small \(Z\)};
    
    \draw (0,1.5) rectangle (3,2);
    \node at (.75,1.75) {\(v\)};
    \draw (0,1) rectangle (3,1.5);
    \node at (2,1.25) {\(u\)};
    
    \draw[blue] (.5,-.5) to [out = 200, in = 180] (.5,.2);
    \draw[blue] (.5,.2) to [out = 200, in = 180] (.5,.6);
    \draw[blue] (.5,.6) to [out = 200, in = 180] (.5,1);
    \draw[blue] (.5,1) to [out = 200, in = 180] (.5,1.3);
    \draw[blue] (.5,1.3) to [out = 160, in = 180] (.6,1.75); 
    \draw[blue] (.6,1.75) to [out = 120, in = 180] (1.35,2.75); 
    \draw[blue,->] (1.35,2.75) to [out = 180, in = 270] (-1,4); 
    
    \draw[red, very thick, dotted] (2.15,-.5) to [out = -20, in = 0] (2.15,.4);
    \draw[red, very thick, dotted] (2.15,.4) to [out = -20, in = 0] (2.15,.8);
    \draw[red, very thick, dotted] (2.15,.8) to [out = -20, in = 0] (2.15,1.25); 
    \draw[red, very thick, dotted] (2.15,1.25) to [out = 20, in = 0] (.1,2.25); 
    \draw[red, very thick, dotted] (.1,2.25) to [out = 200, in = 180] (.1,3.25); 
    \draw[red, ->, very thick, dotted] (.1,3.25) to [out = 180, in = 270] (-.5,4); 
\end{tikzpicture}
\end{center}

Let \(T^\prime \in \mF_{\lambda \setminus X}\) be the unique tableau with \(T^\prime = T_P\) on \((\lambda \setminus X) \{([b](i_0), [b](i_0 + 2))\}\) and \(T^\prime = T\) on \(([b](i_0), [b](i_0 + 2))\) except \(T^\prime_{z} = v\) and \(T^\prime_{a_0} = u\).
\begin{center}
\begin{tikzpicture}
    \node at (-1,2) {\(T' =\)};

    \draw (0,0) rectangle (3,4);
    \draw (0,-.25) -- (0,0);
    \draw (0,4) -- (0,4.25);
    \draw (2.5,-.25) -- (2.5,0);
    \draw (3,4) -- (3.5,4) -- (3.5,4.25);
    
    \draw (0,2) -- (.5,2);
    \draw (.5,2) -- (.5,2.5);
    \node at (.25,2.25) {\small \(u\)};
    
    \draw (0,2.5) rectangle (.5,3);
    \node at (.25,2.75) {\small \(A_1\)};
    \draw (.5,2.5) rectangle (2.5,3);
    \node at (1.5,2.75) {\small \(\cdots\)};
    \draw (2.5,2.5) rectangle (3,3);
    \node at (2.75,2.75) {\small \(A_w\)};
    
    \draw (0,3.5) -- (.5,3.5);
    \draw (.5,3) -- (.5,3.5);
    \node at (.25,3.25) {\small \(v\)};
    
    \draw (0,1.5) rectangle (3,2);
    \draw (0,1) rectangle (3,1.5);
    
    \node at (-.5,4) { \ };
    \node at (-1,4) { \ };
    
    \draw[blue] (.5,-.5) to [out = 200, in = 180] (.5,.2);
    \draw[blue] (.5,.2) to [out = 200, in = 180] (.5,.6);
    \draw[blue] (.5,.6) to [out = 200, in = 180] (.5,1);
    \draw[blue] (.5,1) to [out = 200, in = 180] (.5,1.3);
    \draw[blue] (.5,1.3) to [out = 160, in = 180] (.6,1.75); 
    
    \draw[red, very thick, dotted] (2.15,-.5) to [out = -20, in = 0] (2.15,.4);
    \draw[red, very thick, dotted] (2.15,.4) to [out = -20, in = 0] (2.15,.8);
    \draw[red, very thick, dotted] (2.15,.8) to [out = -20, in = 0] (2.15,1.25); 
\end{tikzpicture}
\end{center}

Then by Corollary \ref{Generating Garnir Relations and Tools for Collapsing Sums subsection: Calculation Lemma for 2-paths} and applications of \(G_A\) we have, mod \(F_2 \otimes \mR_{\lambda \setminus X, n}\)
\[
    \sum_{Q_k \in E^P} Q_K 
        = \sum_{Q_k \in \left[ P_0 \right]_1} Q_K + \ds \sum_{Q_k \in \left[ P_0 \right]_2} Q_K 
        = - ~ \ytableaushort{{\alpha^P_2}, {\alpha^P_1}} \otimes T^\prime - ~ \begin{ytableau} \alpha^P_1 \\ \alpha^P_2 \end{ytableau} \otimes T^\prime 
        = 0.
\]
\end{proof}

\subsection{Two Box Removal Preserves Garnirs for Hooks Contained in Two Blocks} \label{Showing the Pieri Inclusion Removing Many Boxes is a GL(V)-map subsection: Two Box Removal Preserves Garnirs for Hooks Contained in Two Blocks}

We now show that Equation \ref{Showing the Pieri Inclusion Removing Many Boxes is a GL(V)-map subsection: The Theorem equation: Phi of Garnir m Boxes} holds when \(m = 2\) for all hooks \(A \subset [b] \cup [b + 1]\) for some \(1 \le b \le N - 1\). 
For the rest of Section \ref{Showing the Pieri Inclusion Removing Many Boxes is a GL(V)-map subsection: Two Box Removal Preserves Garnirs for Hooks Contained in Two Blocks}, fix \(T \in \mF_{\lambda, n}\) and let
\[
    A = \{ a_0 := [b](h_b,1), a_1 := [b + 1](1,1), \ldots, a_{w_{b + 1}} := [b + 1](1,w_{b + 1})\} \subset T_0    
\]
so that \(A \subset [b] \cup [b + 1]\). Denote the entries of \(A\) in \(T\) by \(A_k = T_{a_k}\) for \(k = 0, 1, \ldots, w_{b + 1}\). 
Then by Lemma \ref{Generating Garnir Relations and Tools for Collapsing Sums subsection: Simplifying a Garnir}, mod \(F_2 \otimes \mR_{\lambda \setminus X,n}\) we have
\begin{align*}
    \Phi_2\left(G_A (T) \right) %
        & = \sum_{P} \frac{(-1)^{P}}{H(P)} P\left(\sum_{\sigma \in S_A} \sigma T \right) \\
        & = C \sum_{P} \sum_{k=0}^{w_{b + 1}} \frac{(-1)^{P}}{H(P)} P\left(\sigma^A_k T \right),
\end{align*}
where the sum is over all \(2\)-paths \(P\) on \(\lambda\) removing \(X\).
The set of all \(P_k := P(\sigma^A_k T)\) appearing in the image \(\Phi_2\left(G_A (T) \right)\) above is the union of the following disjoint sets.

The \(P_k\)s that miss \(A\),
\begin{equation} \label{Showing the Pieri Inclusion Removing Many Boxes is a GL(V)-map subsection: Two Box Removal Preserves Garnirs for Hooks Contained in Two Blocks equation: T1}
    \sT_1 = \{P_k  :  R^P \cap A = \emptyset\}.
\end{equation}

The \(P_k\)s that hit \(A\) and keep \(A\) in \([b] \cup [b + 1]\),
\begin{equation} \label{Showing the Pieri Inclusion Removing Many Boxes is a GL(V)-map subsection: Two Box Removal Preserves Garnirs for Hooks Contained in Two Blocks equation: T2}
    \sT_2 = \{P_k  :  R^P \cap A \ne \emptyset, ~ P(A) \le [b + 1]\}.
\end{equation}

The \(P_k\)s that have exactly one orbit in \([b] \cup [b + 1]\) and move \(A_i\) above \([b + 1]\),
\begin{equation} \label{Showing the Pieri Inclusion Removing Many Boxes is a GL(V)-map subsection: Two Box Removal Preserves Garnirs for Hooks Contained in Two Blocks equation: T3}
    \begin{split}
        \sT_3 & = \bigsqcup_{i=0}^{w_{b + 1}} \sT_3^{i}, \text{ where }\\
    	\sT_3^i & = %
    	    \begin{split}
    	        \{P_k \in \sT_3 : &\text{exactly one of } R^P_{x_1}, R^P_{x_2}, \text{ intersect } [b] \cup [b + 1]\\ & \text{ and } P(\sigma^A_k A_i) > [b + 1]\}.
    	    \end{split}
    \end{split}
\end{equation}

The \(P_k\)s that move \(A_i\) and \(A_j\) above \([b + 1]\),
\begin{equation} \label{Showing the Pieri Inclusion Removing Many Boxes is a GL(V)-map subsection: Two Box Removal Preserves Garnirs for Hooks Contained in Two Blocks equation: T4}
    \begin{split}
        \sT_4 & = \bigsqcup_{0 \le i < j \le w_{b + 1}} \sT_4^{i,j}, \text{ where }\\
    	\sT_4^{i,j} & = \{P_k : P(\sigma^A_k A_i) > [b + 1], \text{ and } P(\sigma^A_k A_j) > [b + 1]\}.
    \end{split}
\end{equation}

The \(P_k\)s that move \(A_i\) and a box \(Z\) in \([b]\) below \(A\) above \([b + 1]\),
\begin{equation} \label{Showing the Pieri Inclusion Removing Many Boxes is a GL(V)-map subsection: Two Box Removal Preserves Garnirs for Hooks Contained in Two Blocks equation: T5}
    \begin{split}
        \sT_5 & = \bigsqcup_{\substack{0 \le i \le w_{b + 1}\\ z = [b](j,k), ~ 1\le j < h_b \text{ and } 1 \le k \le w_b}} \sT_5^{i,z}, \text{ where }\\
    	\sT_5^{i,z} & = \{ P_k : P(\sigma^A_k A_i) > [b + 1], P(z) > [b + 1]\}.
    \end{split}
\end{equation}

The \(P_k\)s that move \(A_i\) and a box other than \(a_0\) in row \([b](h_b)\) above \([b + 1]\),
\begin{equation} \label{Showing the Pieri Inclusion Removing Many Boxes is a GL(V)-map subsection: Two Box Removal Preserves Garnirs for Hooks Contained in Two Blocks equation: T6}
    \begin{split}
        \sT_6 & = \bigsqcup_{0 \le i \le w_{b + 1}, ~ 2 \le j \le w_b} \sT_6^{i,j}, \text{ where }\\
    	\sT_6^{i,j} & = \{ P_k : P(\sigma^A_k A_i) > [b + 1], P([b](h_b,j))> [b + 1]\}.
    \end{split}
\end{equation}

The \(P_k\)s that move \(A_i\) and a box above \(A\) above \([b + 1]\),
\begin{equation} \label{Showing the Pieri Inclusion Removing Many Boxes is a GL(V)-map subsection: Two Box Removal Preserves Garnirs for Hooks Contained in Two Blocks equation: T7}
    \begin{split}
        \sT_7 & = \bigsqcup_{\substack{0 \le i \le w_{b + 1}\\ 1 \le j \le w_{b + 1}}} \sT_7^{i,j}, \text{ where }\\
    	\sT_7^{i,z} & = \{ P_k : P(\sigma^A_k A_i) > [b + 1], P([b + 1](2,j)) \in R^P\}.
    \end{split}
\end{equation}

Then we have
\begin{align*}
	\Phi_2\left(G_A (T)\right) %
    	& = C \sum_{P} \sum_{k = 0}^{w_{b + 1}} \frac{(-1)^{P}}{H(P)} P\left(\sigma^A_k T \right) \mod F_2 \otimes \mR_{\lambda \setminus X,n}\\
        & = C \sum_{j=1,\ldots,7} \sum_{P_k \in \sT_j} \frac{(-1)^{P}}{H(P)} P_k \mod F_2 \otimes \mR_{\lambda \setminus X,n}.\\
\end{align*}

We show that for \(1 \le j \le 7\), 
\begin{align*}
    \sum_{P_k \in T_j} \frac{(-1)^{P}}{H(P)} P_k \in F_2 \otimes \mR_{\lambda \setminus X,n},
\end{align*}
and hence Equation \ref{Showing the Pieri Inclusion Removing Many Boxes is a GL(V)-map subsection: The Theorem equation: Phi of Garnir m Boxes} holds when \(m = 2\) for all blocks \(A \subset [b] \cup [b + 1]\).

The proofs of Case (\ref{Showing the Pieri Inclusion Removing Many Boxes is a GL(V)-map subsection: Two Box Removal Preserves Garnirs for Hooks Contained in Two Blocks equation: T1}) and Case (\ref{Showing the Pieri Inclusion Removing Many Boxes is a GL(V)-map subsection: Two Box Removal Preserves Garnirs for Hooks Contained in Two Blocks equation: T2}) are similar to the proofs of Case (\ref{Showing the Pieri Inclusion Removing One Box is a GL(V)-map subsection: One Box Removal Preserves Garnirs for Hooks Contained in a Single Block equation: T1}) and Case (\ref{Showing the Pieri Inclusion Removing One Box is a GL(V)-map subsection: One Box Removal Preserves Garnirs for Hooks Contained in a Single Block equation: T2}), respectively. 
The proof of Case (\ref{Showing the Pieri Inclusion Removing Many Boxes is a GL(V)-map subsection: Two Box Removal Preserves Garnirs for Hooks Contained in Two Blocks equation: T3}) is similar to the proof of Case (\ref{Showing the Pieri Inclusion Removing One Box is a GL(V)-map subsection: One Box Removal Preserves Garnirs for Hooks Contained in a Two Blocks equation: T3}), and goes through by observing that using the definition of \(H(P)\) for a \(2\)-path only adds and subtracts \(1\) in some of the terms.
The proofs of Case (\ref{Showing the Pieri Inclusion Removing Many Boxes is a GL(V)-map subsection: Two Box Removal Preserves Garnirs for Hooks Contained in Two Blocks equation: T4}) and Case (\ref{Showing the Pieri Inclusion Removing Many Boxes is a GL(V)-map subsection: Two Box Removal Preserves Garnirs for Hooks Contained in Two Blocks equation: T7}) are similar to the proofs of Case (\ref{Showing the Pieri Inclusion Removing Many Boxes is a GL(V)-map subsection: Two Box Removal Preserves Garnirs for Hooks Contained in a Single Block equation: T4}) and Case (\ref{Showing the Pieri Inclusion Removing Many Boxes is a GL(V)-map subsection: Two Box Removal Preserves Garnirs for Hooks Contained in a Single Block equation: T7}), respectively, as these proofs did not depend on \(H(P)\).
It remains to show Case (\ref{Showing the Pieri Inclusion Removing Many Boxes is a GL(V)-map subsection: Two Box Removal Preserves Garnirs for Hooks Contained in Two Blocks equation: T5}) and Case (\ref{Showing the Pieri Inclusion Removing Many Boxes is a GL(V)-map subsection: Two Box Removal Preserves Garnirs for Hooks Contained in Two Blocks equation: T6}). 
In both cases we assume \(b > b_1\) and \(A \cap X = \emptyset\), as if \(b = b_1\) or if \(A \cap X \ne \emptyset\) we may follow the proof of Subcase (\ref{Showing the Pieri Inclusion Removing One Box is a GL(V)-map subsection: One Box Removal Preserves Garnirs for Hooks Contained in a Two Blocks equation: T3}.1).

\begin{case*}[\ref{Showing the Pieri Inclusion Removing Many Boxes is a GL(V)-map subsection: Two Box Removal Preserves Garnirs for Hooks Contained in Two Blocks equation: T5}]
In this case we show that the sum over all paths that move \(A_i\) and a box \(Z\) in \([b]\) below \(A\) above \([b + 1]\) is in \(F_2 \otimes \mR_{\lambda \setminus X,n}\).
Recall that
\[
    \sT_5 = \bigsqcup_{\substack{0 \le i \le w_{b + 1}\\ z = [b](j,k), ~ 1\le j < h_b \text{ and } 1 \le k \le w_b}} \sT_5^{i,z},
\]
where
\[
    \sT_5^{i,z} = \{ P_k : P(\sigma^A_k A_i) > [b + 1], P(z) > [b + 1]\}.
\]
It is enough to show that
\[
    \sum_{P_k \in \sT_5^{0,z}} \frac{(-1)^{P}}{H(P)} P_k \in F_2 \otimes \mR_{\lambda \setminus X,n},
\]
where \(z = [b](i_z,j_z)\) is a fixed box with \(1 \le i_z \le h_b - 1\) odd, \(1 \le j_z \le w_{b + 1}\), and \(Z = T_z\), with the other cases being similar.
\end{case*}

\begin{proof}
For the rest of Case (\ref{Showing the Pieri Inclusion Removing Many Boxes is a GL(V)-map subsection: Two Box Removal Preserves Garnirs for Hooks Contained in Two Blocks equation: T5}), let \(\sT := \sT_5^{0,z}\) and, for any \(2\)-path \(P\) on \(\lambda\) removing \(X\), let \(\tilde{h^P} = h^P - h^P_b - h^P_{b + 1}\) and \(\tilde{H(P)} = \dfrac{H(P)}{H_b(P)H_{b + 1}(P)}\).
Observe that \(\sT\) is the union of the following disjoint sets:
\begin{align*}
    \sT^1 & = \{P_k \in \sT : P(\sigma^A_k a_0), P(z) \not \in Y\},\\[5pt]
    \sT^2 & = \{P_k \in \sT : P(\sigma^A_k a_0) \in Y, P(z) \not \in Y\},\\[5pt]
    \sT^3 & = \{P_k \in \sT : P(z) \in Y, P(\sigma^A_k a_0) \not \in Y\}, \text{ and }\\[5pt]
    \sT^4 & = \{P_k \in \sT : P(\sigma^A_k a_0), P(z) \in Y\}.
\end{align*}

So it is enough to show that for \(1 \le i \le 4\),
\[
    \sum_{P_k \in \sT^i} \frac{(-1)^{P}}{H(P)} P_k \in F_2 \otimes \mR_{\lambda \setminus X,n}.
\]

Define the relation \(\sim^i\) on \(\sT^i\), for \(1 \le i \le 4\), as follows.
Define \(\sim^1\) on \(\sT^1\) by 
	\begin{align*}
        P_k \sim^1 Q_j %
    	    \iff & Q \text{ is a } ([b](1), [b + 1](1)) \text{-path extension of } P,\\
            & Q(\sigma^A_j a_0) = P(\sigma^A_k a_0), \text{ and } Q(z) = P(z).
    \end{align*}
Define \(\sim^2\) on \(\sT^2\) by 
	\begin{align*}
        P_k \sim Q_j %
    	    \iff & Q \text{ is a } ([b](1), [b + 1](1)) \text{-path extension of } P,\\
            & \text{ and } Q(z) = P(z).
    \end{align*}
Define \(\sim^3\) on \(\sT^3\) by 
	\begin{align*}
        P_k \sim^3 Q_j %
    	    \iff & Q \text{ is a } ([b](1), [b + 1](1)) \text{-path extension of } P,\\
            & \text{ and } Q(\sigma^A_j a_0) = P(\sigma^A_k a_0).
    \end{align*}
Define \(\sim^4\) on \(\sT^4\) by 
	\[
        P_k \sim^4 Q_j \iff Q \text{ is a } ([b](1), [b + 1](1)) \text{-path extension of } P.
    \]

It is clear that for \(1 \le i \le 4\), \(\sim^i\) an equivalence relation on \(\sT^i\), so that
\[
    \sum_{P_k \in \sT^i} \frac{(-1)^{P}}{H(P)} P_k = \sum_{\left[P_k\right] \in \sT^i / \sim^i} %
    \sum_{Q_k \in \left[P_k\right]} \frac{(-1)^{Q}}{H(Q)} Q_k.
\]
Thus it is enough to show that, for \(1 \le i \le 4\),
\[
    \sum_{\left[P_k\right] \in \sT^i / \sim^i} \sum_{Q_k \in \left[P_k\right]} \frac{(-1)^{Q}}{H(Q)} %
    Q_k \in F_2 \otimes \mR_{\lambda \setminus X, n}.
\]
We will show the case \(i = 1\), with the rest being similar.
Pick \(P_0 \in \sT^1\) with \([b](i,1) \in R^P\) for all \(1 \le i \ne i_z \le h_b\), and let \([b_u](i_u,j_u) = P^{-1}([b](1,1))\) and \([b_v](i_v,j_v) = P^{-1}([b](2,1))\) with \(u = T_{[b_u](i_u,j_u)}\) and \(v = T_{[b_v](i_v,j_v)}\).

\begin{center}
\begin{tikzpicture}
    \node at (-1.5,2) {\(P_0 = \)};
    
    \draw (0,0) rectangle (2,3);
    \draw (0,-.25) -- (0,0);
    \draw (1.5,-.25) -- (1.5,0);
    
    \draw (0,3) rectangle (3,4);
    \draw (0,4) -- (0,4.25);
    \draw (3,4) -- (3.5,4) -- (3.5,4.25);
    
    \draw (0,2.5) rectangle (.5,3);
    \draw (.5,0) -- (.5,2.5);
    \node at (.25,2.75) {\small \(A_0\)};
    
    \draw (0,3) rectangle (.5,3.5);
    \node at (.25,3.25) {\small \(A_1\)};
    \draw (.5,3) rectangle (2.5,3.5);
    \node at (1.5,3.25) {\small \(\cdots\)};
    \draw (2.5,3) rectangle (3,3.5);
    \node at (2.75,3.25) {\small \(A_w\)};
    
    \draw (0,1.25) rectangle (2,1.75);
    \draw (.75,1.25) -- (.75,1.75);
    \draw (1.25,1.25) -- (1.25,1.75);
    \node at (1,1.5) {\small \(Z\)};
    
    \node at (.5,-.5) {\(u\)};
    \node at (1.5,-.5) {\(v\)};
    
    \draw[blue] (.4,-.5) to [out = 200, in = 180] (.2,.2); 
    \draw[blue] (.2,.2) to [out = 200, in = 180] (.2,.6);
    \draw[blue] (.2,.6) to [out = 200, in = 180] (.2,1);
    \draw[blue] (.2,1) to [out = 140, in = 180] (.85,1.5); 
    \draw[blue,->] (.85,1.5) to [out = 180, in = 270] (-1,4); 
    
    \draw[red, very thick, dotted] (1.6,-.5) to [out = 20, in = 0] (.2,.4); 
    \draw[red, very thick, dotted] (.2,.4) to [out = -20, in = 0] (.2,.8);
    \draw[red, very thick, dotted] (.2,.8) to [out = -20, in = 0] (.2,1.2);
    \draw[red, very thick, dotted] (.2,1.2) to [out = -20, in = 0] (.2,1.8); 
    \draw[red, very thick, dotted] (.2,1.8) to [out = -20, in = 0] (.2,2);
    \draw[red, very thick, dotted] (.2,2) to [out = -20, in = 0] (.2,2.2);
    \draw[red, very thick, dotted] (.2,2.2) to [out = -20, in = 0] (.2,2.4);
    \draw[red, very thick, dotted] (.2,2.4) to [out = 200, in = 270] (.1,2.75); 
    \draw[red,->, very thick, dotted] (.1,2.75) to [out = 180, in = 270] (-.5,4); 
\end{tikzpicture}
\end{center}

It is then enough to show that
\[
   \sum_{Q_k \in \left[ P_0 \right]} \frac{(-1)^{Q}}{H(Q)} Q_k \in F_2 \otimes \mR_{\lambda \setminus X, n}.
\]
In fact, as \(\tilde{h^Q} = \tilde{h^P}\) and \(\tilde{H(Q)} = \tilde{H(P)}\) for all \(Q_k \in \left[ P_0 \right]\), it is enough to show that
\[
    \sum_{Q_k \in \left[ P_0 \right]} \frac{(-1)^{h^Q_b + h^Q_{b + 1}}}{H^Q_b H^Q_{b + 1}} Q_k %
    \in F_2 \otimes \mR_{\lambda \setminus X,n}.
\]
Observe that \(\left[ P_0 \right]\) can be written as the disjoint union 
\[
    \left[ P_0 \right] = \bigsqcup_{i = 1}^7 \left[ P_0 \right]_i,
\]
where the \(\left[ P_0 \right]_i\) are defined as follows.\\

The paths acting on \(\sigma_0 T\), 
\begin{equation*}
    \left[ P_0 \right]_1 = \{Q_0 \in \left[ P_0 \right]\},
\end{equation*}
\begin{center}
    \begin{tikzpicture}
        \draw (0,0) rectangle (2,3);
        \draw (0,-.25) -- (0,0);
        \draw (1.5,-.25) -- (1.5,0);
        
        \draw (0,3) rectangle (3,4);
        \draw (0,4) -- (0,4.25);
        \draw (3,4) -- (3.5,4) -- (3.5,4.25);
        
        \draw (0,2.5) rectangle (.5,3);
        \node at (.25,2.75) {\small \(A_0\)};
        
        \draw (0,3) rectangle (.5,3.5);
        \node at (.25,3.25) {\small \(A_1\)};
        \draw (.5,3) rectangle (2.5,3.5);
        \node at (1.5,3.25) {\small \(\cdots\)};
        \draw (2.5,3) rectangle (3,3.5);
        \node at (2.75,3.25) {\small \(A_w\)};
        
        \draw (0,1.25) rectangle (2,1.75);
        \draw (.75,1.25) -- (.75,1.75);
        \draw (1.25,1.25) -- (1.25,1.75);
        \node at (1,1.5) {\small \(Z\)};
        
        \node at (.5,-.5) {\(u\)};
        \node at (1.5,-.5) {\(v\)};
        
        \draw[blue] (.4,-.5) to [out = 200, in = 180] (.75,.2); 
        \draw[blue] (.75,.2) to [out = 200, in = 180] (.75,.6);
        \draw[blue] (.75,.6) to [out = 200, in = 180] (.75,1);
        \draw[blue] (.75,1) to [out = 140, in = 180] (.85,1.5); 
        \draw[blue,->] (.85,1.5) to [out = 180, in = 270] (-1,4); 
        
        \draw[red, very thick, dotted] (1.6,-.5) to [out = 20, in = 0] (.75,.4); 
        \draw[red, very thick, dotted] (.75,.4) to [out = -20, in = 0] (.75,.8);
        \draw[red, very thick, dotted] (.75,.8) to [out = -20, in = 0] (.75,1.2);
        \draw[red, very thick, dotted] (.75,1.2) to [out = -20, in = 0] (.75,1.8); 
        \draw[red, very thick, dotted] (.75,1.8) to [out = -20, in = 0] (.75,2);
        \draw[red, very thick, dotted] (.75,2) to [out = -20, in = 0] (.75,2.2);
        \draw[red, very thick, dotted] (.75,2.2) to [out = -20, in = 0] (.75,2.4);
        \draw[red, very thick, dotted] (.75,2.4) to [out = -20, in = 270] (.1,2.75); 
        \draw[red,->, very thick, dotted] (.1,2.75) to [out = 180, in = 270] (-.5,4); 
    \end{tikzpicture}
\end{center}
the paths acting on \(\sigma^A_k T\), \(k \ne 0\), that hit \(\sigma^A_k a_k\), 
\begin{equation*}
    \left[ P_0 \right]_2 = \{Q_k \in \left[ P_0 \right]  :  k \ne 0, a_0 \in R^Q\},
\end{equation*}
\begin{center}
    \begin{tikzpicture}
        \draw (0,0) rectangle (2,3);
        \draw (0,-.25) -- (0,0);
        \draw (1.5,-.25) -- (1.5,0);
        
        \draw (0,3) rectangle (3,4);
        \draw (0,4) -- (0,4.25);
        \draw (3,4) -- (3.5,4) -- (3.5,4.25);
        
        \draw (0,2.5) rectangle (.5,3);
        \node at (.25,2.75) {\small \(A_k\)};
        
        \draw (0,3) rectangle (3,3.5);
        \draw (1.25,3) -- (1.25,3.5);
        \draw (1.75,3) rectangle (1.75,3.5);
        \node at (1.5,3.25) {\small \(A_0\)};
        \node at (1.5,3.7) {\small \(k\)};
        
        \draw (0,1.25) rectangle (2,1.75);
        \draw (.75,1.25) -- (.75,1.75);
        \draw (1.25,1.25) -- (1.25,1.75);
        \node at (1,1.5) {\small \(Z\)};
        
        \node at (.5,-.5) {\(u\)};
        \node at (1.5,-.5) {\(v\)};
        
        \draw[blue] (.4,-.5) to [out = 200, in = 180] (.75,.2); 
        \draw[blue] (.75,.2) to [out = 200, in = 180] (.75,.6);
        \draw[blue] (.75,.6) to [out = 200, in = 180] (.75,1);
        \draw[blue] (.75,1) to [out = 140, in = 180] (.85,1.5); 
        \draw[blue,->] (.85,1.5) to [out = 180, in = 270] (-1,4); 
        
        \draw[red, very thick, dotted] (1.6,-.5) to [out = 20, in = 0] (.75,.4); 
        \draw[red, very thick, dotted] (.75,.4) to [out = -20, in = 0] (.75,.8);
        \draw[red, very thick, dotted] (.75,.8) to [out = -20, in = 0] (.75,1.2);
        \draw[red, very thick, dotted] (.75,1.2) to [out = -20, in = 0] (.75,1.8); 
        \draw[red, very thick, dotted] (.75,1.8) to [out = -20, in = 0] (.75,2);
        \draw[red, very thick, dotted] (.75,2) to [out = -20, in = 0] (.75,2.2);
        \draw[red, very thick, dotted] (.75,2.2) to [out = -20, in = 0] (.75,2.4);
        \draw[red, very thick, dotted] (.75,2.4) to [out = 200, in = 270] (.1,2.75); 
        \draw[red, very thick, dotted] (.1,2.75) to [out = 120, in = 180] (1.35,3.25); 
        \draw[red,->, very thick, dotted] (1.35,3.25) to [out = 180, in = 270] (-.5,4); 
    \end{tikzpicture}
\end{center}
the paths acting on \(\sigma^A_k T\), \(k \ne 0\), that miss \(\sigma^A_k a_k\) but hit row \([b](h_b)\), 
\begin{equation*}
    \left[ P_0 \right]_3 = \{Q_k \in \left[ P_0 \right]  :  k \ne 0, [b](h_b,j) \in R^Q \text{ for some } 2 \le j \le w_b\},
\end{equation*}
\begin{center}
    \begin{tikzpicture}
        \draw (0,0) rectangle (2,3);
        \draw (0,-.25) -- (0,0);
        \draw (1.5,-.25) -- (1.5,0);
        
        \draw (0,3) rectangle (3,4);
        \draw (0,4) -- (0,4.25);
        \draw (3,4) -- (3.5,4) -- (3.5,4.25);
        
        \draw (0,2.5) -- (2,2.5);
        \draw (.5,2.5) -- (.5,3);
        \node at (.25,2.75) {\small \(A_k\)};
        
        \draw (0,3) rectangle (3,3.5);
        \draw (1.25,3) -- (1.25,3.5);
        \draw (1.75,3) rectangle (1.75,3.5);
        \node at (1.5,3.25) {\small \(A_0\)};
        \node at (1.5,3.7) {\small \(k\)};
        
        \draw (0,1.25) rectangle (2,1.75);
        \draw (.75,1.25) -- (.75,1.75);
        \draw (1.25,1.25) -- (1.25,1.75);
        \node at (1,1.5) {\small \(Z\)};
        
        \node at (.5,-.5) {\(u\)};
        \node at (1.5,-.5) {\(v\)};
        
        \draw[blue] (.4,-.5) to [out = 200, in = 180] (.75,.2); 
        \draw[blue] (.75,.2) to [out = 200, in = 180] (.75,.6);
        \draw[blue] (.75,.6) to [out = 200, in = 180] (.75,1);
        \draw[blue] (.75,1) to [out = 140, in = 180] (.85,1.5); 
        \draw[blue,->] (.85,1.5) to [out = 180, in = 270] (-1,4); 
        
        \draw[red, very thick, dotted] (1.6,-.5) to [out = 20, in = 0] (.75,.4); 
        \draw[red, very thick, dotted] (.75,.4) to [out = -20, in = 0] (.75,.8);
        \draw[red, very thick, dotted] (.75,.8) to [out = -20, in = 0] (.75,1.2);
        \draw[red, very thick, dotted] (.75,1.2) to [out = -20, in = 0] (.75,1.8); 
        \draw[red, very thick, dotted] (.75,1.8) to [out = -20, in = 0] (.75,2);
        \draw[red, very thick, dotted] (.75,2) to [out = -20, in = 0] (.75,2.2);
        \draw[red, very thick, dotted] (.75,2.2) to [out = -20, in = 0] (.75,2.4);
        \draw[red, very thick, dotted] (.75,2.4) to [out = -20, in = 0] (1,2.75); 
        \draw[red, very thick, dotted] (1,2.75) to [out = -20, in = 270] (1.35,3.25); 
        \draw[red,->, very thick, dotted] (1.35,3.25) to [out = 180, in = 270] (-.5,4); 
    \end{tikzpicture}
\end{center}
the paths acting on \(\sigma^A_k T\), \(k \ne 0\), that miss row \([b](h_b)\) with \(R^P_2\) leaving \([b]\) in a row above \(Z\),
\begin{equation*}
    \left[ P_0 \right]_4 = \{Q_k \in \left[ P_0 \right]  :  k \ne 0, Q([b](i,j)) = a_k \text{ for some } i_z < i < h_b, 1 \le j \le w_b\},
\end{equation*}
\begin{center}
    \begin{tikzpicture}
        \draw (0,0) rectangle (2,3);
        \draw (0,-.25) -- (0,0);
        \draw (1.5,-.25) -- (1.5,0);
        
        \draw (0,3) rectangle (3,4);
        \draw (0,4) -- (0,4.25);
        \draw (3,4) -- (3.5,4) -- (3.5,4.25);
        
        \draw (0,2.5) rectangle (.5,3);
        \node at (.25,2.75) {\small \(A_k\)};
        
        \draw (0,3) rectangle (3,3.5);
        \draw (1.25,3) -- (1.25,3.5);
        \draw (1.75,3) rectangle (1.75,3.5);
        \node at (1.5,3.25) {\small \(A_0\)};
        \node at (1.5,3.7) {\small \(k\)};
        
        \draw (0,.75) rectangle (2,1.25);
        \draw (.75,.75) -- (.75,1.25);
        \draw (1.25,.75) -- (1.25,1.25);
        \node at (1,1) {\small \(Z\)};
        
        \draw (0,1.5) rectangle (2,2);
        \node at (2.7,1.75) {\small \(\leftarrow\) row \(i\)};
        
        \node at (.5,-.5) {\(u\)};
        \node at (1.5,-.5) {\(v\)};
        
        \draw[blue] (.4,-.5) to [out = 200, in = 180] (.75,.2); 
        \draw[blue] (.75,.2) to [out = 200, in = 180] (.75,.6);
        \draw[blue] (.75,.6) to [out = 200, in = 180] (.75,1);
        \draw[blue] (.75,1) to [out = 140, in = 180] (.85,1); 
        \draw[blue,->] (.85,1) to [out = 180, in = 270] (-1,4); 
        
        \draw[red, very thick, dotted] (1.6,-.5) to [out = 20, in = 0] (.75,.4); 
        \draw[red, very thick, dotted] (.75,.4) to [out = -20, in = 0] (.75,.7);
        \draw[red, very thick, dotted] (.75,.7) to [out = -20, in = 0] (.75,1.35); 
        \draw[red, very thick, dotted] (.75,1.35) to [out = -20, in = 0] (.75,1.75); 
        \draw[red, very thick, dotted] (.75,1.75) to [out = 20, in = 270] (1.35,3.25); 
        \draw[red,->, very thick, dotted] (1.35,3.25) to [out = 180, in = 270] (-.5,4); 
    \end{tikzpicture}
\end{center}
the paths acting on \(\sigma^A_k T\), \(k \ne 0\), that miss row \([b](h_b)\) with \(R^P_2\) leaving \([b]\) in an even row below \(Z\),
\begin{equation*}
    \left[ P_0 \right]_5 = \{Q_k \in \left[ P_0 \right]  :  k \ne 0, Q([b](i,j)) = \sigma^A_k a_0 \text{ for some } 1 \le i < i_z, i \text{ even }, 1 \le k \le w_b\},
\end{equation*}
\begin{center}
    \begin{tikzpicture}
        \draw (0,0) rectangle (2,3);
        \draw (0,-.25) -- (0,0);
        \draw (1.5,-.25) -- (1.5,0);
        
        \draw (0,3) rectangle (3,4);
        \draw (0,4) -- (0,4.25);
        \draw (3,4) -- (3.5,4) -- (3.5,4.25);
            
        \draw (0,2.5) rectangle (.5,3);
        \node at (.25,2.75) {\small \(A_k\)};
        
        \draw (0,3) rectangle (3,3.5);
        \draw (1.25,3) -- (1.25,3.5);
        \draw (1.75,3) rectangle (1.75,3.5);
        \node at (1.5,3.25) {\small \(A_0\)};
        \node at (1.5,3.7) {\small \(k\)};
        
        \draw (0,1.75) rectangle (2,2.25);
        \draw (.75,1.75) -- (.75,2.25);
        \draw (1.25,1.75) -- (1.25,2.25);
        \node at (1,2) {\small \(Z\)};
        
        \draw (0,.75) rectangle (2,1.25);
        \node at (2.7,1) {\small \(\leftarrow\) row \(i\)};
        
        \node at (.5,-.5) {\(u\)};
        \node at (1.5,-.5) {\(v\)};
        
        \draw[blue] (.4,-.5) to [out = 200, in = 180] (.75,.2); 
        \draw[blue] (.75,.2) to [out = 200, in = 180] (.75,.6);
        \draw[blue] (.75,.6) to [out = 200, in = 180] (.75,1.3);
        \draw[blue] (.75,1.3) to [out = 140, in = 180] (.75,1.5); 
        \draw[blue] (.75,1.5) to [out = 140, in = 180] (.75,1.7); 
        \draw[blue] (.75,1.7) to [out = 140, in = 180] (.85,2); 
        \draw[blue,->] (.85,2) to [out = 180, in = 270] (-1,4); 
        
        \draw[red, very thick, dotted] (1.6,-.5) to [out = 20, in = 0] (.75,.4); 
        \draw[red, very thick, dotted] (.75,.4) to [out = -20, in = 0] (.75,.7);
        \draw[red, very thick, dotted] (.75,.7) to [out = 20, in = 0] (.75,1); 
        \draw[red, very thick, dotted] (.75,1) to [out = 20, in = 270] (1.35,3.25); 
        \draw[red,->, very thick, dotted] (1.35,3.25) to [out = 180, in = 270] (-.5,4); 
    \end{tikzpicture}
\end{center}
the paths acting on \(\sigma^A_k T\), \(k \ne 0\), that miss row \([b](h_b)\) with \(R^P_2\) leaving \([b]\) in an odd row below \(Z\),
\begin{equation*}
    \left[ P_0 \right]_6 = \{Q_k \in \left[ P_0 \right]  :  k \ne 0, Q([b](j,l)) = \sigma^A_k a_0 \text{ for some } 1 \le i < i_z, i \text{ odd }, 1 \le k \le w_b\},
\end{equation*}
\begin{center}
    \begin{tikzpicture}
        \draw (0,0) rectangle (2,3);
        \draw (0,-.25) -- (0,0);
        \draw (1.5,-.25) -- (1.5,0);
    
        \draw (0,3) rectangle (3,4);
        \draw (0,4) -- (0,4.25);
        \draw (3,4) -- (3.5,4) -- (3.5,4.25);
        
        \draw (0,2.5) rectangle (.5,3);
        \node at (.25,2.75) {\small \(A_k\)};
        
        \draw (0,3) rectangle (3,3.5);
        \draw (1.25,3) -- (1.25,3.5);
        \draw (1.75,3) rectangle (1.75,3.5);
        \node at (1.5,3.25) {\small \(A_0\)};
        \node at (1.5,3.7) {\small \(k\)};
        
        \draw (0,1.75) rectangle (2,2.25);
        \draw (.75,1.75) -- (.75,2.25);
        \draw (1.25,1.75) -- (1.25,2.25);
        \node at (1,2) {\small \(Z\)};
        
        \draw (0,.75) rectangle (2,1.25);
        \node at (2.7,1) {\small \(\leftarrow\) row \(i\)};
        
        \node at (.5,-.5) {\(u\)};
        \node at (1.5,-.5) {\(v\)};
        
        \draw[blue] (.4,-.5) to [out = 200, in = 180] (.75,.2); 
        \draw[blue] (.75,.2) to [out = 200, in = 180] (.75,.6);
        \draw[blue] (.75,.6) to [out = 120, in = 220] (.75,1); 
        \draw[blue] (.75,1) to [out = 120, in = 180] (1.35,3.25); 
        \draw[blue,->] (1.35,3.25) to [out = 180, in = 270] (-.5,4); 
        
        \draw[red, very thick, dotted] (1.6,-.5) to [out = 20, in = 0] (.75,.4); 
        \draw[red, very thick, dotted] (.75,.4) to [out = -20, in = 0] (.75,.7);
        \draw[red, very thick, dotted] (.75,.7) to [out = -20, in = 0] (.75,1.3); 
        \draw[red, very thick, dotted] (.75,1.3) to [out = -20, in = 0] (.75,1.5);
        \draw[red, very thick, dotted] (.75,1.5) to [out = -20, in = 0] (.75,1.7);
        \draw[red, very thick, dotted] (.75,1.7) to [out = 200, in = 180] (.85,2);
        \draw[red,->, very thick, dotted] (.85,2) to [out = 180, in = 270] (-1,4); 
    \end{tikzpicture}
\end{center}
and the paths acting on \(\sigma^A_k T\), \(k \ne 0\), with \(R^P_2\) missing \([b]\), 
\begin{equation*}
    \left[ P_0 \right]_7 = \{Q_k \in \left[ P_0 \right]  :  k \ne 0, R^P_2 \cap [b] = \emptyset\}.
\end{equation*}
\begin{center}
    \begin{tikzpicture}
        \draw (0,0) rectangle (2,3);
        \draw (0,-.25) -- (0,0);
        \draw (1.5,-.25) -- (1.5,0);
        
        \draw (0,3) rectangle (3,4);
        \draw (0,4) -- (0,4.25);
        \draw (3,4) -- (3.5,4) -- (3.5,4.25);
        
        \draw (0,2.5) rectangle (.5,3);
        \node at (.25,2.75) {\small \(A_k\)};
        
        \draw (0,3) rectangle (3,3.5);
        \draw (1.25,3) -- (1.25,3.5);
        \draw (1.75,3) rectangle (1.75,3.5);
        \node at (1.5,3.25) {\small \(A_0\)};
        \node at (1.5,3.7) {\small \(k\)};
        
        \draw (0,1.25) rectangle (2,1.75);
        \draw (.75,1.25) -- (.75,1.75);
        \draw (1.25,1.25) -- (1.25,1.75);
        \node at (1,1.5) {\small \(Z\)};
        
        \node at (.5,-.5) {\(u\)};
        \node at (1.5,-.5) {\(v\)};
        
        \draw[blue] (.4,-.5) to [out = 160, in = 180] (1.35,3.25); 
        \draw[blue,->] (1.35,3.25) to [out = 180, in = 270] (-.5,4); 

        \draw[red, very thick, dotted] (1.35,-.5) to [out = 160, in = 270] (.75,.25); 
        \draw[red, very thick, dotted] (.75,.2) to [out = 200, in = 180] (.75,.4);
        \draw[red, very thick, dotted] (.75,.4) to [out = 200, in = 180] (.75,.6);
        \draw[red, very thick, dotted] (.75,.6) to [out = 200, in = 180] (.75,.8);
        \draw[red, very thick, dotted] (.75,.8) to [out = 200, in = 180] (.75,1);
        \draw[red, very thick, dotted] (.75,1) to [out = 200, in = 180] (.75,1.2);
        \draw[red, very thick, dotted] (.75,1.2) to [out = 140, in = 180] (.85,1.5); 
        \draw[red, very thick, dotted,->] (.85,1.5) to [out = 180, in = 270] (-1,4); 
    \end{tikzpicture}
\end{center}

Let \(T^\prime \in \mF_{\lambda \setminus X}\) be the unique tableau with \(T^\prime = T_P\) on \((\lambda \setminus X) \setminus [b] \cup [b + 1]\) and \(T^\prime = T\) on \([b] \cup [b + 1]\) except \(T^\prime_{z} = u\) and \(T^\prime_{a_0} = v\).
\begin{center}
\begin{tikzpicture}
    \node at (-1,2) {\(T^\prime = \)};
    
    \draw (0,0) rectangle (2,3);
    \draw (0,-.25) -- (0,0);
    \draw (1.5,-.25) -- (1.5,0);
    
    \draw (0,3) rectangle (3,4);
    \draw (0,4) -- (0,4.25);
    \draw (3,4) -- (3.5,4) -- (3.5,4.25);
    
    \draw (0,2.5) rectangle (.5,3);
    \node at (.25,2.75) {\small \(u\)};
    
    \draw (0,3) rectangle (.5,3.5);
    \node at (.25,3.25) {\small \(A_1\)};
    \draw (.5,3) rectangle (2.5,3.5);
    \node at (1.5,3.25) {\small \(\cdots\)};
    \draw (2.5,3) rectangle (3,3.5);
    \node at (2.75,3.25) {\small \(A_w\)};
    
    \draw (0,1.25) rectangle (2,1.75);
    \draw (.75,1.25) -- (.75,1.75);
    \draw (1.25,1.25) -- (1.25,1.75);
    \node at (1,1.5) {\small \(v\)};
    
    \node at (-.5,4) { \ };
    \node at (-1,4) { \ };
    \node at (.5,-.5) { \ };
    \node at (1.5,-.5) { \ };
\end{tikzpicture}
\end{center}

Then by Corollary \ref{Generating Garnir Relations and Tools for Collapsing Sums subsection: Calculation Lemma for 2-paths} and applications of \(G_A\) we have, mod \(F_2 \otimes \mR_{\lambda \setminus X,n}\),

\begin{align*}
    \sum_{Q_k \in \left[ P_0 \right]_1} \frac{(-1)^{h^Q_b + h^Q_{b + 1}}}{H^Q_b H^Q_{b + 1}} Q_k %
        = & \dfrac{(-1)^{h_b + h_b - 2 + i_z - 2}}{H(b)(H(b) - 1)} ~ \begin{ytableau} A_0 \\ Z \end{ytableau} \otimes T^\prime \\
        = & \dfrac{-H(b + 1) + 1}{H(b)(H(b) - 1)(H(b + 1) - 1)} ~ \begin{ytableau} A_0 \\ Z \end{ytableau} \otimes T^\prime,
\end{align*}
\begin{align*}
    \sum_{Q_k \in \left[ P_0 \right]_2} \frac{(-1)^{h^Q_b + h^Q_{b + 1}}}{H^Q_b H^Q_{b + 1}} Q_k %
        = & \dfrac{(-1)^{h_b + 1 + h_b - 2 + i_z - 2} (w_{b + 1})}{H(b)(H(b) - 1)(H(b + 1) - 1)} ~ %
                    \begin{ytableau} A_0 \\ Z \end{ytableau} \otimes T^\prime\\
        = & \dfrac{w_{b + 1}}{H(b)(H(b) - 1)(H(b + 1) - 1)} ~ \begin{ytableau} A_0 \\ Z \end{ytableau} \otimes T^\prime
\end{align*}
\begin{align*}
    \sum_{Q_k \in \left[ P_0 \right]_3} \frac{(-1)^{h^Q_b + h^Q_{b + 1}}}{H^Q_b H^Q_{b + 1}} Q_k %
        = & \dfrac{(-1)^{h_b + 1 + 1 + h_b - 2 + i_z - 2}(w_b-1)}{H(b)(H(b) - 1)(H(b + 1) - 1)} ~ %
                    \begin{ytableau} A_0 \\ Z \end{ytableau} \otimes T^\prime \\
        = & \dfrac{-w_b + 1}{H(b)(H(b) - 1)(H(b + 1) - 1)} ~ \begin{ytableau} A_0 \\ Z \end{ytableau} \otimes T^\prime
\end{align*}
\begin{align*}
    \sum_{Q_k \in \left[ P_0 \right]_4} \frac{(-1)^{h^Q_b + h^Q_{b + 1}}}{H^Q_b H^Q_{b + 1}} Q_k %
        = & \sum_{i=i_z + 1}^{h_b - 1} \dfrac{(-1)^{i + 1 + 1 + i - 2 + i_z - 2 + 1}}{H(b)(H(b) - 1)(H(b + 1) - 1)} ~ %
                    \begin{ytableau} A_0 \\ Z \end{ytableau} \otimes T^\prime \\
        = & \dfrac{h_b - 1 - i_z}{H(b)(H(b) - 1)(H(b + 1) - 1)} ~ \begin{ytableau} A_0 \\ Z \end{ytableau} \otimes T^\prime
\end{align*}
\begin{align*}
    \sum_{Q_k \in \left[ P_0 \right]_5} \frac{(-1)^{h^Q_b + h^Q_{b + 1}}}{H^Q_b H^Q_{b + 1}} Q_k %
        = & \sum_{\substack{1 \le i < i_z,\\ i \text{ even}}} \dfrac{(-1)^{i_z + 1 + 1 + i_z - 2 + i - 2 + 1}}%
        {H(b)(H(b) - 1)(H(b + 1) - 1)} ~ \begin{ytableau} A_0 \\ Z \end{ytableau} \otimes T^\prime \\
        = & \sum_{\substack{1 \le i < i_z,\\ i \text{ even}}} \dfrac{(-1)^{i+1}}%
        {H(b)(H(b) - 1)(H(b + 1) - 1)} ~ \begin{ytableau} A_0 \\ Z \end{ytableau} \otimes T^\prime \\
\end{align*}
\begin{align*}
    \sum_{Q_k \in \left[ P_0 \right]_6} \frac{(-1)^{h^Q_b + h^Q_{b + 1}}}{H^Q_b H^Q_{b + 1}} Q_k %
        = & \sum_{\substack{1 \le i < i_z,\\ i \text{ odd}}} \dfrac{(-1)^{i_z + 1 + 1 + i_z - 2 + i - 2 + 1}}%
        {H(b)(H(b) - 1)(H(b + 1) - 1)} ~ \begin{ytableau} Z \\ A_0 \end{ytableau} \otimes T^\prime \\
        = & \sum_{\substack{1 \le i < i_z,\\ i \text{ odd}}} \dfrac{(-1)^{i+2}}%
        {H(b)(H(b) - 1)(H(b + 1) - 1)} ~ \begin{ytableau} A_0 \\ Z \end{ytableau} \otimes T^\prime \\
\end{align*}
\begin{align*}
    \sum_{Q_k \in \left[ P_0 \right]_7} \frac{(-1)^{h^Q_b + h^Q_{b + 1}}}{H^Q_b H^Q_{b + 1}} Q_k %
        = & \dfrac{(-1)^{i_z + 1 + 1 + i_z - 1}}{(H(b) - 1)(H(b + 1) - 1)} ~ %
                    \begin{ytableau} Z \\ A_0 \end{ytableau} \otimes T^\prime\\
        = & \dfrac{H(b)}{H(b)(H(b) - 1)(H(b + 1) - 1)} ~ \begin{ytableau} A_0 \\ Z \end{ytableau} \otimes T^\prime
\end{align*}

Then as \(H(b + 1) = H(b) + w_{b + 1} - w_b + h_b\) and
\begin{align*}
    \sum_{\substack{1 \le i < i_z,\\ i \text{ even}}} & \dfrac{(-1)^i} {H(b)(H(b) - 1)(H(b + 1) - 1)} ~ \begin{ytableau} A_0 \\ Z \end{ytableau} \otimes T^\prime\\
    & + \sum_{\substack{1 \le i < i_z,\\ i \text{ odd}}} \dfrac{(-1)^{i+1}} {H(b)(H(b) - 1)(H(b + 1) - 1)} ~ \begin{ytableau} A_0 \\ Z \end{ytableau} \otimes T^\prime \\
    & = \dfrac{i_z - 1}{H(b)(H(b) - 1)(H(b + 1) - 1)} ~ \begin{ytableau} A_0 \\ Z \end{ytableau} \otimes T^\prime
\end{align*}
we get, mod \(F_2 \otimes \mR_{\lambda \setminus X,n}\)
\begin{align*}
    & \sum_{Q_k \in \left[P_0\right]} \frac{(-1)^{h^Q_b + h^Q_{b + 1}}}{H^Q_b H^Q_{b + 1}} Q_k %
             =  \sum_{i = 1,\ldots,7} ~ \sum_{Q_k \in \left[P_0\right]_i} \frac{(-1)^{h^Q_b + h^Q_{b + 1}}}{H^Q_b H^Q_{b + 1}} Q_k\\
    & = \dfrac{-H(b + 1) + 1 + w_{b + 1} - w_b + 1 + h_b - 1 - i_z - + i_z - 1 + H(b)}{H(b) \left(H(b) - 1\right) \left(H(b + 1) - 1\right)} %
            ~ \begin{ytableau} A_0 \\ Z \end{ytableau} \otimes T^\prime\\
    & = 0
\end{align*}
\end{proof}

\begin{case*}[\ref{Showing the Pieri Inclusion Removing Many Boxes is a GL(V)-map subsection: Two Box Removal Preserves Garnirs for Hooks Contained in Two Blocks equation: T6}]
In this case we show that the sum over all paths that move \(A_i\) and a box other than \(a_0\) in row \([b](h_b)\) above \([b + 1]\) is in \(F_2 \otimes \mR_{\lambda \setminus X,n}\).
Recall that
\[
    \sT_6 = \bigsqcup_{0 \le i \le w_{b + 1}, ~ 2 \le j \le w_b} \sT_6^{i,j},
\]
where
\[
    \sT_6^{i,j} = \{ P_k : P(\sigma^A_k A_i) > [b + 1], P([b](h_b,j))> [b + 1]\}.        
\]
It is enough to show that
\[
    \sum_{P_k \in \sT_6^{0,2}} \frac{(-1)^{P}}{H(P)} P_k \in F_2 \otimes \mR_{\lambda \setminus X,n},
\]
with the other cases being similar. 
\end{case*}

\begin{proof}
For the rest of Case (\ref{Showing the Pieri Inclusion Removing Many Boxes is a GL(V)-map subsection: Two Box Removal Preserves Garnirs for Hooks Contained in Two Blocks equation: T6}), let \(z = [b](h_b,2)\) and \(Z = T_z\), and observe that \(\sT_6^{0,2}\) is the union of the following disjoint sets.

\begin{align*}
    \sT_6^{0,2,1} & = \{P_k \in \sT_6^{0,2}  :  P(\sigma^A_k a_0), P(z) \not \in Y\},\\[5pt]
    \sT_6^{0,2,2} & = \{P_k \in \sT_6^{0,2}  :  P(\sigma^A_k a_0) \in Y, P(z) \not \in Y\},\\[5pt]
    \sT_6^{0,2,3} & = \{P_k \in \sT_6^{0,2}  :  P(z) \in Y, P(\sigma^A_k a_0) \not \in Y\}, \text{ and }\\[5pt]
    \sT_6^{0,2,4} &= \{P_k \in \sT_6^{0,2}  :  P(\sigma^A_k a_0), P(z) \in Y\}.
\end{align*}

So it is enough to show that for \(1 \le i \le 4\),
\[
    \sum_{P_k \in \sT_6^{0,2,i}} \frac{(-1)^{P}}{H(P)} P_k \in F_2 \otimes \mR_{\lambda \setminus X,n}.
\]

Now define the relation \(\sim^i\) on \(\sT_6^{0,2,i}\), for \(1 \le i \le 4\), as follows.
\begin{itemize}
\item Define \(\sim^1\) on \(\sT_6^{0,2,1}\) by 
	\begin{align*}
        P_k \sim^1 Q_j %
        	\iff & Q \text{ is a } ([b](1), [b+1](1)) \text{-path extension of } P,\\
                & Q(\sigma^A_j A_0) = P(\sigma^A_k A_0), \text{ and } Q(Z) = P(Z), \text{ and }\\
                & Q^{-1}(\sigma^A_j A_0) \text{ and } P^{-1}(\sigma^A_k A_0) \text{ are in the same row.}
    \end{align*}
\item Define \(\sim^2\) on \(\sT_6^{0,2,2}\) by 
	\begin{align*}
    P_k \sim^2 Q_j %
        \iff & Q \text{ is a } ([b](1), [b+1](1)) \text{-path extension of } P,\\
            & Q(Z) = P(Z), \text{ and }\\
            & Q^{-1}(\sigma^A_j A_0) \text{ and } P^{-1}(\sigma^A_k A_0) \text{ are in the same row.}
    \end{align*}
\item Define \(\sim^3\) on \(\sT_6^{0,2,3}\) by 
	\begin{align*}
    P_k \sim^3 Q_j %
    	\iff & Q \text{ is a } ([b](1), [b+1](1)) \text{-path extension of } P,\\
            & Q(\sigma^A_j A_0) = P(\sigma^A_k A_0), \text{ and }\\
            & Q^{-1}(\sigma^A_j A_0) \text{ and } P^{-1}(\sigma^A_k A_0) \text{ are in the same row.}
    \end{align*}
\item Define \(\sim^4\) on \(\sT_6^{0,2,4}\) by 
	\begin{align*}
    P_k \sim^4 Q_j %
    	\iff & Q \text{ is a } ([b](1), [b+1](1)) \text{-path extension of } P, \text{ and }\\
            & Q^{-1}(\sigma^A_j A_0) \text{ and } P^{-1}(\sigma^A_k A_0) \text{ are in the same row.}
    \end{align*}
\end{itemize}

It is clear that for \(1 \le i \le 4\), \(\sim^i\) an equivalence relation on \(\sT_6^{0,2,i}\), so that
\[
    \sum_{P_k \in \sT_6^{0,2,i}} \frac{(-1)^{P}}{H(P)} P_k = \sum_{\left[P_k\right] \in \sT_6^{0,2,i} / \sim^i} %
    \sum_{Q_k \in \left[P_k\right]} \frac{(-1)^{Q}}{H(Q)} Q_k.
\]
Thus it is enough to show that for \(1 \le i \le 4\),
\[
    \sum_{\left[P_k\right] \in \sT_6^{0,2,i} / \sim^i} \sum_{Q_k \in \left[P_k\right]} %
    \frac{(-1)^{Q}}{H(Q)} Q_k \in F_2 \otimes \mR_{\lambda \setminus X,n}.
\]
We will show the case \(i = 1\), with the rest being similar. For the rest of Case (\ref{Showing the Pieri Inclusion Removing Many Boxes is a GL(V)-map subsection: Two Box Removal Preserves Garnirs for Hooks Contained in Two Blocks equation: T6}), let \(\sT := \sT_6^{0,2,1}\).

For \(l = 1, 2\), let \(\sT_{x_l}\) be the set of all \(Q_k\) in \(\sT\) such that the orbit of \(x_l\) intersects the first row in \([b]\),
\[
    \sT_{x_l} = \{P_k \in \sT  :  R^P_l \cap [b](1) \ne \emptyset\}.
\]
Note that as \(a_0 = [b](h_b,1)\) is in the same row as \(z = [b](h_b,2)\), it must be that \(1 \le k \le w_b\) for all \(P_k \in \sT\). 
Pick \(P_1 \in \sT\) with \([b](i,1) \in R^P\) for all \(i = 1, \ldots, h_b - 1\), and \(P^{-1}(\sigma_1 A_0) \in [b](i)\) with \(i\) odd, and let \([b_u](i_u,j_u) = P^{-1}([b](1,1))\) and \([b_v](i_v,j_v) = P^{-1}([b](2,1))\) with \(u = T_{[b_u](i_u,j_u)}\) and \(v = T_{[b_v](i_v,j_v)}\).

\begin{center}
\begin{tikzpicture}
    \node at (-1,2) {\(P_1 = \)};
    
    \draw (0,0) rectangle (2,3);
    \draw (0,-.25) -- (0,0);
    \draw (1.5,-.25) -- (1.5,0);
    
    \draw (0,3) rectangle (3,4);
    \draw (0,4) -- (0,4.25);
    \draw (3,4) -- (3.5,4) -- (3.5,4.25);
    
    \draw (0,2.5) rectangle (1,3);
    \draw (.5,0) -- (.5,3);
    \node at (.25,2.75) {\small \(A_k\)};
    \node at (.75,2.75) {\small \(Z\)};
    
    \draw (0,3) rectangle (.5,3.5);
    \node at (.25,3.25) {\small \(A_0\)};
    
    \draw (0,1.25) rectangle (2,1.75);
    \node at (2.75,1.5) {\small \(\leftarrow\) row \(i\)};
    
    \node at (.5,-.5) {\(u\)};
    \node at (1.5,-.5) {\(v\)};
    
    \draw[blue] (.4,-.5) to [out = 200, in = 180] (.2,.2); 
    \draw[blue] (.2,.2) to [out = 200, in = 180] (.2,.6);
    \draw[blue] (.2,.6) to [out = 200, in = 180] (.2,1);
    \draw[blue] (.2,1) to [out = 140, in = 180] (.2,1.5); 
    \draw[blue] (.2,1.5) to [out = 140, in = 180] (.1,3.25); 
    \draw[blue,->] (.1,3.25) to [out = 180, in = 270] (-.5,4); 
    
    \draw[red, very thick, dotted] (1.6,-.5) to [out = 20, in = 0] (.2,.4); 
    \draw[red, very thick, dotted] (.2,.4) to [out = -20, in = 0] (.2,.8);
    \draw[red, very thick, dotted] (.2,.8) to [out = -20, in = 0] (.2,1.2);
    \draw[red, very thick, dotted] (.2,1.2) to [out = 20, in = -20] (.2,1.8); 
    \draw[red, very thick, dotted] (.2,1.8) to [out = 200, in = 180] (.2,2);
    \draw[red, very thick, dotted] (.2,2) to [out = 200, in = 180] (.2,2.2);
    \draw[red, very thick, dotted] (.2,2.2) to [out = 200, in = 180] (.2,2.4);
    \draw[red, very thick, dotted] (.2,2.4) to [out = 20, in = 270] (.6,2.75); 
    \draw[red,->, very thick, dotted] (.6,2.75) to [out = 180, in = 270] (-1,4); 
\end{tikzpicture}
\end{center}

It is then enough to show that
\[
    \sum_{Q_k \in \left[P_1\right]} \frac{(-1)^{Q}}{H(Q)} Q_k \in F_2 \otimes \mR_{\lambda \setminus X,n},
\]
with the other cases being similar. In fact, as \((-1)^P = (-1)^Q\) and \(H(Q) = H(P)\) for all \(Q_k \in \left[P_1\right]\), it is enough to show that
\[
    \sum_{Q_k \in \left[P_1\right]} Q_k \in F_2 \otimes \mR_{\lambda \setminus X,n}.
\]

Without loss of generality, assume \(P_1 \in \sT_{x_1}\) and let \(\left[ P_1 \right]_{x_1} = \left[ P_1 \right] \cap \sT_{x_1}\) and \(\left[ P_1 \right]_{x_2} = \left[ P_1 \right] \cap \sT_{x_2}\), so that
\[
    \left[ P_1 \right] = \left[ P_1 \right]_{x_1} \bigsqcup \left[ P_1 \right]_{x_2}.
\]
See Figure \ref{Case Showing the Pieri Inclusion Removing Many Boxes is a GL(V)-map subsection: Two Box Removal Preserves Garnirs for Hooks Contained in Two Blocks equation: T6 figure: [P]1}.

\begin{figure}[h]
\begin{subfigure}{0.35\textwidth}
\begin{tikzpicture}
    \draw (0,0) rectangle (2,3);
    \draw (0,-.25) -- (0,0);
    \draw (1.5,-.25) -- (1.5,0);
    
    \draw (0,3) rectangle (3,4);
    \draw (0,4) -- (0,4.25);
    \draw (3,4) -- (3.5,4) -- (3.5,4.25);
    
    \draw (0,2.5) rectangle (1,3);
    \draw (.5,2.5) -- (.5,3);
    \node at (.25,2.75) {\small \(A_k\)};
    \node at (.75,2.75) {\small \(Z\)};
    
    \draw (0,3) rectangle (3,3.5);
    \draw (1.25,3) rectangle (1.75,3.5);
    \node at (1.5,3.25) {\small \(A_0\)};
    \node at (1.5,3.7) {\small \(k\)};
    
    \draw (0,1.25) rectangle (2,1.75);
    \node at (2.75,1.5) {\small \(\leftarrow\) row \(i\)};
    
    \node at (.5,-.5) {\(u\)};
    \node at (1.5,-.5) {\(v\)};
    
    \draw[blue] (.4,-.5) to [out = 200, in = 180] (.75,.2); 
    \draw[blue] (.75,.2) to [out = 200, in = 180] (.75,.6);
    \draw[blue] (.75,.6) to [out = 200, in = 180] (.75,1);
    \draw[blue] (.75,1) to [out = 140, in = 180] (.75,1.5); 
    \draw[blue] (.75,1.5) to [out = 140, in = 180] (1.35,3.25); 
    \draw[blue,->] (1.35,3.25) to [out = 180, in = 270] (-.5,4); 
    
    \draw[red, very thick, dotted] (1.6,-.5) to [out = 20, in = 0] (.75,.4); 
    \draw[red, very thick, dotted] (.75,.4) to [out = -20, in = 0] (.75,.8);
    \draw[red, very thick, dotted] (.75,.8) to [out = -20, in = 0] (.75,1.2);
    \draw[red, very thick, dotted] (.75,1.2) to [out = 20, in = -20] (.75,1.8); 
    \draw[red, very thick, dotted] (.75,1.8) to [out = 200, in = 180] (.75,2);
    \draw[red, very thick, dotted] (.75,2) to [out = 200, in = 180] (.75,2.2);
    \draw[red, very thick, dotted] (.75,2.2) to [out = 200, in = 180] (.75,2.4);
    \draw[red, very thick, dotted] (.75,2.4) to [out = 200, in = 180] (.6,2.75); 
    \draw[red,->, very thick, dotted] (.6,2.75) to [out = 180, in = 270] (-1,4); 
\end{tikzpicture}
\caption{\(Q_k \in \left[P_1\right]_{x_1}\)}
\end{subfigure}
\hspace{.5in}
\begin{subfigure}{0.35\textwidth}
\begin{tikzpicture}
    \draw (0,0) rectangle (2,3);
    \draw (0,-.25) -- (0,0);
    \draw (1.5,-.25) -- (1.5,0);
    
    \draw (0,3) rectangle (3,4);
    \draw (0,4) -- (0,4.25);
    \draw (3,4) -- (3.5,4) -- (3.5,4.25);
    
    \draw (0,2.5) rectangle (1,3);
    \draw (.5,2.5) -- (.5,3);
    \node at (.25,2.75) {\small \(A_k\)};
    \node at (.75,2.75) {\small \(Z\)};
    
    \draw (0,3) rectangle (3,3.5);
    \draw (1.25,3) rectangle (1.75,3.5);
    \node at (1.5,3.25) {\small \(A_0\)};
    \node at (1.5,3.7) {\small \(k\)};
    
    \draw (0,1.25) rectangle (2,1.75);
    \node at (2.75,1.5) {\small \(\leftarrow\) row \(i\)};
    
    \node at (.5,-.5) {\(u\)};
    \node at (1.5,-.5) {\(v\)};
    
    \draw[blue] (.4,-.5) to [out = 200, in = 180] (.75,.4); 
    \draw[blue] (.75,.4) to [out = 200, in = 180] (.75,.8);
    \draw[blue] (.75,.8) to [out = 200, in = 180] (.75,1.2);
    \draw[blue] (.75,1.2) to [out = 200, in = 180] (.75,1.8); 
    \draw[blue] (.75,1.8) to [out = 200, in = 180] (.75,2);
    \draw[blue] (.75,2) to [out = 200, in = 180] (.75,2.2);
    \draw[blue] (.75,2.2) to [out = 200, in = 180] (.75,2.4);
    \draw[blue] (.75,2.4) to [out = 200, in = 180] (.6,2.75); 
    \draw[blue, ->] (.6,2.75) to [out = 180, in = 270] (-1,4); 
    
    \draw[red, very thick, dotted] (1.6,-.5) to [out = 20, in = 0] (.75,.2); 
    \draw[red, very thick, dotted] (.75,.2) to [out = -20, in = 0] (.75,.6);
    \draw[red, very thick, dotted] (.75,.6) to [out = -20, in = 0] (.75,1);
    \draw[red, very thick, dotted] (.75,1) to [out = -20, in = 0] (.75,1.5); 
    \draw[red, very thick, dotted] (.75,1.5) to [out = 20, in = 270] (1.35,3.25); 
    \draw[red, very thick, dotted,->] (1.35,3.25) to [out = 180, in = 270] (-.5,4); 
\end{tikzpicture}
\caption{\(Q_k \in \left[P_1\right]_{x_2}\)}
\end{subfigure}
\caption{}
\label{Case Showing the Pieri Inclusion Removing Many Boxes is a GL(V)-map subsection: Two Box Removal Preserves Garnirs for Hooks Contained in Two Blocks equation: T6 figure: [P]1}
\end{figure}

Let \(T^\prime \in \mF_{\lambda \setminus X}\) be the unique tableau with \(T^\prime = T_P\) on \((\lambda \setminus X) \setminus \left([b] \cup [b + 1]\right)\) and \(T^\prime = T\) on \([b] \cup [b + 1]\) except \(T^\prime_{z} = v\) and \(T^\prime_{a_0} = u\).

\begin{center}
\begin{tikzpicture}
    \node at (-1,2) {\(T^\prime = \)};
    
    \draw (0,0) rectangle (2,3);
    \draw (0,-.25) -- (0,0);
    \draw (1.5,-.25) -- (1.5,0);
    
    \draw (0,3) rectangle (3,4);
    \draw (0,4) -- (0,4.25);
    \draw (3,4) -- (3.5,4) -- (3.5,4.25);
    
    \draw (0,2.5) rectangle (1,3);
    \draw (.5,2.5) -- (.5,3);
    \node at (.25,2.75) {\small \(u\)};
    \node at (.75,2.75) {\small \(v\)};
    
    \draw (0,3) rectangle (3,3.5);
    \draw (.5,3) -- (.5,3.5);
    \draw (2.5,3) -- (2.5,3.5);
    \node at (.25,3.25) {\small \(A_1\)};
    \node at (1.5,3.25) {\small \(\cdots\)};
    \node at (2.75,3.25) {\small \(A_w\)};
    
    \node at (-.5,4) { \ };
    \node at (-1,4) { \ };
    \node at (.5,-.5) { \ };
    \node at (1.5,-.5) { \ };
\end{tikzpicture}
\end{center}

As in the calculations for the proof of Case \ref{Showing the Pieri Inclusion Removing Many Boxes is a GL(V)-map subsection: Two Box Removal Preserves Garnirs for Hooks Contained in a Single Block equation: T6}, by the proof of Lemma \ref{Generating Garnir Relations and Tools for Collapsing Sums subsection: Calculation Lemma for 1-paths}, the result of Corollary \ref{Generating Garnir Relations and Tools for Collapsing Sums subsection: Calculation Lemma for 2-paths} still holds when moving \(u\) and \(v\) to boxes in the same row, which we have here after applying \(G_A\). 
This gives, mod \(F_2 \otimes \mR_{\lambda \setminus X, n}\),
\begin{align*}
    \sum_{Q_k \in \left[P_1\right]} Q_k %
        & = \sum_{Q_k \in \left[P_1\right]_{x_1}} Q_k + \sum_{Q_k \in \left[P_1\right]_{x_2}} Q_k\\
        & = (-1)^{h_b + 1 + 1 h_b - 2 + i - 2 + 1} ~ \ytableaushort{{\alpha^P_2}, {\alpha^P_1}} \otimes %
            T^\prime 
            + (-1)^{h_b + 1 + 1 h_b - 2 + i - 2 + 1} ~ \begin{ytableau} \alpha^P_1 \\ %
            \alpha^P_2 \end{ytableau} \otimes T^\prime\\
        & = 0 \otimes T^\prime.
\end{align*}
\end{proof}

Thus Equation \ref{Showing the Pieri Inclusion Removing Many Boxes is a GL(V)-map subsection: The Theorem equation: Phi of Garnir m Boxes} holds for all hooks \(A \subset [b] \cup [b + 1]\), and so Theorem \ref{Showing the Pieri Inclusion Removing Many Boxes is a GL(V)-map subsection: The Theorem} holds for \(m = 2\).

\section{Relating Pieri Inclusion Descriptions} \label{section: Relating Pieri Inclusion Descriptions}

\subsection{} \label{Relating Pieri Inclusion Descriptions subsection: Comparing with Olver's map}

In this section we show that our description of the Pieri inclusion removing one box is the negative of that given by Olver. 
We then show that iterating our description of the Pieri inclusion removing one box is equal to our description of the Pieri inclusion removing many boxes.
Finally, we show that our description of the Pieri inclusion removing many boxes also describes the symmetric case.
 
\begin{theorem*}
For \(\widetilde\Phi_1\) and \(\Phi_1\) as above,
\[
    \Phi_1 = - \widetilde\Phi_1.
\]
\end{theorem*}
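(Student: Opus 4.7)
The plan is to invoke Schur's lemma together with Pieri's rule, and then pin down the scalar by evaluating both maps on a single convenient vector.

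Since $\ell(\lambda) \le n$, the modules $\mathbb{S}_\lambda(V)$ and $\mathbb{S}_{\lambda \setminus X}(V)$ are irreducible $\GL(V)$-modules, and by Pieri's rule (recalled in Section 1.1) $\mathbb{S}_\lambda(V)$ appears with multiplicity exactly one inside $V \otimes \mathbb{S}_{\lambda \setminus X}(V)$. Consequently
\[
    \dim_{\bC} \Hom_{\GL(V)}\bigl(\mathbb{S}_\lambda(V),\; V \otimes \mathbb{S}_{\lambda \setminus X}(V)\bigr) = 1.
\]
By Theorem \ref{Showing the Pieri Inclusion Removing One Box is a GL(V)-map subsection: The Theorem} our $\Phi_1$ lies in this Hom space, and Olver's map $\widetilde\Phi_1$ does too. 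Thus $\Phi_1 = c\,\widetilde\Phi_1$ for some constant $c \in \bC$, and it remains only to determine that $c = -1$.

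To compute $c$, I will evaluate both maps on the highest weight vector $e_{T_\lambda}$ and compare the coefficient of a single distinguished term. The natural candidate is the term obtained from the unique $1$-path $P_0$ whose evacuation route is the entire first column from $x_1$ up to $[N](1,1)$; that is, the ``straight-up'' path that lifts the entry of $x_1$ column-by-column and places it into the zeroth row. For $e_{T_\lambda}$, this path contributes a term of the form $e_{b_1} \otimes e_{T_{\lambda \setminus X}}$ (after the trivial row-reordering), with no Garnir straightening required because every resulting row of $T_P$ is already constant and the tableau is semistandard. The coefficient of this term under $\Phi_1$ is $(-1)^{h^{P_0}}/H(P_0)$, which by Section 3.6 is explicitly computable from the block hook-length data of $\lambda$. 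Crucially, $P_0$ is the unique $1$-path producing this particular tensor, so no cancellations from other paths occur.

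On Olver's side, the image $\widetilde\Phi_1(e_{T_\lambda})$ contains the same distinguished tensor, and the coefficient can be read off directly from Olver's explicit formula (as recalled in \cite{olver1982differential} and used in \cite{sam2011pieri}); a short bookkeeping calculation shows it is exactly the negative of the coefficient produced by our formula, yielding $c = -1$.

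The main obstacle will be the bookkeeping in the last step: carefully identifying which term of Olver's iterative formula matches our ``straight-up'' path and verifying the sign and hook-length normalization agree up to an overall factor of $-1$. Once the two scalars are aligned on a single nonzero vector, the one-dimensionality of the Hom space upgrades this to the global identity $\Phi_1 = -\widetilde\Phi_1$.
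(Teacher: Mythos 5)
Your overall strategy is exactly the paper's: invoke Schur's lemma together with Pieri multiplicity-one to reduce to a single scalar, then pin that scalar down by matching one coefficient in the image of the highest weight vector. The gap is in the execution of that last step — you misidentify both the distinguished term and the path(s) producing it.

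The ``straight-up'' $1$-path $P_0$ whose evacuation route runs up the entire first column does \emph{not} send $e_{T_\lambda}$ to a multiple of $e_{b_1}\otimes e_{T_{\lambda\setminus X}}$. A $1$-path removes the entry sitting in the \emph{top} box $P^{-1}(y_1)$ of its route (not the entry of $x_1$), and it shifts all other route entries upward. For the highest weight tableau, your $P_0$ therefore yields $e_1 \otimes T'$ where $T'$ has its entire first column incremented — not $T_{\lambda\setminus X}$. You can see this directly in the paper's $\lambda=(2,1,1,1)$ example, where the full-column path contributes $\tfrac14\, \ytableaushort{1}\otimes\ytableaushort{12,3,4}$, not $\ytableaushort{1}\otimes\ytableaushort{11,2,3}$. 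Moreover $e_1\otimes e_{T_{\lambda\setminus X}}$ does not appear in $\Phi_1(e_{T_\lambda})$ at all, so its coefficient cannot be used to pin down the scalar, and ``every resulting row of $T_P$ is already constant'' is false.

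The term the paper actually tracks is $e_\alpha\otimes e_{T_{\lambda\setminus X}}$ with $\alpha=\sum_{i=b_1}^{N} h_i$, i.e.\ $\alpha$ is the \emph{entry of $x_1$} in $T_\lambda$. The contributing $1$-paths are the $w_{b_1}$ short paths whose evacuation route is a single box $[b_1](1,j)$ in the bottom row of block $b_1$ — not one long path — and each contributes $(-1)^1/H(P)=-1$, for a total coefficient of $-w_{b_1}$. Your claim that ``$P_0$ is the unique $1$-path producing this particular tensor'' is therefore also wrong. If you replace your distinguished term and path count by these (and then actually carry out the Olver-side bookkeeping you sketch, showing that formula gives $+w_{b_1}$), the Schur's-lemma argument goes through as intended.
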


\begin{proof}
Let \(T_{\lambda} \in \mT_{\lambda, n}\) and \(T_{\lambda \setminus X} \in \mT_{\lambda \setminus X, n}\) be the diagrams corresponding to highest weight vectors as in Section \ref{Constructing Schur--Weyl Modules subsection: Highest Weights}.
Then, in the image of \(T_{\lambda}\), the coefficient of 
\[
   \ytableaushort{{\alpha}} \otimes T_{\lambda \setminus X}
\]
where
\[
    \alpha = \sum_{i = b_1}^N h_{i}
\]
is readily seen to be \(- w_{b_1}\) in the image of \(\Phi\) and \(w_{b_1}\) in the image of \(\widetilde{\Phi}_1\). By uniqueness of the Pieri inclusion up to scalar multiple (Schur's Lemma), we then have
\[
    \Phi = - \widetilde{\Phi}_1.
\]
\end{proof}

\subsection{} \label{Relating Pieri Inclusion Descriptions subsection: Equating 1 Box Removal and m Box Removal}
Given removal set \(X = \{x_1 = [b_1](1,w_{b_1}), \ldots, x_m = [b_m](i_m,w_{b_m})\} \subset \lambda\), let \(\Phi_1^m\) be the map given by composing the one box removal map where the column of removed boxes is extended by one each time, i.e. \(\Phi_1^1 = \Phi_1\) and 
\[
    \Phi_1^m\left( T \right) = \sum_{P} \frac{\left( -1 \right)^P}{H(P)} P\left( \Phi_{1}^{m - 1}(T) \right)
\]
where the sum is over all \(1\)-paths \(P\) on \(\lambda \setminus \{x_1, \ldots, x_{m - 1}\}\) removing \(x_m\) and
\[
    P \left( Y_Q \otimes T_Q \right) = \substack{Y_P \\ Y_Q} \otimes P\left( T_Q \right)
\]
and \(Y_P\) is the box removed by from \(P\) from \(T_Q\). 

\begin{lemma*}
    \(\Phi_1^m\) is a \(\GL(V)\)-map.
\end{lemma*}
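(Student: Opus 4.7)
The plan is to prove this by induction on $m$, using Theorem~\ref{Showing the Pieri Inclusion Removing One Box is a GL(V)-map subsection: The Theorem} as the building block. The base case $m=1$ is immediate since $\Phi_1^1 = \Phi_1$, and this map was shown in Section~\ref{section: Showing the Pieri Inclusion Removing One Box is a GL(V)-map} to descend to a well-defined $\GL(V)$-equivariant map
\[
    \Phi_1 : \bS_{\lambda}(V) \to V \otimes \bS_{\lambda \setminus \{x_1\}}(V).
\]

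For the inductive step, I would first rewrite the defining recursion in the cleaner factored form
\[
    \Phi_1^m \;=\; \bigl(\mathrm{id}_{V^{\otimes (m-1)}} \otimes \Phi_1^{\,x_m}\bigr) \circ \Phi_1^{m-1},
\]
where $\Phi_1^{\,x_m} : \mF_{\lambda \setminus \{x_1, \ldots, x_{m-1}\}, n} \to V \otimes \mF_{\lambda \setminus X, n}$ is the one-box removal map of Section~\ref{section: Constructing the Pieri Inclusion for Removing One Box} applied to the partition $\lambda \setminus \{x_1, \ldots, x_{m-1}\}$ to remove the box $x_m$. That this is the correct interpretation follows directly from the prescription $P(Y_Q \otimes T_Q) = \substack{Y_P \\ Y_Q} \otimes P(T_Q)$, which acts trivially on the accumulated $V^{\otimes(m-1)}$ factor and applies the one-box procedure to the $T_Q$ factor alone. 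By the inductive hypothesis, $\Phi_1^{m-1}$ is a $\GL(V)$-map, so it descends to and is equivariant on the Schur--Weyl quotients. By Theorem~\ref{Showing the Pieri Inclusion Removing One Box is a GL(V)-map subsection: The Theorem} applied to the shape $\lambda \setminus \{x_1, \ldots, x_{m-1}\}$ and the removal box $x_m$, the map $\Phi_1^{\,x_m}$ is likewise a $\GL(V)$-map, and tensoring with $\mathrm{id}_{V^{\otimes (m-1)}}$ preserves both the descent to the quotient and the $\GL(V)$-equivariance. The composition of $\GL(V)$-maps is a $\GL(V)$-map, which completes the induction.

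The only real thing to verify carefully is that Theorem~\ref{Showing the Pieri Inclusion Removing One Box is a GL(V)-map subsection: The Theorem} is indeed applicable at each stage, i.e.\ that after removing $x_1, \ldots, x_{m-1}$ the remaining shape $\lambda \setminus \{x_1, \ldots, x_{m-1}\}$ is still a valid Young diagram whose bottom-right box of some block is $x_m$; this is exactly the condition that $X$ be a removal set in the sense of Section~\ref{Constructing the Pieri Inclusion for Removing Many Boxes subsection: Phi Diagram}, and so is built into the hypotheses. I expect no substantive obstacle: the lemma is essentially a bookkeeping statement that the iteration of known $\GL(V)$-maps is again a $\GL(V)$-map, and all of the hard work (preservation of Garnir relations and equivariance) has already been carried out in Section~\ref{section: Showing the Pieri Inclusion Removing One Box is a GL(V)-map}.
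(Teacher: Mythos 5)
Your proof is correct, but it takes a genuinely different route from the paper. The paper's own argument is a one-line citation: having just shown (in the preceding theorem) that $\Phi_1 = -\widetilde\Phi_1$, it invokes \cite[Corollary 1.8]{sam2011pieri}, where it is established that iterating Olver's one-box map is a $\GL(V)$-map; since $\Phi_1^m$ differs from that iteration only by an overall sign, the claim follows. You instead give a self-contained inductive argument: you factor $\Phi_1^m = \bigl(\mathrm{id}_{V^{\otimes(m-1)}} \otimes \Phi_1^{\,x_m}\bigr) \circ \Phi_1^{m-1}$, apply Theorem~\ref{Showing the Pieri Inclusion Removing One Box is a GL(V)-map subsection: The Theorem} to the intermediate shape $\lambda \setminus \{x_1, \ldots, x_{m-1}\}$ to get that $\Phi_1^{\,x_m}$ is a $\GL(V)$-map, and then use that compositions and tensors of $\GL(V)$-maps (plus the inductive hypothesis) are $\GL(V)$-maps. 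Your approach is more transparent about what is actually being used (the one-box theorem, descent of $\mR$ to $\mR$, and equivariance at each stage), and it avoids the dependence on the external reference; the paper's approach is shorter but opaque without chasing \cite{sam2011pieri}. One small imprecision: the prescription $P(Y_Q \otimes T_Q) = \substack{Y_P \\ Y_Q} \otimes P(T_Q)$ does not act \emph{trivially} on the first factor --- it stacks the new box onto the column, which on the tensor side is the natural map $V^{\otimes(m-1)} \otimes V \to V^{\otimes m}$ (followed, if one prefers, by the projection onto $\wedge^m V$). Since these are themselves $\GL(V)$-equivariant, this does not affect the argument, but your write-up should name this wedge/concatenation step explicitly rather than calling it the identity.
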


\begin{proof}
    By the previous theorem this follows from the proof in \cite[Corollary 1.8]{sam2011pieri}, where it is shown for the iteration of Olver's map.
\end{proof}

Let 
\[
    \Phi_m : \bS_{\lambda} (V) \to F_m \otimes \bS_{\lambda \setminus X} (V)
\]
be the Pieri inclusion constructed in Section \ref{section: Constructing the Pieri Inclusion for Removing Many Boxes}.

\begin{theorem*} 
For \(\Phi_1^m\) and \(\Phi_m\) as above,
\[
    \Phi_1^m = \Phi_m.
\]
\end{theorem*}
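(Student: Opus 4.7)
Both $\Phi_1^m$ and $\Phi_m$ are $\GL(V)$-equivariant maps $\bS_{\lambda}(V) \to F_m \otimes \bS_{\lambda \setminus X}(V)$: the former by the preceding lemma, the latter by Theorem \ref{Showing the Pieri Inclusion Removing Many Boxes is a GL(V)-map subsection: The Theorem}. Since $X$ is a vertical strip of $\lambda$ (no two of $x_1, \ldots, x_m$ occupy the same row, as each $x_j$ is the rightmost box of its row), Pieri's rule places $\bS_{\lambda}(V)$ as a multiplicity-one summand of $\wedge^m V \otimes \bS_{\lambda \setminus X}(V)$. By Schur's lemma, the space of $\GL(V)$-equivariant maps between source and target is therefore one-dimensional, so $\Phi_1^m = c_m\, \Phi_m$ for some scalar $c_m \in \bC$. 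The theorem reduces to showing $c_m = 1$.

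To pin down $c_m$, I would evaluate both maps on the highest weight vector $e_{T_\lambda}$ and extract the coefficient of a single convenient basis vector in the image. The natural candidate is the ``straight-through'' tensor $e_{Y_0} \otimes e_{T_{\lambda \setminus X}}$, where $T_{\lambda \setminus X}$ is the highest weight tableau of $\lambda \setminus X$ and $Y_0 = e_{r_m} \wedge \cdots \wedge e_{r_1}$, with $r_j$ the row of $x_j$ in $T_\lambda$ (so $(T_\lambda)_{x_j} = r_j$). This term is canonical: because every row of $T_\lambda$ is constant, a box removed from row $r_j$ can only contribute $e_{r_j}$, and the residual highest-weight structure of $\lambda \setminus X$ sharply restricts which paths can produce $T_{\lambda \setminus X}$ as the final tableau.

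On the $\Phi_m$ side I would identify the $m$-paths $P$ with $Y_P = Y_0$ and $T_P \equiv T_{\lambda \setminus X}$ modulo $\mR_{\lambda \setminus X, n}$, using Corollary \ref{Generating Garnir Relations and Tools for Collapsing Sums subsection: Calculation Lemma for m-paths} to collapse sums over path extensions into single representatives. The resulting contribution is a product of signs and hook-length factors read off from Section \ref{Constructing the Pieri Inclusion for Removing Many Boxes subsection: Defining H(P)}. On the $\Phi_1^m$ side, the same coefficient is computed by iterating the one-box construction: at stage $j$, only those $1$-paths preserving the highest-weight structure of the remaining diagram survive, and summing their weighted contributions (computed in the shrinking partition $\lambda \setminus \{x_1, \ldots, x_{j-1}\}$) yields a product of unshifted hook factors.

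The main obstacle will be matching these two products of hook factors. The denominator $H(P)$ in $\Phi_m$ uses shifted hooks $\prod_{i=1}^{k_b}(H(b) - (m-i))$ for each block meeting the evacuation route, computed in the full ambient partition $\lambda$; the iterated construction produces unshifted hooks computed in a sequence of successively smaller partitions. The heart of the verification is a telescoping identity: as $x_j$ is removed at the $j$-th iteration, the corresponding block widths drop in a prescribed way, and this adjustment is exactly what converts the unshifted factor at step $j$ into the appropriate shifted factor appearing in the single-step formula. Once this telescoping is checked and the signs are matched, $c_m = 1$ and Schur's lemma closes the argument.
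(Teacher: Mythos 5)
Your proposal correctly identifies the paper's strategy: both maps land in the same multiplicity-one summand, Schur's lemma gives a scalar $c_m$, and the job is to match a single coefficient in the image of the highest weight vector. The paper does exactly this. However, your write-up stops at describing the computation rather than carrying it out: everything from ``On the $\Phi_m$ side I would identify\ldots'' through ``Once this telescoping is checked and the signs are matched'' is a plan, and the telescoping identity you flag as the ``heart of the verification'' is precisely the step left unverified. That is the gap.

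There is also a substantive difference in the choice of target coefficient. You propose $Y_0 = e_{r_m}\wedge\cdots\wedge e_{r_1}$ with $r_j$ the row of $x_j$, which for a removal set spread over non-adjacent rows leaves the entries $\alpha_j^P$ coming from boxes near each $x_j$ separately, potentially requiring the full machinery of Corollary \ref{Generating Garnir Relations and Tools for Collapsing Sums subsection: Calculation Lemma for m-paths} and straightening to isolate the term. The paper instead takes $\alpha_1 = \sum_{i=b_1}^N h_i$ (the row index of $x_1$) and $\alpha_k = \alpha_1-(k-1)$ \emph{consecutive}, so that the only contributing $m$-paths are the ``straight-up'' ones whose orbits run vertically through the bottom $m$ rows starting at $[b_1](1)$; there are exactly $w_{b_1}$ choices of column for each of the $m$ orbits, each contributes a sign $(-1)$, $H(P)=1$, and no straightening is needed, giving $(-1)^m w_{b_1}^m$ on both sides essentially by inspection. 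Your choice of coefficient is not wrong, but it front-loads exactly the hook-factor bookkeeping that the paper's choice is designed to avoid, so if you pursue your route you should expect the ``telescoping'' to be genuinely harder than the paper's one-line count. Either way, to turn the proposal into a proof you must actually extract the coefficient and show it equals $(-1)^m w_{b_1}^m$ (or whatever value your chosen term yields) on both sides.
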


\begin{proof}
Let \(T_{\lambda} \in \mT_{\lambda, n}\) and \(T_{\lambda \setminus X} \in \mT_{\lambda \setminus X, n}\)  be the diagrams corresponding to highest weight vectors as in Section \ref{Constructing Schur--Weyl Modules subsection: Highest Weights}.
Then in the image of \(T_{\lambda}\), the coefficient of 
\[
   \begin{ytableau} \alpha_m \\ \vdots \\ \alpha_1 \end{ytableau} \otimes T_{\lambda \setminus X}
\]
where
\[
    \alpha_1 = \sum_{i = b_1}^N h_{i}
\]
and
\[
    \alpha_k = \alpha_1 - (k - 1)
\]
for \(2 \le k \le m\) is readily seen to be \(\left( -1 \right)^m w_{b_1}^m\) in the image of both \(\Phi_1^m\) and \(\Phi_m\). By uniqueness of the Pieri inclusion up to scalar multiple, we then have
\[
    \Phi_1^m = \Phi_m.
\]
\end{proof}

\subsection{} \label{Relating Pieri Inclusion Descriptions subsection: The Symmetric Case}
Define the map
\[
    \Phi_m^\prime : \bS_{\lambda} (V) \to S^m(V) \otimes \bS_{\lambda \setminus X} (V)
\]
just as we have defined \(\Phi_m\) in Section \ref{section: Constructing the Pieri Inclusion for Removing Many Boxes} except for redefining, for all \(m\)-paths \(P\) on \(\lambda\) removing \(X\),
\[
    Y_P = E_X \ytableaushort{{\alpha_m^P} \cdots {\alpha_1^P}}
\]
which is standard form notation is \(e_{\alpha_1^P} \cdots e_{\alpha_m^P} \in S^m V\).

\begin{theorem*}
    The map
    \[
        \Phi_m^\prime : \bS_{\lambda} (V) \to S^m(V) \otimes \bS_{\lambda \setminus X} (V)
    \]
    is a \(\GL(V)\)-map.
\end{theorem*}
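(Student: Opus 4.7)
The plan is to adapt the proof of Theorem \ref{Showing the Pieri Inclusion Removing Many Boxes is a GL(V)-map subsection: The Theorem}, the corresponding statement for $\Phi_m$, to the symmetric setting. The $\gl(V)$-equivariance of $\Phi_m'$ is immediate by the same commutation argument used for $\Phi_m$: the simple root operators $e_{\pm\alpha_i}$ act on a tableau by replacing entries $i$ with $i\pm 1$, and this action commutes with the path-sum formula independently of whether $Y_P$ lies in $\bigwedge^m V$ or in $S^m V$, since both are $\GL(V)$-modules and the path combinatorics are identical.

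The substance is to show that $\Phi_m'$ descends to the quotient $\bS_\lambda(V)$, i.e.\ that $\Phi_m'(\mR_{\lambda,n}) \subseteq S^m V \otimes \mR_{\lambda\setminus X,n}$. By Theorem \ref{Generating Garnir Relations and Tools for Collapsing Sums subsection: One Garnir Theorem} it suffices to check this on Garnir relations $G_A(T)$ for hooks $A$, and I would re-run the case analysis of the preceding section (hooks inside one block; hooks spanning two consecutive blocks) case by case, partitioning the paths that contribute to $\Phi_m'(G_A(T))$ into the same families as there. Every subcase whose cancellation takes the form ``a sum of paths equals $\tfrac{1}{C_A}G_{A'}$ applied to some $T'\in\mF_{\lambda\setminus X,n}$'' transfers verbatim, because those arguments depend only on path combinatorics and on manipulations inside $\mF_{\lambda\setminus X,n}$; they never use the internal structure of the $V$-factor in front of $T_P$.

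The main obstacle is the symmetric analog of Case 4 of Section \ref{Showing the Pieri Inclusion Removing Many Boxes is a GL(V)-map subsection: Two Box Removal Preserves Garnirs for Hooks Contained in a Single Block} (and its two-block variant in Section \ref{Showing the Pieri Inclusion Removing Many Boxes is a GL(V)-map subsection: Two Box Removal Preserves Garnirs for Hooks Contained in Two Blocks}), where the cancellation relied on the wedge identity $e_{\alpha_1}\wedge e_{\alpha_2} + e_{\alpha_2}\wedge e_{\alpha_1}=0$ to pair the two paths $P_0,P_1'$ with swapped orbit-assignments. Under $S^m V$ the same two contributions combine to $2\,e_{\alpha_1}e_{\alpha_2}\otimes T_P$ rather than $0$. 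The hardest step of the argument will be producing a compensating cancellation for these residuals: I expect that the relevant $T_P$ itself lies in $\mR_{\lambda\setminus X,n}$ via a hook-Garnir relation on the two rows of $\lambda\setminus X$ that receive the evacuated entries, so that the total image still vanishes modulo $S^m V \otimes \mR_{\lambda\setminus X,n}$ even though the cancellation mechanism has shifted from the $V$-factor to the tableau-factor. Once this single adaptation is in place, the remaining subcases carry over from the preceding section without change, and combining $\gl(V)$-equivariance with descent yields the $\GL(V)$-map claim.
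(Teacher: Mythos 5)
Your overall strategy—establish $\gl(V)$-equivariance (which carries over unchanged), then reduce descent to the quotient to Garnir relations on hooks via Theorem \ref{Generating Garnir Relations and Tools for Collapsing Sums subsection: One Garnir Theorem}, then re-run the case analysis of Sections \ref{section: Showing the Pieri Inclusion Removing One Box is a GL(V)-map} and \ref{section: Showing the Pieri Inclusion Removing Many Boxes is a GL(V)-map}—is sound, and you are right that all the cases whose cancellation is implemented by applying some $G_{A'}$ inside $\mF_{\lambda\setminus X,n}$ transfer verbatim. You have also put your finger on the genuine difficulty: the cancellations that come from the antisymmetry of the $V^{\otimes m}$-factor (Case \ref{Showing the Pieri Inclusion Removing Many Boxes is a GL(V)-map subsection: Two Box Removal Preserves Garnirs for Hooks Contained in a Single Block equation: T4} and its two-block analogue, but note that Cases \ref{Showing the Pieri Inclusion Removing Many Boxes is a GL(V)-map subsection: Two Box Removal Preserves Garnirs for Hooks Contained in a Single Block equation: T6} and \ref{Showing the Pieri Inclusion Removing Many Boxes is a GL(V)-map subsection: Two Box Removal Preserves Garnirs for Hooks Contained in a Single Block equation: T7} and their two-block versions use the same wedge identity, so the scope is larger than you indicate).

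The proposed repair for that obstacle does not work, however. You suggest that the residual term $2\,e_{\alpha_1^P}e_{\alpha_2^P}\otimes T'$ vanishes because $T'$ itself lies in $\mR_{\lambda\setminus X,n}$. There is no reason for this: $T'$ is an essentially arbitrary filled tableau of shape $\lambda\setminus X$ (look at its explicit description in Case \ref{Showing the Pieri Inclusion Removing Many Boxes is a GL(V)-map subsection: Two Box Removal Preserves Garnirs for Hooks Contained in a Single Block equation: T4}: it agrees with $T_P$ off two rows and has the evacuated entries $u,v$ slotted into $a_0,a_1$), and generically it is semistandard and nonzero in $\bS_{\lambda\setminus X}(V)$. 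If $T'$ were always a Garnir relation, the wedge cancellation in the original proof would have been vacuous. The paper's own (admittedly extremely terse) proof signals the correct mechanism with the phrase ``by keeping track of a sign'': the cancellation must still happen in the $V$-factor, which means the coefficients $(-1)^P/H(P)$ of the paired paths $P_0$ and $P'_1$ must come out with \emph{opposite} sign in the symmetric setting, so that $e_{\alpha_1^P}e_{\alpha_2^P}-e_{\alpha_2^P}e_{\alpha_1^P}=0$. Since $h^P=h^{P'}$ and $H(P)=H(P')$, this forces an additional sign—depending on the orbit-to-removal-box matching, i.e.\ on which of $R^P_1,R^P_2$ contains $a_0$—that is not present in the wedge definition and is not captured by a relation in the tableau factor. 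Your writeup should either introduce that extra sign explicitly in the definition of $\Phi_m'$ and verify it flips under the $P_0\leftrightarrow P'_1$ pairing, or else argue directly that $\Phi_m'$ is the composite of the (already-established) iterated one-box map with the $\GL(V)$-equivariant projection $V^{\otimes m}\to S^mV$, as in Sam--Weyman's Corollary~1.8; pushing a Garnir relation onto $T'$ will not close the gap.
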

\begin{proof}
    As \(\Phi_m\) is a \(GL(V)\)-map, similar to \cite[Corollary 1.8]{sam2011pieri}, this follows by the results of Sections \ref{section: Showing the Pieri Inclusion Removing One Box is a GL(V)-map} and \ref{section: Showing the Pieri Inclusion Removing Many Boxes is a GL(V)-map} by keeping track of a sign.
\end{proof}


\section{The Image of a Highest Weight Vector and Computational Complexity} \label{section: The Image of a Highest Weight Vector and Computational Complexity}

\subsection{} \label{The Image of a Highest Weight Vector and Computational Complexity subsection: Description in Terms of Phi_1}

In this section we use the Pieri inclusion \(\Phi_1\) constructed in Section \ref{section: Constructing the Pieri Inclusion for Removing One Box} to give an optimal description of the image under a Pieri inclusion removing one box of a highest weight vector.
Given a removal set \(X = \{x_1 = [b_1](1,w_{b_1})\} \subset \lambda\), it is clear by the construction of \(1\)-paths that for all \(1\)-paths on \(\lambda\) removing \(X\), \((T_{\lambda})_P\) is semi-standard.

Define the relation \(\sim\) on the set of all \(1\)-paths on \(\lambda\) removing \(X\) by
\[
    P \sim Q \iff R^Q \text{and } R^P \text{ intersect the same set of rows}.
\]
This clearly defines an equivalence relation. Let
\[
    [P] = \{ Q : Q \sim P\}.
\]
Then for all \(Q \in [P]\) we have \((-1)^Q = (-1)^P\) and \(H(Q) = H(P)\), and, when considering the image of a highest weight vector where each entry in a given row is the same,
\[
    Y_Q \otimes (T_\lambda)_Q = Y_P \otimes (T_\lambda)_P.
\]

For distinct \([P]\) and \([P^\prime]\) we have (by construction) that \(Y_P \otimes (T_\lambda)_P\) and \(Y_{P^\prime} \otimes (T_\lambda)_{P^\prime}\) are linearly independent.
Thus, \(\Phi_1(T_{\lambda})\) can be written as
\[
    \Phi_1(T_{\lambda}) = \sum_{[P_0]} \frac{(-1)^{P_0} \vert [P_0] \vert}{H(P_0)} P_0(T_{\lambda})
\]
where the sum is over all \(1\)-paths \(P_0\) on \(\lambda\) removing \(X\) which only hit boxes in the first column of \(\lambda\).
From the above, the terms in the image of \(\Phi_1(T_{\lambda})\) are linearly independent and do not require straightening, and so this description is optimal.
Recall that such an example was computed in Section \ref{Introduction subsection: Example HW}.
To see the optimal description from this example, take only the first six terms shown.

\subsection{} \label{The Image of a Highest Weight Vector and Computational Complexity subsection: The Size of |P_0|}

For a given such \(1\)-path \(P_0\), we now describe the corresponding term in the image of \(T_{\lambda}\).
Let \(\{r_i\}_{1 \le i \le \vert R^{P_0} \vert}\) be the rows in \(\lambda\) that \(P_0\) hits, so that \(\lambda_i > \lambda_{i + 1}\) and \(r_{\vert R^{P_0} \vert} = [b_1](1)\).
Then
\[
    \left\vert [P_0] \right\vert = \prod_{i = 1}^{\vert P \vert} \lambda_{r_i}
\]
and \((T_{\lambda})_{P_0} \in \bS_{\lambda \setminus X}\) has \(\lambda_1\) ones in the first row, \(\lambda_2\) twos in the first row, etc. except for each row \(r_i\), \(1 \le i \le \vert R^{P_0} \vert\), where the last entry in row \(r_i\) of \((T_{\lambda})_{P_0}\) is 
\[
    \left((T_{\lambda})_{P_0} \right)_{(r_i, \lambda_{r_i})} = r_{i + 1}.
\]

\subsection{} \label{The Image of a Highest Weight Vector and Computational Complexity subsection: HW Macaulay2 Examples}

The second author has implemented this optimal description using Macaulay2, with the output given as a hash table, where one can quickly compute the image of the highest weight for very large examples. 
Figure \ref{The Image of a Highest Weight Vector and Computational Complexity subsection: Macaulay2 Examples figure: 18 row} shows the timed computation for the image of a highest vector, where the partition is given as the first input of the function oneboxremovalHW and the second input of the function is the row (from the top of the tableau) of the box to be removed.

\begin{figure}[H]
    \centering
    \raisebox{-.9\height}{\includegraphics[width=.8\textwidth]{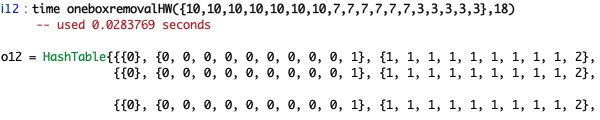}}
    \caption{Computing the image of the highest weight for the inclusion \(\bS_{(10,10,10,10,10,10,10,7,7,7,7,7,7,3,3,3,3,3)} \to \bS_{(1)} \otimes \bS_{(10,10,10,10,10,10,10,7,7,7,7,7,7,3,3,3,3,2)}\).}
    \label{The Image of a Highest Weight Vector and Computational Complexity subsection: Macaulay2 Examples figure: 18 row}
\end{figure}

\subsection{} \label{The Image of a Highest Weight Vector and Computational Complexity subsection: Complexity of Phi_1 vs Phi_O}

We now describe the computational complexity of \(\Phi_1\) and compare this to the computational complexity of the one-box removal Pieri inclusion described by Olver.
For a removal set \(X = \{ x_1 = [b_1](1, w_{b_1})\}\), let
\[
    \widetilde\Phi_1 : \bS_{\lambda} (V) \to V \otimes \bS_{\lambda \setminus X} (V)
\]
be the Pieri inclusion given by Olver (see \cite[\S 1.2]{sam2011pieri} and \cite[\S 4]{sam2009computing}). 

\begin{theorem*}
    Fix a positive integer \(N\) and consider partitions \(\lambda\) that have at most \(N\) blocks.
    Then our algorithm to compute the image of a highest weight vector under a Pieri inclusion \(\Phi_1: \bS_{\lambda}(V) \hookrightarrow V \otimes \bS_{\lambda \setminus X}(V)\) has a worst-case time complexity of \(O(l(\lambda)^N)\).
    On the other hand, the algorithm to compute the image of a highest weight vector under a Pieri inclusion \(\widetilde\Phi_1: \bS_{\lambda}(V) \hookrightarrow V \otimes \bS_{\lambda \setminus X}(V)\) has a worst-case time complexity of \(\Omega(2^{\,l(\lambda)})\).
\end{theorem*}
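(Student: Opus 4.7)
The plan is to prove the two bounds separately, using the optimized description of $\Phi_1(T_\lambda)$ for the upper bound and an analysis of Olver's iterative structure for the lower bound.

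For the upper bound on $\Phi_1$, I would invoke the optimized description from Sections \ref{The Image of a Highest Weight Vector and Computational Complexity subsection: Description in Terms of Phi_1} and \ref{The Image of a Highest Weight Vector and Computational Complexity subsection: The Size of |P_0|}, which expresses $\Phi_1(T_\lambda)$ as a sum indexed by equivalence classes $[P_0]$ of $1$-paths whose evacuation route lies entirely in the first column of $\lambda$. Each class is determined by the set of rows the route visits. From the evacuation route axiom in Section \ref{Constructing the Pieri Inclusion for Removing One Box subsection: Evacuation Route}, within any block $b$ the route either misses the block or visits exactly the bottom $k_b$ consecutive rows for some $1 \le k_b \le h_b$; moreover $k_{b_1} \ge 1$ since the route must start at block $b_1$. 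Hence the number of equivalence classes is bounded by
\[
    h_{b_1} \prod_{b = b_1 + 1}^{N} (h_b + 1) \;\le\; \prod_{b=1}^{N} (h_b + 1) \;\le\; (l(\lambda) + 1)^N,
\]
which is $O(l(\lambda)^N)$ for fixed $N$. For each such class, the scalar $|[P_0]|/H(P_0)$ is a ratio of products of at most $l(\lambda)$ small factors, and the tableau $(T_\lambda)_{P_0}$ is obtained by modifying at most $l(\lambda)$ entries of $T_{\lambda \setminus X}$ according to the explicit recipe in \ref{The Image of a Highest Weight Vector and Computational Complexity subsection: The Size of |P_0|}. Since the resulting terms are linearly independent, no straightening is required, yielding an overall runtime of $O(l(\lambda)^N)$.

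For the lower bound on $\widetilde\Phi_1$, I would analyze Olver's iterative construction, in which the designated box is lifted row by row through the diagram. At each lifting step across a row of width $w$, the formula produces a sum of at least two summands (depending on whether the box halts at that row or continues upward), and these summands are distinct tableaux that are not combined with summands generated at other rows until the entire lift terminates. Consequently, the number of intermediate summands grows multiplicatively across the rows traversed. I would exhibit an explicit worst-case family, for instance a two-block shape with $w_1 = 1$ and $w_2 = 2$, in which the number of distinct intermediate terms enumerated by Olver's algorithm when applied to the highest weight vector is at least $2^{\,l(\lambda)}$. Since each of these terms must be generated and processed, the runtime is $\Omega(2^{\,l(\lambda)})$.

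The principal obstacle is the lower bound. The upper bound reduces cleanly to a counting argument on evacuation routes combined with polynomial-time per-class evaluation. Proving that Olver's algorithm actually enumerates exponentially many distinct intermediate terms requires a more delicate analysis of its branching recursion, together with an argument that cancellations or collisions across branches do not collapse the term count below $2^{\,l(\lambda)}$. I would approach this by induction on the number of rows strictly above the removed box in the chosen family, verifying at each Olver iteration step that the surviving intermediate terms roughly double and identifying enough invariants among them to rule out premature coincidences.
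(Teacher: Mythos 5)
Your upper bound argument is essentially the paper's: count evacuation routes through the first column, observe that within each block a route must occupy a contiguous bottom segment of rows, giving $h_{b_1}\prod_{b>b_1}(h_b+1) \le (l(\lambda)+1)^N$, and note that each term is computed in polynomial time with no straightening needed. That part is fine and matches the paper.

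Your lower bound, however, introduces a difficulty that is not actually there and then treats it as the ``principal obstacle.'' You frame Olver's map as a branching recursion whose \emph{intermediate} summands roughly double per row, and you propose to argue that cancellations and collisions among branches cannot collapse the count below $2^{l(\lambda)}$. But a running-time lower bound does not require any such cancellation analysis. Olver's formula for $\widetilde\Phi_1$ is a sum over \emph{all} subsets of rows strictly above $[b_1](1)$ --- one summand per subset --- so the algorithm must enumerate and evaluate exactly $2^{h_{b_1}-1}\prod_{i>b_1}2^{h_i}$ terms before any cancellation or straightening can even be considered. Taking $b_1=1$ makes this $2^{l(\lambda)-1}=\Omega(2^{l(\lambda)})$, and the paper also points out the additional cost of straightening the non-semistandard outputs. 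Whether terms later combine or cancel is irrelevant to this enumeration cost. Your proposed two-block example with $w_1=1$, $w_2=2$ is not wrong, but it is unnecessary: the subset count gives the bound for every $\lambda$ with $b_1=1$, and the induction you outline to rule out ``premature coincidences'' is solving a problem the theorem does not pose. If you replace that whole step with the direct observation that the number of Olver paths equals the number of subsets of rows above the removed row, the proof closes immediately.
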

 
\begin{proof}
Let \(\lambda = (w_1^{h_1}, \ldots, w_N^{h_N})\). 
We first consider the time complexity of the algorithm as given by Olver's construction. 
As in Section \ref{The Image of a Highest Weight Vector and Computational Complexity subsection: Description in Terms of Phi_1}, when considering the image of a highest weight vector we only need to select paths on \(\lambda\) removing \(X\) that act on the first column of \(\lambda\).
From the description of the map \(\widetilde\Phi_1\) removing \(X\) in \cite[\S 1.2]{sam2011pieri}, the number of such paths in the computation of \(\widetilde\Phi_1\) is equal to the number of choices of rows in \(\lambda\) above row \([b_1](1)\).
Thus the complexity of the map \(\widetilde\Phi_1\) acting on a highest weight vector is
\[
      2^{h_{b_1} - 1} \cdot  \prod_{i = b_1 + 1}^N 2^{h_i} \leq {\textstyle\frac{1}{2}}\cdot \prod_{i = 1}^N 2^{h_i}
      = {\textstyle\frac{1}{2}}\cdot  2^{\sum_{i=1}^{N} h_i } = {\textstyle\frac{1}{2}}\cdot  2^{\,l(\lambda)}.
\]
In the worst-case when \(b_1=1\), the inequality is in fact an equality.
Furthermore, the paths that act on the first column of \(\lambda\) using Olver's algorithm can result in tableaux which are not semi-standard, and so must be straightened.
Hence the worst-case complexity of Olver's algorithm is \(\Omega(2^{\, l(\lambda)})\).
 
The map \(\Phi_1\) removing \(X\) restricts the choices of rows to those which describe an evacuation route, and hence the number of \(1\)-paths acting on the first column of \(\lambda\) in the computation of \(\Phi_1\) is equal to the number of choices of rows in \(\lambda\) above row \([b_1](1)\) made without skipping rows within blocks.
It also clear from the definition of \(1\)-paths that the image of a highest weight vector under a \(1\)-path is semi-standard.
Thus the complexity of the map \(\Phi_1\) acting on a highest weight vector is
\[
    h_{b_1} \cdot \prod_{i = b_1 + 1}^N \left( h_i + 1 \right) < \prod_{i = 1}^N \left( h_i + 1 \right)
  \leq (l(\lambda)+1)^N = \Theta(l(\lambda)^N).
\]
\end{proof}

\begin{remark*}
    Similar to the previous theorem, by restricting the maximum possible width of a block in \(\lambda\) we get that \(\Phi_1\) is an exponential speed up of \(\widetilde\Phi_1\) on the image of basis vectors (semi-standard tableaux) in \(\bS_{\lambda}\).
\end{remark*}

\subsection{} \label{The Image of a Highest Weight Vector and Computational Complexity subsection: Macaulay2 Examples}

This exponential to polynomial speed up can be seen in the computation time for computing Pieri maps in Macaulay2, for which the second author has implemented the description of \(\Phi_1\) given in Section \ref{section: Constructing the Pieri Inclusion for Removing One Box} within Sam's PieriMaps package \cite{sam2009computing}.

In Figure \ref{Comparing the Computational Complexity of the Pieri Inclusion Descriptions subsection: Macaulay2 Examples figure: 8,8,8} below we show the timed computations for computing the map
\[
    \bS_{(8,8,8)} \to \bS_{(1)} \otimes \bS_{(8,8,7)}.
\]
Using Olver's algorithm (as built in to PieriMaps), the process was interrupted after an hour with no output.
Using our algorithm implemented in PieriMaps, comuting this map takes only 0.07 seconds.

\begin{figure}[H]
    \centering
    \begin{subfigure}{.4\textwidth}
        \raisebox{-.9\height}{\includegraphics[width=\textwidth]{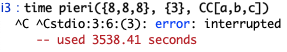}}
        \caption{Using Olver's algorithm.}
    \end{subfigure}
    \hspace{.5in}
    \begin{subfigure}{.4\textwidth}
        \raisebox{-.9\height}{\includegraphics[width=\textwidth]{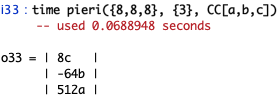}}
        \caption{Using our algorithm.}
    \end{subfigure}
    \caption{Computing the inclusion \(\bS_{(8,8,8)} \to \bS_{(1)} \otimes \bS_{(8,8,7)}\).}
    \label{Comparing the Computational Complexity of the Pieri Inclusion Descriptions subsection: Macaulay2 Examples figure: 8,8,8}
\end{figure}

We can also see this exponential speed up for small examples with more than one block. 
In the figure below we show the computation time for the Pieri inclusion
\[
    \bS_{(3,1,1,1,1,1,1,1,1,1)} \to \bS_{(1)} \otimes \bS_{(3,1,1,1,1,1,1,1,1)}
\]
using Olver's algorithm.
\begin{center}
    \raisebox{-.9\height}{\includegraphics[width=.85\textwidth]{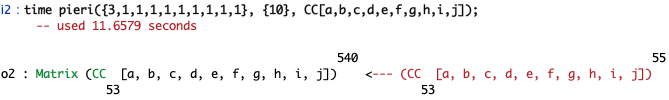}}\\
\end{center}
Using Olver's algorithm this computation takes over eleven seconds, while with the new algorithm this computation (shown below) takes less than two seconds.
\begin{center}
    \raisebox{-.9\height}{\includegraphics[width=.85\textwidth]{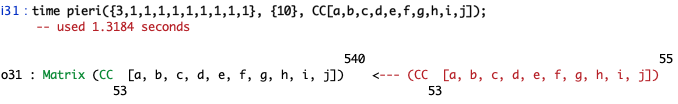}}
\end{center}


\newpage

\bibliographystyle{alpha}
\bibliography{refs}

\end{document}